\documentclass[12.0 pt]{article}
\usepackage{amssymb,amsmath, mathtools,amsthm,graphicx, subfigure, color, latexsym, hyperref, nameref, url}
\usepackage{bm}
\usepackage{amsfonts}
\usepackage{authblk}
\usepackage{amsbsy}
\usepackage{wrapfig}
\usepackage{longtable}
\usepackage{euscript}
\usepackage{epsfig}
\usepackage{mathrsfs}
\usepackage{graphicx}
\usepackage{fancybox}
\usepackage{varioref}
\usepackage{enumerate}
\usepackage{enumitem}
\usepackage{verbatim}
\usepackage{makeidx}
\usepackage[draft]{fixme}
\usepackage[english]{babel}
\usepackage{natbib}
\bibliographystyle{dcu}


\newtheorem{thm}{Theorem}[section]

\newtheorem{Prop}{Proposition}[section]
\newtheorem{lem}{Lemma}[section]
\newtheorem{Cor}{Corollary}[section]
\newtheorem{Rk}{Remark}[section]
\newtheorem{defn}{Definition}

\newcommand{\continuation}{??}

\textheight 8.0in
\textwidth 6.0in
\oddsidemargin .25in
\evensidemargin 0in
\topmargin -.90in

\makeatletter
\newcommand{\bC}{\bm{Cov}}
\newcommand{\bD}{\bm{\Delta}}
\newcommand{\bd}{\bm{d}}
\newcommand{\bE}{\bm{E}}
\newcommand{\bG}{\bm{\Gamma}}
\newcommand{\bM}{\bm{M}}
\newcommand{\bP}{\bm{P}}
\newcommand{\bV}{\bm{Var}}

\newcommand{\norm}[1]{\lVert#1\rVert}
\newcommand{\abs}[1]{\lvert#1\rvert}
\DeclareMathOperator*{\diag}{Diag}

\makeatother
\baselineskip=8.cm

\newcounter{mylabelcounter}

\makeatletter
\newcommand{\labeltext}[2]{%
#1\refstepcounter{mylabelcounter}%
\immediate\write\@auxout{%
  \string\newlabel{#2}{{1}{\thepage}{{\unexpanded{#1}}}{mylabelcounter.\number\value{mylabelcounter}}{}}%
}%
}
\makeatother

\begin{document}

\title{\bf Branching Processes in Random Environments with Thresholds}

\author{Giacomo Francisci}
\author{Anand N.\ Vidyashankar}

\affil{George Mason University}

\date{\today}

\maketitle


\begin{abstract}
Motivated by applications to COVID dynamics, we describe a branching process in random environments model $\{Z_n\}$  whose characteristics change when crossing upper and lower thresholds. This introduces a cyclical path behavior involving periods of increase and decrease leading to supercritical and subcritical regimes. Even though the process is not Markov, we identify subsequences at random time points $\{(\tau_j, \nu_j)\}$ - specifically the values of the process at crossing times, {\it{viz.}}, $\{(Z_{\tau_j}, Z_{\nu_j})\}$ - along which the process retains the Markov structure. Under mild moment and regularity conditions, we establish that the subsequences possess a regenerative structure and prove that the limiting normal distribution of the growth rates of the process in supercritical and subcritical regimes decouple.
For this reason, we establish limit theorems concerning the length of supercritical and subcritical regimes and the proportion of time the process spends in these regimes. As a byproduct of our analysis, we explicitly identify the limiting variances in terms of the functionals of the offspring distribution, threshold distribution, and environmental sequences.
\end{abstract}

\noindent {{\bf{Key Words.}} BPRE, COVID dynamics, Ergodicity of Markov chains, Estimators of growth rate, Length of cycles, Martingales, Random sums, Regenerative structure, Size-dependent branching process, Size-dependent branching process with a threshold, Subcritical regime, Supercritical regime.}

\vspace{0.1in}
\noindent{\bf{Math Subject Classification 2020.}} {\it{Primary}:} 60J80, 60F05, 60J10;
{\it{Secondary}:} 92D25, 92D30, 60G50, 62F10.

\section{Introduction} \label{section:introduction}

Branching processes and their variants are used to model various biological, biochemical, and epidemic processes (see \citet{Jagers-1975,Haccou-2007,Hanlon-2011,Kimmel-2015}). More recently, these methods have been used as a model for spreading COVID cases in a community during the early stages of the pandemic \citep{Yanev-2020,Atanasov-2021}. As the time progressed, the number of infected members in a community changed due to different containment efforts of the local communities \citep{Falco-2021,Sun-2022} leading to periods of increase and decrease. In this paper, we describe a stochastic process model built on a branching process model in random environments that explicitly takes into account periods of growth and decrease in the transmission rate of the virus.

Specifically, we consider a branching process model initiated by a random number of ancestors (thought of as initiators of the pandemic within a community). During the first several generations, the process grows uncontrolled, allowing immigration into the system. This initial phase is modeled using a supercritical branching process with immigration in random environments, specifically independent and identically distributed (i.i.d.) environments. When consequences of rapid spread become significant, policymakers introduce restrictions to reduce the rate of growth, hopefully resulting in a reduced number of infected cases. The limitations are modeled using upper thresholds on the number of infected cases, and beyond the threshold the process changes its character to evolve as a subcritical branching process in random environments. During this period - due to strict controls - immigration is also not allowed. In practical terms, this period typically involves a ``lockdown'' and other social containment efforts, the intensity of which varies across communities.

The period of restrictions is not sustainable for various reasons, including political, social, and economic pressures leading to the easing of controls. The policymakers use multiple metrics to gradually reduce controls, leading to an ``opening of communities", resulting in increased human interactions. As a result or due to changes undergone by the virus, the number of infected cases increases again. We use lower thresholds in the number of ``newly infected'' to model the period of change and let the process evolve again as a supercritical BPRE in i.i.d.\ environments after it crosses the lower threshold. The process continues to evolve in this manner alternating between periods of increase and decrease. In this paper, we provide a rigorous probabilistic analysis of this model.

Even though we used the dynamics of COVID spread as a motivation for the proposed model, the aforementioned cyclic behavior is often observed in other biological systems, such as those modeled as a predator-prey model or the SIR model. In some biological populations, the cyclical behavior can be attributed to the decline of fecundity as the population size approaches a threshold \citep{Klebaner-1993}. Deterministic models such as ordinary differential equations, dynamical systems, and corresponding discrete time models are used for analysis in the applications mentioned above \citep{Teschl-2012,Perko-2013,Iannelli-2014}. While many models described above yield good qualitative descriptions, uncertainty estimates are typically unavailable. It is worthwhile to point out that previously described branching process methods also produce reasonable point estimates for the mean growth during the early stages of the pandemic. However, the above-mentioned point estimates of the growth rate are unreliable during the later stages of the pandemic. In this paper, we address statistical estimation of the mean growth and characterize the variance of the estimates. We end the discussion with a  plot, Figure \ref{plot_COVID_cases_in_Italy}, of the total number of confirmed COVID cases per week in Italy from February 23, 2020 to July 20, 2022. The plot also includes the number of cases using the proposed model.
\begin{figure}
\centering
\includegraphics[width=\linewidth]{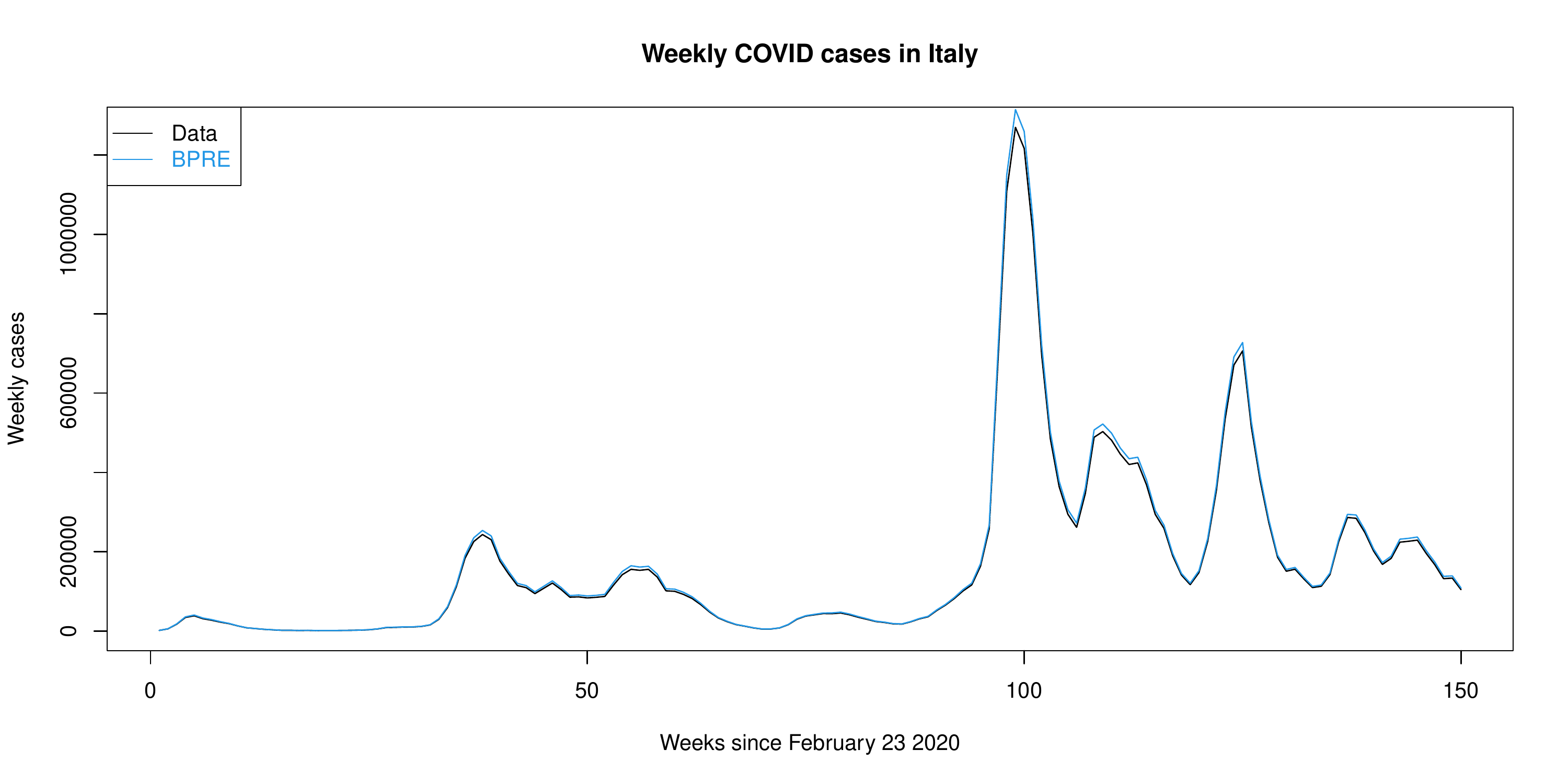}
\caption{In black weekly COVID cases in Italy from February 23, 2020 to February 3, 2023. In blue a BPRE starting with the same initial value and offspring mean having the negative binomial distribution with predefined number of successful trials $r=10$ and Gamma-distributed mean with shape parameter equal to the mean of the data and rate parameter $1$.}
\label{plot_COVID_cases_in_Italy}
\end{figure}
Other examples with similar plots include the hare-lynx predator-prey dynamics and measles cases \citep{Tyson-2010,Iannelli-2014,Hempel-2015}.

Before we provide a precise description of our model, we begin with a brief description of BPRE with immigration. Let $\Pi_{n} = (P_{n},Q_{n})$ be i.i.d.\ random variables taking values in $\mathcal{P} \times \mathcal{P}$, where $\mathcal{P}$ is the space of probability distributions on $\mathbb{N}_{0}$; that is, $P_{n}=\{ P_{n,r} \}_{r=0}^{\infty}$ and $Q_{n}=\{ Q_{n,r} \}_{r=0}^{\infty}$ for some non-negative integers $P_{n,r}$ and $Q_{n,r}$ such that $\sum_{r=0}^{\infty} P_{n,r}=1$ and $\sum_{r=0}^{\infty} Q_{n,r}=1$. The process $\Pi=\{ \Pi_{n} \}_{n=0}^{\infty}$ is referred to as the environmental sequence. For each realization of $\Pi$, we associate a population process $\{ Z_{n} \}_{n=0}^{\infty}$ defined recursively as follows: let $Z_{0}$ take values on the positive integers and $n \geq 0$
\begin{equation*}
    Z_{n+1}=\sum_{i=1}^{Z_{n}} \xi_{n,i}+I_{n},
\end{equation*}
where, given $\Pi_{n}=(P_{n},Q_{n})$, $\{ \xi_{n,i} \}_{i=1}^{\infty}$ are i.i.d.\ with distribution $P_{n}$ and $I_{n}$ is an independent random variable with distribution $Q_{n}$. The random variable $Y_{n}=\log(\overline{P}_{n})$, where $\overline{P}_{n}=\sum_{r=0}^{\infty} r P_{n,r}$, plays an important role in classification of BPRE with immigration. It is well-known that when $\bE[Y_{0}] > 0$ the process diverges to infinity with probability one and if $\bE[Y_{0}] \leq 0$ and the immigration is degenerate at zero for all environments then the process becomes extinct with probability one \citep{Athreya-1971}. Furthermore, in the subcritical case, that is $\bE[Y_{0}] < 0$, one can further identify three distinct regimes: (i) weakly subcritical, (ii) moderately subcritical, and (iii) strongly subcritical. (i) corresponds to when there exists a $0 < \rho < 1$ such that $\bE[Y_{0} e^{\rho Y_{0}}]=0$, while (ii) corresponds to the case when $\bE[Y_{0} e^{Y_{0}}]=0$. Finally, (iii) corresponds to the case when $\bE[Y_{0} e^{Y_{0}}]<0$ \citep{Kersting-2017}. In this paper, when working with the subcritical regime, we will assume that the process is strongly subcritical and refer to it as subcritical process in the rest of the manuscript.

We now turn to a description of the model. Let $\Pi^{U} = \{ \Pi_{n}^{U} \}_{n=0}^{\infty}$, where $\Pi_{n}^{U}=(P_{n}^{U},Q_{n}^{U})$, denote a collection of supercritical environmental sequences. Here, $P_{n}^{U}=\{ P_{n,r}^{U} \}_{r=0}^{\infty}$ indicates the offspring distribution and $Q_{n}^{U}=\{ Q_{n,r}^{U} \}_{r=0}^{\infty}$ represents the immigration distribution. Also, let $\Pi^{L}=\{\Pi_{n}^{L}\}_{n=0}^{\infty}$, where $\Pi_{n}^{L}=P_{n}^{L}=\{ P_{n,r}^{L} \}_{r=0}^{\infty}$, denote a collection of subcritical environmental sequences. We now provide an evolutionary description of the process: at time zero the process starts with a random number of ancestors $Z_{0}$. Each of them live one unit of time and reproduce according to the distribution $P_{0}^{U}$. Thus, the size of the first generation population is 
\begin{equation*}
    Z_{1}=\sum_{i=1}^{Z_{0}} \xi_{0,i}^{U}+I_{0}^{U},
\end{equation*}
where, given $\Pi_{0}^{U}=(P_{0}^{U},Q_{0}^{U})$, $\xi_{0,i}^{U}$ are i.i.d.\ random variables with offspring distribution $P_{0}^{U}$ and independent of the immigration random variable $I_{0}^{U}$ with distribution $Q_{0}^{U}$. The random variable $\xi_{0,i}^{U}$ is interpreted as the number of children produced by the $i^{\text{th}}$ parent in the $0^{\text{th}}$ generation and $I_{0}^{U}$ is interpreted as the number of immigrants whose distribution is generated by the same environmental random variable $\Pi^{U}_{0}$.

Let $U_{1}$ denote the random variable representing the upper threshold. If $Z_{1} < U_{1}$, each of the first generation population live one unit of time and evolve, conditionally on the environment, as the ancestors independent of the population size at time one. That is, 
\begin{equation*}
   Z_{2} = \sum_{i=1}^{Z_{1}} \xi_{1,i}^{U}+I_{1}^{U}.
\end{equation*}
As before, given $\Pi_{1}^{U}=(P_{1}^{U},Q_{1}^{U})$, $\xi_{1,i}^{U}$ are i.i.d.\ with distribution $P_{1}^{U}$ and $I_{1}^{U}$ has distribution $Q_{1}^{U}$. The random variables $\xi_{1,i}^{U}$ are independent of $Z_{1}$, $\xi_{0,i}^{U}$, and $I_{0}^{U}$, $I_{1}^{U}$. If $Z_{1} \geq U_{1}$, then
\begin{equation*}
   Z_{2} = \sum_{i=1}^{Z_{1}} \xi_{1,i}^{L},
\end{equation*}
where, given $\Pi_{1}^{L}=P_{1}^{L}$, $\xi_{1,i}^{L}$ are i.i.d.\ with distribution $P_{1}^{L}$. Thus, the size of the second generation population is 
\begin{equation*}
   Z_{2} = \begin{cases}
   \sum_{i=1}^{Z_{1}} \xi_{1,i}^{U}+I_{1}^{U} &\text{ if } Z_{1}<U_{1}, \\
   \sum_{i=1}^{Z_{1}} \xi_{1,i}^{L} &\text{ if } Z_{1} \geq U_{1}.
   \end{cases}
\end{equation*}
The process $Z_{3}$ is defined recursively as before. As an example, if $Z_{1} < U_{1}$, $Z_{2} < U_{1}$ or $Z_{1} \geq U_{1}$, $Z_{2} \leq L_{1}$, for a random lower threshold $L_{1}$, then the process will evolve like a supercritical BPRE with offspring distribution $P_{2}^{U}$ and immigration distribution $Q_{2}^{U}$. Otherwise (that is, $Z_1< U_1$ and $Z_2 \ge U_1$ or $Z_1 \ge U_1$ and $Z_2 >L_1$), the process will evolve like a subcritical BPRE with offspring distribution $P_{2}^{L}$. This dynamics continues with different thresholds $(U_j, L_j)$ yielding the process $\{ Z_{n} \}_{n=0}^{\infty}$ which we refer to as branching process in random environments with thresholds (BPRET). The consecutive set of generations where the reproduction is governed by a supercritical BPRE is referred to as the \emph{supercritical regime}, while the other is referred to as the \emph{subcritical regime}. As we will see below,  non-trivial immigration in the supercritical regime is required to obtain alternating periods of increase and decrease.

The model described above is related to size dependent branching processes with a threshold as studied by \citet{Klebaner-1993} and more recently by \citet{Athreya-2016}. Specifically, in that model the offspring distribution depends on a \emph{fixed threshold} $K$ and the size of the previous generation.  As observed in these papers, these Markov processes either explode to infinity or are absorbed at zero. In our model the thresholds are \emph{random and dynamic} resulting in a non-Markov process; however, the offspring distribution does not depend on the size of the previous generation \emph{as long as they belong to the same regime}. Indeed, when $U_{j}-1=L_{j}=K$ for all $j \geq 1$,  the immigration distribution is degenerate at zero, and the environment is fixed, one obtains as a special case the density dependent branching process (see for example \citet{Klebaner-1984,Klebaner-1993,Jagers-2011,Athreya-2016}). Additionally, while the model of \citet{Klebaner-1993} uses Galton-Watson process as a building block, our model uses branching processes in i.i.d.\ environments.

Continuing with our discussion on the literature, \citet{Athreya-2016} show that in the fixed environment case the special case of size-dependent process with a single threshold becomes extinct with probability one. We show that this is also the case for the BPRE when there is no immigration, and the details are in Theorem \ref{theorem:extinction_probability}. Similar phenomenon have been observed in slightly different contexts  in \cite{Jagers-2020,Jagers-2021}. Incorporation of immigration component ensures that the process is not absorbed at zero and hence may be useful for modeling stable populations at equilibrium as done in deterministic models.  For an additional discussion see Section \ref{section:discussion_and_concluding_remarks}.

For the ease of further discussions we introduce a few notations. Let $Y_{n}^{U} \coloneqq \log(\overline{P}_{n}^{U})$ and $Y_{n}^{L} \coloneqq \log(\overline{P}_{n}^{L})$, where
\begin{equation*}
    \overline{P}_{n}^{U}=\sum_{r=0}^{\infty} r P_{n,r}^{U} \text{ and } \overline{P}_{n}^{L}=\sum_{r=0}^{\infty} r P_{n,r}^{L};
\end{equation*}
that is, $\overline{P}_{n}^{U}$ and $\overline{P}_{n}^{L}$ represent the offspring means conditional on the environment $\Pi_{n}^{U}=(P_{n}^{U},Q_{n}^{U})$ and $\Pi_{n}^{L}=P_{n}^{L}$, respectively. Also, let $\overline{Q}_{n}^{U}=\sum_{r=0}^{\infty} r Q_{n,r}^{U}$ denote the immigration mean conditional on the environment; and 
\begin{equation*}
    \overline{\overline{P}}_{n}^{U}=\sum_{r=0}^{\infty} (r-\overline{P}_{n}^{U})^{2} P_{n,r}^{U} \text{ and } \overline{\overline{P}}_{n}^{L}=\sum_{r=0}^{\infty} (r-\overline{P}_{n}^{L})^{2} P_{n,r}^{L}
\end{equation*}
denote the conditional variance of the offspring distributions given the environment.

From the description, it is clear that the crossing times at the thresholds $(U_{j},L_{j})$ of $Z_{n}$, namely $\tau_{j}$ and $\nu_{j}$ will play a significant role in the analysis. It will turn out that $\{Z_{\tau_{j}}\}$ and $\{Z_{\nu_{j}}\}$ will form a time homogeneous Markov chain with state space $S^{L} \coloneqq \mathbb{N}_{0} \cap [0,L_{U}]$ and $S^{U} \coloneqq \mathbb{N} \cap [L_{U}+1,\infty)$, respectively, where we take $L_{j} \leq L_{U}$ and $U_{j} \geq L_{U}+1$ for all $j \geq 1$. Under additional conditions on the offspring distribution and the environment sequence, the processes $\{Z_{\tau_{j}}\}$ and $\{Z_{\nu_{j}}\}$ will be uniformly ergodic. These results are established in Section \ref{section:path_properties_of_BPRET}.

The amount of time the process spends in the supercritical and subcritical regimes, beyond its mathematical and scientific interest, will also arise when studying the central limit theorem for the estimates of $M^{U} \coloneqq \bE[\overline{P}_{n}^{U}]$ and $M^{L} \coloneqq \bE[\overline{P}_{n}^{L}]$. Using the uniformly ergodicity alluded to above, we will establish that the time averages of $\tau_{j}-\nu_{j-1}$ and $\nu_{j}-\tau_{j}$ converge to finite positive constants, $\mu^{U}$ and $\mu^{L}$. Additionally, we establish a central limit theorem (CLT) related to this convergence under a finite second moment hypothesis after an appropriate centering and scaling; that is,
\begin{align*}
    \frac{1}{\sqrt{n}}\sum_{j=1}^n (\tau_j -\nu_{j-1}) \xrightarrow[n \to \infty]{d} N(  \mu^U, \sigma^{2,U})
\end{align*}
and we characterize $\sigma^{2,U}$ in terms of the stationary distribution of the Markov chain. A similar result also holds for $\nu_j-\tau_{j}$. This, in turn, provides qualitative information regarding the proportion of time the process spends in these regimes. That is, if $C_{n}^{U}$ is the amount of time the process spends in the supercritical regime up to time $n-1$ we show that $n^{-1}C_n^{U}$ converges to $\mu^U(\mu^U+\mu^L)^{-1}$; a related central limit theorem is also established and in the process we also characterize the limiting variance. Interestingly, we show that the CLT prevails even for the joint distribution of the length of time and the proportion of time the process spends in supercritical and subcritical regimes. These results are described in Sections \ref{section:regenerative_property_of_crossing_times} and \ref{section:proportion_of_time_spent_in_supercritical_and_subcritical_regimes}.

An interesting question concerns the rate of growth of the BPRET in the supercritical and subcritical regimes described by the corresponding expectations, namely $M^{U}$ and $M^{L}$. Specifically, we establish that the limiting joint distribution of the estimators is bivariate normal with a diagonal covariance matrix yielding asymptotic independence of the mean estimators derived using data from supercritical and subcritical regimes. In the classical setting of a supercritical BPRE \emph{without immigration} this problem has received some attention (see for instance \citet{Dion-1979}). The problem considered here is different in the following four ways: (i) the population size does not converge to infinity, (ii) the lengths of the regimes are random, (iii) in the supercritical regime the population size may be zero, and (iv) there is an additional immigration term. While (iii) and (iv) can be accounted for in the classical settings as well, their effect on the point estimates is minimized due to the exponential growth of the population size. Here, while the exponential growth is ruled out, perhaps as anticipated, the Markov property of the process at crossing times, namely $\{Z_{\tau_j}\}$ and $\{Z_{\nu_j}\}$ and their associated regeneration times play a central role in the proof. It is important to note that, it is possible that both regimes occur between regeneration times. Hence, also the proportion of time the process spends in the supercritical and subcritical regime plays a vital role in deriving the asymptotic limit distribution. The limiting variance of the estimators depend additionally on $\mu^{U}$ and $\mu^{L}$, beyond $V_{1}^{U} \coloneqq \bV[\overline{P}_{0}^{U}]$, $V_{1}^{L} \coloneqq \bV[\overline{P}_{0}^{L}]$, $V_{2}^{U} \coloneqq \bE[\overline{\overline{P}}_{0}^{U}]$, and $V_{2}^{L} \coloneqq \bE[\overline{\overline{P}}_{0}^{L}]$. In the special case of fixed environments, the limit behavior of the estimators takes a different form compared to the traditional results as described for example in \citet{Heyde-1971}. These results are in Section \ref{section:estimating_the_mean_of_the_offspring_distribution}.

Finally, in Appendix \ref{section:numerical_experiments} we provide some numerical experiments illustrating the behavior of the model. Specifically, we illustrate the effects of different distributions on the path behavior of the process and describe how they change when the thresholds increase. The experiments also suggests that if different regimes are not taken into account the true growth rate of the virus may be underestimated. We now turn to Section \ref{section:main_results} where we develop additional notations and provide a precise statement of the main results.

\section{Main results} \label{section:main_results}

Branching process in random environments with thresholds (BPRET) is a supercritical BPRE with immigration until it reaches an upper threshold after which it transitions to a subcritical BPRE until it crosses a lower threshold. Beyond this time the process reverts to a supercritical BPRE with immigration and the above cycle continues. Specifically, let $\{ (U_{j},L_{j}) \}_{j=1}^{\infty}$ denote a collection of thresholds (assumed to be i.i.d.). Then, the BPRET evolves like a supercritical BPRE with immigration until it reaches the upper threshold $U_{1}$ at which time it becomes a subcritical BPRE. The process remains subcritical until it crosses the threshold $L_{1}$; after that it evolves again as a supercritical BPRE with immigration, and so on. We now provide a precise description of BPRET.

Let $\{ (U_{j},L_{j})\}_{j=1}^{\infty}$ be i.i.d.\ random vectors with support $S_{B}^{U} \times S_{B}^{L}$, where $S_{B}^{U} \coloneqq \mathbb{N} \cap [L_U+1,\infty)$, $S_{B}^{L} \coloneqq \mathbb{N} \cap [L_{0}, L_{U}]$, and $1 \leq L_{0} \leq L_{U}$ are fixed integers. We denote by $\Pi^{U}$ and $\Pi^{L}$ the supercritical and subcritical environmental sequences; that is, 
\begin{equation*}
    \Pi^{U} = \{ \Pi_{n}^{U} \}_{n=0}^{\infty}=\{ (P_{n}^{U},Q_{n}^{U}) \}_{n=0}^{\infty} \text{ and } \Pi^{L} = \{ \Pi_{n}^{L} \}_{n=0}^{\infty}= \{ P_{n}^{L} \}_{n=0}^{\infty}.
\end{equation*}
We use the notations $\bP_{E^{U}}$ and $\bP_{E^{L}}$ for probability statements with respect to (w.r.t.)\ the supercritical and subcritical environmental sequences. As in the introduction, given the environment, $\xi_{n,i}^{U}$ are i.i.d.\ random variables with distribution $P_{n}^{U}$ and are independent of the immigration random variable $I_{n}^{U}$. Similarly, conditionally on the environment, $\xi_{n,i}^{L}$ are i.i.d.\ random variables with offspring distribution $P_{n}^{L}$. Finally, let $Z_{0}$ be an independent random variable with support included in $\mathbb{N} \cap [1, L_{U}]$. We emphasize that the thresholds are independent of the environmental sequences, offspring random variables, immigration random variables, and $Z_{0}$. For technical details regarding the construction of the probability space we refer to Appendix \ref{subsection:probability_space}. We denote by $M^{T} \coloneqq \bE[\overline{P}_{0}^{T}]$, $T \in \{ L, U \}$, and $N^{U} \coloneqq \bE[\overline{Q}_{0}^{U}]$ the annealed (averaged over the environment) offspring mean and the annealed immigration mean respectively. Throughout the manuscript, we make the following assumptions on the environmental sequences. \\

\noindent {\bf {Assumptions:}}
\begin{enumerate}[label=(\textbf{H\arabic*})]
    \item $\Pi^{T} = \{ \Pi_{n}^{T} \}_{n=0}^{\infty}$ are i.i.d.\ environments such that $P_{0,0}^{T}<1$ and $0 < \overline{P}_{0}^{T} < \infty$ $\bP_{E^{T}}$-a.s. \label{H1}
    \item $\bE[Y_{0}^{L} e^{Y_{0}^{L}}] < 0$, $\bE[Y_{0}^{U}] >0$, $M^{U} < \infty$, and $\bE[\log(1-P_{0,0}^{U})]>-\infty$. \label{H2}
    \item $\bP_{E^{U}}(Q_{0,0}^{U}<1)>0$ and $N^{U}<\infty$. \label{H3}
    \item $\{(U_{j}, L_{j})\}_{j=1}^{\infty}$ are i.i.d.\ and have support $S_{B}^{U} \times S_{B}^{L}$, where $1 \leq L_{0} \leq M^{L} L_{U}$ and $\bE[U_{1}]<\infty$. \label{H4}
\end{enumerate}
The above assumptions rule out degenerate behavior of the process and are \emph{commonly used} in the literature on BPRE (see Assumption R and Theorem 2.2 of \citet{Kersting-2017}). Assumption \ref{H2} states that $\Pi_{n}^{U}$ is a supercritical environment and $\Pi_{n}^{L}$ is a (strongly) subcritical environment. Additionally, using Jensen's inequality it follows that $M^{L} < 1$ and $1 < M^{U} < \infty$. Assumption \ref{H3} states that immigration is positive with positive probability and has finite expectation $N^{U}$, while \ref{H4} states that the upper thresholds $U_{j}$ have finite expectation.

We are now ready to give a precise definition of the BPRET. Let $\nu_{0} \coloneqq 0$. Starting from $Z_{0}$, the BPRET $\{ Z_{n} \}_{n=0}^{\infty}$ is defined recursively over $j \geq 0$ as follows.
\begin{enumerate}
    \item[1j.] For $n \geq \nu_{j}$ and until $Z_{n} < U_{j+1}$
\begin{equation} \label{process_in_supercritical_regime}
    Z_{n+1} = \sum_{i=1}^{Z_n} \xi_{n,i}^{U} + I_{n}^{U}.
\end{equation}  
Next, let $\tau_{j+1} \coloneqq \inf\{n \ge \nu_{j}: Z_{n} \ge U_{j+1}\}$.
\item[2j.] For $n \geq \tau_{j+1}$ and until $Z_{n} > L_{j+1}$
\begin{equation} \label{process_in_subcritical_regime}
    Z_{n+1} = \sum_{i=1}^{Z_n} \xi_{n,i}^{L}.
\end{equation}
Next, let $\nu_{j+1} \coloneqq \inf \{ n \geq \tau_{j+1} : Z_{n} \leq L_{j+1} \}$.
\end{enumerate}
It is clear from the definition that $\nu_{j}$ and $\tau_{j}$ are stopping times w.r.t.\ the $\sigma$-algebra $\mathcal{F}_{n}$ generated by $\{ Z_{j} \}_{j=0}^{n}$ and the thresholds $\{ (U_{j},L_{j}) \}_{j=1}^{\infty}$. Thus, $Z_{\nu_{j}}$, $Z_{\tau_{j}}$, $\xi_{\nu_{j},i}^{U}$, $\xi_{\tau_{j+1},i}^{L}$, and $I_{\nu_{j}}^{U}$ are well-defined random variables.

It is also clear from the above definition that the intervals $[\nu_{j-1},\tau_{j})$ and $[\tau_{j},\nu_{j})$ represent supercritical and subcritical intervals, respectively. We show below that the process $\{ Z_{n} \}_{n=0}^{\infty}$ exits and enters the above intervals infinitely often. Let $\Delta_{j}^{U} \coloneqq \tau_{j}-\nu_{j-1}$ and $\Delta_{j}^{L} \coloneqq \nu_{j}-\tau_{j}$ denote the length of these intervals. Since a supercritical BPRE with immigration diverges with probability one (see Theorem 2.2 of \citet{Kersting-2017}), it follows that  $\tau_{j+1}$ is finite whenever $\nu_{j}$ is finite:
\begin{equation} \label{tau_finite}
    \bP(\Delta_{j+1}^{U}=\infty | \nu_{j}<\infty) = \bP( \cap_{l=1}^{\infty} \{ Z_{\nu_{j}+l} <U_{j+1} \} | \nu_{j}<\infty)=0.
\end{equation}
We emphasize that Assumption \ref{H3} is required since, otherwise, if $I_{0}^{U} \equiv 0$ the process may fail to cross the upper threshold and becomes extinct (see Theorem \ref{theorem:extinction_probability} below). On the other hand, since strongly subcritical BPRE becomes extinct with probability one, $\Delta_{j+1}^{L}<\infty$ whenever $\tau_{j+1}<\infty$, that is,
\begin{equation} \label{nu_finite}
    \bP(\Delta_{j+1}^{L} <\infty | \tau_{j+1}<\infty)=1.
\end{equation}
Using $\nu_{0}=0$ and induction over $j$ we see that $\Delta_{j+1}^{U}$, $\Delta_{j+1}^{L}$, $\tau_{j+1}$, and $\nu_{j+1}$ are finite almost surely. We emphases that \eqref{nu_finite} holds whenever $\Pi^{L}$ is a subcritical or critical (but not strongly critical) environmental sequence (see Definition 2.3 in \citet{Kersting-2017}). That is, it remains valid if the assumption $\bE[Y_{0}^{L} e^{Y_{0}^{L}}] < 0$ in \ref{H2} is weakened to $\bE[Y_{0}^{L}] \leq 0$ and $\bP_{E^{L}}(Y_{0}^{L} \neq  0)>0$, which leads to the following assumption:
\begin{enumerate}[label=($\textbf{H\arabic*}^{\bf{\prime}})$] \setcounter{enumi}{1}
  \item $\bE[Y_{0}^{L}] \leq 0$, $\bP_{E^{L}}(Y_{0}^{L} \neq  0)>0$, and $\bE[Y_{0}^{U}] >0$. \label{H2prime}
\end{enumerate}
The next theorem shows that if immigration is zero the process becomes extinct almost surely.
\begin{thm} \label{theorem:extinction_probability}
    Assume \ref{H1}, \ref{H2prime}, and $Q_{0,0}^{U} \equiv 1$ a.s. Let $\mathrm{T} \coloneqq \inf\{n \ge 1: Z_n=0 \}$. Then $\bP(\mathrm{T}<\infty)=1$.
\end{thm}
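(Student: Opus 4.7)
My plan is to reduce the claim to an absorption result for a finite-state Markov chain observed at the subcritical crossing times $\{\nu_j\}$. Define $X_j \coloneqq Z_{\nu_j}$ when $\nu_j<\infty$ and $X_j \coloneqq 0$ otherwise. Because $Q_{0,0}^U \equiv 1$ rules out immigration, once $Z_n=0$ the process stays at $0$ in both regimes, and one checks that $\{T<\infty\}=\bigcup_{j\geq 0}\{X_j=0\}$: extinction inside a supercritical excursion forces $\tau_j=\nu_j=\infty$ and hence $X_j=0$, while extinction inside a subcritical excursion forces $\nu_j \leq T$ with $Z_{\nu_j}=0$. Applying the strong Markov property at $\nu_j$ together with the i.i.d.\ structure of the environments and thresholds, one sees that $\{X_j\}_{j\geq 0}$ is a time-homogeneous Markov chain on the finite state space $\{0,1,\dots,L_U\}$, started from $X_0=Z_0\in\{1,\dots,L_U\}$, with $0$ absorbing.

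It therefore suffices to show that $0$ is accessible from every $z\in\{1,\dots,L_U\}$; standard finite Markov chain theory then makes the non-absorbing states transient and yields $\bP(X_j=0 \text{ eventually})=1$. I would establish $\bP(X_1=0 \mid X_0=z)>0$ by a dichotomy on whether $P_{0,0}^U$ vanishes almost surely. If $\bP_{E^U}(P_{0,0}^U>0)>0$, then conditional on the supercritical environment the $z$ ancestors evolve as conditionally independent branching processes, so the quenched one-generation extinction probability from $z$ is at least $(P_{0,0}^U)^z$, giving an annealed supercritical-excursion extinction probability of at least $\bE[(P_{0,0}^U)^z]>0$. If instead $P_{0,0}^U=0$ almost surely, every supercritical parent has at least one child, and combined with $\bE[Y_0^U]>0$ the classical BPRE dichotomy forces $Z_n \to \infty$ in the first excursion so that $\tau_1<\infty$ a.s.; the first subcritical generation then annihilates all $Z_{\tau_1}$ parents with annealed probability $\bE[(P_{0,0}^L)^{Z_{\tau_1}}]$, which is positive as soon as $\bP_{E^L}(P_{0,0}^L>0)>0$.

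The only non-routine point is this last positivity $\bP_{E^L}(P_{0,0}^L>0)>0$, which I would extract from the weakened hypothesis \ref{H2prime}: were $P_{0,0}^L=0$ a.s., then $\overline{P}_0^L \geq 1$ and hence $Y_0^L \geq 0$ almost surely, incompatible with $\bE[Y_0^L] \leq 0$ together with $\bP_{E^L}(Y_0^L \neq 0)>0$. Once this positivity is established, the finite-state Markov chain absorption argument closes the proof.
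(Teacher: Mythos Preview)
Your argument is correct and is organized somewhat differently from the paper's proof. The paper does not introduce the absorbing chain $\{X_j\}$ explicitly; instead it partitions $\Omega$ into $\bigcup_{j\ge 0}\{\tau_{j+1}=\infty,\tau_j<\infty\}$ and $\bigcap_{j\ge 1}\{\tau_j<\infty\}$, uses the supercritical BPRE dichotomy to show the first union is contained in $\{\mathrm{T}<\infty\}$, and then shows directly that $\bP(\tau_{j+1}<\infty)\le (1-\underline{p})\,\bP(\tau_j<\infty)$ with $\underline{p}=\min_{1\le i\le L_U}\bP_{\delta_i^L}(Z_{\tau_1+1}=0\mid \tau_1<\infty)>0$, so $\bP(\bigcap_j\{\tau_j<\infty\})=0$. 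Thus the paper conditions on $\tau_j<\infty$ and uses only the subcritical one-step extinction $\bE[(P_{0,0}^L)^k]>0$, avoiding your dichotomy on $P_{0,0}^U$; conversely, your packaging of the same uniform-positivity idea as absorption of a finite-state chain makes the conclusion a one-line appeal to standard theory rather than an explicit geometric bound. Both routes rest on the same two facts --- the supercritical dichotomy (bounded $\Rightarrow$ extinct) and $\bP_{E^L}(P_{0,0}^L>0)>0$, which you correctly extract from \ref{H2prime} --- so the difference is largely stylistic. One small point: in verifying the Markov property of $\{X_j\}$ with the convention $X_j=0$ on $\{\nu_j=\infty\}$ you are outside the scope of the paper's later Theorem on $\{Z_{\nu_j}\}$ (which assumes immigration to make all $\nu_j$ finite), so a sentence noting that the i.i.d.\ block structure still yields time-homogeneity with $0$ absorbing would tighten the exposition.
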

\noindent Theorem 1 of \citet{Athreya-2016} follows from the above theorem by taking $L_{U}=K$, $L_{j} \equiv K$, $U_{j} \equiv K+1$, where $K$ is a finite positive integer, and assuming that the environments are fixed in both regimes. 

\subsection{Path properties of BPRET} \label{subsection:path_properties_of_BPRET}

We now turn to transience and recurrence of the BPRET $\{Z_{n}\}_{n=0}^{\infty}$. Notice that even though $\{Z_{n}\}_{n=0}^{\infty}$ is not Markov the concepts of recurrence and transience can be studied using the definition given below (due to \citet{Lamperti-1960,Lamperti-1963}).
\begin{defn} \label{definition_recurrent}
A non-negative stochastic process $\{X_n\}_{n=0}^{\infty}$ satisfying $\bP(\limsup_{n \rightarrow \infty} X_n= \infty)=1$ is said to be recurrent if there exists an $ r < \infty$ such that $\bP(\liminf_{n \rightarrow \infty} X_n \le r)=1$ and transient if $\bP(\lim_{n \rightarrow \infty} X_n=\infty)=1$.
\end{defn}

Our next result is concerned with the path behavior of $\{ Z_{n} \}_{n =0 }^{\infty}$ and the stopped sequences $\{ Z_{\nu_{j}} \}_{j = 0 }^{\infty}$ and $\{ Z_{\tau_{j}} \}_{j = 1 }^{\infty}$.

\begin{thm} \label{theorem:recurrence_uniformly_ergodic_stationary_markov_chains}
Assume \ref{H1}-\ref{H4}. Then \\
(i) the process $\{ Z_{n} \}_{n=0}^{\infty}$ is recurrent; \\
(ii) $\{ Z_{\nu_{j}} \}_{j=0}^{\infty}$ and $\{ Z_{\tau_{j}} \}_{j=1}^{\infty}$ are time homogeneous Markov chains.
\end{thm}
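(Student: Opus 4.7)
For (i), I would verify the two clauses of Definition \ref{definition_recurrent} separately. To show $\limsup_n Z_n = \infty$ a.s., I would use that $\tau_j<\infty$ a.s.\ for every $j$---this is shown just above the theorem by combining \eqref{tau_finite}--\eqref{nu_finite} with induction---together with the pathwise bound $Z_{\tau_j}\ge U_j$ coming from the definition of $\tau_j$. Since by \ref{H4} the $U_j$ are i.i.d.\ with support $\mathbb{N}\cap[L_U+1,\infty)$, one has $\bP(U_1\ge M)>0$ for every integer $M$, and the second Borel--Cantelli lemma applied to the independent events $\{U_j\ge M\}$ gives $\limsup_j U_j=\infty$ a.s., whence $\limsup_n Z_n=\infty$ a.s. For the second clause, the choice $r=L_U$ suffices: since $\nu_j<\infty$ a.s.\ and $Z_{\nu_j}\le L_j\le L_U$ for every $j\ge 1$, $\liminf_n Z_n\le L_U$ a.s.

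For (ii), the plan is to realize $Z_{\nu_{j+1}}$ as the image of $Z_{\nu_j}$ under a measurable ``cycle map'' $\Phi$, not depending on $j$, driven by a collection $\mathcal{E}_{j+1}$ of fresh randomness independent of everything up to time $\nu_j$. Concretely, there is a measurable $\Phi$ with
\[
Z_{\nu_{j+1}} \;=\; \Phi\bigl(Z_{\nu_j};\,\mathcal{E}_{j+1}\bigr),
\]
where $\mathcal{E}_{j+1}$ collects (a) the environments $\{\Pi^U_{\nu_j+m},\Pi^L_{\nu_j+m}\}_{m\ge 0}$, (b) the next threshold $(U_{j+1},L_{j+1})$, and (c) the offspring and immigration variables indexed by $n\ge\nu_j$. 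Since $\nu_j$ is a stopping time for the natural filtration generated by $Z_0$ and the primitives, and since the three families (a)--(c) are mutually independent i.i.d.\ collections, the standard strong-Markov / shift argument yields that $\mathcal{E}_{j+1}$ is independent of the pre-$\nu_j$ $\sigma$-algebra---which contains $\sigma(Z_{\nu_0},\ldots,Z_{\nu_j})$---and has a law not depending on $j$. The tower property then gives
\[
\bP\bigl(Z_{\nu_{j+1}}\in A\,\big|\,Z_{\nu_0},\ldots,Z_{\nu_j}\bigr) \;=\; K\bigl(Z_{\nu_j},A\bigr),\qquad K(k,A)\coloneqq\bP(\Phi(k;\mathcal{E})\in A),
\]
which is exactly the time-homogeneous Markov property. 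The chain $\{Z_{\tau_j}\}_{j\ge 1}$ is treated symmetrically by basing the cycle map at the stopping times $\tau_j$ in place of $\nu_j$.

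The main obstacle is the independence claim on $\mathcal{E}_{j+1}$. Two bookkeeping points require care: first, the threshold index advances only at the random crossings, so one must observe that ``the number of thresholds consumed up to $\nu_j$'' is deterministic (equals $j$) and measurable with respect to the pre-$\nu_j$ $\sigma$-algebra, ensuring that $(U_{j+1},L_{j+1})$ is a genuinely fresh draw; second, the environmental sequences are indexed by global time, so the strong-Markov shift at $\nu_j$ has to be applied separately to the i.i.d.\ sequences $\Pi^U$ and $\Pi^L$ using that $\nu_j$ is an $\mathcal{F}_n$-stopping time. Both can be handled on the concrete product space constructed in the appendix, after which the Markov and time-homogeneity statements in (ii) follow by direct computation.
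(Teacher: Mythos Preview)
Your proposal is correct and follows essentially the same approach as the paper: part (i) is identical, and for part (ii) the paper's explicit event decomposition (conditioning on the environments and thresholds, summing over the crossing times $s,t$, and invoking Lemma~\ref{lemma:stationarity} for time-homogeneity) is precisely the computational unwinding of your abstract cycle-map $\Phi(Z_{\nu_j};\mathcal{E}_{j+1})$ argument. The two bookkeeping points you flag---that exactly $j$ thresholds are consumed by time $\nu_j$ and that the i.i.d.\ environmental sequences shift cleanly at the stopping time---are exactly what the paper handles implicitly in its conditioning step and in Lemma~\ref{lemma:stationarity}.
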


We now turn to the ergodicity properties of $\{ Z_{\nu_{j}} \}_{j=0}^{\infty}$ and $\{ Z_{\tau_{j}} \}_{j=1}^{\infty}$. These rely on conditions on the offspring distribution that ensures the Markov chains $\{ Z_{\nu_{j}} \}_{j=0}^{\infty}$ and $\{ Z_{\tau_{j}} \}_{j=1}^{\infty}$ are irreducible and aperiodic. While several sufficient conditions are possible, we provide below some possible conditions.
\begin{enumerate}[label=(\textbf{H\arabic*})] \setcounter{enumi}{4}
    \item $\bP_{E^{L}}(\cap_{r=0}^{1} \{ P_{0,r}^{L} > 0 \} )>0$. \label{H5}
    \item  $\bP_{E^{U}}( \cap_{r=0}^{\infty} \{ P_{0,r}^{U}>0 \} \cap \{ Q_{0,0}^{U}>0 \})>0$ and $\bP_{E^{U}}(Q_{0,s}^{U}>0)>0$ for some $s \in \{ 1, \dots, L_{U} \}$. \label{H6}
    \item $\bP_{E^{U}}( \{P_{0,0}^{U}>0\} \cap \cap_{r=L_{U}+1}^{\infty} \{ Q_{0,r}^{U}>0\})> 0$. \label{H7}
\end{enumerate}
\ref{H5} requires that on a set of positive $\bP_{E^{L}}$ probability an individual can produce zero and one offspring while \ref{H6} requires that on a set of positive $\bP_{E^{U}}$ probability $P_{0,r}^{U}>0$ for all $r \in \mathbb{N}_{0}$ and $Q_{0,0}^{U}> 0$. Also, on a set of positive $\bP_{E^{U}}$ probability, $Q_{0,s}^{U}>0$ for some $s \in \{ 1, \dots, L_{U} \}$. Finally, \ref{H7} states that on a set of positive $\bP_{E^{U}}$ probability $P_{0,0}^{U}>0$ and $Q_{0,r}^{U}>0$ for all $r \geq L_{U}+1$. These are weak conditions on the environment sequences and are part of standard BPRE literature. We recall that $S^{L}$ is the set of non-negative integers not larger than $L_{U}$ and $S^{U}$ is the set of integers larger than $L_{U}$.

\begin{thm} \label{theorem:uniform_ergodicity}
Assume \ref{H1}-\ref{H4}. (i) If \ref{H5} also holds, then $\{ Z_{\nu_{j}} \}_{j=0}^{\infty}$ is a uniformly ergodic Markov chain with state space $S^{L}$. (ii) If \ref{H6} (or \ref{H7}) holds, then $\{ Z_{\tau_{j}} \}_{j=1}^{\infty}$ is a uniformly ergodic Markov chain with state space $S^{U}$.
\end{thm}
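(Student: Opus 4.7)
The plan is to verify the standard sufficient conditions for uniform ergodicity tailored to each state space. For (i), $S^L$ is finite, so uniform ergodicity reduces to irreducibility and aperiodicity of the transition kernel of $\{Z_{\nu_j}\}_{j=0}^\infty$, from which uniform geometric convergence follows by classical finite-state Markov chain theory. For (ii), $S^U$ is countably infinite, so I would instead establish a Doeblin minorization: there exist $n_0 \geq 1$, $\epsilon>0$, and a probability measure $\varphi$ on $S^U$ such that $P^{n_0}(x,A) \geq \epsilon\,\varphi(A)$ for every $x \in S^U$ and every measurable $A$, which implies geometric uniform ergodicity. In the present argument $n_0=1$ will suffice.

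For (i), fix $k,k' \in S^L$. Starting from $Z_{\nu_j}=k$, the process enters the supercritical regime and, by \eqref{tau_finite}, almost surely crosses the upper threshold at $\tau_{j+1}$, attaining some value $Z_{\tau_{j+1}} = m \geq L_U+1 > k'$. To steer the subsequent subcritical step to $k'$, I would condition on the positive-probability lower threshold event $\{L_{j+1} \geq k'\}$ (which holds because $L_U$ lies in the support of $L_{j+1}$ by \ref{H4} and $k' \leq L_U$) and on the environment event $\{P^L_{\tau_{j+1},0} > 0,\ P^L_{\tau_{j+1},1} > 0\}$ provided by \ref{H5}. Conditionally on these, each of the $m$ parents independently produces 0 or 1 offspring with positive probability, so the first subcritical generation equals $k'$ with positive probability; since $k' \leq L_{j+1}$, one has $\nu_{j+1} = \tau_{j+1}+1$ and $Z_{\nu_{j+1}} = k'$. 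This gives $\bP(Z_{\nu_{j+1}}=k' \mid Z_{\nu_j}=k) > 0$, proving irreducibility. Taking $k'=k$ yields a positive self-transition probability, hence aperiodicity.

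For (ii), the crucial structural fact is that between any two supercritical crossings $\tau_j$ and $\tau_{j+1}$ the process must pass through the finite set $S^L$ at time $\nu_j$. Conditioning on $Z_{\nu_j}$ yields
\begin{equation*}
\bP(Z_{\tau_{j+1}} = L_U+1 \mid Z_{\tau_j} = x) = \sum_{y \in S^L} \bP(Z_{\nu_j} = y \mid Z_{\tau_j} = x)\, \bP(Z_{\tau_{j+1}} = L_U+1 \mid Z_{\nu_j} = y),
\end{equation*}
so it suffices to bound the second factor from below uniformly in $y \in S^L$. Under \ref{H7}, I would choose $\delta>0$ such that the environment event $\Omega_\delta \coloneqq \{P^U_{\nu_j,0} \geq \delta,\ Q^U_{\nu_j,L_U+1} \geq \delta\}$ has positive $\bP_{E^U}$-probability, intersect with $\{U_{j+1}=L_U+1\}$ (of positive probability by \ref{H4}), and note that on this intersection the single-step transition in which all parents produce zero offspring and immigration equals $L_U+1$ occurs with conditional probability at least $\delta^{y+1} \geq \delta^{L_U+1}$, uniformly in $y$. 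Under \ref{H6}, the argument is analogous but may require a second supercritical step to handle $y=0$ (first use immigration to reach some $s \in \{1,\dots,L_U\}$, then use the full support of $P^U_{0,\cdot}$ to reach $L_U+1$). Either way, $\bP(Z_{\tau_{j+1}} = L_U+1 \mid Z_{\tau_j} = x) \geq \epsilon$ for some $\epsilon>0$ uniformly in $x \in S^U$, which is Doeblin's condition with $\varphi = \delta_{L_U+1}$.

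The principal obstacle is ensuring that the minorization in (ii) is genuinely uniform over the unbounded state space $S^U$. The bottleneck property $Z_{\nu_j} \in S^L$ reduces the problem to a finite set on which positivity immediately produces uniform lower bounds. A secondary technicality is to correctly combine the independent sources of randomness — thresholds, environments, offspring, and immigration — when constructing the favorable events; isolating $\{U_{j+1}=L_U+1\}$ and $\Omega_\delta$ at the outset exploits the independence structure built into the model and handles this cleanly.
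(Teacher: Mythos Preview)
Your proposal is correct and follows essentially the same strategy as the paper: finite-state irreducibility and aperiodicity for (i), and a one-step Doeblin minorization for (ii) that exploits the finite bottleneck $Z_{\nu_j}\in S^L$, including the two-step workaround for $y=0$ under \ref{H6}. The only cosmetic difference is that the paper's Lemma~\ref{lemma:aperiodic_irreducible_and_positive_recurrent} derives a lower bound $p^U_{ik}\geq \underline{p}^U_k>0$ for every target state $k\in S^U$ (yielding a non-atomic minorizing measure), whereas you minorize at the single atom $L_U+1$; both versions suffice for Doeblin's condition.
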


When the assumptions \ref{H1}-\ref{H6} (or \ref{H7}) hold, we denote by $\pi^{L} = \{ \pi^{L}_{i} \}_{i \in S^{L}}$ and $\pi^{U} = \{ \pi^{U}_{i} \}_{i \in S^{U}}$ the stationary distributions of the ergodic Markov chains $\{ Z_{\nu_{j}} \}_{j=0}^{\infty}$ and $\{ Z_{\tau_{j}} \}_{j=1}^{\infty}$, respectively. While $\pi^{L}$ has moments of all orders, we show in Proposition \ref{proposition:finite_expectation_stationary_distribution} below, that $\pi^{U}$ has a finite first moment. These distributions will play a significant role when studying the length of supercritical and subcritical regime which we now undertake.

\subsection{Length of supercritical and subcritical regime} \label{subsection:length_of_supercritical_and_subcritical_regime}

We now turn to the law of large numbers and central limit theorem for the differences $\Delta_{j}^{U}$ and $\Delta_{j}^{L}$. We denote by $\bP_{\pi^{L}}(\cdot)$, $\bE_{\pi^{L}}[\cdot]$, $\bV_{\pi^{L}}[\cdot]$, and $\bC_{\pi^{L}}[\cdot,\cdot]$ probability, expectation, variance, and covariance conditionally on $Z_{\nu_{0}} \sim \pi^{L}$. Similarly, when $\pi^{L}$ is replaced by $\pi^{U}$ in the above quantities, we understand that they are conditioned on $Z_{\tau_{1}} \sim \pi^{U}$. We define $\mu^{U} \coloneqq \bE_{\pi^{L}}[\Delta_{1}^{U}]$, $\mu^{L} \coloneqq \bE_{\pi^{U}}[\Delta_{1}^{L}]$,
\begin{align} \label{definition_sigma_L}
    \sigma^{2,U} \coloneqq & \bV_{\pi^{L}}[\Delta_{1}^{U}] + 2 \sum_{j=1}^{\infty} \bC_{\pi^{L}}[\Delta_{1}^{U}, \Delta_{j+1}^{U}], \text{ and } \\
  \sigma^{2,L} \coloneqq & \bV_{\pi^{U}}[\Delta_{1}^{L}] + 2 \sum_{j=1}^{\infty} \bC_{\pi^{U}}[\Delta_{1}^{L}, \Delta_{j+1}^{L} ]. \label{definition_sigma_U}
\end{align}
In the supercritical regime, we impose an additional assumption \ref{H8} below, so as to not qualify our statements with the phrase ``on the set of non-extinction''. Assumption \ref{H9} below ensures that the immigration distribution stochastically dominates the upper threshold.
\begin{enumerate}[label=(\textbf{H\arabic*})] \setcounter{enumi}{7}
    \item $P_{0,0}^{U} = 0$ $\bP_{E^{U}}$-a.s. \label{H8}
    \item $\bE[\frac{U_{1}}{\bP(I_{0}^{U} \geq U_{1} | U_{1})}]<\infty$. \label{H9}
\end{enumerate}
Let $S_{n}^{U} \coloneqq \sum_{j=1}^{n} \Delta_{j}^{U}$ and $S_{n}^{L} \coloneqq \sum_{j=1}^{n} \Delta_{j}^{L}$. We now state the main result of this subsection. 
\begin{thm} \label{theorem:law_of_large_numbers_and_central_limit_theorem_for_crossing_times}
Assume \ref{H1}-\ref{H4}. (i) If \ref{H5} and \ref{H8} hold, then,
\begin{equation*}
    \lim_{n \to \infty} \frac{1}{n} S_{n}^{U} = \mu^{U} \text{ a.s., and } \frac{1}{\sqrt{n}} (S_{n}^{U}- n\mu^{U}) \xrightarrow[n \to \infty]{d} N(0,\sigma^{2,U}).
\end{equation*}
(ii) If \ref{H6} (or \ref{H7}) and \ref{H9} hold, then, 
\begin{equation*}
    \lim_{n \to \infty} \frac{1}{n} S_{n}^{L} = \mu^{L} \text{ a.s., and } \frac{1}{\sqrt{n}} (S_{n}^{L}- n\mu^{L}) \xrightarrow[n \to \infty]{d} N(0,\sigma^{2,L}).
\end{equation*}
\end{thm}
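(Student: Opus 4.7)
The plan is to represent $S_n^U$ and $S_n^L$ as additive functionals of the uniformly ergodic Markov chains $\{Z_{\nu_j}\}$ and $\{Z_{\tau_j}\}$ from Theorem \ref{theorem:uniform_ergodicity}, and then invoke the regenerative LLN and CLT. The key structural observation is that $\Delta_{j+1}^U$ is a measurable function of $Z_{\nu_j}$ and of the block of environment, offspring, immigration, and threshold variables attached to $[\nu_j, \tau_{j+1})$, which is independent of $\mathcal{F}_{\nu_j}$ by construction; hence $\{(Z_{\nu_j}, \Delta_{j+1}^U)\}$ is a time-homogeneous Markov chain whose first coordinate is uniformly ergodic on the finite set $S^L$ (under \ref{H5}), and analogously $\{(Z_{\tau_j}, \Delta_j^L)\}$ on $S^U$ (under \ref{H6} or \ref{H7}).

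With this in place, I would fix a state $z^\star$ of positive stationary mass and define regeneration epochs $R_0 := 0 < R_1 < R_2 < \cdots$ as the successive returns of $\{Z_{\nu_j}\}$ (resp.\ $\{Z_{\tau_j}\}$) to $z^\star$. Uniform ergodicity yields that the inter-regeneration lengths $\eta_k := R_k - R_{k-1}$ are i.i.d.\ with exponential tails, and the strong Markov property makes the blocks $B_k := \sum_{j=R_{k-1}+1}^{R_k} \Delta_j^U$ i.i.d.\ as well. A standard Anscombe-type argument then reduces the asymptotics of $S_n^U$ to those of the i.i.d.\ sums $\sum_k B_k$, so that the classical SLLN and CLT deliver $n^{-1} S_n^U \to \bE[B_1]/\bE[\eta_1]$ a.s.\ and $n^{-1/2}(S_n^U - n\mu^U) \xrightarrow{d} N(0, \sigma^{2,U})$ with $\sigma^{2,U} = \bV[B_1 - \mu^U \eta_1]/\bE[\eta_1]$. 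The Kac formula identifies $\bE[B_1]/\bE[\eta_1]$ with $\bE_{\pi^L}[\Delta_1^U] = \mu^U$, and the covariance series in \eqref{definition_sigma_L} is recovered by expanding $\bV[B_1 - \mu^U \eta_1]$ using stationarity of $\{Z_{\nu_j}\}$ under $\pi^L$ together with the absolute summability of $\bC_{\pi^L}[\Delta_1^U, \Delta_{j+1}^U]$ implied by the geometric mixing of uniformly ergodic chains. Part (ii) follows by the same template, with $\pi^U$ and \eqref{definition_sigma_U} replacing $\pi^L$ and \eqref{definition_sigma_L}.

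The main obstacle is the moment control $\bE[B_1^2] < \infty$, which (via Cauchy--Schwarz and the exponential tail of $\eta_1$) reduces to $\bE_{\pi^L}[(\Delta_1^U)^2] < \infty$ in part (i) and $\bE_{\pi^U}[(\Delta_1^L)^2] < \infty$ in part (ii). For (i), starting from any $z \in S^L$, assumption \ref{H8} rules out extinction and $\bE[Y_0^U] > 0$ in \ref{H2} forces $\log Z_n$ to behave like a random walk with strictly positive drift; combined with $\bE[U_1] < \infty$ from \ref{H4}, this gives finite second-moment control on the first-passage time above $\log U_1$ uniformly in $z$ over the finite set $S^L$. For (ii), strong subcriticality in \ref{H2} yields $\bE[Z_{\tau_1+n} \mid Z_{\tau_1}] \le Z_{\tau_1} (M^L)^n$ with $M^L < 1$, whence $\bP(\Delta_1^L > n \mid Z_{\tau_1}) \le Z_{\tau_1}(M^L)^n$ by Markov; summing gives $\bE[(\Delta_1^L)^2 \mid Z_{\tau_1}] = O(\log^2 Z_{\tau_1})$, so it suffices to check $\bE_{\pi^U}[\log^2 Z_{\tau_1}] < \infty$. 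This in turn follows from the finite first moment of $\pi^U$ established in Proposition \ref{proposition:finite_expectation_stationary_distribution} under \ref{H9}, since $\log^2 z = o(z)$ as $z \to \infty$.
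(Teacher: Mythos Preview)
Your proposal is correct and follows essentially the same regenerative-block approach as the paper: fix a reference state, split $S_n^T$ into i.i.d.\ cycles between successive returns of the uniformly ergodic chain, apply the SLLN and CLT for i.i.d.\ random sums together with an Anscombe-type remainder estimate, and identify $\mu^T$ and $\sigma^{2,T}$ via the Kac (occupation-measure) formula and the absolutely summable covariance series coming from geometric mixing. The paper carries out exactly these steps as Lemma~\ref{lemma:regenerative_property_for_crossing_times}, Lemma~\ref{lemma:variance_of_SnL_and_SnU}, and Proposition~\ref{proposition:mean_and_variance_are_finite_and_variance_is_positive}, the only cosmetic difference being that its second-moment bound for $\Delta_1^L$ is linear in $Z_{\tau_1}$ rather than your sharper $O(\log^2 Z_{\tau_1})$, though both feed into the same conclusion via $\overline{\pi}^U<\infty$.
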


\subsection{Proportion of time spent in supercritical and subcritical regime} \label{subsection:proportion_of_times_spent_in_supercritical_and_subcritical_regime}

We now consider the proportion of time the process spends in the subcritical and supercritical regimes. To this end for $n \geq 0$, let $\chi_{n}^U \coloneqq \mathbf{I}_{\cup_{j=1}^{\infty} [\nu_{j-1},\tau_{j})}(n)$ be the indicator function assuming value $1$ if at time $n$ the process is in the supercritical regime and $0$ otherwise. Similarly, let $\chi_{n}^L \coloneqq 1-\chi_{n}^U = \mathbf{I}_{\cup_{j=1}^{\infty} [\tau_{j},\nu_{j})}(n)$
take value $1$ if at time $n$ the process is in the subcritical regime and $0$ otherwise. Furthermore, let $C_{n}^U \coloneqq \sum_{j=1}^{n} \chi_{j-1}^U$ and $C_{n}^L \coloneqq \sum_{j=1}^{n} \chi_{j-1}^L = n-C_{n}^U$ be the total time that the process spends in the supercritical and subcritical regime, respectively, up to time $n-1$. Let
\begin{equation*}
    \theta_{n}^{U} \coloneqq \frac{C_{n}^{U}}{n} \text{ and } \theta_{n}^{L} \coloneqq \frac{C_{n}^{L}}{n}
\end{equation*}
denote the proportion of time the process spends in the supercritical and subcritical regimes. Our main result in this section is concerned with the central limit theorem for $\theta_{n}^{U}$ and $\theta_{n}^{L}$. To this end, let
\begin{equation*}
    \theta^{U} \coloneqq \frac{\mu^{U}}{\mu^{U}+\mu^{L}} \text{ and } \theta^{L} \coloneqq \frac{\mu^{L}}{\mu^{U}+\mu^{L}}.
\end{equation*}

\begin{thm} \label{theorem:law_of_large_numbers_and_central_limit_theorem_for_proportions}
Assume \ref{H1}-\ref{H6} (or \ref{H7}) and \ref{H8}-\ref{H9}. Then, for $T \in \{ L, U\}$, $\theta_{n}^{T}$ converges almost surely to $\theta^{T}$. Furthermore,
\begin{equation*}
    \sqrt{n} (\theta_{n}^{T} - \theta^{T} ) \xrightarrow[n \to \infty]{d} N(0,\eta^{2,T}),
\end{equation*}
where $\eta^{2,T}$ is defined in \eqref{definition:eta}.
\end{thm}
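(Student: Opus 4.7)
The plan is to deduce both statements from Theorem \ref{theorem:law_of_large_numbers_and_central_limit_theorem_for_crossing_times} via a renewal-type inversion, after strengthening the two separate scalar CLTs to a bivariate CLT for $(S_n^U, S_n^L)$. Let $N_n \coloneqq \max\{ j \geq 0 : \nu_j \leq n \}$ denote the number of completed cycles by time $n$. Since $\nu_{N_n} = S_{N_n}^U + S_{N_n}^L \leq n < \nu_{N_n+1}$, the a.s.\ limit $\nu_n/n \to \mu^U+\mu^L$ from Theorem \ref{theorem:law_of_large_numbers_and_central_limit_theorem_for_crossing_times} yields $N_n/n \to (\mu^U+\mu^L)^{-1}$ almost surely. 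Writing $C_n^U = S_{N_n}^U + R_n^U$ and $C_n^L = S_{N_n}^L + R_n^L$, where $0 \leq R_n^U, R_n^L \leq \Delta_{N_n+1}^U + \Delta_{N_n+1}^L$ account for the current incomplete cycle, the law of large numbers applied to $S_{N_n}^T/N_n$ gives $\theta_n^U \to \mu^U/(\mu^U+\mu^L) = \theta^U$ a.s., and then $\theta_n^L = 1-\theta_n^U \to \theta^L$.

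For the CLT, I would exploit the algebraic identity obtained from $n = C_n^U + C_n^L$ together with the crucial cancellation $\theta^L \mu^U - \theta^U \mu^L = 0$:
\begin{equation*}
    C_n^U - n \theta^U \;=\; \theta^L C_n^U - \theta^U C_n^L \;=\; \theta^L \bigl(S_{N_n}^U - N_n \mu^U \bigr) - \theta^U \bigl(S_{N_n}^L - N_n \mu^L\bigr) + \bigl(\theta^L R_n^U - \theta^U R_n^L\bigr).
\end{equation*}
The boundary term $\theta^L R_n^U - \theta^U R_n^L$ is shown to be $o_{\bP}(\sqrt{n})$ by a standard renewal estimate: since $\bE_{\pi^L}[\Delta_1^U]$ and $\bE_{\pi^U}[\Delta_1^L]$ are finite under \ref{H8}-\ref{H9}, the overshoot $\Delta_{N_n+1}^U + \Delta_{N_n+1}^L$ is $o_{\bP}(\sqrt{n})$.

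The principal step is the joint two-dimensional CLT
\begin{equation*}
    \frac{1}{\sqrt{n}} \bigl(S_n^U - n \mu^U, \; S_n^L - n \mu^L \bigr) \xrightarrow[n \to \infty]{d} N(\mathbf{0}, \Sigma),
\end{equation*}
with $\Sigma_{11}=\sigma^{2,U}$, $\Sigma_{22}=\sigma^{2,L}$, and an off-diagonal entry $\Sigma_{12}$ arising from the asymptotic covariance between consecutive regime lengths. The regenerative structure established in Section \ref{section:regenerative_property_of_crossing_times} supplies common regeneration epochs for the bivariate sequence $(\Delta_j^U, \Delta_j^L)$, namely the returns of the uniformly ergodic chain $\{Z_{\nu_j}\}$ (Theorem \ref{theorem:uniform_ergodicity}) to a fixed atom, so the classical regenerative CLT applies to the cycle vector. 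An Anscombe-type argument, valid because $N_n/n$ converges to a deterministic limit, then transfers the CLT to the random index $N_n$. Substituting into the identity and invoking Slutsky yields
\begin{equation*}
    \sqrt{n}\bigl(\theta_n^U - \theta^U\bigr) \xrightarrow[n \to \infty]{d} N\!\left(0, \; \frac{(\theta^L)^2 \sigma^{2,U} + (\theta^U)^2 \sigma^{2,L} - 2 \theta^U \theta^L \Sigma_{12}}{\mu^U+\mu^L}\right),
\end{equation*}
which is the expression \eqref{definition:eta} for $\eta^{2,U}$; the case $T=L$ is immediate from $\theta_n^L - \theta^L = -(\theta_n^U - \theta^U)$, giving $\eta^{2,L} = \eta^{2,U}$.

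I expect the main obstacle to be the bivariate CLT: Theorem \ref{theorem:law_of_large_numbers_and_central_limit_theorem_for_crossing_times} stated the two scalar CLTs under \emph{different} stationary initial laws ($\pi^L$ for $\Delta_j^U$ and $\pi^U$ for $\Delta_j^L$), so reconciling them inside a single regenerative framework, identifying the joint cycle, and computing $\Sigma_{12}$ in terms of $\pi^L$, $\pi^U$ and the joint law of $(\Delta_1^U, \Delta_1^L)$ is the technical heart of the argument. A secondary concern is making the Anscombe substitution and the overshoot control uniform without strengthening the moment hypotheses beyond \ref{H8}-\ref{H9}; here the uniform ergodicity of the crossing chains should provide exponential tail bounds on the cycle lengths that comfortably suffice.
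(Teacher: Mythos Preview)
Your proposal is correct and follows essentially the same strategy as the paper, with a slightly different (and somewhat more direct) final step. The paper's Lemma \ref{lemma:multivariate_central_limit_theorem_for_crossing_times} is exactly the bivariate CLT you need, stated for $(S_n^U,\overline{S}_n^U)$ with $\overline{S}_n^U=S_n^U+S_n^L$ rather than for $(S_n^U,S_n^L)$; this linear change of variables is precisely how the paper resolves your worry about reconciling the two stationary initial laws: everything is handled inside the single $\pi^L$-regenerative framework of Lemma \ref{lemma:regenerative_property_for_crossing_times}, whose blocks $B_{i,l}^L$ carry both $\Delta_j^U$ and $\overline{\Delta}_j^U=\Delta_j^U+\Delta_j^L$. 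Your cross-term $\Sigma_{12}$ then equals $\mathbb{C}^U-\sigma^{2,U}$ and your $\Sigma_{22}$ equals $\overline{\sigma}^{2,U}-2\mathbb{C}^U+\sigma^{2,U}$, which does agree with the paper's $\sigma^{2,L}$ by Remark \ref{remark:link_between_stationary_distributions}. Where the paper passes through the intermediate Lemma \ref{lemma:central_limit_theorem_for_frequences} (a bivariate CLT for $(C_{n+1}^T/\tilde N^T(n),\,\tilde N^T(n)/(n+1))$ obtained via the delta method) and then takes a linear combination, your algebraic identity $C_n^U-n\theta^U=\theta^L(S_{N_n}^U-N_n\mu^U)-\theta^U(S_{N_n}^L-N_n\mu^L)+o_{\bP}(\sqrt n)$ goes straight from Lemma \ref{lemma:multivariate_central_limit_theorem_for_crossing_times} to the answer; this is a cleaner route and has the pleasant byproduct that $\eta^{2,U}=\eta^{2,L}$ is immediate from $\theta_n^L-\theta^L=-(\theta_n^U-\theta^U)$, a fact the paper's parametrization leaves implicit. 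For the overshoot, finite second moments of the cycle sums (Proposition \ref{proposition:mean_and_variance_are_finite_and_variance_is_positive}) already suffice; you do not need the exponential tails you mention.
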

We use these results to now describe the growth rate of the process, as defined by their expectations, in the supercritical and subcritical regime; that is, $M^{U}$ and $M^{L}$.

\subsection{Offspring mean estimation} \label{subsection:offspring_mean_estimation}

We begin by noticing that $Z_{\tau_{j}} \geq L_{U}+1$ and $Z_{\tau_{j}+1}, \dots, Z_{\nu_{j}-1} \geq L_{0}$ are positive for all $j \in \mathbb{N}$. However, there may be instances where $Z_{\nu_{j}}, \dots, Z_{\tau_{j}-1}$ could be zero. To avoid division by zero in \eqref{offspring_mean_estimates} below, we let $\tilde{\chi}_{n}^{U} \coloneqq \chi_{n}^{U} \mathbf{I}_{ \{ Z_{n} \geq 1 \}}$, $\tilde{C}_{n}^{U} \coloneqq \sum_{j=1}^{n} \tilde{\chi}_{j-1}^{U}$, and use the convention that $0/0= 0 \cdot \infty=0$.  The generalized method of moments estimators of $M^{U}$ and $M^{L}$ are given by
\begin{equation} \label{offspring_mean_estimates}
    M_{n}^{U} \coloneqq \frac{1}{\tilde{C}_{n}^{U}} \sum_{j=1}^n \frac{Z_{j}-I_{j-1}^{U}}{Z_{j-1}} \tilde{\chi}_{j-1}^U \text{ and } M_{n}^L \coloneqq \frac{1}{C_{n}^{L}} \sum_{j=1}^n \frac{Z_{j}}{Z_{j-1}} \chi_{j-1}^L,
\end{equation}
where the last term is non-trivial whenever $C_{n}^{L} \geq 1$, that is $n \geq \tau_{1}+1$. Our assumptions will involve first and second moments assumptions on the centered offspring means $(\overline{P}_{n}^{T}-M^{T})$ and the centered offspring random variables $( \xi_{n,i}^{T} - \overline{P}_{n}^{T})$. To this end, we define the quantities $\Lambda_{n,1}^{T,s} \coloneqq \abs{\overline{P}_{n}^{T}-M^{T}}^{s}$ and $\Lambda_{n,2}^{T,s} \coloneqq \bE[\abs{\xi_{n,1}-\overline{P}_{n}^{T}}^{s} | \Pi_{n}^{T}]$. Next, let $\bM_{n} \coloneqq (M_{n}^{U}, M_{n}^{L})^{\top}$, $\bM \coloneqq (M^{U}, M^{L})^{\top}$, and $\Sigma$ be the $2 \times 2$ diagonal matrix with elements
\begin{equation*}
    \frac{1}{\tilde{\theta}^{U}} \biggl(V_{1}^{U} + \frac{\tilde{A}^{U} V_{2}^{U}}{\tilde{\mu}^{U}} \biggr) \text{ and } \frac{1}{\theta^{L}} \biggl( V_{1}^{L} + \frac{A^{L}V_{2}^{L}}{\mu^{L}} \biggr),
\end{equation*}
where $\tilde{\mu}^{U} \coloneqq \bE_{\pi^{L}}[\sum_{k=1}^{\tau_{1}} \tilde{\chi}_{k-1}^{U}]$ is the average length of supercritical regime not taking into account the times in which the process is zero, $\tilde{\theta}^{U} \coloneqq \frac{\tilde{\mu}^{U}}{\mu^{U}+\mu^{L}}$ is the average proportion of time the process spends in the supercritical regime and is positive, $\tilde{A}^{U} \coloneqq \bE_{\pi^{L}}[\sum_{k=1}^{\tau_{1}} \frac{\tilde{\chi}_{k-1}^{U}}{Z_{k-1}}]$ is the average sum of $\frac{1}{Z_{n}}$ over a supercritical regime discarding the times in which $Z_{n}$ is zero, and $A^{L} \coloneqq \bE_{\pi^{U}}[ \sum_{k=\tau_{1}+1}^{\nu_{1}} \frac{\chi_{k-1}^{L}}{Z_{k-1}}]$ is the average sum of $\frac{1}{Z_{n}}$ over a subcritical regime. Obviously, $0 \leq \tilde{\mu}^{U} \leq \mu^{U}$. Finally, we recall that $V_{1}^{T} = \bV[\overline{P}_{0}^{T}]$ is the variance of the random offspring mean $\overline{P}_{0}^{T}$ and $V_{2}^{T} = \bE[\overline{\overline{P}}_{0}^{T}]$ is the expectation of the random offspring variance $\overline{\overline{P}}_{0}^{T}$.

\begin{thm} \label{theorem:law_of_large_numnbers_and_central_limit_theorem_mean_estimation}
Assume \ref{H1}-\ref{H6} (or \ref{H7}). (i) If $\mu^{T}<\infty$ and for some $s > 1$ $\bE[\Lambda_{0,i}^{T,s}]<\infty$, where $i=1,2$ and $T \in \{L,U\}$, then $\bM_{n}$ is a strongly consistent estimator of $\bM$. (ii) If additionally for some $\delta >0$ $\bE[\Lambda_{0,i}^{T,2+\delta}]<\infty$ for $i=1,2$ and $T \in \{L,U\}$, then
\begin{equation*}
    \sqrt{n} (\bM_{n}-\bM) \xrightarrow[n \to \infty]{d} N(\bm{0},\Sigma).
\end{equation*}
\end{thm}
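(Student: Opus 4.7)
The plan is to split each estimator error into an environmental piece and a demographic (martingale) piece and propagate both through the regenerative structure of the embedded Markov chain $\{Z_{\nu_j}\}$ (supplied by Theorem \ref{theorem:uniform_ergodicity}(i) for the $U$-estimator) and $\{Z_{\tau_j}\}$ (Theorem \ref{theorem:uniform_ergodicity}(ii) for the $L$-estimator). Using $Z_j - I_{j-1}^U = \sum_{i=1}^{Z_{j-1}} \xi_{j-1,i}^U$ on $\{\tilde{\chi}_{j-1}^U = 1\}$,
\begin{equation*}
    M_n^U - M^U = \frac{1}{\tilde{C}_n^U}\sum_{j=1}^n\biggl[\frac{1}{Z_{j-1}}\sum_{i=1}^{Z_{j-1}}(\xi_{j-1,i}^U-\overline{P}_{j-1}^U)+(\overline{P}_{j-1}^U-M^U)\biggr]\tilde{\chi}_{j-1}^U=:\frac{D_n^U+E_n^U}{\tilde{C}_n^U},
\end{equation*}
and analogously $M_n^L - M^L = (D_n^L + E_n^L)/C_n^L$. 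Uniform ergodicity of the embedded chain allows one to construct, via Nummelin splitting, regeneration times that partition the original time axis into i.i.d.\ random-length blocks whose length enjoys all polynomial moments.

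\textbf{LLN and CLT.} By the same ergodic argument underlying Theorem \ref{theorem:law_of_large_numbers_and_central_limit_theorem_for_proportions}, $\tilde{C}_n^U/n \to \tilde{\theta}^U > 0$ and $C_n^L/n \to \theta^L > 0$ a.s., so it suffices to analyze $D_n^T + E_n^T$. Aggregating these across regeneration blocks produces i.i.d.\ mean-zero block totals $W_k^{T,\mathrm{env}}$ and $W_k^{T,\mathrm{dem}}$: the environmental totals are $s$-th integrable by combining $\bE[\Lambda_{0,1}^{T,s}] < \infty$ with polynomial cycle-length moments, and the demographic totals are $s$-th integrable by combining $\bE[\Lambda_{0,2}^{T,s}] < \infty$ with the bounds $\tilde{\chi}_{j-1}^U/Z_{j-1} \leq 1$ and $\chi_{j-1}^L/Z_{j-1} \leq 1/L_0$. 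Kolmogorov's SLLN then proves (i). For (ii), the $(2+\delta)$ hypothesis upgrades the block totals to $(2+\delta')$-integrability via a Rosenthal-type inequality, and the classical i.i.d.\ CLT applies. The environmental and demographic per-cycle contributions are orthogonal because $\bE[\xi^T - \overline{P}^T \mid \Pi^T] = 0$; a direct calculation identifies the environmental per-unit-time variance as $V_1^U \tilde{\mu}^U/(\mu^U+\mu^L)$ (resp.\ $V_1^L \mu^L/(\mu^U+\mu^L)$) and the demographic one as $V_2^U \tilde{A}^U/(\mu^U+\mu^L)$ (resp.\ $V_2^L A^L/(\mu^U+\mu^L)$); dividing by $(\tilde{C}_n^U/n)^2 \to (\tilde{\theta}^U)^2$ and $(C_n^L/n)^2 \to (\theta^L)^2$ respectively and invoking Slutsky recovers the diagonal entries of $\Sigma$.

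\textbf{Off-diagonal and main obstacle.} The off-diagonal of $\Sigma$ vanishes because each $W_k^U$ is a linear functional of the supercritical noises $\{\xi^U - \overline{P}^U, \overline{P}^U - M^U\}$ while $W_k^L$ is a linear functional of the independent subcritical noises; conditioning on the thresholds and path structure and applying the tower property gives $\bE[W_k^U W_k^L] = 0$. The Cram\'er-Wold device together with Slutsky (to dispose of the random normalizers $\tilde{C}_n^U$, $C_n^L$) delivers the joint bivariate normal limit. The main technical obstacle is upgrading the pointwise $(2+\delta)$ offspring-moment hypothesis to a $(2+\delta')$ moment on the regeneration-block totals: although the $1/Z_{j-1}$ weight is bounded and does not aggravate moments, the number of summands in a block is random of order the cycle length, forcing a Rosenthal-type inequality to be combined with the polynomial-moment control on cycle length supplied by uniform ergodicity. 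A secondary point is that $\tilde{\mu}^U > 0$ and $\tilde{A}^U < \infty$ require separate verification: the former follows from Assumption \ref{H6} (positive immigration keeps $\tilde{\chi}^U = 1$ on a positive fraction of the supercritical indices) and the latter from the elementary bound $\tilde{\chi}^U/Z \leq 1$.
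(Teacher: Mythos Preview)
Your decomposition into environmental and demographic pieces is exactly the one the paper uses (their $M_{n,1}^T$ and $M_{n,2}^T$), but the subsequent argument follows a genuinely different route. The paper does \emph{not} aggregate $D_n^T+E_n^T$ into i.i.d.\ regeneration blocks. Instead it treats $\{(M_{n,i}^T,\mathcal{H}_{n,i})\}$ directly as martingales (Proposition~\ref{proposition:moments_m_n}) and applies Hall--Heyde's martingale SLLN (their Theorem~2.18) for part~(i) and Hall--Heyde's martingale CLT (their Theorem~3.2) for part~(ii). Regeneration enters only indirectly, through Proposition~\ref{proposition:limit_of_averages}, to identify the a.s.\ limits of $\tilde{C}_n^T/n$ and $\tilde{A}_n^T/n$ that appear in the normalising constants. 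The $(2+\delta)$ hypothesis is spent verifying the conditional Lindeberg condition of the martingale CLT, not a block-moment bound.

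Your block approach is viable in principle---indeed the paper's concluding discussion (Section~\ref{section:discussion_and_concluding_remarks}) explicitly points to it as an alternative that could even weaken $(2+\delta)$ to a second-moment condition---but it requires a strengthening of Lemma~\ref{lemma:regenerative_property_for_crossing_times} that you have assumed rather than proved. That lemma only establishes that the \emph{crossing times} $\Delta_j^U,\Delta_j^L$ regenerate over returns of $\{Z_{\nu_j}\}$; it says nothing about the offspring variables $\xi_{n,i}^T$, the environments $\Pi_n^T$, or the immigration $I_n^U$ within a block being jointly independent across blocks. To claim that the block totals $W_k^{T,\mathrm{env}}$ and $W_k^{T,\mathrm{dem}}$ are i.i.d.\ you need exactly this extension, which the paper describes as ``similar \ldots\ but more cumbersome with an increased notational burden'' and does not carry out. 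Nummelin splitting is also unnecessary here: the embedded chain has honest atoms (any state of $S^L$), so classical regeneration suffices once the extended independence is established. In short, your route is the one the authors chose to avoid; it works, but the missing ingredient is the joint-regeneration lemma, and what it buys (potential reduction to second moments) comes at the cost of a heavier regenerative setup than the paper's direct martingale argument.
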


\begin{Rk} \label{remark:law_of_large_numnbers_and_central_limit_theorem_mean_estimation}
  In the fixed environment case $\overline{P}_{0}^{T} = M^{T}$ and $\overline{\overline{P}}_{0}^{T} = V_{2}^{T}$ are deterministic constants. Therefore, $V_{1}^{T}=0$ and $\Sigma$ is the $2 \times 2$ diagonal matrix with elements
\begin{equation*}
  \frac{\tilde{A}^{U} V_{2}^{U}}{\tilde{\theta}^{U} \tilde{\mu}^{U}} \text{ and } \frac{A^{L} V_{2}^{L}}{\theta^{L} \mu^{L}}.
\end{equation*}
\end{Rk}

\section{Path properties of BPRET} \label{section:path_properties_of_BPRET}

In this section we provide the proofs of Theorems \ref{theorem:extinction_probability}, \ref{theorem:recurrence_uniformly_ergodic_stationary_markov_chains}, and \ref{theorem:uniform_ergodicity} along with the required probability estimates. The proofs rely on the fact that both the environmental sequence and the thresholds are i.i.d. It follows that probability statements like $\bP(Z_{\tau_{j+1}} = k | Z_{\nu_{j}}=i, \nu_{j}<\infty)$ and $\bP(Z_{\nu_{j+1}}=i | Z_{\tau_{j+1}}=k, \tau_{j+1} < \infty)$ do not depend on the index $j$. This idea is made precise in Lemma \ref{lemma:stationarity} in Appendix \ref{subsection:time_homogeneity_of_Znu_and_Ztau} and will lead to time homogeneity of $\{ Z_{\nu_{j}} \}_{j=0}^{\infty}$ and $\{ Z_{\tau_{j}} \}_{j=1}^{\infty}$. Expectedly, this property does not depend on the process being strongly subcritical: Assumptions \ref{H1} and \ref{H2prime} are more than enough. We denote by $\bP_{\delta_{i}^{L}}(\cdot)$, $\bE_{\delta_{i}^{L}}[\cdot]$, $\bV_{\delta_{i}^{L}}[\cdot]$, and $\bC_{\delta_{i}^{L}}[\cdot,\cdot]$ probability, expectation, variance, and covariance conditionally on $Z_{\nu_{0}} \sim \delta_{i}^{L}$, where $\delta_{(\cdot)}^{L}$ is the restriction of the Dirac delta to $S^{L}$. Similarly, when $\delta_{i}^{L}$ is replaced by $\delta_{i}^{U}$ in the above quantities, we understand that they are conditioned on $Z_{\tau_{1}} \sim \delta_{i}^{U}$, where $\delta_{(\cdot)}^{U}$ is the restriction of the Dirac delta to $S^{U}$.

\subsection{Extinction when immigration is zero} \label{subsection:extinction_when_immigration_is_zero}

In this subsection, we provide the proof of Theorem \ref{theorem:extinction_probability}, which is an adaptation of Theorem 1 of \citet{Athreya-2016} for BPRE. Recall that for this theorem there is no immigration in the supercritical regime and hence the extinction time $\mathrm{T}$ is finite with probability one.

\begin{proof}[Proof of Theorem \ref{theorem:extinction_probability}]
Set, for simplicity, $\tau_{0} \coloneqq -1$. We partition the sample space as
\begin{equation*}
    \Omega = ( \cup_{j=0}^{\infty} \{ \tau_{j+1}=\infty, \tau_{j}<\infty \} ) \cup ( \cap_{j=1}^{\infty} \{ \tau_{j} < \infty \} )
\end{equation*}
and show that (i) $\{ \tau_{j+1}=\infty, \tau_{j}<\infty \} \subset \{ \mathrm{T}<\infty \}$ for all $j \in \mathbb{N}_{0}$ and (ii) $\bP(\cap_{j=1}^{\infty} \{ \tau_{j}< \infty \} )=0$. First, we notice that if $\tau_{j}<\infty$ then, using Theorem 2.1 of \citet{Kersting-2017}, $\nu_{j}<\infty$. Thus,
\begin{align*}
    \{ \tau_{j+1}=\infty, \tau_{j}<\infty \} &= \{ Z_{n} < U_{j+1} \, \forall n \geq \nu_{j}, \tau_{j}<\infty \},
\end{align*}
where $\{ Z_{n} \}_{n=\nu_{j}}^{\infty}$ is a supercritical BPRE until $U_{j+1}$ is reached. Since $Z_{n} < U_{j+1}$ for all $n \geq \nu_{j}$, (2.6) of \citet{Kersting-2017} yields that $\lim_{n \to \infty} Z_{n} = 0$ a.s.\ and $\{ \tau_{j+1}=\infty, \tau_{j}<\infty \} \subset \{ \mathrm{T} < \infty \}$. Turning to (ii), since the events $\{ \tau_{j} < \infty \}$ are nonincreasing, $\bP(\cap_{j=1}^{\infty} \{ \tau_{j}< \infty\})= \lim_{j \to \infty} \bP(\tau_{j+1}< \infty)$ and $\bP(\tau_{j+1}< \infty) = \bP(\tau_{j+1} < \infty | \tau_{j}<\infty) \bP(\tau_{j}<\infty)$. Since $\tau_{j}=\infty$, if $Z_{\nu_{j-1}}=0$, it follows that 
\begin{align*}
    \bP(\tau_{j+1} < \infty | \tau_{j}<\infty) &\leq \bP(\tau_{j+1} < \infty | \tau_{j}<\infty, Z_{\nu_{j-1}} \in [1,L_{U}]) \\
    &\leq \max_{i=1,\dots,L_{U}} \bP(\tau_{j+1} < \infty | \tau_{j}<\infty, Z_{\nu_{j-1}}=i) \\
    &\leq 1-\min_{i=1,\dots,L_{U}} \bP(Z_{\tau_{j}+1} = 0 | \tau_{j}<\infty, Z_{\nu_{j-1}}=i).
\end{align*}
Lemma \ref{lemma:stationarity} yields that for all $k \in S_{B}^{U}$
\begin{equation} \label{stationarity}
    \bP(Z_{\tau_{j}} = k | \tau_{j}<\infty, Z_{\nu_{j-1}}=i) = \bP_{\delta_{i}^{L}}(Z_{\tau_{1}} = k | \tau_{1}<\infty).
\end{equation}
Also, for all $j \geq 1$
\begin{align*}
    \bP(Z_{\tau_{j}+1}=0 | \tau_{j}<\infty, Z_{\tau_{j}}=k) &=\bP ( \cap_{i=1}^{k} \{ \xi_{\tau_{j},i}^{L}=0 \} | \tau_{j}<\infty ) \\
    &=\sum_{n=1}^{\infty} \bP ( \cap_{i=1}^{k} \{ \xi_{n,i}^{L}=0 \} | \tau_{j}=n ) \bP(\tau_{j}=n) = \bP(\xi_{0,1}^{L}=0)^{k}.
\end{align*}
Multiplying by $\bP(Z_{\tau_{j}+1}=0 | \tau_{j}<\infty, Z_{\tau_{j}}=k)$ and $\bP(Z_{\tau_{1}+1}=0 | \tau_{1}<\infty, Z_{\tau_{1}}=k)$ and summing over $k \geq L_{U}+1$ in \eqref{stationarity}, we obtain that
\begin{align*}
    \bP(Z_{\tau_{j}+1}=0 | \tau_{j}<\infty, Z_{\nu_{j-1}}=i) &=\bP_{\delta_{i}^{L}}(Z_{\tau_{1}+1}=0 | \tau_{1} < \infty) \\
    &=\sum_{k=L_{U}+1}^{\infty} \bP_{\delta_{i}^{L}}(Z_{\tau_{1}}=k | \tau_{1} < \infty) \bP(\xi_{0,1}^{L}=0)^{k}.
\end{align*}
Set $\underline{p} \coloneqq \min_{i=1,\dots,L_{U}} p_{i}$, where $p_{i} \coloneqq \bP_{\delta_{i}^{L}}(Z_{\tau_{1}+1}=0 | \tau_{1} < \infty)$. Since $\bP(\xi_{0,1}^{L}=0)>0$ and $\bP_{\delta_{i}^{L}}(Z_{\tau_{1}}=k | \tau_{1} < \infty)>0$ for some $k$, we have that $p_{i}>0$ and $\underline{p}>0$. Hence, $\bP(\tau_{j+1}< \infty) \leq (1-\underline{p}) \bP(\tau_{j}< \infty)$. Iterating the above argument it follows that $\bP(\tau_{j+1}< \infty) \leq (1-\underline{p})^{j} \bP(\tau_{1}<\infty)$ yielding $\lim_{j \to \infty} \bP(\tau_{j+1}< \infty)=0$.
\end{proof}

\subsection{Markov property at crossing times} \label{subsection:markov_property_at_crossing_times}

\begin{proof}[Proof of Theorem \ref{theorem:recurrence_uniformly_ergodic_stationary_markov_chains}]
We begin by proving (i). We first notice that since $\{U_{j}\}_{j=1}^{\infty}$ are i.i.d.\ random variables with unbounded support $S_{B}^{U}$, $\limsup_{j \rightarrow \infty} U_{j} = \infty$ with probability one. Next, observe that along the subsequence $\{\tau_{j}\}_{j=1}^{\infty}$, $Z_{\tau_{j}} \ge U_{j}$. Hence, $\limsup_{n \rightarrow \infty} Z_{n} =\infty$. On the other hand, along the subsequence $\{\nu_{j} \}_{j=1}^{\infty}$ $Z_{\nu_{j}} \le L_{j}$. Thus, $0 \le \liminf_{j\rightarrow \infty} Z_{j} \le L_U<\infty$. It follows that $\{ Z_{n} \}_{n=0}^{\infty}$ is recurrent in the sense of Definition \ref{definition_recurrent}. Turning to (ii), we first notice that, since $Z_{0} \leq L_{U}$, $Z_{\nu_{j}} \leq L_{j} \leq L_{U}$ and $Z_{\tau_{j}} \geq U_{j} \geq L_{U}+1$ for all $j \geq 1$, the state spaces $S^{L}$ of $\{ Z_{\nu_{j}} \}_{j=0}^{\infty}$ and $S^{U}$ of $\{ Z_{\tau_{j}} \}_{j=1}^{\infty}$ are included in $S^{L}$ and $S^{U}$, respectively. We now establish the Markov property of $\{ Z_{\nu_{j}} \}_{j=0}^{\infty}$. For all $j \geq 0$, $k \in S^{L}$, and $i_{0},i_{1},\dots,i_{j} \in S^{L}$, we consider the probability $\bP(Z_{\nu_{j+1}}=k | Z_{\nu_{0}}=i_{0},\dots,Z_{\nu_{j}}=i_{j})$. By the law of total expectation this is equal to
\begin{equation*}
    \bE [\bP(Z_{\nu_{j+1}}=k | Z_{\nu_{0}}=i_{0},\dots,Z_{\nu_{j}}=i_{j},L_{j+1},U_{j+1},\pi^{L},\pi^{U})].
\end{equation*}
Now, setting $A_{\nu_{j},s}(u) \coloneqq \{ Z_{\nu_{j}+s} \geq u, Z_{\nu_{j}+s-1} <u, \dots, Z_{\nu_{j}+1} < u \}$, $B_{\nu_{j},s,t}(l) \coloneqq \{ Z_{\nu_{j}+t}=k, Z_{\nu_{j}+t-1} > l, \dots, Z_{\nu_{j}+s+1} > l \}$, we have that
\begin{align*}
    &\bP(Z_{\nu_{j+1}}=k | Z_{\nu_{0}}=i_{0},\dots,Z_{\nu_{j}}=i_{j},L_{j+1},U_{j+1},\pi^{L}, \pi^{U} ) \\
    =& \sum_{s=1}^{\infty} \sum_{t=s+1}^{\infty} \bP(A_{\nu_{j},s}(U_{j+1}) | Z_{\nu_{j}}=i_{j},U_{j+1},\pi^{U})  \bP( B_{\nu_{j},s,t}(L_{j+1}) | A_{\nu_{j},s}, L_{j+1},\pi^{L}) \\
    =&\bP(Z_{\nu_{j+1}}=k | Z_{\nu_{j}}=i_{j},L_{j+1},U_{j+1},\pi^{L}, \pi^{U}),
\end{align*}
where in the second line we have used that $\{ Z_{n} \}_{n=\nu_{j}}^{\infty}$ is a supercritical BPRE with immigration until it crosses the threshold $U_{j+1}$ at time $\tau_{j+1}=\nu_{j}+s$, and similarly $\{ Z_{n} \}_{n=\tau_{j+1}}^{\infty}$ is a subcritical BPRE until it crosses the threshold $L_{j+1}$ at time $\nu_{j+1}=\nu_{j}+t$. By taking expectation on both sides, we obtain that 
\begin{equation*}
    \bP(Z_{\nu_{j+1}}=k | Z_{\nu_{0}}=i_{0},\dots,Z_{\nu_{j}}=i_{j})=\bP(Z_{\nu_{j+1}}=k | Z_{\nu_{j}}=i_{j}).
\end{equation*}
Turning to the time homogeneity property, we obtain from Lemma \ref{lemma:stationarity} (iii) that 
\begin{align*}
    \bP(Z_{\nu_{j+1}}=k | Z_{\nu_{j}}=i_{j}) &= \sum_{l=L_{U}+1}^{\infty} \bP(Z_{\nu_{j+1}}=k | Z_{\tau_{j+1}}=l) \bP(Z_{\tau_{j+1}}=l | Z_{\nu_{j}}=i_{j}) \\
    &= \sum_{l=L_{U}+1}^{\infty} \bP(Z_{\nu_{1}}=k | Z_{\tau_{1}}=l) \bP(Z_{\tau_{1}}=l | Z_{\nu_{0}}=i_{j}) \\
    &=\bP(Z_{\nu_{1}}=k | Z_{\nu_{0}}=i_{j}).
\end{align*}
The proof for $\{ Z_{\tau_{j}} \}_{j=1}^{\infty}$ is similar.
\end{proof}

\subsection[Uniform ergodicity]{Uniform ergodicity of $\{Z_{\nu_{j}}\}_{j=0}^{\infty}$ and $\{ Z_{\tau_{j}} \}_{j=1}^{\infty}$} \label{subsection:uniform_ergodicity_of_Z_tau_and_Z_nu}

In this subsection we prove Theorem \ref{theorem:uniform_ergodicity}. The proof relies on the following lemma. We denote by $p_{ik}^{L}(j)=\bP_{\delta_{i}^{L}}(Z_{\nu_{j}}=k)$, $i,k \in S^{L}$, and $p_{ik}^{U}(j)=\bP_{\delta_{i}^{U}}(Z_{\tau_{j+1}}=k)$, $i,k \in S^{U}$, the $j$-step transition probability of the (time homogeneous) Markov chains $\{ Z_{\nu_{j}} \}_{j=0}^{\infty}$ and $\{ Z_{\tau_{j}} \}_{j=1}^{\infty}$. For $j=1$, we also write $p_{ik}^{L}=p_{ik}^{L}(1)$ and $p_{ik}^{U}=p_{ik}^{U}(1)$. Finally, let $p_{i}^{L}(j)=\{ p_{ik}^{L}(j) \}_{k \in S^{L}}$ and $p_{i}^{U}(j)=\{ p_{ik}^{U}(j) \}_{k \in S^{U}}$ be the $j$-step transition probability of the Markov chains $\{ Z_{\nu_{j}} \}_{j=0}^{\infty}$ and $\{ Z_{\tau_{j}} \}_{j=1}^{\infty}$ from state $i \in S^{L}$ (resp.\ $i \in S^{U}$).

\begin{lem} \label{lemma:aperiodic_irreducible_and_positive_recurrent}
Assume \ref{H1}-\ref{H4}. Then, (i) if \ref{H5} also holds, then $p_{ik}^{L} \geq \underline{p}^{L}>0$ for all $i,k \in S^{L}$. Also, (ii) if \ref{H6} (or \ref{H7}) holds, then $p_{ik}^{U} \geq \underline{p}_{k}^{U}>0$ for all $i,k \in S^{U}$.
\end{lem}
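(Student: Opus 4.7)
The plan is to decompose each one-step transition of the macro Markov chain through the intermediate hitting state in the opposite regime, and then reduce the uniform lower bound to a minimum of finitely many positivity statements that I will verify by constructing explicit one-generation paths.

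For part (ii), I will condition on $Z_{\nu_1}$ and use the Markov property from Theorem \ref{theorem:recurrence_uniformly_ergodic_stationary_markov_chains} together with the time-homogeneity in Lemma \ref{lemma:stationarity} to write
\begin{equation*}
p_{ik}^{U} \;=\; \sum_{j \in S^{L}} \bP_{\delta_i^U}(Z_{\nu_1} = j)\, \bP_{\delta_j^L}(Z_{\tau_1} = k).
\end{equation*}
Since the subcritical regime becomes extinct a.s., $\nu_1 < \infty$ a.s.\ and the weights sum to $1$, so $p_{ik}^U \geq \min_{j \in S^L} \bP_{\delta_j^L}(Z_{\tau_1} = k)$. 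Because $S^L$ is finite, it suffices to show $\bP_{\delta_j^L}(Z_{\tau_1} = k) > 0$ for each $(j, k) \in S^L \times S^U$. Under \ref{H7} I will condition on $\{U_1 = L_U + 1\}$ (positive by \ref{H4}) and on the positive-probability environmental event $\{P_{0,0}^U > 0,\, Q_{0,k}^U > 0\}$, then force the $j$ ancestors to leave no offspring and immigration to equal $k$; this yields $Z_1 = k \geq U_1$ and $\tau_1 = 1$. Under \ref{H6} I will instead use the positive-probability environmental event where $P_{0,r}^U > 0$ for every $r$ and $Q_{0,0}^U > 0$: for $j \geq 1$, one ancestor produces $k$ offspring, the others produce none, and no immigration occurs; for $j = 0$, I first use the second set in \ref{H6} to immigrate some $s \in \{1, \dots, L_U\}$ with $s < U_1$, and then apply the $j \geq 1$ argument at the next generation. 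Taking $\underline{p}_k^U$ to be the finite minimum over $j \in S^L$ completes (ii).

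For part (i), the analogous decomposition
\begin{equation*}
p_{ik}^{L} \;=\; \sum_{m \in S^{U}} \bP_{\delta_i^L}(Z_{\tau_1} = m)\, \bP_{\delta_m^U}(Z_{\nu_1} = k)
\end{equation*}
has an \emph{infinite} index set, so the $\min$-over-$m$ device cannot be used. Instead, for each fixed pair $(i,k) \in S^L \times S^L$, I will construct a single positive-probability path. Condition on $\{U_1 = L_U + 1,\, L_1 = L_U\}$ (positive by \ref{H4}). Since a supercritical BPRE with immigration diverges a.s.\ (Theorem~2.2 of \citet{Kersting-2017}), $\tau_1 < \infty$ a.s.\ and some $m = m(i) \in S^U$ satisfies $\bP_{\delta_i^L}(Z_{\tau_1} = m) > 0$. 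The i.i.d.\ structure of $\Pi^L$ and its independence from the supercritical pieces ensure that the \ref{H5}-positive event $\{\Pi_{\tau_1}^L : P_{\tau_1, 0}^L > 0,\, P_{\tau_1, 1}^L > 0\}$ has positive probability. On this event the $m$ offspring at generation $\tau_1$ take values in $\{0, 1\}$, and the probability that exactly $k$ of them equal $1$ is $\binom{m}{k}(P_{\tau_1, 1}^L)^k (P_{\tau_1, 0}^L)^{m - k} > 0$, which is meaningful since $k \leq L_U < m$. Because $k \leq L_U = L_1$, this forces $\nu_1 = \tau_1 + 1$ and $Z_{\nu_1} = k$; hence $p_{ik}^L > 0$ for each $(i,k)$, and the finite minimum over $S^L \times S^L$ gives $\underline{p}^L > 0$.

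The principal obstacle is obtaining uniformity over the infinite index $i \in S^U$ in part (ii); the decomposition through the finite set $S^L$ neutralizes this by transferring the uniformity requirement onto the supercritical one-generation transition, whose target minimization is over the finite $S^L$. The remaining steps --- verifying independence of the threshold event, the supercritical-environmental event, and the subcritical-environmental event used in each construction, and confirming that the conditioning forces the intended crossing at the intended generation --- are routine.
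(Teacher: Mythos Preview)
Your proof is correct and follows essentially the same strategy as the paper's: both route the uniformity problem in (ii) through the finite set $S^{L}$ (you via the explicit decomposition $p_{ik}^{U}=\sum_{j\in S^{L}}\bP_{\delta_i^U}(Z_{\nu_1}=j)\,\bP_{\delta_j^L}(Z_{\tau_1}=k)$, the paper via bounding inside $\bE_{\delta_i^U}[\bP(Z_{\tau_2}=k\mid Z_{\nu_1},U_2,\pi^U)]$), and both handle (i) by constructing, for each fixed $(i,k)$, a one-step subcritical path from $Z_{\tau_1}$ to $k$ using \ref{H5} and then taking the finite minimum over $S^{L}\times S^{L}$. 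The explicit path constructions under \ref{H6} and \ref{H7} are also the same as the paper's, including the two-step treatment of the case $j=0$ under \ref{H6}.
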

\begin{proof}[Proof of Lemma \ref{lemma:aperiodic_irreducible_and_positive_recurrent}]
The idea of proof is to establish a lower bound on $p_{ik}^{L}$ and $p_{ik}^{U}$ using \eqref{transition_probability_L} and \eqref{transition_probability_U} below, respectively. We begin by proving (i). Using Assumptions \ref{H5}, let $A$ be a measurable subset of $\mathcal{P}$ satisfying $\bP_{E^{L}}(\Pi_{0}^{L} \in A)>0$ and $P_{0,r}^{L} > 0$ for $r=0,1$ and $\Pi_{0}^{L} \in A$. By the law of total expectation
\begin{equation} \label{transition_probability_L}
    p_{ik}^{L} = \bE_{\delta_{i}^{L}}[\bP(Z_{\nu_{1}}=k | Z_{\tau_{1}}, L_{1},\pi^{L})].
\end{equation}
Since $\bP(Z_{\nu_{1}}=k | Z_{\tau_{1}}, L_{1},\pi^{L})=0$  on the event $\{L_{1} < k\}$, it follows that
\begin{equation*}
    \bP(Z_{\nu_{1}}=k | Z_{\tau_{1}}, L_{1},\pi^{L}) = \bP(Z_{\nu_{1}}=k | Z_{\tau_{1}}, L_{1},\pi^{L}) \mathbf{I}_{\{L_{1} \geq k\}}.
\end{equation*}
Now, notice that on the event $\{L_{1} \geq k\}$ the term $\bP(Z_{\nu_{1}}=k | Z_{\tau_{1}}, L_{1},\pi^{L})$ is bounded below by the probability of reaching state $k$ from $Z_{\tau_{1}}$ in one step; that is, 
\begin{equation*}
    \bP(Z_{\nu_{1}}=k | Z_{\tau_{1}}, L_{1},\pi^{L}) \geq  \bP(Z_{\nu_{1}}=k, \nu_{1}=\tau_{1}+1 | Z_{\tau_{1}}, L_{1},\pi^{L}) \mathbf{I}_{\{L_{1} \geq k\}}.
\end{equation*}
The RHS of the above inequality is bounded below by the probability that the first $k$ individuals have exactly one offspring and the remaining $Z_{\tau_{1}}-k$ have no offspring, that is,
\begin{equation*}
    \mathbf{I}_{\{L_{1} \geq k\}} \prod_{r=1}^{k} \bP(\xi_{\tau_{1},r}^{L}=1 | \Pi_{\tau_{1}}^{L}) \prod_{r=k+1}^{Z_{\tau_{1}}} \bP(\xi_{\tau_{1},r}^{L}=0 | \Pi_{\tau_{1}}^{L}).
\end{equation*}    
Once again, using that conditional on the environment $\Pi_{\tau_{1}}^{L}$, $\xi_{\tau_{1},r}^{L}$ are i.i.d., this is equal to
\begin{equation*}
    \mathbf{I}_{\{L_{1} \geq k\}} (P_{\tau_{1},1}^{L})^{k} (P_{\tau_{1},0}^{L})^{Z_{\tau_{1}}-k}.
\end{equation*}
Since $\mathbf{I}_{\{\Pi_{\tau_{1}}^{L} \in A\}} \leq 1$ and $\{L_{1} \geq k\} \supset \{L_{1}=L_{U}\}$ because $k \leq L_{U}$, the last term is bounded below by
\begin{equation*}
   \mathbf{I}_{\{L_{1} = L_{U}\}} (P_{\tau_{1},1}^{L})^{k} (P_{\tau_{1},0}^{L})^{Z_{\tau_{1}}-k} \mathbf{I}_{\{\Pi_{\tau_{1}}^{L} \in A\}}.
\end{equation*}
Finally, again using that $\Pi_{n}^{L}$ are i.i.d.\ and taking the expectation $\bE_{\delta_{i}^{L}}[\cdot]$ as in \eqref{transition_probability_L}, we obtain that $p_{ik}^{L} \geq \underline{p}_{ik}^{L}$, where
\begin{equation*}
    \underline{p}_{ik}^{L} \coloneqq \bP(L_{1}=L_{U}) \bE_{\delta_{i}^{L}}[(P_{\tau_{1},1}^{L})^{k} (P_{\tau_{1},0}^{L})^{Z_{\tau_{1}}-k} \mathbf{I}_{\{\Pi_{\tau_{1}}^{L} \in A\}}].
\end{equation*}
Notice that $\underline{p}_{k}^{L}$ is positive because $\bP(L_{1}=L_{U}) > 0$, $\bP_{E^{L}}(\Pi_{0}^{L} \in A)>0$, $P_{0,r}^{L} > 0$ for $r=0,1$ and $\Pi_{0}^{L} \in A$, and the environments $\Pi_{n}^{L}$ are i.i.d. Finally, since $S^{L}$ is finite, $\underline{p}^{L} \coloneqq \min_{i,k \in S^{L}} \underline{p}_{ik}^{L} > 0$.

We now turn to the proof of (ii), which is similar to the proof of (i). Using \ref{H6}, let $A$ and $B$ be measurable subsets of $\mathcal{P} \times \mathcal{P}$ satisfying:
\begin{itemize}
\item[(a)] $\bP_{E^{U}}( \Pi_{0}^{U} \in A) > 0$ and $Q_{0,0}^{U}>0$, $P_{0,r}^{U}>0$ for all $r \in \mathbb{N}_{0}$ and $\Pi_{0}^{U} \in A$; and
\item[(b)] $\bP_{E^{U}}( \Pi_{0}^{U} \in B) > 0$ and $Q_{0,s}^{U}>0$ for (a fixed) $s \in \{1,\dots, L_{u}\}$ and $\Pi_{0}^{U} \in B$.
\end{itemize}
Again using the law of total expectation, we obtain
\begin{equation} \label{transition_probability_U}
    p_{ik}^{U} = \bE_{\delta_{i}^{U}}[\bP(Z_{\tau_{2}}=k | Z_{\nu_{1}}, U_{2},\pi^{U})].
\end{equation}
Since $\bP(Z_{\tau_{2}}=k | Z_{\nu_{1}}, U_{2},\pi^{U})=0$  on the event $\{U_{2} > k\}$, it follows that
\begin{equation} \label{conditional_transition_probability_U}
    \bP(Z_{\tau_{2}}=k | Z_{\nu_{1}}, U_{2},\pi^{U}) = \sum_{z=0}^{L_{U}} \bP(Z_{\tau_{2}}=k | Z_{\nu_{1}}, U_{2},\pi^{U}) \mathbf{I}_{\{U_{2} \leq  k\}} I_{\{Z_{\nu_{1}}=z\}}.
\end{equation}
If $U_{2} \leq k$ and $Z_{\nu_{1}}=z>0$, then $\bP(Z_{\tau_{2}}=k | Z_{\nu_{1}}, U_{2},\pi^{U})$ is bounded below by the probability that $z$ individuals have a total of exactly $k$ offspring and no immigration occurs; that is,
\begin{equation*}
    \bP(Z_{\tau_{2}}=k | Z_{\nu_{1}}, U_{2},\pi^{U}) \mathbf{I}_{\{U_{2} \leq  k\}} I_{\{Z_{\nu_{1}}=z\}} \geq \bP( \sum_{r=1}^{z} \xi_{\nu_{1},r}^{U}=k, I_{\nu_{1}}^{U}=0 | Z_{\nu_{1}}, U_{2},\pi^{U}) \mathbf{I}_{\{U_{2} \leq  k\}} I_{\{Z_{\nu_{1}}=z\}}.
\end{equation*}
The RHS of the above inequality is bounded below by the probability that the first $z_{1} \coloneqq (k_{1}+1)z-k$ individuals have $k_{1} \coloneqq \lfloor \frac{k}{z} \rfloor$ offspring and the last $z_{2} \coloneqq z-z_{1}$ individuals have $k_{2} \coloneqq k_{1}+1$ offspring (indeed $k_{1} z_{1} + k_{2} z_{2}=k$) and no immigration occurs; that is,
\begin{equation*}
    \bP( \cap_{r=1}^{z_{1}} \{\xi_{\nu_{1},r} =k_{1}\}, \cap_{r=z_{1}+1}^{z_{2}} \{\xi_{\nu_{1},r} =k_{2}\}, I_{\nu_{1}}^{U}=0 | Z_{\nu_{1}}, U_{2},\pi^{U}) \mathbf{I}_{\{U_{2} \leq  k\}} I_{\{Z_{\nu_{1}}=z\}}.
\end{equation*}
Using that conditional on the environment $\Pi_{\nu_{1}}^{U}$, $\xi_{\nu_{1},r}^{U}$ are i.i.d., the above is equal to
\begin{equation} \label{transition_probability_U_zeta_positive}
    (P_{\nu_{1},k_{1}}^{U})^{z_{1}} (P_{\nu_{1},k_{2}}^{U})^{z_{2}} Q_{\nu_{1},0}^{U} \mathbf{I}_{\{U_{2} \leq  k\}} I_{\{Z_{\nu_{1}}=z\}}.
\end{equation}
Next, if $U_{2} \leq k$ and $Z_{\nu_{1}}=z=0$ then $\bP(Z_{\tau_{2}}=k | Z_{\nu_{1}}, U_{2},\pi^{U})$ is bounded below by the probability $\bP(Z_{\tau_{2}}=k, \tau_{2}=\nu_{1}+2 | Z_{\nu_{1}}, U_{2},\pi^{U})$. Now, this probability is bounded below by the probability that there are $s$ immigrants at time $\nu_{1}+1$, these immigrants have a total of exactly $k$ offspring, and no immigration occurs at time $\nu_{1}+2$; that is,
\begin{equation*}
    \bP (I_{\nu_{1}}^{U}=s, \sum_{r=1}^{s} \xi_{\nu_{1}+1,r}^{U}=k, I_{\nu_{1}+1}^{U}=0 | Z_{\nu_{1}}, U_{2},\pi^{U} ) \mathbf{I}_{\{U_{2} \leq  k\}} I_{\{Z_{\nu_{1}}=0\}}.
\end{equation*}
As before, this last probability is bounded below by probability that $s_{1} \coloneqq (t_{1}+1)s-k$ individuals have $t_{1} \coloneqq \lfloor \frac{k}{s} \rfloor$ offspring and $s_{2} \coloneqq s-s_{1}$ individuals have $t_{2} \coloneqq t_{1}+1$ offspring. Thus, the above probability is bounded below by
\begin{equation} \label{transition_probability_U_zeta_zero}
    Q_{\nu_{1},s}^{U} (P_{\nu_{1}+1,t_{1}}^{U})^{s_{1}} (P_{\nu_{1}+1,t_{2}}^{U})^{s_{2}} Q_{\nu_{1}+1,0}^{U} \mathbf{I}_{\{U_{2} \leq k\}} \mathbf{I}_{\{Z_{\nu_{1}} = 0\} }.
\end{equation}
Combing \eqref{conditional_transition_probability_U}, \eqref{transition_probability_U_zeta_positive}, and \eqref{transition_probability_U_zeta_zero} and using that $\mathbf{I}_{\{\Pi_{\nu_{1}}^{U} \in A\}}, \mathbf{I}_{\{\Pi_{\nu_{1}+1}^{U} \in A\}}, \mathbf{I}_{\{\Pi_{\nu_{1}}^{U} \in B\}} \leq 1$, we obtain that $\bP(Z_{\tau_{2}}=k | Z_{\nu_{1}}, U_{2},\pi^{U})$ is bounded below by
\begin{align*}
    & \mathbf{I}_{\{ U_{2} \leq k\}} \sum_{z=1}^{L_{U}} \mathbf{I}_{\{Z_{\nu_{1}} = z\} } (P_{\nu_{1},k_{1}}^{U})^{z_{1}} (P_{\nu_{1},k_{2}}^{U})^{z_{2}} Q_{\nu_{1},0}^{U} \mathbf{I}_{\{\Pi_{\nu_{1}}^{U} \in A\}} \\
  +&\mathbf{I}_{\{U_{2} \leq k\}} \mathbf{I}_{\{Z_{\nu_{1}} = 0\} } Q_{\nu_{1},s}^{U} (P_{\nu_{1}+1,t_{1}}^{U})^{s_{1}} (P_{\nu_{1}+1,t_{2}}^{U})^{s_{2}} Q_{\nu_{1}+1,0}^{U} \mathbf{I}_{\{\Pi_{\nu_{1}}^{U} \in B\}} \mathbf{I}_{\{\Pi_{\nu_{1}+1}^{U} \in A\}}.
\end{align*}
Using that $\Pi_{n}^{U}$ are i.i.d.\ and taking the expectation $\bE_{\delta_{i}^{U}}[\cdot]$ as in \eqref{transition_probability_U}, we obtain that
\begin{equation*}
    p_{ik}^{U} \geq \bP(U_{1} \leq k) \sum_{z=0}^{L_{U}} H_{k}(z) \bP_{\delta_{i}^{U}}(Z_{\nu_{1}}=z),
\end{equation*}
where $H_{k}: S^{L} \to \mathbb{R}$ is given by
\begin{equation*}
  H_{k}(z) = \begin{cases}
    \bE[Q_{\nu_{1},s}^{U} (P_{\nu_{1}+1,t_{1}}^{U})^{s_{1}} (P_{\nu_{1}+1,t_{2}}^{U})^{s_{2}} Q_{\nu_{1}+1,0}^{U} \mathbf{I}_{\{\Pi_{\nu_{1}}^{U} \in B\}} \mathbf{I}_{\{\Pi_{\nu_{1}+1}^{U} \in A\}}] & \text{ if } z = 0, \\
    \bE[(P_{\nu_{1},k_{1}}^{U})^{z_{1}} (P_{\nu_{1},k_{2}}^{U})^{z_{2}} Q_{\nu_{1},0}^{U} \mathbf{I}_{\{\Pi_{\nu_{1}}^{U} \in A\}}] & \text{ if } z \neq 0.
\end{cases}
\end{equation*}
Since $\sum_{z=0}^{L_{U}} \bP_{\delta_{i}^{U}}(Z_{\nu_{1}}=z) = 1$, we conclude that $p_{ik}^{U} \geq \underline{p}_{k}^{U}$, where
\begin{equation*}
    \underline{p}_{k}^{U} \coloneqq \bP(U_{1} \leq k) \min_{z \in S^{L}} H_{k}(z) > 0.
\end{equation*}
This concludes the proof of (ii). If instead of \ref{H6}, \ref{H7} holds the proof is similar by noticing that for all $z \in S^{L}$, on the event $\{U_{1} \leq k\} \cap \{Z_{\nu_{1}}=z\}$, there's a positive probability that at time $\nu_{1}+1$ there are $k$ immigrants and the $z$ individuals have no offspring. A detailed proof can be obtained in the same manner.
\end{proof}

Before we turn to the proof of Theorem \ref{theorem:uniform_ergodicity} we introduce few notations. Let $T_{i,0}^{L} \coloneqq 0$ and $T_{i,l}^{L} \coloneqq \inf \{ j > T_{i,l-1}^{L} : Z_{\nu_{j}}=i  \}$, $l \geq 1$, be the random times in which the Markov chain $\{ Z_{\nu_{j}} \}_{j=0}^{\infty}$ enters state $i \in S^{L}$ when the initial state is $Z_{\nu_{0}}=i$. Similarly, we let $T_{i,0}^{U} \coloneqq 1$ and $T_{i,l}^{U} \coloneqq \inf \{ j > T_{i,l-1}^{U} : Z_{\tau_{j}}=i  \}$, $l \geq 1$, be the random times in which the Markov chain $\{ Z_{\tau_{j}} \}_{j=1}^{\infty}$ enters state $i \in S^{U}$ when the initial state is $Z_{\tau_{1}}=i$. The expected times to visit state $k$ starting from $i$ is denoted by $f_{ik}^{L} \coloneqq \bE_{\delta_{i}^{L}}[T_{k,1}^{L}]$ and $f_{ik}^{U} \coloneqq \bE_{\delta_{i}^{U}}[T_{k,1}^{U}-1]$, respectively.

\begin{proof}[Proof of Theorem \ref{theorem:uniform_ergodicity}]
Lemma \ref{lemma:aperiodic_irreducible_and_positive_recurrent} implies that the state spaces of $\{ Z_{\nu_{j}} \}_{j=0}^{\infty}$ and $\{ Z_{\tau_{j}} \}_{j=1}^{\infty}$ are $S^{L}$ and $S^{U}$, respectively. Next, to establish ergodicity of these Markov chains, it is sufficient to verify irreducibility, aperiodicity, and positive recurrence. Irreducibility and aperiodicity follow from Lemma \ref{lemma:aperiodic_irreducible_and_positive_recurrent} in both cases. Now, turning to positive recurrence let $I_{n}^{L}(k) \coloneqq \{ (i_{1},i_{2}, \dots, i_{n-1}) : i_{j} \in S^{L} \setminus \{ k \} \}$ for $k \in S^{L}$. Then, using Markov property and (i) of Lemma \ref{lemma:aperiodic_irreducible_and_positive_recurrent}, it follows that
\begin{align*}
    f_{kk}^{L} &=\sum_{n=1}^{\infty} \bP_{\delta_{k}^{L}}(T_{k,1}^{L} \geq n) =\sum_{n=1}^{\infty} \bP_{\delta_{k}^{L}}( \cap_{j=1}^{n-1} \{Z_{\nu_{j}} \neq k\}) \\
    &=\sum_{n=1}^{\infty} p_{k i_{1}} \prod_{I_{n}^{L}(k)} p_{i_{j-1}i_{j}} \leq \sum_{n=1}^{\infty} (1-\underline{p}^{L})^{n-1} < \infty.
\end{align*}
Now by the finiteness of $S^{L}$ it follows that $\{ Z_{\nu_{j}} \}_{j=0}^{\infty}$ is uniformly ergodic. Next, as above for all $k \in S^{U}$
\begin{equation*}
    f_{kk}^{U}=\sum_{n=1}^{\infty} \bP_{\delta_{k}^{U}}(T_{k,1}^{U}-1 \geq n) \leq \sum_{n=1}^{\infty} (1-\underline{p}_{k}^{U})^{n-1} < \infty.
\end{equation*}
To complete the proof of uniform ergodicity of $\{ Z_{\tau_{j}} \}_{j=1}^{\infty}$, we will verify the Doeblin's condition for one-step transition: that is, for a probability distribution $q=\{ q_{k} \}_{k \in S^{U}}$ and every set $A \subset S^{U}$ satisfying $\sum_{k \in A} q_{k} > \epsilon$
\begin{equation*}
    \inf_{l \in S^{U}} ( \sum_{k \in A} p_{lk}^{U}) > \delta.
\end{equation*}
Now, taking $q_{k} \coloneqq (\underline{p}_{k}^{U})/(\sum_{k \in S} \underline{p}_{k}^{U})$, it follows from Lemma \ref{lemma:aperiodic_irreducible_and_positive_recurrent} (ii) that
\begin{equation*}
    \inf_{l \in S^{U}} ( \sum_{k \in A} p_{lk}^{U} ) \geq (\sum_{k \in S} \underline{p}_{k}^{U}) \sum_{k \in A} q_{k}.
\end{equation*}
Choosing $\delta=(\sum_{k \in S} \underline{p}_{k}^{U}) \epsilon$ uniform ergodicity of $\{ Z_{\tau_{j}} \}_{j=1}^{\infty}$ follows.
\end{proof}

\begin{Rk} \label{remark:stationary_distribution}
An immediate consequence of the above theorem is that $\{ Z_{\nu_{j}} \}_{j=0}^{\infty}$ possesses a proper stationary distribution $\pi^{L}=\{ \pi_{k}^{L} \}_{k \in S^{L}}$, where $\pi_{k}^{L} \coloneqq 1/f_{kk}^{L}>0$ and satisfies $\pi_{k}^{L}=\sum_{i \in S}  \pi_{i}^{L} p_{ik}^{L}$ for all $k \in S^{L}$. Furthermore, $\lim_{j \to \infty} \sup_{l \in S^{L}} \norm{p_{l}^{L}(j)- \pi^{L}} = 0$ where $\norm{\cdot}$ denotes the total variation norm. Furthermore, under a finite second moment hypothesis the central limit theorem holds for functions of $Z_{\nu_{j}}$. Similar comment also holds for $\{ Z_{\tau_{j}} \}_{j=1}^{\infty}$ with $L$ replaced by $U$. 
\end{Rk}

\begin{Rk} \label{remark:link_between_stationary_distributions}
It is worth noticing that the stationary distributions $\pi^{L}$ and $\pi^{U}$ are connected using $\pi_{i}^{L}=\sum_{l \in S^{U}} \bP_{\delta_{l}^{U}}(Z_{\nu_{1}}=i) \pi_{l}^{U}$ for all $i \in S^{L}$ since by time homogeneity (Lemma \ref{lemma:stationarity}) $\bP(Z_{\nu_{j+1}}=i | Z_{\tau_{j+1}}=l)=\bP_{\delta_{l}^{U}}(Z_{\nu_{1}}=i)$. Now, taking the limit as $j \to \infty$ in
\begin{equation*}
    \bP(Z_{\nu_{j+1}}=i) = \sum_{l \in S^{U}} \bP(Z_{\nu_{j+1}}=i | Z_{\tau_{j+1}}=l) \bP(Z_{\tau_{j+1}}=l),
\end{equation*}
the above expression follows. Similarly, $\pi_{k}^{U}=\sum_{l \in S^{L}} \bP_{\delta_{l}^{L}}(Z_{\tau_{1}}=k) \pi_{l}^{L}$ for all $k \in S^{U}$.
\end{Rk}

Since the state space $S^{L}$ is finite $\pi^{L}$ has moments of all orders. Proposition \ref{proposition:finite_expectation_stationary_distribution} in Appendix \ref{subsection:finiteness_of_mean_stationary_distribution} shows that $\pi^{U}$ has a finite first moment $\overline{\pi}^{U} \coloneqq \sum_{k \in S^{U}} k \pi_{k}^{U}$.

\section{Regenerative property of crossing times} \label{section:regenerative_property_of_crossing_times}

In this section, we establish the law of large numbers and central limit theorem for the length of the supercritical and subcritical regimes $\{ \Delta_{j}^{U} \}_{j=1}^{\infty}$ and $\{ \Delta_{j}^{L} \}_{j=1}^{\infty}$. To this end, we will show that $\{ \Delta_{j}^{U} \}_{j=1}^{\infty}$ and $\{ \Delta_{j}^{L} \}_{j=1}^{\infty}$ are regenerative over the times $\{ T_{i,l}^{L} \}_{l=0}^{\infty}$ and $\{ T_{i,l}^{U} \}_{l=1}^{\infty}$, respectively. In our analysis we will also encounter the random variables $\overline{\Delta}_{j}^{U} \coloneqq \Delta_{j}^{U}+\Delta_{j}^{L}$ and $\overline{\Delta}_{j}^{L} \coloneqq \Delta_{j}^{L}+\Delta_{j+1}^{U}$. For $l \geq 1$ and $i \in S^{L}$ let $B_{i,l}^{L} \coloneqq (K_{i,l}^{L}, \bD_{i,l}^{L}, \overline{\bD}_{i,l}^{L})$, where
\begin{equation*}
K_{i,l}^{L} \coloneqq T_{i,l}^{L}-T_{i,l-1}^{L}, \bD_{i,l}^{L} \coloneqq ( \Delta_{T_{i,l-1}^{L}+1}^{U}, \dots, \Delta_{T_{i,l}^{L}}^{U}), \text{ and } \overline{\bD}_{i,l}^{L} \coloneqq ( \overline{\Delta}_{T_{i,l-1}^{L}+1}^{U}, \dots, \overline{\Delta}_{T_{i,l}^{L}}^{U}).
\end{equation*}
The triple $B_{i,l}^{L}$ consists of the random time $K_{i,l}^{L}$ required by $\{ Z_{\nu_{j}} \}_{j=0}^{\infty}$ to return for the $l^{\text{th}}$ time to state $i$, the lengths of all supercritical regimes $\Delta_{j}^{U}$ between the $(l-1)^{\text{th}}$ return and the $l^{\text{th}}$ return, and the lengths $\overline{\Delta}_{j}^{U}$ of both regimes in the same time interval. Similarly, for $l \geq 1$ and $i \in S^{U}$ we let $B_{i,l}^{U} \coloneqq (K_{i,l}^{U}, \bD_{i,l}^{U}, \overline{\bD}_{i,l}^{U})$, where
\begin{equation*}
  K_{i,l}^{U} \coloneqq T_{i,l}^{U}-T_{i,l-1}^{U}, \bD_{i,l}^{U} \coloneqq ( \Delta_{T_{i,l-1}^{U}}^{L}, \dots, \Delta_{T_{i,l}^{U}-1}^{L}), \text{ and } \overline{\bD}_{i,l}^{U} \coloneqq ( \overline{\Delta}_{T_{i,l-1}^{U}}^{L}, \dots, \overline{\Delta}_{T_{i,l}^{U}-1}^{L}).
\end{equation*}
The proof of the following lemma is included in Appendix \ref{subsection:proofs_of_lemmas}.

\begin{lem} \label{lemma:regenerative_property_for_crossing_times}
Assume \ref{H1}-\ref{H4}. (i) If \ref{H5} also holds and $Z_{\nu_{0}}=i \in S^{L}$, then $\{ B_{i,l}^{L} \}_{l=1}^{\infty}$ are i.i.d. (ii) If \ref{H6} (or \ref{H7}) holds and $Z_{\tau_{1}}=i \in S^{U}$, then $\{ B_{i,l}^{U} \}_{l=1}^{\infty}$ are i.i.d.
\end{lem}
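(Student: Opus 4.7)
The approach is to read off the regenerative structure from the strong Markov property of the chains $\{Z_{\nu_j}\}_{j=0}^\infty$ in case (i) and $\{Z_{\tau_j}\}_{j=1}^\infty$ in case (ii), established in Theorem \ref{theorem:recurrence_uniformly_ergodic_stationary_markov_chains}, combined with the i.i.d.\ nature of the environments, thresholds, and reproduction/immigration primitives given by \ref{H1} and \ref{H4}. I would focus on (i) and indicate at the end that (ii) is analogous.

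First, fix $i \in S^L$, condition on $Z_{\nu_0}=i$, and introduce the filtration $\{\mathcal{H}_j\}_{j \ge 0}$, where $\mathcal{H}_j$ is generated by $\{Z_n\}_{n\le\nu_j}$, the thresholds $\{(U_k,L_k)\}_{k\le j}$, the environments $\Pi_n^U,\Pi_n^L$ for $n<\nu_j$, and the offspring and immigration random variables indexed up to $\nu_j-1$. Since $\nu_j$ is a stopping time with respect to the natural filtration of the primitives, and $T_{i,l}^L$ depends only on $\{Z_{\nu_0},\dots,Z_{\nu_j}\}$, each $T_{i,l}^L$ is a stopping time with respect to $\{\mathcal{H}_j\}$, and $Z_{\nu_{T_{i,l-1}^L}}=i$ by construction.

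Second, I would show that the BPRET restarts freshly at $\nu_{T_{i,l-1}^L}$: the shifted environments $\{\Pi_{\nu_{T_{i,l-1}^L}+m}^T\}_{m\ge 0}$ for $T\in\{U,L\}$, the shifted thresholds $\{(U_{T_{i,l-1}^L+j},L_{T_{i,l-1}^L+j})\}_{j\ge 1}$, and the shifted offspring and immigration random variables are independent of $\mathcal{H}_{T_{i,l-1}^L}$ and jointly distributed as the originals. This is the standard discrete-time ``strong Markov property at the level of the driving noise'' for an i.i.d.\ sequence sampled at a stopping time. Granting this, the block $B_{i,l}^L=(K_{i,l}^L,\bD_{i,l}^L,\overline{\bD}_{i,l}^L)$ is a single measurable functional $\Phi$ of the starting state $i$ and the shifted driving noise: running the BPRET from $i$ using those primitives until the first return to $i$ along $\{Z_{\nu_j}\}$ produces $K_{i,l}^L$ (the number of such steps), the supercritical-regime lengths $\bD_{i,l}^L$, and the combined lengths $\overline{\bD}_{i,l}^L$. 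Because $\Phi$ does not depend on $l$ and its input is independent of $\mathcal{H}_{T_{i,l-1}^L}$ with a distribution not depending on $l$, one concludes $B_{i,l}^L \stackrel{d}{=} B_{i,1}^L$ and that $B_{i,l}^L$ is independent of $(B_{i,1}^L,\dots,B_{i,l-1}^L)$, which are $\mathcal{H}_{T_{i,l-1}^L}$-measurable.

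The main obstacle is the preceding ``fresh primitives'' step, since the number of primitives consumed by block $l$ is itself the random quantity $K_{i,l}^L$ together with the embedded $\Delta_j^U$'s. I would handle this either by applying the strong Markov property to the enlarged chain whose state at index $j$ records $Z_{\nu_j}$ together with the block-in-progress statistics, so that the classical cycle/excursion decomposition delivers i.i.d.\ blocks, or by a direct appeal to the fact that measurable functionals of disjoint excursions of an i.i.d.\ driving sequence between successive hitting times of a fixed value are i.i.d. Case (ii) then follows by repeating the argument for $\{Z_{\tau_j}\}_{j=1}^\infty$, whose time-homogeneity and uniform ergodicity under \ref{H6} or \ref{H7} are supplied by Theorems \ref{theorem:recurrence_uniformly_ergodic_stationary_markov_chains} and \ref{theorem:uniform_ergodicity}, with $\nu_{T_{i,l-1}^L}$ replaced by $\tau_{T_{i,l-1}^U}$ and the roles of the supercritical and subcritical regimes interchanged.
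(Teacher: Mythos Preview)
Your approach is correct and captures the same underlying idea as the paper's: the i.i.d.\ nature of the driving primitives (environments, thresholds, offspring/immigration) ensures that at each return of $\{Z_{\nu_j}\}$ to state $i$ the process restarts afresh, so the blocks are i.i.d. The paper, however, executes this more concretely than your strong-Markov/regeneration outline. Rather than setting up a filtration on primitives and invoking a ``fresh primitives at a stopping time'' principle (or an enlarged chain to absorb the within-block statistics), it verifies directly that
\[
\bP_{\delta_i^L}\bigl(B_{i,n+1}^L=(k,\bd^L,\bd^L+\bd^U)\,\big|\,B_{i,1}^L,\dots,B_{i,n}^L\bigr)=\bP_{\delta_i^L}\bigl(B_{i,1}^L=(k,\bd^L,\bd^L+\bd^U)\bigr)
\]
by conditioning on the intermediate states $Z_{\nu_{T_{i,n}^L+j}}=x_j$ and on the environments (exactly as in the stationarity Lemma~\ref{lemma:stationarity}), so that the conditional probability factors into one-step transition terms independent of $n$, and then summing over the $x_j$'s. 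This hands-on computation sidesteps the obstacle you flag---that the number of primitives consumed by a block is itself random---at the cost of a little bookkeeping; your abstract route is cleaner to state but, as you note, requires either the enlarged-chain construction or a carefully justified excursion argument to be complete.
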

The proof of the following lemma, which is required in the proof of the Theorem \ref{theorem:law_of_large_numbers_and_central_limit_theorem_for_crossing_times}, is also included in Appendix \ref{subsection:proofs_of_lemmas}. We need the following additional notations: $\overline{S}_{n}^{U} \coloneqq \sum_{j=1}^{n} \overline{\Delta}_{j}^{U}$, $\overline{S}_{n}^{L} \coloneqq \sum_{j=1}^{n} \overline{\Delta}_{j}^{L}$,
\begin{align*}
    &\overline{\sigma}^{2,U} \coloneqq \bV_{\pi^{L}}[\overline{\Delta}_{1}^{U}] + 2\sum_{j=1}^{\infty} \bC_{\pi^{L}}[\overline{\Delta}_{1}^{U}, \overline{\Delta}_{j+1}^{U}], \\
    &\overline{\sigma}^{2,L} \coloneqq \bV_{\pi^{U}}[\overline{\Delta}_{1}^{L}] +2 \sum_{j=1}^{\infty} \bC_{\pi^{U}}[\overline{\Delta}_{1}^{L}, \overline{\Delta}_{j+1}^{L}],  \\
    &\mathbb{C}^{U} \coloneqq \sum_{j=0}^{\infty} \bC_{\pi^{L}}[ \Delta_{1}^{U}, \overline{\Delta}_{j+1}^{U}] + \sum_{j=1}^{\infty} \bC_{\pi^{L}}[ \overline{\Delta}_{1}^{U}, \Delta_{j+1}^{U}], \text{ and} \\
    &\mathbb{C}^{L} \coloneqq \sum_{j=0}^{\infty} \bC_{\pi^{U}}[\Delta_{1}^{L}, \overline{\Delta}_{j+1}^{L}] + \sum_{j=1}^{\infty} \bC_{\pi^{U}}[\overline{\Delta}_{1}^{L}, \Delta_{j+1}^{L}].
\end{align*}

\begin{lem} \label{lemma:variance_of_SnL_and_SnU}
Under the assumptions of Theorem \ref{theorem:law_of_large_numbers_and_central_limit_theorem_for_crossing_times}, for all $i \in S^{L}$ the following hold:
\begin{align*}
  &\text{(i) } \bE_{\delta_{i}^{L}}[S_{T_{i,1}^{L}}^{U}] = (\pi_{i}^{L})^{-1} \mu^{U} \text{ and } \bE_{\delta_{i}^{L}}[\overline{S}_{T_{i,1}^{L}}^{U}] = (\pi_{i}^{L})^{-1} (\mu^{U}+\mu^{L}), \\
  &\text{(ii) } \bV_{\delta_{i}^{L}}[S_{T_{i,1}^{L}}^{U}] = (\pi_{i}^{L})^{-1} \sigma^{2,U} \text{ and } \bV_{\delta_{i}^{L}}[\overline{S}_{T_{i,1}^{L}}^{U}] = (\pi_{i}^{L})^{-1} \overline{\sigma}^{2,U}; \text{ and} \\
  &\text{(iii) } \bC_{\delta_{i}^{L}}[S_{T_{i,1}^{L}}^{U},\overline{S}_{T_{i,1}^{L}}^{U}] = (\pi_{i}^{L})^{-1} \mathbb{C}^{U}.
\end{align*}
The above statements also hold with $U$ replaced by $L$.
\end{lem}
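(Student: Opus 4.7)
The plan is to exploit the i.i.d.\ regenerative-cycle decomposition from Lemma \ref{lemma:regenerative_property_for_crossing_times} together with Kac's identity $\bE_{\delta_{i}^{L}}[T_{i,1}^{L}]=(\pi_{i}^{L})^{-1}$, which follows from positive recurrence of $\{Z_{\nu_{j}}\}$ (Theorem \ref{theorem:uniform_ergodicity}). The idea is to compute each limiting moment of the partial sums $S_{n}^{U}$ and $\overline{S}_{n}^{U}$ in two ways---once via the ergodic/CLT theory for the uniformly ergodic Markov chain (which produces $\mu^{U}$, $\mu^{U}+\mu^{L}$, $\sigma^{2,U}$, $\overline{\sigma}^{2,U}$, and $\mathbb{C}^{U}$), and once by splitting the partial sum along the renewal epochs $T_{i,l}^{L}$ and invoking the classical (possibly bivariate) SLLN/CLT on the i.i.d.\ blocks provided by Lemma \ref{lemma:regenerative_property_for_crossing_times}---and then to match the two expressions.

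For part (i) I would write $S_{T_{i,n}^{L}}^{U}=\sum_{l=1}^{n}\sum_{j=T_{i,l-1}^{L}+1}^{T_{i,l}^{L}}\Delta_{j}^{U}$; the outer summands are i.i.d.\ under $\bP_{\delta_{i}^{L}}$ by Lemma \ref{lemma:regenerative_property_for_crossing_times}. The standard renewal-reward argument with random index $N(n)=\sup\{l:T_{i,l}^{L}\le n\}$ gives
\[
\frac{1}{n}S_{n}^{U}\xrightarrow[n\to\infty]{\text{a.s.}}\frac{\bE_{\delta_{i}^{L}}[S_{T_{i,1}^{L}}^{U}]}{\bE_{\delta_{i}^{L}}[T_{i,1}^{L}]}=\pi_{i}^{L}\,\bE_{\delta_{i}^{L}}[S_{T_{i,1}^{L}}^{U}].
\]
On the other hand, $\Delta_{j}^{U}$ is a measurable functional of $Z_{\nu_{j-1}}$ together with innovations independent of $\mathcal{F}_{\nu_{j-1}}$, so the ergodic theorem for the uniformly ergodic chain $\{Z_{\nu_{j}}\}$ forces the same limit to equal $\bE_{\pi^{L}}[\Delta_{1}^{U}]=\mu^{U}$. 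Equating yields the first identity; replacing $\Delta_{j}^{U}$ by $\overline{\Delta}_{j}^{U}$ throughout gives the second.

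Parts (ii) and (iii) follow the same template applied to the centered cycle increments $S_{T_{i,l}^{L}}^{U}-S_{T_{i,l-1}^{L}}^{U}-\mu^{U}K_{i,l}^{L}$ (and analogously for $\overline{S}^{U}$). These are i.i.d.\ with mean zero under $\bP_{\delta_{i}^{L}}$, so the classical bivariate CLT combined with Anscombe's random-index theorem yields
\[
\frac{1}{\sqrt{n}}\bigl(S_{n}^{U}-n\mu^{U},\;\overline{S}_{n}^{U}-n(\mu^{U}+\mu^{L})\bigr)\xrightarrow[n\to\infty]{d} N(0,\Sigma_{i}),
\]
where $\Sigma_{i}$ equals $\pi_{i}^{L}$ times the covariance matrix of the centered cycle pair $(S_{T_{i,1}^{L}}^{U}-\mu^{U}T_{i,1}^{L},\;\overline{S}_{T_{i,1}^{L}}^{U}-(\mu^{U}+\mu^{L})T_{i,1}^{L})$. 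The stationary-sequence CLT for functionals of $\{Z_{\nu_{j}}\}$ identifies the same limiting covariance with diagonal $(\sigma^{2,U},\overline{\sigma}^{2,U})$ and off-diagonal $\mathbb{C}^{U}$; the series in \eqref{definition_sigma_L} and in the definition of $\mathbb{C}^{U}$ are absolutely convergent thanks to the geometric mixing supplied by uniform ergodicity. Comparing matrix entries then produces (ii) and (iii), and the $L$-versions follow from the symmetric argument applied to the cycles of $\{Z_{\tau_{j}}\}$.

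The main obstacle is verifying square-integrability of $T_{i,1}^{L}$, $S_{T_{i,1}^{L}}^{U}$ and $\overline{S}_{T_{i,1}^{L}}^{U}$ under $\bP_{\delta_{i}^{L}}$, without which neither the renewal CLT nor the Fubini interchange unfolding the covariance series is legitimate. Uniform ergodicity (Theorem \ref{theorem:uniform_ergodicity}) supplies geometric tails for $T_{i,1}^{L}$, hence all of its polynomial moments. For the inner sums, Assumption \ref{H9} together with a stochastic-domination argument (coupling $\tau_{j+1}-\nu_{j}$ with the first generation in which the immigration exceeds the pending upper threshold) controls $\bE_{\pi^{L}}[(\Delta_{1}^{U})^{2}]$ uniformly in the starting state, while strong subcriticality of $\Pi^{L}$ produces uniform geometric tails for $\Delta_{1}^{L}$; Minkowski's inequality then transports these bounds to the cycle sums.
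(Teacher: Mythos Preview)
Your route is genuinely different from the paper's. The paper proves each identity by a direct cycle computation: it applies Proposition~1.69 and Theorem~1.54 of \citet{Serfozo-2009} to write $\bE_{\delta_{i}^{L}}\bigl[\sum_{j=1}^{T_{i,1}^{L}} V_{j}\bigr]=(\pi_{i}^{L})^{-1}\bE_{\pi^{L}}[V_{1}]$ for suitable choices of $V_{j}$ (namely $\Delta_{j}^{U}$, $(\Delta_{j}^{U}-\mu^{U})^{2}$, and the cross-product), and handles the off-diagonal sum in the variance expansion by an explicit add-and-subtract trick using stationarity so that the infinite covariance series appears directly. No limit theorem is invoked anywhere; the identities drop out of the Markov-chain occupation-measure machinery. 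Your argument instead proves the SLLN/CLT for $S_{n}^{U}$ first (via a mixing CLT for functionals of the uniformly ergodic chain) and then backs out the cycle moments by matching the renewal-reward/Anscombe variance against the stationary-series variance. This is valid, and has the appeal of delivering Theorem~\ref{theorem:law_of_large_numbers_and_central_limit_theorem_for_crossing_times} and Lemma~\ref{lemma:variance_of_SnL_and_SnU} in one stroke; but within the paper's architecture it inverts the dependency (the lemma is stated precisely as the input to the theorem's proof), so you are effectively replacing both by an appeal to an external mixing CLT. The paper's direct computation keeps the lemma self-contained and avoids that black box.

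One correction in your moment discussion: the second moment of $\Delta_{1}^{U}$ should come from \ref{H8}, not \ref{H9}. Under \ref{H8} the supercritical BPRE never hits zero and $\bP_{\delta_{i}^{L}}(\tau_{1}>n)$ decays geometrically (this is how Proposition~\ref{proposition:mean_and_variance_are_finite_and_variance_is_positive} obtains $\bE_{\pi^{L}}[(\Delta_{1}^{U})^{2}]<\infty$). Your immigration-domination coupling bounds $\Delta_{1}^{U}$ by a geometric with parameter $p(U_{1})=\bP(I_{0}^{U}\ge U_{1}\mid U_{1})$; its second moment requires $\bE[p(U_{1})^{-2}]<\infty$, which \ref{H9} (controlling only $\bE[U_{1}/p(U_{1})]$) does not supply. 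Assumption \ref{H9} is instead what feeds the $L$-side via the first moment of $\pi^{U}$.
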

Proposition \ref{proposition:mean_and_variance_are_finite_and_variance_is_positive} in Appendix \ref{subsection:finiteness_of_means_and_variances} shows that $\sigma^{2,T}$ and $\overline{\sigma}^{2,T}$ are positive and finite, and $\abs{\mathbb{C}^{T}}<\infty$. We are now ready to prove Theorem \ref{theorem:law_of_large_numbers_and_central_limit_theorem_for_crossing_times}. The proof relies on decomposing $S_{n}^{U}$ and $S_{n}^{L}$ into i.i.d.\ cycles using Lemma \ref{lemma:regenerative_property_for_crossing_times}. Specifically, conditionally on $Z_{\nu_{0}}=i \in S^{L}$ (resp.\ $Z_{\tau_{1}}=i \in S^{U}$), the random variables $\{ S_{T_{i,l}^{L}}^{U}-S_{T_{i,l-1}^{L}}^{U} \}_{l=1}^{\infty}$ (resp.\ $\{ S_{T_{i,l}^{U}-1}^{L}-S_{T_{i,l-1}^{U}-1}^{L} \}_{l=1}^{\infty}$) are i.i.d.

\begin{proof}[Proof of Theorem \ref{theorem:law_of_large_numbers_and_central_limit_theorem_for_crossing_times}]
  We begin by proving (i). For $i  \in S^{L}$ and $n \in \mathbb{N}$, let $N_{i}^{L}(n) \coloneqq \sum_{l=1}^{\infty} \mathbf{I}_{ \{T_{i,l}^{L} \leq n\}}$ be the number of times $T_{i,l}^{L}$ is in $\{ 0,1,\dots,n \}$. Conditionally on $Z_{\nu_{0}}=i$, notice that $N_{i}^{L}(n)$ is a renewal process (recall that $T_{i,0}^{L}=0$). We recall that $K_{i,l}^{L} = T_{i,l}^{L}-T_{i,l-1}^{L}$ and let $K_{i,n}^{*,L} \coloneqq n-T_{i,N_{i}^{L}(n)}$, $R_{i,l}^{L} \coloneqq S_{T_{i,l}^{L}}^{U}-S_{T_{i,l-1}^{L}}^{U}$, and $R_{i,n}^{*,L} \coloneqq S_{n}^{U}-S_{T_{i,N_{i}^{L}(n)}^{L}}^{U}$.  Using the decomposition
\begin{equation} \label{decomposition_SnL}
    \frac{1}{n} S_{n}^{U} = \frac{N_{i}^{L}(n)}{n} \biggl( \frac{1}{N_{i}^{L}(n)} \sum_{l=1}^{N_{i}^{L}(n)} R_{i,l}^{L} + \frac{1}{N_{i}^{L}(n)} R_{i,n}^{*,L} \biggr),
\end{equation}
$\{ R_{i,l}^{L} \}_{l=1}^{\infty}$ are i.i.d.\ and $\lim_{n \to \infty} N_{i}^{L}(n) = \infty$ a.s., we obtain using the law of large numbers for random sums and Lemma \ref{lemma:variance_of_SnL_and_SnU} (i) that
\begin{equation} \label{law_of_large_numbers}
  \lim_{n \to \infty} \frac{1}{N_{i}^{L}(n)} \sum_{l=1}^{N_{i}^{L}(n)} R_{i,l}^{L} = \bE_{\delta_{i}^{L}}[S_{T_{i,1}^{L}}^{U}]=(\pi_{i}^{L})^{-1} \mu^{U} \text{ a.s.}
\end{equation}
Also, 
\begin{equation*}
\limsup_{n \to \infty} \frac{1}{N_{i}^{L}(n)} R_{i,n}^{*,L} \leq \lim_{n \to \infty} \frac{1}{N_{i}^{L}(n)} R_{i,N_{i}^{L}(n)+1}^{L} =0 \text{ a.s.}
\end{equation*}
Finally, using the key renewal theorem (Corollary 2.11 of \citet{Serfozo-2009}) and Remark \ref{remark:stationary_distribution}
\begin{equation} \label{frequency_of_return_to_i}
  \lim_{n \to \infty} \frac{N_{i}^{L}(n)}{n}=\frac{1}{\bE_{\delta_{i}^{L}}[T_{i,1}^{L}]}=\pi_{i}^{L} \text{ a.s.}
\end{equation}
Using \eqref{law_of_large_numbers} and \eqref{frequency_of_return_to_i} in \eqref{decomposition_SnL}, we obtain the SLLN for $S_{n}^{U}$. Turning to the central limit theorem, we let $\underline{R}_{i,l}^{L} \coloneqq R_{i,l}^{L} - \mu^{U} K_{i,l}^{L}$ and $\underline{R}_{i,n}^{*,L} \coloneqq R_{i,n}^{*,L} - \mu^{U} K_{i,n}^{*,L}$. Conditionally on $Z_{\nu_{0}}=i$, using the decomposition in \eqref{decomposition_SnL} and centering, we obtain
\begin{equation*}
   \frac{1}{\sqrt{n}}(S_{n}^{U}-n\mu^{U}) = \sqrt{\frac{N_{i}^{L}(n)}{n}} \biggl( \frac{1}{\sqrt{N_{i}^{L}(n)}} \sum_{l=1}^{N_{i}^{L}(n)} \underline{R}_{i,l}^{L} + \frac{1}{\sqrt{N_{i}^{L}(n)}} \underline{R}_{i,n}^{*,L} \biggr),
\end{equation*}
where $\{ \underline{R}_{i,l}^{L} \}_{l=1}^{\infty}$ are i.i.d.\ with mean $0$ and variance which using Lemma \ref{lemma:variance_of_SnL_and_SnU} is
\begin{equation} \label{variance_central_limit_theorem_for_crossing_times}
  \bV_{\delta_{i}^{L}}[S_{T_{i,1}^{L}}^{U}-\mu^{U}T_{i,1}^{L}] = (\pi_{i}^{L})^{-1} \sigma^{2,U}.
\end{equation}
Finally, using the central limit theorem for i.i.d.\ random sums and \eqref{frequency_of_return_to_i}, it follows that
\begin{equation*}
  \sqrt{\frac{N_{i}^{L}(n)}{n}} \biggl( \frac{1}{\sqrt{N_{i}^{L}(n)}} \sum_{l=1}^{N_{i}^{L}(n)} \underline{R}_{i,l}^{L} \biggr) \xrightarrow[n \to \infty]{d} N(0,\sigma^{2,U}).
\end{equation*}
To complete the proof notice that
\begin{equation*}
   \abs{ \frac{1}{\sqrt{N_{i}^{L}(n)}} \underline{R}_{i,n}^{*,L}} \leq \frac{1}{\sqrt{N_{i}^{L}(n)}} \abs{\underline{R}_{i,N_{i}^{L}(n)+1}^{L}} \xrightarrow[n \to \infty]{p} 0.
\end{equation*}
The proof for $S_{n}^{L}$ is similar.
\end{proof}

When studying the proportion of time the process spends in supercritical and subcritical regimes we will need the above theorem with $n$ replaced by a random time $\tilde{N}(n)$.

\begin{Rk} \label{remark:law_of_large_numbers_and_central_limit_theorem_for_crossing_times}
Theorem \ref{theorem:law_of_large_numbers_and_central_limit_theorem_for_crossing_times} holds if $n$ is replaced by a random time $\tilde{N}(n)$, where $\lim_{n \to \infty} \tilde{N}(n)=\infty$ a.s.
\end{Rk}

\section{Proportion of time spent in supercritical and subcritical regimes} \label{section:proportion_of_time_spent_in_supercritical_and_subcritical_regimes}

We recall that $\chi_{n}^U =\mathbf{I}_{\cup_{j=1}^{\infty} [\nu_{j-1},\tau_{j})}(n)$ is $1$ if the  process is in the supercritical regime and $0$ otherwise and similarly $\chi_{n}^L=1-\chi_{n}^U$. Also $\theta_{n}^{U}=\frac{1}{n} C_{n}^U$ is the proportion of time the process spends in the supercritical regime up to time $n-1$. $\theta_{n}^{L}$ is defined similarly. The limit theorems for $\theta_{n}^{U}$ and $\theta_{n}^{L}$ will invoke the i.i.d.\ blocks developed in Section \ref{section:regenerative_property_of_crossing_times}. Let $\bm{S}_{n}^{U} \coloneqq (S_{n}^{U}, \overline{S}_{n}^{U})^{\top}$, $\bm{\mu}^{U} \coloneqq (\mu^{U},\mu^{U}+\mu^{L})^{\top}$, $\bm{\mu}^{L} \coloneqq (\mu^{L},\mu^{U}+\mu^{L})^{\top}$, and
\begin{equation*}
    \Sigma^{U} \coloneqq \begin{pmatrix}
    \sigma^{2,U} & \mathbb{C}^{U} \\
    \mathbb{C}^{U} & \overline{\sigma}^{2,U}
    \end{pmatrix}, \text{ and }
    \Sigma^{L} \coloneqq \begin{pmatrix}
    \sigma^{2,L} & \mathbb{C}^{L} \\
    \mathbb{C}^{L} & \overline{\sigma}^{2,L}
    \end{pmatrix}.
\end{equation*}
We note that while $S_{n}^{U}$ represents the length of the first $n$ supercritical regimes $\overline{S}_{n}^{U}$ is the total time taken for the process to complete the first $n$ cycles.

\begin{lem} \label{lemma:multivariate_central_limit_theorem_for_crossing_times}
Under the conditions of Theorem \ref{theorem:law_of_large_numbers_and_central_limit_theorem_for_proportions}, $\frac{1}{\sqrt{n}} (\bm{S}_{n}^{U}-n\bm{\mu}^{U}) \xrightarrow[n \to \infty]{d} N(\bm{0},\Sigma^{U})$, and $\frac{1}{\sqrt{n}} (\bm{S}_{n}^{L}-n\bm{\mu}^{L}) \xrightarrow[n \to \infty]{d} N(\bm{0},\Sigma^{L})$.
\end{lem}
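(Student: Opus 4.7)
[Proof proposal for Lemma \ref{lemma:multivariate_central_limit_theorem_for_crossing_times}]
The plan is to reduce the joint central limit theorem to the scalar statement proved in Theorem \ref{theorem:law_of_large_numbers_and_central_limit_theorem_for_crossing_times} via the Cram\'er--Wold device, and then re-run the regenerative argument of that theorem on an arbitrary linear combination of $S_{n}^{U}$ and $\overline{S}_{n}^{U}$. Fix $(a,b) \in \mathbb{R}^{2}$ and let $W_{j} \coloneqq a \Delta_{j}^{U} + b \overline{\Delta}_{j}^{U}$, so that $a S_{n}^{U} + b \overline{S}_{n}^{U} = \sum_{j=1}^{n} W_{j}$. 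Under $\pi^{L}$ the sequence $\{W_{j}\}_{j \geq 1}$ inherits stationarity from $\{(\Delta_{j}^{U},\overline{\Delta}_{j}^{U})\}_{j \geq 1}$, and by bilinearity of covariance together with Lemma \ref{lemma:variance_of_SnL_and_SnU} (applied to $S_{T_{i,1}^{L}}^{U}$, $\overline{S}_{T_{i,1}^{L}}^{U}$, and their covariance) the per-cycle variance of $\sum_{j=T_{i,l-1}^{L}+1}^{T_{i,l}^{L}} W_{j}$ equals $(\pi_{i}^{L})^{-1} \bigl(a^{2}\sigma^{2,U} + 2ab\,\mathbb{C}^{U} + b^{2}\overline{\sigma}^{2,U}\bigr) = (\pi_{i}^{L})^{-1}(a,b)\Sigma^{U}(a,b)^{\top}$.

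First I would condition on $Z_{\nu_{0}}=i \in S^{L}$ and use Lemma \ref{lemma:regenerative_property_for_crossing_times}(i) exactly as in the proof of Theorem \ref{theorem:law_of_large_numbers_and_central_limit_theorem_for_crossing_times}: the vector-valued blocks $B_{i,l}^{L}$ are i.i.d., so the random variables $\tilde{R}_{i,l}^{L} \coloneqq \sum_{j=T_{i,l-1}^{L}+1}^{T_{i,l}^{L}}\bigl(W_{j} - a\mu^{U} - b(\mu^{U}+\mu^{L})\bigr)$ are i.i.d., mean zero, with variance as computed above (finite by Proposition \ref{proposition:mean_and_variance_are_finite_and_variance_is_positive}, which also gives $|\mathbb{C}^{U}|<\infty$). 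Using the decomposition analogous to \eqref{decomposition_SnL},
\begin{equation*}
    \frac{1}{\sqrt{n}}\bigl(aS_{n}^{U}+b\overline{S}_{n}^{U} - n(a\mu^{U}+b(\mu^{U}+\mu^{L}))\bigr)
    = \sqrt{\frac{N_{i}^{L}(n)}{n}} \Biggl( \frac{1}{\sqrt{N_{i}^{L}(n)}} \sum_{l=1}^{N_{i}^{L}(n)} \tilde{R}_{i,l}^{L} + \frac{1}{\sqrt{N_{i}^{L}(n)}} \tilde{R}_{i,n}^{*,L} \Biggr),
\end{equation*}
the CLT for i.i.d.\ random sums combined with $N_{i}^{L}(n)/n \to \pi_{i}^{L}$ a.s.\ yields convergence of the leading term to $N(0,(a,b)\Sigma^{U}(a,b)^{\top})$, while the boundary term $\tilde{R}_{i,n}^{*,L}/\sqrt{N_{i}^{L}(n)}$ is negligible in probability by the same bound used for $\underline{R}_{i,n}^{*,L}$ in Theorem \ref{theorem:law_of_large_numbers_and_central_limit_theorem_for_crossing_times}.

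The conditioning on $Z_{\nu_{0}}=i$ is removed by averaging against $\pi^{L}$ (finite mixture over the finite state space $S^{L}$), and the Cram\'er--Wold device then upgrades the above scalar convergence to the joint statement $n^{-1/2}(\bm{S}_{n}^{U}-n\bm{\mu}^{U}) \xrightarrow{d} N(\bm{0},\Sigma^{U})$. The proof for $\bm{S}_{n}^{L}$ is identical after replacing $(S^{L},\pi^{L},T_{i,l}^{L})$ by $(S^{U},\pi^{U},T_{i,l}^{U})$ and invoking Lemma \ref{lemma:regenerative_property_for_crossing_times}(ii) together with the $U$-versions of Lemma \ref{lemma:variance_of_SnL_and_SnU}. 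The main delicate point is checking positive semi-definiteness and finiteness of $\Sigma^{T}$ so that the variance of each linear combination is well-defined and non-negative; this is exactly what Proposition \ref{proposition:mean_and_variance_are_finite_and_variance_is_positive} provides, and the Cauchy--Schwarz bound $|\mathbb{C}^{T}| \leq \sigma^{T}\overline{\sigma}^{T}$ follows directly from representing $\mathbb{C}^{T}$ as the limiting covariance of the cycle sums through Lemma \ref{lemma:variance_of_SnL_and_SnU}.
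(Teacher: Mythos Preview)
Your proof is correct and follows essentially the same regenerative-block argument as the paper: the paper works directly with the vector-valued i.i.d.\ blocks $\underline{\bm{R}}_{i,l}^{L}=\bm{R}_{i,l}^{L}-K_{i,l}^{L}\bm{\mu}^{U}$ and applies the multivariate CLT for random sums, while you project first via Cram\'er--Wold and then re-run the scalar argument of Theorem \ref{theorem:law_of_large_numbers_and_central_limit_theorem_for_crossing_times}; these are equivalent. One minor remark: the unconditioning step is not ``averaging against $\pi^{L}$'' (the law of $Z_{0}$ need not be $\pi^{L}$), but rather the observation that the limit does not depend on the starting state $i\in S^{L}$, so the convergence holds for any initial distribution supported on $S^{L}$.
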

\begin{proof}[Proof of Lemma \ref{lemma:multivariate_central_limit_theorem_for_crossing_times}]
  The proof is similar to that of Theorem \ref{theorem:law_of_large_numbers_and_central_limit_theorem_for_crossing_times}. We let $\bm{R}_{i,l}^{L} \coloneqq \bm{S}_{T_{i,l}^{L}}^{U}-\bm{S}_{T_{i,l-1}^{L}}^{U}$, $\bm{R}_{i,n}^{*,L} \coloneqq \bm{S}_{n}^{U}-\bm{S}_{T_{i,N_{i}^{L}(n)}^{L}}^{U}$, $\underline{\bm{R}}_{i,l}^{L} \coloneqq \bm{R}_{i,l}^{L} - K_{i,l}^{L} \bm{\mu}^{U}$ and $\underline{\bm{R}}_{i,n}^{*,L} \coloneqq \bm{R}_{i,n}^{*,L} - K_{i,n}^{*,L} \bm{\mu}^{U}$. Conditionally on $Z_{\nu_{0}}=i$, we write
\begin{equation*}
   \frac{1}{\sqrt{n}} (\bm{S}_{n}^{U}-n\bm{\mu}^{U}) = \sqrt{\frac{N_{i}^{L}(n)}{n}} \biggl( \frac{1}{\sqrt{N_{i}^{L}(n)}} \sum_{l=1}^{N_{i}^{L}(n)} \underline{\bm{R}}_{i,l}^{L} + \frac{1}{\sqrt{N_{i}^{L}(n)}} \underline{\bm{R}}_{i,n}^{*,L} \biggr).
\end{equation*}
Now, by Lemma \ref{lemma:regenerative_property_for_crossing_times} and Lemma \ref{lemma:variance_of_SnL_and_SnU}, $\{ \underline{\bm{R}}_{i,l}^{L} \}_{l=1}^{\infty}$ are i.i.d.\ with mean $\bm{0}=(0,0)^{\top}$ and covariance matrix $(\pi_{i}^{L})^{-1} \Sigma^{U}$. Using the key renewal theorem we conclude that
\begin{equation*}
   \sqrt{\frac{N_{i}^{L}(n)}{n}} \biggl( \frac{1}{\sqrt{N_{i}^{L}(n)}} \sum_{l=1}^{N_{i}^{L}(n)} \underline{\bm{R}}_{i,l}^{L} \biggr)  \xrightarrow[n \to \infty]{d} N(\bm{0},\Sigma^{U})
\end{equation*}
   and
\begin{equation*}
  \frac{1}{\sqrt{N_{i}^{L}(n)}} \abs{\underline{\bm{R}}_{i,n}^{*,L}} \xrightarrow[n \to \infty]{p} 0.
\end{equation*}
The proof of (ii) is similar.
\end{proof}
\begin{Rk} \label{remark:central_limit_theorem_for_crossing_times}
  Lemma \ref{lemma:multivariate_central_limit_theorem_for_crossing_times} holds also with $n$ replaced by a random time $\tilde{N}(n)$ such that $\lim_{n \to \infty} \tilde{N}(n)=\infty$ a.s.
\end{Rk}
The next lemma concerns the number of crossings of upper and lower thresholds, namely, $\tilde{N}^{U}(n) \coloneqq \sup \{ j \geq 0 : \tau_{j} \leq n \}$ and $\tilde{N}^{L}(n) \coloneqq \sup \{ j \geq 0 : \nu_{j} \leq n \}$ where $n \in \mathbb{N}_{0}$.

\begin{lem} \label{lemma:law_of_large_numbers_for_proportion_of_times}
Under the conditions of Theorem \ref{theorem:law_of_large_numbers_and_central_limit_theorem_for_proportions},  \\
  (i) $\lim_{n \to \infty} \frac{\tilde{N}^{U}(n)}{n+1} = \frac{1}{\mu^{U}+\mu^{L}}$ and $\lim_{n \to \infty} \frac{\tilde{N}^{L}(n)}{n+1} = \frac{1}{\mu^{U}+\mu^{L}}$ a.s. \\
  (ii) $\lim_{n \to \infty} \frac{C_{n+1}^{U}}{\tilde{N}^{U}(n)} = \mu^{U}$ and $\lim_{n \to \infty} \frac{C_{n+1}^{L}}{\tilde{N}^{L}(n)} = \mu^{L}$ a.s. \\
\end{lem}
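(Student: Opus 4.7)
The plan is to reduce both parts to the SLLN for $\{S_n^U\}$ and $\{S_n^L\}$ already established in Theorem \ref{theorem:law_of_large_numbers_and_central_limit_theorem_for_crossing_times}, combined with standard renewal-inversion arguments. The starting point for (i) is the identities $\tau_j = S_j^U + S_{j-1}^L$ and $\nu_j = S_j^U + S_j^L$, which follow directly from the recursive definition of the crossing times and the definition of $\Delta_j^U$ and $\Delta_j^L$. Under the assumptions of Theorem \ref{theorem:law_of_large_numbers_and_central_limit_theorem_for_proportions} (which cover the hypotheses of both parts of Theorem \ref{theorem:law_of_large_numbers_and_central_limit_theorem_for_crossing_times}), the SLLN gives $S_j^U/j \to \mu^U$ and $S_j^L/j \to \mu^L$ a.s.\ so that $\tau_j/j \to \mu^U+\mu^L$ and $\nu_j/j \to \mu^U+\mu^L$ a.s.\ as $j \to \infty$. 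Recurrence of $\{Z_n\}$ (Theorem \ref{theorem:recurrence_uniformly_ergodic_stationary_markov_chains}) guarantees $\tilde{N}^U(n),\tilde{N}^L(n) \to \infty$ a.s.

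Part (i) then follows from the standard sandwich
\begin{equation*}
\frac{\tau_{\tilde{N}^U(n)}}{\tilde{N}^U(n)} \;\le\; \frac{n+1}{\tilde{N}^U(n)} \;\le\; \frac{\tau_{\tilde{N}^U(n)+1}}{\tilde{N}^U(n)+1}\cdot \frac{\tilde{N}^U(n)+1}{\tilde{N}^U(n)},
\end{equation*}
since both bounding ratios tend to $\mu^U+\mu^L$ a.s.\ along the subsequence $\tilde{N}^U(n)$. Inverting gives $\tilde{N}^U(n)/(n+1) \to 1/(\mu^U+\mu^L)$ a.s. The argument for $\tilde{N}^L(n)$ is identical, using $\nu_{\tilde{N}^L(n)} \le n < \nu_{\tilde{N}^L(n)+1}$.

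For part (ii), the key observation is the decomposition
\begin{equation*}
C_{n+1}^U \;=\; S_{\tilde{N}^U(n)}^U + R_n^U, \qquad 0 \le R_n^U \le \Delta_{\tilde{N}^U(n)+1}^U,
\end{equation*}
obtained by splitting the sum $\sum_{k=0}^{n}\chi_k^U$ into completed supercritical regimes (contributing $S_{\tilde{N}^U(n)}^U$) and a partial contribution of length $\max(0, n-\nu_{\tilde{N}^U(n)}+1)$ if the process has re-entered a supercritical regime by time $n$; a short case analysis on whether $\nu_{\tilde{N}^U(n)} \le n$ or not (noting $n < \tau_{\tilde{N}^U(n)+1}$ in either case) gives the stated bound. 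Dividing by $\tilde{N}^U(n)$, Remark \ref{remark:law_of_large_numbers_and_central_limit_theorem_for_crossing_times} (SLLN with a random index tending to infinity a.s.) yields $S_{\tilde{N}^U(n)}^U/\tilde{N}^U(n) \to \mu^U$ a.s., while $\Delta_k^U/k \to 0$ a.s.\ follows from the Cesàro-type differencing $\Delta_k^U/k = S_k^U/k - ((k-1)/k)\,S_{k-1}^U/(k-1)$; hence $\Delta_{\tilde{N}^U(n)+1}^U/\tilde{N}^U(n) \to 0$ a.s. The same argument with $U \leftrightarrow L$ and with $C_{n+1}^L = S_{\tilde{N}^L(n)}^L + R_n^L$, $0 \le R_n^L \le \Delta_{\tilde{N}^L(n)+1}^L$, delivers the subcritical limit. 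The only mildly delicate point is the verification of the decomposition and its remainder bound; the rest is a bookkeeping application of results already in hand.
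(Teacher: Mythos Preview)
Your proof is correct and follows essentially the same renewal-inversion approach as the paper: a sandwich on $(n+1)/\tilde N^T(n)$ using $\tau_j/j,\nu_j/j\to\mu^U+\mu^L$ for part (i), and the decomposition $C_{n+1}^U=S_{\tilde N^U(n)}^U+R_n^U$ with $0\le R_n^U\le \Delta_{\tilde N^U(n)+1}^U$ for part (ii). The only cosmetic differences are that the paper reduces $\tilde N^U$ to $\tilde N^L$ via $\tilde N^L(n)\le \tilde N^U(n)\le \tilde N^L(n)+1$ and writes $\nu_j=\overline{S}_j^U$ instead of your $\nu_j=S_j^U+S_j^L$, and your remainder $R_n^U$ is exactly the paper's $\sum_{l=\nu_{\tilde N^U(n)}}^{n}1$.
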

\begin{proof}[Proof Lemma \ref{lemma:law_of_large_numbers_for_proportion_of_times}]
We begin by proving (i). We recall that $\tau_{0}=-1$ and $\tau_{j} < \nu_{j} < \tau_{j+1}$ a.s.\ for all $j \geq 0$ yielding that
\begin{equation*}
  \tilde{N}^{L}(n) \leq \tilde{N}^{U}(n) \leq \tilde{N}^{L}(n)+1.
\end{equation*}
Since $\tau_{j}$ and $\nu_{j}$ are finite almost surely, we obtain that $\lim_{n \to \infty} \tilde{N}^{U}(n)=\infty$ and $\lim_{n \to \infty} \tilde{N}^{L}(n)=\infty$ a.s. (i) follows if we show that $\lim_{n \to \infty} \frac{\tilde{N}^{L}(n)}{n+1} = \frac{1}{\mu^{U}+\mu^{L}}$ a.s. To this end, we notice that $\nu_{\tilde{N}^{L}(n)} \leq n \leq \nu_{\tilde{N}^{L}(n)+1}$ and for $n \geq \nu_{1}$
\begin{equation*}
  \frac{\nu_{\tilde{N}^{L}(n)}}{\tilde{N}^{L}(n)} \leq \frac{n}{\tilde{N}^{L}(n)} \leq \frac{\nu_{\tilde{N}^{L}(n)+1}}{\tilde{N}^{L}(n)+1} \frac{\tilde{N}^{L}(n)+1}{\tilde{N}^{L}(n)}
\end{equation*}
Clearly, $\lim_{n \to \infty} \frac{\tilde{N}^{L}(n)+1}{\tilde{N}^{L}(n)} = 1$ a.s. Remark \ref{remark:law_of_large_numbers_and_central_limit_theorem_for_crossing_times} with $\tilde{N}(n)=\tilde{N}^{U}(n)$ yields that
\begin{equation*}
  \lim_{n \to \infty} \frac{\nu_{\tilde{N}^{L}(n)}}{\tilde{N}^{L}(n)}=\lim_{n \to \infty} \frac{1}{\tilde{N}^{L}(n)} \sum_{j=1}^{\tilde{N}^{L}(n)} \overline{\Delta}_{j}^{U} = \mu^{U}+\mu^{L} \text{ a.s.}
\end{equation*}
Thus, we obtain
\begin{equation} \label{inequality_for_renewal_processes}
  \lim_{n \to \infty} \frac{\tilde{N}^{L}(n)}{n+1} = \frac{1}{\mu^{U}+\mu^{L}} \text{ a.s.}
\end{equation}
Turning to (ii), we notice that
\begin{equation*}
  C_{n+1}^{U} = \sum_{j=0}^{n} \chi_{j}^{U} = \sum_{j=1}^{\tilde{N}^{U}(n)} \Delta_{j}^{U}+\sum_{l=\nu_{\tilde{N}^{U}(n)}}^{n} 1,
\end{equation*}
where $\sum_{l=r}^{n}=0$ for $r>n$. Remark \ref{remark:law_of_large_numbers_and_central_limit_theorem_for_crossing_times} with $\tilde{N}(n)=\tilde{N}^{U}(n)$ yield that
\begin{align*}
  &\lim_{n \to \infty} \frac{1}{\tilde{N}^{U}(n)} \sum_{j=1}^{\tilde{N}^{U}(n)} \Delta_{j}^{U} = \mu^{U} \text{ a.s., and } \\
  &\limsup_{n \to \infty} \frac{1}{\tilde{N}^{U}(n)} \sum_{l=\nu_{\tilde{N}^{U}(n)}}^{n} 1 \leq \lim_{n \to \infty} \frac{1}{\tilde{N}^{U}(n)} \Delta_{\tilde{N}^{U}(n)+1}^{U} = 0 \text{ a.s.}
\end{align*}
Thus, we obtain that $\lim_{n \to \infty} \frac{C_{n+1}^{U}}{\tilde{N}^{U}(n)} = \mu^{U}$ a.s. Similarly, $\lim_{n \to \infty} \frac{C_{n+1}^{L}}{\tilde{N}^{L}(n)} = \mu^{L}$ a.s.
\end{proof}

Our next result is concerned with the joint distribution of the last time the process is in a specific regime and the proportion of time the process spends in that regime under the assumptions of Theorem \ref{theorem:law_of_large_numbers_and_central_limit_theorem_for_proportions}. Let
\begin{equation*}
     \overline{\Sigma}^{U} \coloneqq \begin{pmatrix}
    \sigma^{2,U}(\mu^{U}+\mu^{L}) & -\frac{\mathbb{C}^{U}}{\mu^{U}+\mu^{L}} \\
    -\frac{\mathbb{C}^{U}}{\mu^{U}+\mu^{L}} & \frac{\overline{\sigma}^{2,U}}{(\mu^{U}+\mu^{L})^{3}}
    \end{pmatrix} \text{ and } 
    \overline{\Sigma}^{L} \coloneqq \begin{pmatrix}
    \sigma^{2,L}(\mu^{U}+\mu^{L}) & -\frac{\mathbb{C}^{L}}{\mu^{U}+\mu^{L}} \\
    -\frac{\mathbb{C}^{L}}{\mu^{U}+\mu^{L}} & \frac{\overline{\sigma}^{2,L}}{(\mu^{U}+\mu^{L})^{3}}
    \end{pmatrix}.
\end{equation*}

\begin{lem} \label{lemma:central_limit_theorem_for_frequences}
Under the conditions of Theorem \ref{theorem:law_of_large_numbers_and_central_limit_theorem_for_proportions}, 
\begin{align*}
\sqrt{n+1}
\begin{pmatrix}
    \frac{C_{n+1}^{U}}{\tilde{N}^{U}(n)} - \mu^{U} \\
    \frac{\tilde{N}^{U}(n)}{n+1}-\frac{1}{\mu^{U}+\mu^{L}}
\end{pmatrix} &\xrightarrow[n \to \infty]{d} N(\bm{0},\overline{\Sigma}^{U}) \text{ and } 
\sqrt{n+1}
\begin{pmatrix}
    \frac{C_{n+1}^{L}}{\tilde{N}^{L}(n)} - \mu^{L} \\
    \frac{\tilde{N}^{L}(n)}{n+1}-\frac{1}{\mu^{U}+\mu^{L}}
\end{pmatrix} &\xrightarrow[n \to \infty]{d} N(\bm{0},\overline{\Sigma}^{L}).
\end{align*}
\end{lem}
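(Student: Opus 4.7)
The plan is to reduce both coordinates to continuous functionals of the bivariate CLT already established in Lemma~\ref{lemma:multivariate_central_limit_theorem_for_crossing_times} together with Remark~\ref{remark:central_limit_theorem_for_crossing_times}. I treat the supercritical case, since the subcritical case is strictly analogous after swapping $U \leftrightarrow L$. Set $N_n \coloneqq \tilde N^U(n)$ and, applying Remark~\ref{remark:central_limit_theorem_for_crossing_times} at the random index $N_n$, define
\begin{equation*}
(U_n,V_n)^\top \coloneqq \frac{1}{\sqrt{N_n}}\bigl(\bm{S}_{N_n}^{U} - N_n \bm{\mu}^{U}\bigr), \qquad (U_n,V_n)^\top \xrightarrow[n\to\infty]{d} N(\bm{0},\Sigma^{U}).
\end{equation*}
Each coordinate of the vector in the statement will be expressed, up to an $o_p(1)$ term, as a fixed linear functional of $(U_n,V_n)$.

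For the first coordinate I would use the decomposition $C_{n+1}^{U} = S_{N_n}^{U} + r_n$ from the proof of Lemma~\ref{lemma:law_of_large_numbers_for_proportion_of_times}, with end-piece satisfying $0 \le r_n \le \overline{\Delta}_{N_n+1}^{U}$. The regenerative structure of Lemma~\ref{lemma:regenerative_property_for_crossing_times} together with the finite-variance conclusion of Proposition~\ref{proposition:mean_and_variance_are_finite_and_variance_is_positive} makes $\overline{\Delta}_j^{U}$ the component of an i.i.d.\ cycle-sum sequence with finite second moment, whence $\max_{j \le n+1}\overline{\Delta}_j^{U} = o(\sqrt n)$ a.s. Since $N_n \asymp n$ a.s.\ by Lemma~\ref{lemma:law_of_large_numbers_for_proportion_of_times}(i), this gives $r_n = o(\sqrt n)$ a.s.; combined with $\sqrt{(n+1)/N_n} \to \sqrt{\mu^{U}+\mu^{L}}$, it yields
\begin{equation*}
\sqrt{n+1}\Bigl(\frac{C_{n+1}^{U}}{N_n}-\mu^{U}\Bigr) = \sqrt{\tfrac{n+1}{N_n}}\,U_n + o_p(1) = \sqrt{\mu^{U}+\mu^{L}}\,U_n + o_p(1).
\end{equation*}

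For the second coordinate I would use $\nu_j = \overline{S}_j^{U}$ together with the bracketing $\nu_{\tilde N^L(n)} \le n < \nu_{\tilde N^L(n)+1}$ to get the renewal-inversion identity
\begin{equation*}
\tilde N^L(n)(\mu^{U}+\mu^{L})-(n+1) = -\bigl[\overline{S}_{\tilde N^L(n)}^{U} - \tilde N^L(n)(\mu^{U}+\mu^{L})\bigr] - \bigl[(n+1)-\overline{S}_{\tilde N^L(n)}^{U}\bigr],
\end{equation*}
in which the last bracket is $o(\sqrt n)$ a.s.\ by the same cycle-maximum argument. Using $|\tilde N^U(n)-\tilde N^L(n)|\le 1$ I may replace $\tilde N^L(n)$ by $N_n$ at a cost absorbed into $o_p(1)$, and after dividing by $\sqrt{n+1}(\mu^{U}+\mu^{L})$ I obtain
\begin{equation*}
\sqrt{n+1}\Bigl(\frac{N_n}{n+1}-\frac{1}{\mu^{U}+\mu^{L}}\Bigr) = -(\mu^{U}+\mu^{L})^{-3/2}\,V_n + o_p(1).
\end{equation*}
Stacking the two displays gives $A(U_n,V_n)^\top + o_p(\bm{1})$ with $A=\operatorname{diag}\bigl(\sqrt{\mu^{U}+\mu^{L}},-(\mu^{U}+\mu^{L})^{-3/2}\bigr)$, and the continuous mapping theorem combined with the direct $2\times 2$ check $A\Sigma^{U} A^\top = \overline{\Sigma}^{U}$ delivers the stated limit.

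The main obstacle I expect is the two end-piece controls: $r_n$ in Step~1 and $(n+1)-\overline{S}_{\tilde N^L(n)}^{U}$ in Step~2. Because $\overline{\Delta}_j^{U}$ is only regenerative (not i.i.d.) across $j$, establishing $\max_{j \le n+1}\overline{\Delta}_j^{U}/\sqrt n \to 0$ a.s.\ requires first grouping the differences into i.i.d.\ blocks via the regeneration times $\{T_{i,l}^{L}\}$ and then applying the standard maximum-of-i.i.d.\ bound to the block sums, which is allowed because the blocks inherit finite second moment from Proposition~\ref{proposition:mean_and_variance_are_finite_and_variance_is_positive}. Once these negligibility claims are secured, the remainder of the argument is Slutsky's lemma plus continuous mapping applied to the bivariate limit already in hand.
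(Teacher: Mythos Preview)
Your proposal is correct and follows essentially the same route as the paper: reduce to the bivariate random-time CLT of Lemma~\ref{lemma:multivariate_central_limit_theorem_for_crossing_times}/Remark~\ref{remark:central_limit_theorem_for_crossing_times}, control an end-piece of size at most one cycle, and transform linearly to recover $\overline{\Sigma}^{U}$.

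The only noteworthy difference is organizational. The paper keeps both coordinates together from the start: it observes that $C_{n+1}^{U}$ and $n+1$ share the \emph{same} overshoot $\sum_{l=\nu_{\tilde N^{U}(n)}}^{n}1$ relative to $S_{\tilde N^{U}(n)}^{U}$ and $\overline{S}_{\tilde N^{U}(n)}^{U}$, so one application of Remark~\ref{remark:central_limit_theorem_for_crossing_times} yields a joint CLT for $\bigl(C_{n+1}^{U}/\tilde N^{U}(n),\,(n+1)/\tilde N^{U}(n)\bigr)$ with covariance $\Sigma^{U}$ under the $\sqrt{\tilde N^{U}(n)}$ scaling; the delta method with $g(x,y)=(x,1/y)$ then inverts the second coordinate, and a final Slutsky step replaces $\sqrt{\tilde N^{U}(n)}$ by $\sqrt{n+1}$. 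Your argument instead treats the coordinates separately, linearizes by hand, and for the second coordinate passes through $\tilde N^{L}(n)$ before switching to $\tilde N^{U}(n)$---this introduces a second end-piece and the $|\tilde N^{U}-\tilde N^{L}|\le 1$ correction, both of which you correctly control. The paper's packaging avoids that detour and the explicit max-of-blocks argument (its remainder control is inherited from the proof of Theorem~\ref{theorem:law_of_large_numbers_and_central_limit_theorem_for_crossing_times}), but your hands-on linearization makes the constant matrix $A$ and the identity $A\Sigma^{U}A^{\top}=\overline{\Sigma}^{U}$ completely transparent. Either way arrives at the same limit.
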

\begin{proof}[Proof of Lemma \ref{lemma:central_limit_theorem_for_frequences}]
We only prove the statement for $C_{n+1}^{U}$ and $\tilde{N}^{U}(n)$ since the other case is similar. We write
\begin{equation*}
\sqrt{\tilde{N}^{U}(n)} 
\begin{pmatrix}
    \frac{C_{n+1}^{U}}{\tilde{N}^{U}(n)} - \mu^{U} \\
    \frac{n+1}{\tilde{N}^{U}(n)}-(\mu^{U}+\mu^{L})
\end{pmatrix}
=\frac{1}{\sqrt{\tilde{N}^{U}(n)}} \sum_{j=1}^{\tilde{N}^{U}(n)}
\begin{pmatrix}
    \Delta_{j}^{U}-\mu^{U} \\
    \overline{\Delta}_{j}^{U}-(\mu^{U}+\mu^{L}) 
\end{pmatrix} + \frac{1}{\sqrt{\tilde{N}^{U}(n)}} \sum_{l=\nu_{\tilde{N}^{U}(n)}}^{n} \begin{pmatrix} 1 \\ 1 \end{pmatrix}
\end{equation*}
and using Lemma \ref{lemma:multivariate_central_limit_theorem_for_crossing_times} and Remark \ref{remark:central_limit_theorem_for_crossing_times} with $\tilde{N}(n)=\tilde{N}^{U}(n)$ we obtain that
\begin{equation*}
\sqrt{\tilde{N}^{U}(n)} 
\begin{pmatrix}
    \frac{C_{n+1}^{U}}{\tilde{N}^{U}(n)} - \mu^{U} \\
    \frac{n+1}{\tilde{N}^{U}(n)}-(\mu^{U}+\mu^{L})
\end{pmatrix}  \xrightarrow[n \to \infty]{d}  N(\bm{0},\Sigma^{U}).
\end{equation*}
Next, we apply the delta method with $g:\mathbb{R}^{2} \to \mathbb{R}^{2}$ given by $g(x,y)=(x,1/y)$ and obtain that
\begin{equation*}
\sqrt{\tilde{N}^{U}(n)} 
\begin{pmatrix}
    \frac{C_{n+1}^{U}}{\tilde{N}^{U}(n)} - \mu^{U} \\
    \frac{\tilde{N}^{U}(n)}{n+1}-\frac{1}{\mu^{U}+\mu^{L}}
\end{pmatrix}  \xrightarrow[n \to \infty]{d}  N(\bm{0},\Sigma_{2}^{U}),
\end{equation*}
where 
\begin{equation*}
    \Sigma_{2}^{U}=J_{g}(\bm{\mu}^{U}) \Sigma^{U} J_{g}(\bm{\mu}^{U})^{\top} =\begin{pmatrix}
    \sigma^{2,U} & -\frac{\mathbb{C}^{U}}{(\mu^{U}+\mu^{L})^{2}} \\
    -\frac{\mathbb{C}^{U}}{(\mu^{U}+\mu^{L})^{2}} & \frac{\overline{\sigma}^{2,U}}{(\mu^{U}+\mu^{L})^{4}}
    \end{pmatrix}
\end{equation*}
and $J_{g}(\cdot)$ is the Jacobian matrix of $g(\cdot)$. Using Lemma \ref{lemma:law_of_large_numbers_for_proportion_of_times} (i), we obtain that
\begin{equation*}
\sqrt{n+1} 
\begin{pmatrix}
    \frac{C_{n+1}^{U}}{\tilde{N}^{U}(n)} - \mu^{U} \\
    \frac{\tilde{N}^{U}(n)}{n+1}-\frac{1}{\mu^{U}+\mu^{L}}
\end{pmatrix}  \xrightarrow[n \to \infty]{d}  N(\bm{0},\overline{\Sigma}^{U}).
\end{equation*}
\end{proof}

We are now ready to prove Theorem \ref{theorem:law_of_large_numbers_and_central_limit_theorem_for_proportions}. Recall that $\theta_{n}^{U} = \frac{C_{n}^{U}}{n}$, $\theta_{n}^{L} = \frac{C_{n}^{L}}{n}$, $\theta^{U} = \frac{\mu^{U}}{\mu^{U}+\mu^{L}}$, and $\theta^{L} = \frac{\mu^{L}}{\mu^{L}+\mu^{U}}$; and let $\theta^{k,U}$ and $\theta^{k,L}$ be the $k^{\text{th}}$ power of $\theta^{U}$ and $\theta^{L}$, respectively.

\begin{proof}[Proof of Theorem \ref{theorem:law_of_large_numbers_and_central_limit_theorem_for_proportions}]
Almost sure convergence of $\theta_{n}^{T}$ follows from Lemma \ref{lemma:law_of_large_numbers_for_proportion_of_times} upon noticing that $\frac{C_{n+1}^{T}}{n+1}=(\frac{C_{n+1}^{T}}{\tilde{N}^{T}(n)} ) ( \frac{\tilde{N}^{T}(n)}{n+1})$. Using Lemma \ref{lemma:central_limit_theorem_for_frequences} and the decomposition
\begin{equation*}
    \sqrt{n+1} \biggl( \frac{C_{n+1}^{T}}{n+1} - \frac{\mu^{T}}{\mu^{U}+\mu^{L}} \biggr) = \frac{\tilde{N}^{T}(n)}{n+1} \cdot \sqrt{n+1} \biggl(\frac{C_{n+1}^{T}}{\tilde{N}^{T}(n)} - \mu^{T} \biggr) + \mu^{T} \cdot \sqrt{n+1} \biggl(\frac{\tilde{N}^{T}(n)}{n+1}-\frac{1}{\mu^{U}+\mu^{L}} \biggr),
\end{equation*}
it follows that $\sqrt{n+1} (\theta_{n+1}^{T} - \theta^{T} )$ is asymptotically normal with mean zero and variance
\begin{equation} \label{definition:eta}
\eta^{2,T} \coloneqq \frac{1}{\mu^{T}} (\sigma^{2,T}\theta^{T}-2\mathbb{C}^{T} \theta^{2,T}+\overline{\sigma}^{2,T} \theta^{3,T}).
\end{equation}
\end{proof}

\begin{Cor} \label{corollary:frequence_central_limit_theorem_for_crossing_times}
Under the conditions of Theorem \ref{theorem:law_of_large_numbers_and_central_limit_theorem_for_crossing_times}, for $T \in \{L,U\}$
\begin{equation*}
    \sqrt{\tilde{N}^{T}(n)} \biggl(\frac{C_{n+1}^{T}}{\tilde{N}^{T}(n)}- \mu^{T} \biggr) \xrightarrow[n \to \infty]{d} N(0,\sigma^{2,T}).
\end{equation*}
\end{Cor}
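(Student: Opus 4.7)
The plan is to obtain this corollary directly from the intermediate convergence that is already established inside the proof of Lemma \ref{lemma:central_limit_theorem_for_frequences}. That proof shows, via Lemma \ref{lemma:multivariate_central_limit_theorem_for_crossing_times} together with Remark \ref{remark:central_limit_theorem_for_crossing_times}, that
$$
\sqrt{\tilde{N}^T(n)}
\begin{pmatrix}
\dfrac{C_{n+1}^T}{\tilde{N}^T(n)} - \mu^T \\[4pt]
\dfrac{n+1}{\tilde{N}^T(n)} - (\mu^U+\mu^L)
\end{pmatrix} \xrightarrow[n\to\infty]{d} N(\bm{0}, \Sigma^T),
$$
and the corollary is simply the marginal obtained by projecting onto the first coordinate, since the $(1,1)$-entry of $\Sigma^T$ is exactly $\sigma^{2,T}$. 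Thus the claim is essentially a one-line observation once the joint convergence in the proof of Lemma \ref{lemma:central_limit_theorem_for_frequences} is in hand.

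For a self-contained derivation, I would proceed as in the proof of Lemma \ref{lemma:law_of_large_numbers_for_proportion_of_times}. Decompose
$$
C_{n+1}^T = \sum_{j=1}^{\tilde{N}^T(n)} \Delta_j^T + R_n^T,
$$
where $R_n^T \ge 0$ collects the contribution of the (possibly incomplete) current $T$-regime at time $n$ and satisfies $R_n^T \le \Delta_{\tilde{N}^T(n)+1}^T$. Centering and scaling gives
$$
\sqrt{\tilde{N}^T(n)} \left(\frac{C_{n+1}^T}{\tilde{N}^T(n)} - \mu^T\right) = \frac{1}{\sqrt{\tilde{N}^T(n)}} \sum_{j=1}^{\tilde{N}^T(n)} (\Delta_j^T - \mu^T) + \frac{R_n^T}{\sqrt{\tilde{N}^T(n)}}.
$$
Since $\tilde{N}^T(n)\to\infty$ a.s.\ by Lemma \ref{lemma:law_of_large_numbers_for_proportion_of_times}, Theorem \ref{theorem:law_of_large_numbers_and_central_limit_theorem_for_crossing_times} combined with Remark \ref{remark:law_of_large_numbers_and_central_limit_theorem_for_crossing_times} applied with the random index $\tilde{N}(n) = \tilde{N}^T(n)$ yields convergence of the first summand to $N(0,\sigma^{2,T})$.

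The only remaining point, and the only mildly delicate step, is to show $R_n^T/\sqrt{\tilde{N}^T(n)} \xrightarrow{p} 0$. Using $R_n^T \le \Delta_{\tilde{N}^T(n)+1}^T$ together with the integrability $\bE_{\pi^L}[\Delta_1^U]=\mu^U<\infty$ (respectively $\bE_{\pi^U}[\Delta_1^L]=\mu^L<\infty$) guaranteed by the assumptions of Theorem \ref{theorem:law_of_large_numbers_and_central_limit_theorem_for_crossing_times}, a routine Markov-inequality argument (using the regenerative decomposition from Lemma \ref{lemma:regenerative_property_for_crossing_times} to obtain an i.i.d.\ envelope bound, exactly as in the proof of Theorem \ref{theorem:law_of_large_numbers_and_central_limit_theorem_for_crossing_times} for the analogous term $\underline{R}_{i,n}^{*,L}/\sqrt{N_i^L(n)}$) gives the required negligibility. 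Slutsky's theorem then concludes. I do not foresee any substantial obstacle: the heavy lifting is contained in Theorem \ref{theorem:law_of_large_numbers_and_central_limit_theorem_for_crossing_times}, Lemma \ref{lemma:multivariate_central_limit_theorem_for_crossing_times}, and Lemma \ref{lemma:law_of_large_numbers_for_proportion_of_times}, so this corollary is genuinely a corollary.
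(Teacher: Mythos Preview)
Your self-contained derivation matches the paper's proof exactly: the same decomposition $C_{n+1}^T = \sum_{j=1}^{\tilde{N}^T(n)} \Delta_j^T + R_n^T$ with $0 \le R_n^T \le \Delta_{\tilde{N}^T(n)+1}^T$, followed by Remark \ref{remark:law_of_large_numbers_and_central_limit_theorem_for_crossing_times} for the main term and negligibility of the remainder. One minor caveat on your alternative route via Lemma \ref{lemma:central_limit_theorem_for_frequences}: that lemma is stated under the hypotheses of Theorem \ref{theorem:law_of_large_numbers_and_central_limit_theorem_for_proportions}, which are strictly stronger than those of Theorem \ref{theorem:law_of_large_numbers_and_central_limit_theorem_for_crossing_times} under which the corollary is asserted, so the direct argument you give second is the one that actually delivers the result as stated.
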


\begin{proof}[Proof of Corollary \ref{corollary:frequence_central_limit_theorem_for_crossing_times}]
We only prove the case $T=U$. We write
\begin{equation*}
  \sqrt{\tilde{N}^{U}(n)} \biggl(\frac{C_{n+1}^{U}}{\tilde{N}^{U}(n)}- \mu^{U}\biggr) = \frac{1}{\sqrt{\tilde{N}^{U}(n)}} \sum_{j=1}^{\tilde{N}^{U}(n)} (\Delta_{j}^{U} - \mu^{U}) + \frac{1}{\sqrt{\tilde{N}^{U}(n)}} \sum_{l=\nu_{\tilde{N}^{U}(n)}}^{n} 1.
\end{equation*}
Taking the limit in the above equation and using Remark \ref{remark:law_of_large_numbers_and_central_limit_theorem_for_crossing_times} the result follows.
\end{proof}

\section{Estimating the mean of the offspring distribution}
\label{section:estimating_the_mean_of_the_offspring_distribution}

We recall that $\tilde{\chi}_{n}^{U} = \chi_{n}^{U} \mathbf{I}_{\{Z_{n} \geq 1\}}$, $\tilde{C}_{n}^{U} = \sum_{j=1}^{n} \tilde{\chi}_{j-1}^{U}$ and set for the subcritical regime $\tilde{\chi}_{n}^{L} \coloneqq \chi_{n}^{L}$ and $\tilde{C}_{n}^{L} \coloneqq C_{n}^{L}$. We also recall the offspring mean estimate of the BPRET $\{Z_{n} \}_{n=0}^{\infty}$ in the supercritical and subcritical regimes are given by
\begin{equation*}
    M_{n}^{U} = \frac{1}{\tilde{C}_{n}^{U}} \sum_{j=1}^n \frac{Z_{j}-I_{j-1}^{U}}{Z_{j-1}} \tilde{\chi}_{j-1}^U \text{ and } M_{n}^L = \frac{1}{\tilde{C}_{n}^{L}} \sum_{j=1}^n \frac{Z_{j}}{Z_{j-1}} \tilde{\chi}_{j-1}^L.
\end{equation*}
 The decomposition
\begin{equation} \label{decomposition_of_offspring_mean}
    M_{n}^{T} = M^{T} + \frac{1}{\tilde{C}_{n}^T} (M_{n,1}^T + M_{n,2}^T)
\end{equation}
will be used in the proof of Theorem \ref{theorem:law_of_large_numnbers_and_central_limit_theorem_mean_estimation} and involves the martingale structure of $M_{n,i}^{T} \coloneqq \sum_{j=1}^{n}  D_{j,i}^{T}$, where
\begin{equation} \label{martingale_diffrerence_in_decomposition_of_offspring_mean}
    D_{j,1}^{T} \coloneqq (\overline{P}_{j-1}^{T}-M^{T}) \tilde{\chi}_{j-1}^T \text{ and } D_{j,2}^{T} \coloneqq \frac{\tilde{\chi}_{j-1}^{T}}{Z_{j-1}} \sum_{i=1}^{Z_{j-1}} ( \xi_{j-1,i}^{T} - \overline{P}_{j-1}^{T}).
\end{equation}
Specifically, let $\mathcal{G}_{n}$ be the $\sigma$-algebra generated by the random environments $\{ \Pi_{j}^{T} \}_{j=0}^{n}$, $\mathcal{H}_{n,1}$ the $\sigma$-algebra generated by $\mathcal{F}_{n}$ and $\mathcal{G}_{n-1}$; and $\mathcal{H}_{n,2}$ the $\sigma$-algebra generated by $\mathcal{F}_{n}$, $\mathcal{G}_{n-1}$, and the offspring distributions $\{ \xi_{j,i}^{T} \}_{i=0}^{\infty}$, $j=0,1,\dots,n-1$. Hence, $Z_{n}$, $\tilde{\chi}_{n}^{T}$, and $\Pi_{n-1}^{T}$ are $\mathcal{H}_{n,1}$-measurable, whereas $\Pi_{n}^{T}$ is not $\mathcal{H}_{n,1}$-measurable. We also denote by $\tilde{\mathcal{H}}_{n,1}$ the $\sigma$-algebra generated by $\mathcal{F}_{n-1}$ and $\mathcal{G}_{n-1}$ and the $\sigma$-algebra $\tilde{\mathcal{H}}_{n,2}$ generated by $\mathcal{F}_{n-1}$, $\mathcal{G}_{n-1}$, and $\{ \xi_{j,i}^{T} \}_{i=0}^{\infty}$, $j=0,1,\dots,n-1$. Hence, $Z_{n-1}$, $\tilde{\chi}_{n-1}^{T}$, and $\Pi_{n-1}^{T}$ are all $\tilde{\mathcal{H}}_{n,1}$-measurable but not $\tilde{\mathcal{H}}_{n-1,1}$-measurable. We establish in Proposition \ref{proposition:moments_m_n} in Appendix \ref{subsection:martingale_structure_of_MniT} that
\begin{equation*}
    \{ (M_{n,1}^{T}, \mathcal{H}_{n,i}) \}_{n=1}^{\infty} \text{ and } \{ (M_{n,2}^{T}, \mathcal{H}_{n,2}) \}_{n=1}^{\infty}
\end{equation*}
are mean zero martingale sequences. Additionally, $\bE[(M_{n,1}^{T})^{2}]=V_{1}^{T} \bE[\tilde{C}_{n}^{T}]$ and $\bE[(M_{n,2}^{T})^{2}]= V_{2}^{T} \bE[ \tilde{A}_{n}^{T}]$, where $\tilde{A}_{n}^{U} \coloneqq \sum_{j=1}^n \frac{\tilde{\chi}_{j-1}^U}{Z_{j-1}}$ is the sum of $\frac{1}{Z_{j}}$ over supercritical time steps up to time $n-1$ discarding times in which $Z_{j}$ is zero and $\tilde{A}_{n}^{L} \coloneqq A_{n}^{L} \coloneqq \sum_{j=1}^n \frac{\chi_{j-1}^L}{Z_{j-1}}$ is the sum of $\frac{1}{Z_{j}}$ over subcritical time steps up to time $n-1$. Proposition \ref{proposition:moments_m_n} contains other two martingales involving the terms $D_{j,1}^{T}$ and $D_{j,2}^{T}$ in \eqref{martingale_diffrerence_in_decomposition_of_offspring_mean} and related moment bounds, which will be used in the proof of Theorem \ref{theorem:law_of_large_numnbers_and_central_limit_theorem_mean_estimation}. As a first step, we derive the limit of the variances $\bE[(M_{n,1}^{T})^{2}]$ and $\bE[(M_{n,2}^{T})^{2}]$ when rescaled by $n$. By Proposition \ref{proposition:moments_m_n}, this entails studying the limit behavior of the quantities $\frac{1}{n} \tilde{C}_{n}^{T}$ and $\frac{1}{n} \tilde{A}_{n}^{T}$. To this end, we build i.i.d.\ blocks as in Section \ref{section:regenerative_property_of_crossing_times}. For $l \geq 1$ and $i \in S^{L}$ let $\overline{B}_{i,l}^{L} \coloneqq (K_{i,l}^{L}, \tilde{\bD}_{i,l}^{L}, \tilde{\bG}_{i,l}^{L})$, where
\begin{equation*}
  K_{i,l}^{L} = T_{i,l}^{L}-T_{i,l-1}^{L}, \tilde{\bD}_{i,l}^{L} \coloneqq ( \tilde{\Delta}_{T_{i,l-1}^{L}+1}^{U}, \dots, \tilde{\Delta}_{T_{i,l}^{L}}^{U}), \text{ and } \tilde{\bG}_{i,l}^{L} \coloneqq ( \tilde{\Gamma}_{T_{i,l-1}^{L}+1}^{U}, \dots, \tilde{\Gamma}_{T_{i,l}^{L}}^{U}),
\end{equation*}
$\tilde{\Delta}_{j+1}^{U} \coloneqq \sum_{k=\nu_{j}+1}^{\tau_{j+1}} \tilde{\chi}_{k-1}^{U}$, and $\tilde{\Gamma}_{j+1}^{U} \coloneqq \sum_{k=\nu_{j}+1}^{\tau_{j+1}} \frac{\tilde{\chi}_{k-1}^{U}}{Z_{k-1}}$. The triple $B_{i,l}^{L}$ consists of the random time $K_{i,l}^{L}$ required by $\{ Z_{\nu_{j}} \}_{j=0}^{\infty}$ to return for the $l^{\text{th}}$ time to state $i$, the lengths of all supercritical regimes $\tilde{\Delta}_{j}^{U}$ between the $(l-1)^{\text{th}}$ return and the $l^{\text{th}}$ return, and the sum of $Z_{j}$ inverse over supercritical regimes, disregarding the times when the process hits zero. Similarly, for $l \geq 1$ and $i \in S^{U}$ we let $\overline{B}_{i,l}^{U} \coloneqq (K_{i,l}^{U}, \bD_{i,l}^{U}, \bG_{i,l}^{U})$, where
\begin{equation*}
  K_{i,l}^{U} = T_{i,l}^{U}-T_{i,l-1}^{U}, \bD_{i,l}^{U} = ( \Delta_{T_{i,l-1}^{U}}^{L}, \dots, \Delta_{T_{i,l}^{U}-1}^{L}), \text{ and } \bG_{i,l}^{U} \coloneqq ( \Gamma_{T_{i,l-1}^{U}}^{L}, \dots, \Gamma_{T_{i,l}^{U}-1}^{L}),
\end{equation*}
and $\Gamma_{j}^{L} \coloneqq \sum_{k=\tau_{j}+1}^{\nu_{j}} \frac{\chi_{k-1}^{L}}{Z_{k-1}}$. Notice that, since $\tilde{C}_{n}^{L}=C_{n}^{L}$, Theorem \ref{theorem:law_of_large_numbers_and_central_limit_theorem_for_proportions} already yields that $\lim_{n \to \infty} \frac{C_{n}^{L}}{n} = \frac{\mu^{L}}{\mu^{U}+\mu^{L}}$. We need the following slight modification of Lemma \ref{lemma:regenerative_property_for_crossing_times}, whose proof is similar and hence omitted.
\begin{lem} \label{lemma:regenerative_property_for_averaged_tilde_crossing_times}
Assume \ref{H1}-\ref{H4}. (i) If \ref{H5} holds and $Z_{\nu_{0}}=i \in S^{L}$, then $\{ \overline{B}_{i,l}^{L} \}_{l=1}^{\infty}$ are i.i.d. (ii) If \ref{H6} (or \ref{H7}) holds and $Z_{\tau_{1}}=i \in S^{U}$, then $\{ \overline{B}_{i,l}^{U} \}_{l=1}^{\infty}$ are i.i.d.
\end{lem}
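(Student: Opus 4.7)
The plan is to adapt the proof of Lemma \ref{lemma:regenerative_property_for_crossing_times} essentially verbatim, observing that the only difference between $B_{i,l}^{T}$ and $\overline{B}_{i,l}^{T}$ is the replacement of one bounded measurable functional of the cycle data by another. The first component $K_{i,l}^{T}$ is identical in both lemmas. For the supercritical case, $\bD_{i,l}^{L}$ becomes $\tilde{\bD}_{i,l}^{L}$ (replacing $\Delta_j^U$ by $\tilde{\Delta}_j^U$, which only discards the time steps at which $Z_{k-1}=0$) and $\overline{\bD}_{i,l}^{L}$ becomes $\tilde{\bG}_{i,l}^{L}$ (summing $1/Z_{k-1}$ instead of the pair-regime length). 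For the subcritical case, $\bD_{i,l}^{U}$ is unchanged and only $\overline{\bD}_{i,l}^{U}$ is replaced by $\bG_{i,l}^{U}$. In every case, the new block is the image of the old one under a measurable map of the same cycle data.

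The formal skeleton is as follows. By the strong Markov property of the time-homogeneous chain $\{Z_{\nu_j}\}_{j=0}^\infty$ (Theorem \ref{theorem:recurrence_uniformly_ergodic_stationary_markov_chains}) at the successive return times $T_{i,l}^{L}$, together with the i.i.d.\ nature of the environment sequences $\Pi^U, \Pi^L$ and the thresholds $\{(U_j, L_j)\}$, the excursion data consisting of the environments, offspring and immigration variables, and thresholds indexed by $j\in[T_{i,l-1}^L+1,T_{i,l}^L]$ form an i.i.d.\ sequence across $l$ whenever $Z_{\nu_0}=i$. This is precisely what is exploited in the proof of Lemma \ref{lemma:regenerative_property_for_crossing_times}. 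Since each coordinate of $\overline{B}_{i,l}^L$ is a deterministic measurable function of this excursion data (the path $\{Z_n\}$ over the excursion is itself reconstructed from these inputs), the image vectors $\{\overline{B}_{i,l}^L\}_{l=1}^\infty$ are i.i.d. The argument for $\{\overline{B}_{i,l}^U\}_{l=1}^\infty$ is the same with $L$ replaced by $U$ and return times of $\{Z_{\tau_j}\}$ to $i \in S^U$.

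The only point that deserves a short verification is the well-posedness of $\tilde{\Gamma}_{j+1}^{U}$ and $\Gamma_{j}^{L}$ as honest measurable functionals. Whenever $\tilde{\chi}_{k-1}^{U}=1$ one has $Z_{k-1}\geq 1$ by the definition $\tilde{\chi}_{n}^{U}=\chi_{n}^{U}\mathbf{I}_{\{Z_n\ge 1\}}$, so each summand $\tilde{\chi}_{k-1}^{U}/Z_{k-1}$ is bounded by $1$ and the $0/0=0$ convention absorbs the remaining terms; likewise on the subcritical blocks $Z_{k-1}\geq L_0 \geq 1$ for $k\in(\tau_j,\nu_j]$, so $\Gamma_j^L$ is bounded by $\Delta_j^L$. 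Hence the underlying map is a bounded Borel function, and no additional obstacle arises beyond the regeneration argument already deployed for Lemma \ref{lemma:regenerative_property_for_crossing_times}. This is why the authors can legitimately remark that the proof is similar and omit it.
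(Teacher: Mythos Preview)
Your proposal is correct and aligns with the paper's own treatment, which simply states that the proof is similar to that of Lemma \ref{lemma:regenerative_property_for_crossing_times} and omits it. Your observation that $\tilde{\Delta}_{j}^{U}$, $\tilde{\Gamma}_{j}^{U}$, and $\Gamma_{j}^{L}$ are measurable functionals of the same cycle data (the path $\{Z_n\}$ over each excursion, together with the associated environments, offspring, immigration, and thresholds) is exactly the reason the argument transfers verbatim; the conditioning-on-environments step in the paper's proof of Lemma \ref{lemma:regenerative_property_for_crossing_times} goes through unchanged once $\Delta_{j}^{U}$ and $\overline{\Delta}_{j}^{U}$ are replaced by the new functionals.
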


\begin{Prop} \label{proposition:limit_of_averages}
  Suppose that \ref{H1}-\ref{H6} (or \ref{H7}) hold and $\mu^{U},\mu^{L}<\infty$. Then \\
  (i) $\lim_{n \to \infty} \frac{\tilde{C}_{n}^{U}}{\tilde{N}^{L}(n)} = \tilde{\mu}^{U}$ and $\lim_{n \to \infty} \frac{\tilde{C}_{n}^{U}}{n} = \frac{\tilde{\mu}^{U}}{\mu^{U}+\mu^{L}}$ a.s. \\
  (ii) $\lim_{n \to \infty} \frac{\tilde{A}_{n}^{U}}{\tilde{N}^{L}(n)} = \tilde{A}^{U}$ and $\lim_{n \to \infty} \frac{\tilde{A}_{n}^{U}}{n} = \frac{\tilde{A}^{U}}{\mu^{U}+\mu^{L}}$ a.s. \\
  (iii) $\lim_{n \to \infty} \frac{A_{n}^{L}}{\tilde{N}^{U}(n)} = A^{L}$ and $\lim_{n \to \infty} \frac{A_{n}^{L}}{n} = \frac{A^{L}}{\mu^{U}+\mu^{L}}$ a.s.
\end{Prop}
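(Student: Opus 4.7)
The plan is to mirror the strategy of Theorem \ref{theorem:law_of_large_numbers_and_central_limit_theorem_for_crossing_times} and Lemma \ref{lemma:law_of_large_numbers_for_proportion_of_times}, but using the i.i.d.\ regenerative blocks supplied by Lemma \ref{lemma:regenerative_property_for_averaged_tilde_crossing_times} in place of those of Lemma \ref{lemma:regenerative_property_for_crossing_times}. For each of (i), (ii), and (iii), the first limit follows by decomposing the partial sum into completed regenerative cycles plus a vanishing boundary term, then applying the SLLN for i.i.d.\ sums together with the key renewal theorem. The second limit then follows from the first by multiplying by the renewal rate $\tilde{N}^{L}(n)/n \to 1/(\mu^{U}+\mu^{L})$ or $\tilde{N}^{U}(n)/n \to 1/(\mu^{U}+\mu^{L})$ from Lemma \ref{lemma:law_of_large_numbers_for_proportion_of_times} (i).

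Carrying out (i) in detail, fix $i \in S^{L}$ and condition on $Z_{\nu_{0}}=i$. I would write
\[
\tilde{C}_{n}^{U} = \sum_{j=1}^{\tilde{N}^{L}(n)} \tilde{\Delta}_{j}^{U} + \sum_{k=\nu_{\tilde{N}^{L}(n)}+1}^{n} \tilde{\chi}_{k-1}^{U},
\]
and regroup the first sum over return times to obtain $\sum_{j=1}^{\tilde{N}^{L}(n)} \tilde{\Delta}_{j}^{U} = \sum_{l=1}^{N_{i}^{L}(\tilde{N}^{L}(n))} \tilde{R}_{i,l}^{L} + \tilde{r}_{n}^{L}$, where $\tilde{R}_{i,l}^{L} \coloneqq \sum_{j=T_{i,l-1}^{L}+1}^{T_{i,l}^{L}} \tilde{\Delta}_{j}^{U}$ are i.i.d.\ by Lemma \ref{lemma:regenerative_property_for_averaged_tilde_crossing_times} and the boundary $\tilde{r}_{n}^{L}$ is dominated by $\tilde{R}_{i,N_{i}^{L}(\tilde{N}^{L}(n))+1}^{L}$, which after division by $\tilde{N}^{L}(n)$ vanishes almost surely. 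The residual tail $\sum_{k=\nu_{\tilde{N}^{L}(n)}+1}^{n} \tilde{\chi}_{k-1}^{U}$ is likewise bounded by $\tilde{\Delta}_{\tilde{N}^{L}(n)+1}^{U}$ and negligible under the same normalization. The SLLN gives $M^{-1}\sum_{l=1}^{M} \tilde{R}_{i,l}^{L} \to \bE_{\delta_{i}^{L}}[\tilde{R}_{i,1}^{L}]$ a.s., while a standard Markov cycle identity (detailed below) yields $\bE_{\delta_{i}^{L}}[\tilde{R}_{i,1}^{L}] = (\pi_{i}^{L})^{-1}\tilde{\mu}^{U}$; combined with the key renewal theorem $N_{i}^{L}(m)/m \to \pi_{i}^{L}$, this produces $\tilde{C}_{n}^{U}/\tilde{N}^{L}(n) \to \tilde{\mu}^{U}$ a.s.\ on $\{Z_{\nu_{0}}=i\}$, and removing the conditioning (over the finite state space $S^{L}$) gives the unconditional claim. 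Parts (ii) and (iii) are handled identically, replacing $\tilde{\Delta}_{j}^{U}$ by $\tilde{\Gamma}_{j}^{U}$ (still with $\{Z_{\nu_{j}}\}$-blocks and $\tilde{N}^{L}(n)$) and by $\Gamma_{j}^{L}$ (with $\{Z_{\tau_{j}}\}$-blocks and $\tilde{N}^{U}(n)$), invoking the analogous identities $\bE_{\delta_{i}^{L}}[\sum_{j=1}^{T_{i,1}^{L}}\tilde{\Gamma}_{j}^{U}] = (\pi_{i}^{L})^{-1}\tilde{A}^{U}$ and $\bE_{\delta_{i}^{U}}[\sum_{j=1}^{T_{i,1}^{U}-1}\Gamma_{j}^{L}] = (\pi_{i}^{U})^{-1} A^{L}$.

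The main obstacle is the Markov cycle identity linking the expected cycle sum under a $\delta_{i}$ initialization to the stationary expectation appearing in the definitions of $\tilde{\mu}^{U}$, $\tilde{A}^{U}$, and $A^{L}$. This is not immediate because $\tilde{\Delta}_{j}^{U}$, $\tilde{\Gamma}_{j}^{U}$, and $\Gamma_{j}^{L}$ are not functions of the embedded chain alone but of the full trajectory between successive crossings. The plan is to condition on the embedded chain and reduce, via the tower property, to the function $g(i) \coloneqq \bE_{\delta_{i}^{L}}[\tilde{\Delta}_{1}^{U}]$ (respectively the analogs for $\tilde{A}^{U}$ and $A^{L}$); then the classical cycle decomposition $\bE_{\delta_{i}^{L}}[\sum_{j=0}^{T_{i,1}^{L}-1} g(Z_{\nu_{j}})] = (\pi_{i}^{L})^{-1}\bE_{\pi^{L}}[g(Z_{\nu_{0}})] = (\pi_{i}^{L})^{-1}\tilde{\mu}^{U}$ applies, using $\pi_{i}^{L} = 1/f_{ii}^{L}$ from Remark \ref{remark:stationary_distribution} together with uniform ergodicity from Theorem \ref{theorem:uniform_ergodicity}. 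The remaining steps---SLLN, key renewal theorem, and vanishing boundary control---parallel exactly what was already carried out in Section \ref{section:regenerative_property_of_crossing_times}.
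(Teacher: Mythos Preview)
Your proposal is correct and follows essentially the same approach as the paper's proof: decompose $\tilde{C}_{n}^{U}$ into i.i.d.\ regenerative cycles via Lemma \ref{lemma:regenerative_property_for_averaged_tilde_crossing_times}, apply the SLLN together with the key renewal theorem, and reduce the second limits to the first via Lemma \ref{lemma:law_of_large_numbers_for_proportion_of_times} (i). The only cosmetic difference is that the paper packages the cycle sums directly as $\tilde{D}_{i,l}^{L} \coloneqq \tilde{C}_{\nu_{T_{i,l}^{L}}}^{U}-\tilde{C}_{\nu_{T_{i,l-1}^{L}}}^{U}$ (which coincides with your $\tilde{R}_{i,l}^{L}$) and invokes Proposition 1.69 of \citet{Serfozo-2009} for the cycle identity $\bE_{\delta_{i}^{L}}[\tilde{D}_{i,1}^{L}] = (\pi_{i}^{L})^{-1}\tilde{\mu}^{U}$, whereas you spell out that identity via the tower property and the function $g(i)=\bE_{\delta_{i}^{L}}[\tilde{\Delta}_{1}^{U}]$.
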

Since $\frac{\tilde{C}_{n}^{T}}{n}$ and $\frac{\tilde{A}_{n}^{T}}{n}$ are non-negative and bounded by one, Proposition \ref{proposition:limit_of_averages} implies convergence in mean of these quantities.
\begin{proof}[Proof of Proposition \ref{proposition:limit_of_averages}]
By Lemma \ref{lemma:law_of_large_numbers_for_proportion_of_times} (i) it is enough to show the first part of the statements (i)-(iii). Since the proof of the other cases is similar we only prove (i). We recall that for $i  \in S^{L}$ and $j \in \mathbb{N}$, $N_{i}^{L}(j) = \sum_{l=1}^{\infty} \mathbf{I}_{ \{T_{i,l}^{L} \leq j\}}$ is the number of times $T_{i,l}^{L}$ is in $\{ 0,1,\dots,j \}$ and define $\overline{N}_{i}^{L}(n) \coloneqq N_{i}^{L}(\tilde{N}^{L}(n))$, $\tilde{D}_{i,l}^{L} \coloneqq \tilde{C}_{\nu_{T_{i,l}^{L}}}^{U}-\tilde{C}_{\nu_{T_{i,l-1}^{L}}}^{U}$, and $\tilde{D}_{i,n}^{*,L} \coloneqq \tilde{C}_{n}^{U}-\tilde{C}_{\nu_{T_{i,\overline{N}_{i}^{L}(n)}^{L}}}^{U}$. Conditionally on $Z_{\nu_{0}}=i$, $T_{i,0}^{L}=0$ and we write
\begin{equation*}
    \frac{\tilde{C}_{n}^{U}}{\tilde{N}^{L}(n)} = \frac{\overline{N}_{i}^{L}(n)}{\tilde{N}^{L}(n)} \biggl( \frac{1}{\overline{N}_{i}^{L}(n)} \sum_{l=1}^{\overline{N}_{i}^{L}(n)} \tilde{D}_{i,l}^{L} + \frac{1}{\overline{N}_{i}^{L}(n)} \tilde{D}_{i,n}^{*,L} \biggr).
\end{equation*}
Lemma\ref{lemma:regenerative_property_for_averaged_tilde_crossing_times} implies that $\{ \tilde{D}_{i,l}^{L} \}_{l=1}^{\infty}$ are i.i.d.\ with expectation that using Proposition 1.69 of \citet{Serfozo-2009} is given by $\bE_{\delta_{i}^{L}}[\tilde{C}_{\nu_{T_{i,1}^{L}}}^{U}] = (\pi_{i}^{L})^{-1} \tilde{\mu}^{U}$. Since $\lim_{n \to \infty} \overline{N}_{i}^{L}(n) = \infty$ a.s., we obtain that
\begin{equation*}
  \lim_{n \to \infty} \frac{1}{\overline{N}_{i}^{L}(n)} \sum_{l=1}^{\overline{N}_{i}^{L}(n)} \tilde{D}_{i,l}^{L} = (\pi_{i}^{L})^{-1} \tilde{\mu}^{U} \text{ a.s.\ and } \lim_{n \to \infty} \frac{1}{\overline{N}_{i}^{L}(n)} \tilde{D}_{i,n}^{*,L} =0 \text{ a.s.,}
\end{equation*}
since $\tilde{D}_{i,n}^{*,L} \leq \tilde{D}_{i,\overline{N}_{i}^{L}(n)+1}^{L}$. Finally, it holds that $\lim_{n \to \infty} \frac{\overline{N}_{i}^{L}(n)}{\tilde{N}^{L}(n)} = \pi_{i}^{L}$ a.s.
\end{proof}

We next establish that, when rescaled by their standard deviations, the terms $M_{n,i}^{T}$, where $i=1,2$ and $T \in \{L,U\}$, are jointly asymptotically normal. To this end, let 
\begin{equation*}
    \overline{\bM}_{n}^{T} \coloneqq \biggl( \frac{M_{n,1}^{T}}{\sqrt{\bE[(M_{n,1}^{T})^{2}]}}, \frac{M_{n,2}^{T}}{\sqrt{\bE[(M_{n,2}^{T})^{2}]}} \biggr)^{\top}\text{ and }\overline{\bM}_{n} \coloneqq ( (\overline{\bM}_{n}^{U})^{\top}, (\overline{\bM}_{n}^{L})^{\top} )^{\top}.
\end{equation*}

\begin{lem} \label{lemma:central_limit_theorem_mean_estimation}
Under the assumption of Theorem \ref{theorem:law_of_large_numnbers_and_central_limit_theorem_mean_estimation} (ii), $\overline{\bM}_{n} \xrightarrow[n \to \infty]{d} N(\bm{0},I)$.
\end{lem}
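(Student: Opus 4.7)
My plan is to prove the joint central limit theorem via the Cram\'er-Wold device and a martingale central limit theorem applied to a single grand martingale that combines all four components. By Cram\'er-Wold, it suffices to show that for any fixed $\lambda=(\lambda_{1}^{U},\lambda_{2}^{U},\lambda_{1}^{L},\lambda_{2}^{L})^{\top} \in \mathbb{R}^{4}$, the scalar sum
\begin{equation*}
    \lambda^{\top} \overline{\bM}_{n} = \sum_{j=1}^{n} \xi_{n,j}, \qquad \xi_{n,j} \coloneqq \sum_{T \in \{U,L\}} \sum_{i=1}^{2} \lambda_{i}^{T} \frac{D_{j,i}^{T}}{\sqrt{\bE[(M_{n,i}^{T})^{2}]}},
\end{equation*}
converges in distribution to $N(0, \norm{\lambda}^{2})$. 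By the nesting $\tilde{\mathcal{H}}_{j,1} \subset \mathcal{H}_{j,1} \subset \tilde{\mathcal{H}}_{j,2} \subset \mathcal{H}_{j,2}$ established in the excerpt, each $\xi_{n,j}$ is a martingale difference with respect to $\mathcal{H}_{j,2}$. I will then invoke the martingale CLT (e.g., Corollary 3.1 of Hall-Heyde), which requires verifying (a) convergence in probability of the conditional variance $V_{n} \coloneqq \sum_{j=1}^{n} \bE[\xi_{n,j}^{2} | \mathcal{H}_{j-1,2}]$ to $\norm{\lambda}^{2}$, and (b) the conditional Lindeberg condition.

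For step (a), I expand $\xi_{n,j}^{2}$ and split the cross products into three categories. First, all U\,--\,L cross terms vanish pointwise because $\tilde{\chi}_{j-1}^{U} \tilde{\chi}_{j-1}^{L} = 0$ at every $j$. Second, the within-regime cross terms $D_{j,1}^{T} D_{j,2}^{T}$ satisfy $\bE[D_{j,1}^{T} D_{j,2}^{T} | \mathcal{H}_{j,1}] = D_{j,1}^{T} \bE[D_{j,2}^{T} | \mathcal{H}_{j,1}] = 0$ because, given $\mathcal{H}_{j,1}$, the offspring variables are centered at $\overline{P}_{j-1}^{T}$; taking conditional expectation given $\mathcal{H}_{j-1,2}$ via the tower property annihilates these contributions. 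Third, for the diagonal terms, direct computation using the independence of $\Pi_{j-1}^{T}$ from $\mathcal{H}_{j-1,2}$ gives
\begin{equation*}
    \bE[(D_{j,1}^{T})^{2} | \mathcal{H}_{j-1,2}] = V_{1}^{T} \tilde{\chi}_{j-1}^{T}, \qquad \bE[(D_{j,2}^{T})^{2} | \mathcal{H}_{j-1,2}] = V_{2}^{T} \frac{\tilde{\chi}_{j-1}^{T}}{Z_{j-1}},
\end{equation*}
so that summing over $j$ yields $V_{1}^{T} \tilde{C}_{n}^{T}$ and $V_{2}^{T} \tilde{A}_{n}^{T}$, respectively. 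Combined with the identities $\bE[(M_{n,1}^{T})^{2}] = V_{1}^{T} \bE[\tilde{C}_{n}^{T}]$ and $\bE[(M_{n,2}^{T})^{2}] = V_{2}^{T} \bE[\tilde{A}_{n}^{T}]$ from Proposition \ref{proposition:moments_m_n}, the ratios become $\tilde{C}_{n}^{T}/\bE[\tilde{C}_{n}^{T}]$ and $\tilde{A}_{n}^{T}/\bE[\tilde{A}_{n}^{T}]$, both of which converge almost surely to $1$ by Proposition \ref{proposition:limit_of_averages} together with the bounded convergence theorem (since $\tilde{C}_{n}^{T}/n, \tilde{A}_{n}^{T}/n \leq 1$). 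Therefore $V_{n} \to (\lambda_{1}^{U})^{2} + (\lambda_{2}^{U})^{2} + (\lambda_{1}^{L})^{2} + (\lambda_{2}^{L})^{2} = \norm{\lambda}^{2}$ a.s.

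For step (b), I verify the stronger Lyapunov condition $\sum_{j=1}^{n} \bE[\abs{\xi_{n,j}}^{2+\delta}] \to 0$. By Minkowski's inequality, it suffices to bound $\sum_{j=1}^{n} \bE[\abs{D_{j,i}^{T}}^{2+\delta}] / (\bE[(M_{n,i}^{T})^{2}])^{1+\delta/2}$ for each $(T,i)$. The numerator is $O(n)$ by the $(2+\delta)$-moment assumption on $\Lambda_{0,i}^{T,2+\delta}$ (applied termwise using the conditional i.i.d.\ structure of the environments and offspring, analogously to the second-moment computation above), while the denominator is of order $n^{1+\delta/2}$ since $\bE[\tilde{C}_{n}^{T}] \asymp n$ and $\bE[\tilde{A}_{n}^{T}] \asymp n$ from Proposition \ref{proposition:limit_of_averages}. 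Thus the ratio is $O(n^{-\delta/2}) \to 0$. The main technical obstacle is verifying the convergence of the conditional variance in part (a); specifically, ensuring that the cross-regime cancellations and the nested-$\sigma$-algebra cancellations combine correctly so that only the four diagonal quantities survive, each of which is then handled by the ergodic identities of Proposition \ref{proposition:limit_of_averages}.
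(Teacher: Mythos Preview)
Your approach via Corollary~3.1 of Hall--Heyde (conditional variance plus conditional Lindeberg) is valid and in fact somewhat more direct than the paper's route, which invokes Theorem~3.2 of Hall--Heyde and therefore has to show that the \emph{raw} sum of squares $\sum_{l} X_{nl}^{2}$ converges in probability to $B^{2}$. To verify that, the paper must prove separately that $\sum_{l}\bigl((D_{l,i}^{T})^{2}-\bE[(D_{l,i}^{T})^{2}]\bigr)$ and the mixed sums $\sum_{l} D_{l,i_{1}}^{T_{1}} D_{l,i_{2}}^{T_{2}}$ are negligible, which it does by exhibiting each as a martingale and applying the strong law (Hall--Heyde Theorem~2.18). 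Your computation of the predictable quadratic variation bypasses that extra layer, because the conditional second moments reduce immediately to $V_{1}^{T}\tilde{C}_{n}^{T}$ and $V_{2}^{T}\tilde{A}_{n}^{T}$, and Proposition~\ref{proposition:limit_of_averages} then finishes the job.

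There is, however, a concrete error in your treatment of the within-regime cross term. You claim $\bE[D_{j,1}^{T} D_{j,2}^{T}\mid \mathcal{H}_{j,1}]=D_{j,1}^{T}\,\bE[D_{j,2}^{T}\mid \mathcal{H}_{j,1}]=0$ and then apply the tower property down to $\mathcal{H}_{j-1,2}$. Both steps fail. First, $\mathcal{H}_{j,1}$ contains $Z_{j}$; in the subcritical regime $Z_{j}=\sum_{i=1}^{Z_{j-1}}\xi_{j-1,i}^{L}$, so $D_{j,2}^{L}=(Z_{j}-Z_{j-1}\overline{P}_{j-1}^{L})\tilde{\chi}_{j-1}^{L}/Z_{j-1}$ is itself $\mathcal{H}_{j,1}$-measurable and its conditional expectation is certainly not zero. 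Second, the tower property would require $\mathcal{H}_{j-1,2}\subset\mathcal{H}_{j,1}$, but $\mathcal{H}_{j-1,2}$ contains the individual offspring variables $\xi_{l,i}^{T}$ for $l\le j-2$, which are not recoverable from $\mathcal{F}_{j}$ and $\mathcal{G}_{j-1}$ alone. The fix is to use $\sigma(\mathcal{H}_{j-1,2},\Pi_{j-1}^{T})$ as the intermediate $\sigma$-field: given this, $\overline{P}_{j-1}^{T}$, $Z_{j-1}$ and $\tilde{\chi}_{j-1}^{T}$ are measurable while the $\xi_{j-1,i}^{T}$ remain conditionally i.i.d.\ with mean $\overline{P}_{j-1}^{T}$ (crucially $Z_{j}$ is \emph{not} in this $\sigma$-field), so the inner expectation vanishes and the tower property then gives $\bE[D_{j,1}^{T} D_{j,2}^{T}\mid \mathcal{H}_{j-1,2}]=0$. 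With this correction your argument goes through.
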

\begin{proof}[Proof of Lemma \ref{lemma:central_limit_theorem_mean_estimation}]
By Cram\'er–Wold theorem (see Theorem 29.4 of \citet{Billingsley-2013}), it is enough to show that for $t_{i}^{T} \in \mathbb{R}$, where $i=1,2$ and $T \in \{L,U\}$,
\begin{equation} \label{central_limit_theorem_mean_estimation}
    \sum_{T \in \{L,U\}} \sum_{i=1}^{2} t_{i}^{T} \frac{M_{n,i}^{T}}{\sqrt{\bE[(M_{n,i}^{T})^{2}]}} \xrightarrow[n \to \infty]{d} N \biggl(0,\sum_{T \in \{L,U\}}\sum_{i=1}^{2} (t_{i}^{T})^{2} \biggr).
\end{equation}
Using Proposition \ref{proposition:moments_m_n}, we see that
\begin{equation*}
    \biggl\{ \biggl( \sum_{T \in \{L,U\}} \sum_{i=1}^{2} t_{i}^{T} M_{n,i}^{T}, \mathcal{H}_{n,2} \biggr) \biggr\}_{n=1}^{\infty}
\end{equation*}
is a mean zero martingale sequence. In particular, 
\begin{equation*}
    \biggl\{ \biggl( \sum_{T \in \{L,U\}} \sum_{i=1}^{2} t_{i}^{T} \frac{M_{j,i}^{T}}{\sqrt{\bE[(M_{n,i}^{T})^{2}]}}, \mathcal{H}_{j,2} \biggr) \biggr\}_{j=1}^{n}
\end{equation*}
is a mean zero martingale array. We will apply Theorem 3.2 of \citet{Hall-1980} with $k_{n}=n$, $X_{nl}=\sum_{T \in \{L,U\}} \sum_{i=1}^{2} t_{i}^{T} \frac{D_{l,i}^{T}}{\sqrt{\bE[(M_{n,i}^{T})^{2}]}}$, $S_{nj}=\sum_{l=1}^{j} X_{nl}=\sum_{T \in \{L,U\}} \sum_{i=1}^{2} t_{i}^{T} \frac{M_{j,i}^{T}}{\sqrt{\bE[(M_{n,i}^{T})^{2}]}}$, $\mathcal{F}_{nj}=\mathcal{H}_{j,2}$, and $B^{2}=\sum_{T \in \{L,U\}}\sum_{i=1}^{2} (t_{i}^{T})^{2}$; and obtain \eqref{central_limit_theorem_mean_estimation}. To this end, we need to verify the following conditions: (i) $\bE[(S_{nj})^{2}] < \infty$, (ii) $\max_{l=1,\dots,n} \abs{X_{nl}} \xrightarrow[n \to \infty]{p} 0$, (iii) $\sum_{l=1}^{n} X_{nl}^{2} \xrightarrow[n \to \infty]{p} B^{2}$, and (iv) $\sup_{n \in \mathbb{N}} \bE[\max_{l=1,\dots,n} X_{nl}^{2}] < \infty$. Using Proposition \ref{proposition:moments_m_n} (iv) $\bE[(M_{j,i_{1}}^{T_{1}})(M_{j,i_{2}}^{T_{2}})]=0$ if either $T_{1} \neq T_{2}$ or $i_{1} \neq i_{2}$ and since $\bE[(M_{j,i}^{T})^{2}]$ are non-decreasing in $j$, we obtain that
\begin{equation*}
    \bE \biggl[ \biggl( \sum_{T \in \{L,U\}} \sum_{i=1}^{2} t_{i}^{T} \frac{M_{j,i}^{T}}{\sqrt{\bE[(M_{n,i}^{T})^{2}]}} \biggr)^{2} \biggr] = \sum_{T \in \{L,U\}} \sum_{i=1}^{2} (t_{i}^{T})^{2} \frac{\bE[(M_{j,i}^{T})^{2}]}{\bE[(M_{n,i}^{T})^{2}]} \leq \sum_{T \in \{L,U\}} \sum_{i=1}^{2} (t_{i}^{T})^{2} < \infty
\end{equation*}
yielding Condition (i). Using again that $\bE[(D_{l,i_{1}}^{T_{1}})(D_{l,i_{2}}^{T_{2}})]=0$ if either $T_{1} \neq T_{2}$ or $i_{1} \neq i_{2}$ and $\bE[(M_{n,i}^{T})^{2}]=\sum_{l=1}^{n} \bE[ (D_{l,i}^{T})^{2} ]$, we obtain that
\begin{align*}
    \bE \biggl[ \max_{l=1,\dots,n} \biggl( \sum_{T \in \{L,U\}} \sum_{i=1}^{2} t_{i}^{T} \frac{D_{l,i}^{T}}{\sqrt{\bE[(M_{n,i}^{T})^{2}]}} \biggr)^{2} \biggr] &\leq \bE \biggl[ \sum_{l=1}^{n} \biggl( \sum_{T \in \{L,U\}} \sum_{i=1}^{2} t_{i}^{T} \frac{D_{l,i}^{T}}{\sqrt{\bE[(M_{n,i}^{T})^{2}]}} \biggr)^{2} \biggr] \\
    &= \sum_{T \in \{L,U\}} \sum_{i=1}^{2} (t_{i}^{T})^{2} 
\end{align*}
yielding Condition (iv). Turning to Condition (ii), assuming w.lo.g.\ that $t_{i}^{T} \neq 0$ and using that 
\begin{equation*}
    \biggl( \sum_{T \in \{L,U\}} \sum_{i=1}^{2} t_{i}^{T} \frac{D_{l,i}^{T}}{\sqrt{\bE[(M_{n,i}^{T})^{2}]}} \biggr)^{2} \leq 4 \sum_{T \in \{L,U\}} \sum_{i=1}^{2} (t_{i}^{T})^{2} \frac{(D_{l,i}^{T})^{2}}{\sqrt{\bE[(M_{n,i}^{T})^{2}]}},
\end{equation*}
we obtain that for all $\epsilon>0$
\begin{align*}
  \bP\biggl( \max_{l=1,\dots,n} \biggl\lvert \sum_{T \in \{L,U\}} \sum_{i=1}^{2} t_{i}^{T} \frac{D_{l,i}^{T}}{\sqrt{\bE[(M_{n,i}^{T})^{2}]}} \biggr\rvert \geq \epsilon \biggr) &\leq \sum_{l=1}^{n} \bP\biggl( \biggl( \sum_{T \in \{L,U\}} \sum_{i=1}^{2} t_{i}^{T} \frac{D_{l,i}^{T}}{\sqrt{\bE[(M_{n,i}^{T})^{2}]}} \biggr)^{2} \geq \epsilon^{2} \biggr) \\
  &\leq \sum_{T \in \{L,U\}} \sum_{i=1}^{2} \sum_{l=1}^{n} \bP \biggl( (D_{l,i}^{T})^{2} \geq \biggl( \frac{\epsilon}{4 t_{i}^{T}} \biggr)^{2} \bE[(M_{n,i}^{T})^{2}] \biggr).
\end{align*}
For $i=1$, we use that $\tilde{\chi}_{l-1}^{T} \in \{0,1\}$ and obtain that
\begin{align*}
  &\sum_{l=1}^{n} \bP \biggl( (D_{l,1}^{T})^{2} \geq \biggl( \frac{\epsilon}{4t_{1}^{T}} \biggr)^{2} \bE[(M_{n,1}^{T})^{2}] \biggr) \\
 \leq& \sum_{l=1}^{n} \bP \biggl( (D_{l,1}^{T})^{2} \geq \biggl( \frac{\epsilon}{4t_{1}^{T}} \biggr)^{2} \bE[(M_{n,1}^{T})^{2}] | \tilde{\chi}_{l-1}^{T}=1 \biggr) \\
  =& \frac{n}{\bE[(M_{n,1}^{T})^{2}]} \bE[(M_{n,1}^{T})^{2}]\bP \biggl( (\overline{P}_{0}^{T}-M^{T})^{2} \geq \biggl( \frac{\epsilon}{4t_{1}^{T}} \biggr)^{2} \bE[(M_{n,1}^{T})^{2}] \biggr).
\end{align*}
It follows from Proposition \ref{proposition:moments_m_n} (i) and Proposition \ref{proposition:limit_of_averages} (i) that
\begin{equation} \label{limsup_M_1}
  \lim_{n \to \infty} \frac{n}{\bE[(M_{n,1}^{T})^{2}]} = \frac{1}{\tilde{\theta}^{T} V_{1}^{T}} < \infty,
\end{equation}
where for $T=L$ $\tilde{\theta}^{L} \coloneqq \theta^{L}$, and since $V_{1}^{T}<\infty$
\begin{equation*}
  \lim_{n \to \infty} \bE[(M_{n,1}^{T})^{2}] \bP \biggl( (\overline{P}_{0}^{T}-M^{T})^{2} \geq \biggl( \frac{\epsilon}{4t_{1}^{T}} \biggr)^{2} \bE[(M_{n,1}^{T})^{2}] \biggr)=0
\end{equation*}
yielding that
\begin{equation*}
    \lim_{n \to \infty} \sum_{l=1}^{n} \bP \biggl( (D_{l,1}^{T})^{2} \geq \biggl( \frac{\epsilon}{4t_{1}^{T}} \biggr)^{2} \bE[(M_{n,1}^{T})^{2}] \biggr) =0.
\end{equation*}
For $i=2$, we use that if $\tilde{\chi}_{l-1}^{T}=1$ then $Z_{l-1} \geq 1$ and obtain that 
\begin{align*}
  &\sum_{l=1}^{n} \bP \biggl( (D_{l,2}^{T})^{2} \geq \biggl( \frac{\epsilon}{4t_{2}^{T}} \biggr)^{2} \bE[(M_{n,2}^{T})^{2}] \biggr) \\
 \leq& n \sup_{z \in \mathbb{N}} \bP \biggl( \biggl( \frac{1}{z} \sum_{i=1}^{z} (\xi_{0,i}^{T}-\overline{P}_{0}^{T}) \biggr)^{2} \geq \biggl( \frac{\epsilon}{4t_{2}^{T}} \biggr)^{2} \bE[(M_{n,2}^{T})^{2}] \biggr).
\end{align*}
Next, using Proposition \ref{proposition:moments_m_n} (ii) and Proposition \ref{proposition:limit_of_averages} (ii)-(iii), we have that
\begin{equation} \label{limsup_M_2}
  \lim_{n \to \infty} \frac{n}{\bE[(M_{n,2}^{T})^{2}]} \leq \frac{\mu^{U}+\mu^{L}}{V_{2}^{T} \tilde{A}^{T}}< \infty,
\end{equation}
where $\tilde{A}^{L} \coloneqq A^{L}$. Since by Jensen's inequality
\begin{equation*}
  \bE \biggl[ \biggl| \frac{1}{z} \sum_{i=1}^{z} (\xi_{0,i}^{T}-\overline{P}_{0}^{T}) \biggr|^{2+\delta} \biggr] \leq \bE[\Lambda_{0,2}^{T,2+\delta}] < \infty,
\end{equation*}
using Markov inequality, we obtain that
\begin{equation*}
    \sum_{n=0}^{\infty} \sup_{z \in \mathbb{N}} \bP \biggl( \biggl( \frac{1}{z} \sum_{i=1}^{z} (\xi_{0,i}^{T}-\overline{P}_{0}^{T}) \biggr)^{2} \geq \biggl( \frac{\epsilon}{4t_{2}^{T}} \biggr)^{2} \bE[(M_{n,2}^{T})^{2}] \biggr) < \infty,
\end{equation*}
which yields that
\begin{equation*}
 \lim_{n \to \infty} n \sup_{z \in \mathbb{N}} \bP \biggl( \biggl( \frac{1}{z} \sum_{i=1}^{z} (\xi_{0,i}^{T}-\overline{P}_{0}^{T}) \biggr)^{2} \geq \biggl( \frac{\epsilon}{4t_{2}^{T}} \biggr)^{2} \bE[(M_{n,2}^{T})^{2}] \biggr)=0.
\end{equation*}
For (iii), we decompose
\begin{equation*}
    \sum_{l=1}^{n} \biggl( \sum_{T \in \{L,U\}} \sum_{i=1}^{2} t_{i}^{T} \frac{D_{l,i}^{T}}{\sqrt{\bE[(M_{n,i}^{T})^{2}]}} \biggr)^{2} - \sum_{T \in \{L,U\}} \sum_{i=1}^{2} (t_{i}^{T})^{2}
\end{equation*}
as
\begin{equation} \label{convergence_of_variance}
    \sum_{T \in \{L,U\}} \sum_{i=1}^{2} (t_{i}^{T})^{2} \frac{\sum_{l=1}^{n} (D_{l,i}^{T})^{2}-\bE[(M_{n,i}^{T})^{2}]}{\bE[(M_{n,i}^{T})^{2}]} + \sum_{T_{1} \neq T_{2} \text{ or } i_{1} \neq i_{2}} t_{i_{1}}^{T_{1}} t_{i_{2}}^{T_{2}} \frac{\sum_{l=1}^{n} (D_{l,i_{1}}^{T_{1}}) (D_{l,i_{2}}^{T_{2}})}{\sqrt{\bE[(M_{n,i_{1}}^{T_{1}})^{2}] \bE[(M_{n,i_{2}}^{T_{2}})^{2}]}}
\end{equation}
and show that each of the above terms converges to zero with probability one. Since $\bE[(M_{n,i}^{T})^{2}]=\sum_{l=1}^{n} \bE[(D_{l,i}^{T})^{2}]$, we use Proposition \ref{proposition:moments_m_n} (iii) and obtain that $\{(\sum_{l=1}^{n} ((D_{l,i}^{T})^{2}-\bE[(D_{l,i}^{T})^{2}]), \tilde{\mathcal{H}}_{n,i})\}_{n=1}^{\infty}$ are mean zero martingale sequences and for $s = 1+\delta/2$
\begin{equation*}
    \bE \biggl[ \biggr \lvert (D_{l,i}^{T})^{2}-\bE[(D_{l,i}^{T})^{2}] \biggr \rvert^{s} | \tilde{\mathcal{H}}_{l-1,i} \biggr] \leq 2^{s} \bE[\Lambda_{0,i}^{T,2s}] \bE[ \tilde{\chi}_{l-1}^{T}] <\infty.
\end{equation*}
We use \eqref{limsup_M_1} and \eqref{limsup_M_2}, and apply Theorem 2.18 of \citet{Hall-1980} with $S_{n}=\sum_{l=1}^{n} ((D_{l,i}^{T})^{2}-\bE[(D_{l,i}^{T})^{2}])$, $X_{l}=(D_{l,i}^{T})^{2}-\bE[(D_{l,i}^{T})^{2}]$, $\mathcal{F}_{n} = \tilde{\mathcal{H}}_{n,i}$, $U_{n}=\bE[(M_{n,i}^{T})^{2}]$, and $p=s$, where $i=1,2$, and obtain the convergence of the first term in \eqref{convergence_of_variance}. For the second term we proceed similarly. Specifically, using Proposition \ref{proposition:moments_m_n} (iv) and Cauchy-Schwartz inequality we obtain that $\{ (\sum_{l=1}^{n} (D_{l,i_{1}}^{T_{1}})(D_{l,i_{2}}^{T_{2}}), \tilde{\mathcal{H}}_{n,2}) \}_{n=0}^{\infty}$ is a mean zero martingale sequence and for $s = 1+\delta/2$
\begin{equation*}
    \bE[\abs{(D_{l,i_{1}}^{T_{1}})(D_{l,i_{2}}^{T_{2}})}^{s} | \tilde{\mathcal{H}}_{l-1,2}] \leq \bE[\Lambda_{0,i_{1}}^{T_{1},s} \Lambda_{0,i_{2}}^{T_{2},s}] \leq  \sqrt{\bE[\Lambda_{0,i_{1}}^{T_{1},2+\delta}] \bE[\Lambda_{0,i_{2}}^{T_{2},2+\delta}]} < \infty.
\end{equation*}
Finally, we apply Theorem 2.18 of \citet{Hall-1980} with $S_{n}=\sum_{l=1}^{n} (D_{l,i_{1}}^{T_{1}})(D_{l,i_{2}}^{T_{2}})$, $X_{l}=(D_{l,i_{1}}^{T_{1}})(D_{l,i_{2}}^{T_{2}})$, $\mathcal{F}_{n} = \tilde{\mathcal{H}}_{n,2}$, $U_{n}=\sqrt{\bE[(M_{n,i_{1}}^{T_{1}})^{2}] \bE[(M_{n,i_{2}}^{T_{2}})^{2}]}$, and $p=s$; and obtain convergence of the second term in \eqref{convergence_of_variance}.
\end{proof}

\noindent We are now ready to prove the main result of the section.

\begin{proof}[Proof of Theorem \ref{theorem:law_of_large_numnbers_and_central_limit_theorem_mean_estimation}]
Using Proposition \ref{proposition:moments_m_n} (i)-(ii) and $\tilde{\chi}_{j-1}^{T} \leq 1$, we obtain that for $i=1,2$ $\{ (M_{n,i}^{T}, \mathcal{H}_{n,i}) \}_{n=1}^{\infty}$ are martingales and
\begin{equation*}
    \sum_{j=1}^{\infty} \frac{1}{j^{s}} \bE[ \abs{D_{j,i}^{T}}^{s} | \mathcal{H}_{j-1,i} ] \leq \bE[\Lambda_{0,i}^{T,s}] \sum_{j=1}^{\infty} \frac{1}{j^{s}} < \infty.
\end{equation*}
 We apply Theorem 2.18 of \citet{Hall-1980} with $S_{n}=M_{n,i}^{T}$, $X_{j}=D_{j,i}^{T}$, where $i=1,2$ and $T \in \{ L, U \}$, $U_{n}=n$, $p=s$, and $\mathcal{F}_{n} = \mathcal{H}_{n,1}$ for $i=1$ and $\mathcal{F}_{n} = \mathcal{H}_{n,2}$ for $i=2$ and obtain that $\lim_{n \to \infty} \frac{1}{n} M_{n,i}^T = 0$ a.s. From this, Theorem \ref{theorem:law_of_large_numbers_and_central_limit_theorem_for_proportions}, and Proposition \ref{proposition:limit_of_averages} (i), we obtain that $\lim_{n \to \infty} \frac{1}{\tilde{C}_{n}^T} M_{n,i}^T = 0$ a.s. Using \eqref{decomposition_of_offspring_mean} we conclude that $\lim_{n \to \infty} M_{n}^{T}=M^{T}$ a.s. Turning to the central limit theorem,  Lemma \ref{lemma:central_limit_theorem_mean_estimation} (iii) yields that
\begin{equation*}
\overline{\bM}_{n} = \biggl( \frac{M_{n,1}^{U}}{\sqrt{\bE[(M_{n,1}^{U})^{2}]}}, \frac{M_{n,2}^{U}}{\sqrt{\bE[(M_{n,2}^{U})^{2}]}}, \frac{M_{n,1}^{L}}{\sqrt{\bE[(M_{n,1}^{L})^{2}]}}, \frac{M_{n,2}^{L}}{\sqrt{\bE[(M_{n,2}^{L})^{2}]}} \biggr)^{\top}
\end{equation*}
is asymptotically normal with mean zero and identity covariance matrix. Let $D_{n}^{2}$ be the $4 \times 4$ diagonal matrix 
\begin{equation*}
    D_{n}^{2} \coloneqq \diag \biggl( \frac{n\bE[(M_{n,1}^{U})^{2}]}{(\tilde{C}_{n}^U)^{2}}, \frac{n\bE[(M_{n,2}^{U})^{2}]}{(\tilde{C}_{n}^U)^{2}}, \frac{n\bE[(M_{n,1}^{L})^{2}]}{(\tilde{C}_{n}^L)^{2}}, \frac{n\bE[(M_{n,2}^{L})^{2}]}{(\tilde{C}_{n}^L)^{2}} \biggr).
\end{equation*}
By Proposition \ref{proposition:moments_m_n} (i)-(ii) and Proposition \ref{proposition:limit_of_averages}, $D_{n} \overline{\bM}_{n}$ is asymptotically normal with mean zero and covariance matrix
\begin{equation*}
    \tilde{D}^{2} \coloneqq \diag \biggl(\frac{V_{1}^{U}}{\tilde{\theta}^{U}}, \frac{\tilde{A}^{U} V_{2}^{U}}{\tilde{\theta}^{U} \tilde{\mu}^{U}}, \frac{V_{1}^{L}}{\theta^{L}}, \frac{A^{L}V_{2}^{L}}{\theta^{L} \mu^{L}} \biggr).
\end{equation*}
Using the continuous mapping theorem, it follows that
\begin{equation*}
    \sqrt{n} (\bM_{n}-\bM) \xrightarrow[n \to \infty]{d} N(\bm{0},\Sigma).
\end{equation*}
\end{proof}

\section{Discussion and concluding remarks} \label{section:discussion_and_concluding_remarks}

In this paper we developed BPRE with Thresholds to describe periods of growth and decrease in the population size arising in several applications including COVID dynamics. Even though the model is non-Markov, we identify Markov subsequences and use them to understand the length of time the process spends in the supercritical and subcritical regimes. Furthermore, using the regeneration technique we also study the rate of growth (or decline) of the process in the supercritical (subcritical) regime. It is possible to start the process using the subcritical BPRE and then move to the supercritical regime; this introduces only minor changes and the qualitative results remain the same. Finally, we note that without incorporating immigration in the supercritical regime the process will become extinct with probability one and hence the cyclical path behavior may not be observed.

An interesting question concerns the choice of strongly subcritical BPRE for the subcritical regime. It is folklore that the generation sizes of moderately and weakly subcritical processes can increase for long periods of time and in that case the time to cross the lower threshold will have a heavier tail. This could lead to lack of identifiability of supercritical and subcritical regimes. Similar issues arise when a subcritical BPRE is replaced by a critical BPRE or when immigration is allowed in both regimes. Since a subcritical BPRE with immigration converges in distribution to a proper limit law \citep{Roitershtein-2007}, we may fail to observe a clear period of decrease. The path properties of these alternatives could be useful for modeling other dynamics observed (see \citet{Klebaner-1993,Iannelli-2014}). Mathematical issues arising from these alternatives would involve different techniques than used in this paper. We end this section with a brief discussion concerning the moment conditions in Theorem \ref{theorem:law_of_large_numnbers_and_central_limit_theorem_mean_estimation}. It is possible to reduce the conditions $\bE[ \Lambda_{0,i}^{T,2+\delta}] < \infty$ to finite second moment hypothesis. This requires an extension of Lemma \ref{lemma:regenerative_property_for_crossing_times} to joint independence of blocks in $B_{i,l}^{L}$, $B_{i,l}^{U}$, offspring random variables, environments, immigration over cycles. The proof will need the Markov property of the pair $\{ (Z_{\nu_{j-1}},Z_{\tau_{j}}) \}_{j=1}^{\infty}$  and its uniform ergodicity. Also, the joint Markov property will also yield joint central limit theorem for the length and proportion of time spent in the supercritical and subcritical regimes. The proof is similar to that of Theorem \ref{theorem:uniform_ergodicity} and Lemma \ref{lemma:regenerative_property_for_crossing_times} but is more cumbersome with an increased notational burden. The numerical experiments suggest that the estimators of the mean parameters of the supercritical and subcritical regime are not affected by the choice of various distributions. A thorough statistical analysis of the robustness of the estimators and analysis of the datasets are beyond the scope of this paper and will be investigated elsewhere.

\medspace



\medspace

\appendix

\section{Auxiliary results} \label{section:auxiliary_results}

This section contains detailed descriptions and proofs of auxiliary results used in the paper. We begin with a detailed description of probability space for BPRET.

\subsection{Probability space} \label{subsection:probability_space}

In this subsection we describe in detail the random variables used to define BPRET as well as the underlying probability space. The thresholds $\{ (U_{j},L_{j})\}_{j=1}^{\infty}$ are i.i.d.\ random vectors with support $S_{B}^{U} \times S_{B}^{L}$, where $S_{B}^{U} = \mathbb{N} \cap [L_U+1,\infty)$, $S_{B}^{L} = \mathbb{N} \cap [L_{0}, L_{U}]$, and $1 \leq L_{0} \leq L_{U}$ are fixed integers, defined on the probability space $(\Omega_{B},\mathcal{F}_{B},\bP_{B})$. Next, $\Pi^{L} = \{ \Pi_{n}^{L} \}_{n=0}^{\infty}$ and $\Pi^{U} = \{ \Pi_{n}^{U} \}_{n=0}^{\infty}$ are subcritical and supercritical environmental sequences that are defined on probability spaces $(\Omega_{E^{L}}, \mathcal{F}_{E^{L}}, \bP_{E^{L}})$ and $(\Omega_{E^{U}}, \mathcal{F}_{E^{U}}, \bP_{E^{U}})$. Specifically, $\Pi_{n}^{U}=(P_{n}^{U},Q_{n}^{U})$ and $\Pi_{n}^{L}=P_{n}^{L}$, where $P_{n}^{U}=\{ P_{n,r}^{U} \}_{r=0}^{\infty}$, $P_{n}^{L}=\{ P_{n,r}^{L} \}_{r=0}^{\infty}$, and $Q_{n}^{U}=\{ Q_{n,r}^{U} \}_{r=0}^{\infty}$ are probability distributions in $\mathcal{P}$. Let $(\Omega_{U},\mathcal{F}_{U},\bP_{U})$ and $(\Omega_{L},\mathcal{F}_{L},\bP_{L})$ denote probability spaces corresponding to supercritical BPRE with immigration and subcritical BPRE. Hence, the environment sequence $\Pi^{U} = \{ \Pi_{n}^{U} \}_{n=0}^{\infty}$, the offspring sequence $\{ \xi_{n,i}^{U} \}_{i=1}^{\infty}$, and the immigration sequence $\{ I_{n}^{U} \}_{n=0}^{\infty}$ are random variables on $(\Omega_{U},\mathcal{F}_{U},\bP_{U})$. Similarly, $\Pi^{L} = \{ \Pi_{n}^{L} \}_{n=0}^{\infty}$ and $\{ \xi_{n,i}^{L} \}_{i=1}^{\infty}$, $n \geq 0$, are random variables on $(\Omega_{L},\mathcal{F}_{L},\bP_{L})$. We point out here that the probability spaces $(\Omega_{U},\mathcal{F}_{U},\bP_{U})$ and $(\Omega_{E^{U}},\mathcal{F}_{E^{U}},\bP_{E^{U}})$ are linked; that is, for all integrable functions $H: \Omega_{U} \to \mathbb{R}$
\begin{equation*}
     \int H(z,\Pi^{U}) d\bP_{U}(z,\Pi^{U}) = \int \int H(z,\Pi^{U}) d\bP_{U}(z | \Pi^{U}) d\bP_{E^{U}}(\Pi^{U}).
\end{equation*}
Similar comments also holds with $U$ replaced by $L$ in the above. All the above described random variables are defined on the probability space $(\Omega,\mathcal{F},\bP) = (\Omega_{B} \times \Omega_{U} \times \Omega_{L}, \mathcal{F}_{B} \otimes \mathcal{F}_{U} \otimes \mathcal{F}_{L}, \bP_{B} \times \bP_{U} \times \bP_{L})$.

\subsection{Time homogeneity of $\{Z_{\nu_{j}}\}_{j=0}^{\infty}$ and $\{ Z_{\tau_{j}} \}_{j=1}^{\infty}$} \label{subsection:time_homogeneity_of_Znu_and_Ztau}

\begin{lem} \label{lemma:stationarity}
  Assume \ref{H1} and \ref{H2prime}. For all $i \in S^{L}$, $k \in S^{U}$, and $j \in \mathbb{N}_{0}$ the following holds:
\begin{align*}
  \text{(i) } &\bP(Z_{\tau_{j+1}} = k | Z_{\nu_{j}}=i, \nu_{j}<\infty) = \bP_{\delta_{i}^{L}}(Z_{\tau_{1}} = k) \text{ and} \\
  &\bP(Z_{\tau_{j+1}} = k | \tau_{j+1}<\infty, Z_{\nu_{j}}=i) = \bP_{\delta_{i}^{L}}(Z_{\tau_{1}} = k | \tau_{1}< \infty), \text{ and} \\
  \text{(ii) } &\bP(Z_{\nu_{j+1}}=i | Z_{\tau_{j+1}}=k, \tau_{j+1} < \infty)=\bP_{\delta_{k}^{U}}(Z_{\nu_{1}}=i | \tau_{1} < \infty) \text{ and } \\
  &\bP(Z_{\nu_{j+1}}=i | \nu_{j+1} < \infty, Z_{\tau_{j+1}}=k)=\bP_{\delta_{k}^{U}}(Z_{\nu_{1}}=i | \nu_{1} < \infty).
\end{align*}
If additionally \ref{H3} holds, then (iii) $\tau_{j}$ and $\nu_{j}$ are finite almost surely,
\begin{align*}
  &\bP(Z_{\tau_{j+1}} = k | Z_{\nu_{j}}=i) = \bP_{\delta_{i}^{L}}(Z_{\tau_{1}} = k) \text{ and } \\
  &\bP(Z_{\nu_{j+1}}=i | Z_{\tau_{j+1}}=k)=\bP_{\delta_{k}^{U}}(Z_{\nu_{1}}=i).
\end{align*}
\end{lem}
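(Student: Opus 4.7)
The plan is to exploit the strong Markov property of the BPRET at the stopping times $\nu_j$ and $\tau_{j+1}$ combined with the i.i.d.\ structure of the environmental and threshold sequences. I first observe that $\nu_j$ and $\tau_{j+1}$ are stopping times for the filtration $\mathcal{F}_n$ generated by $\{Z_l\}_{l=0}^{n}$ and the thresholds $\{(U_l,L_l)\}_{l=1}^{\infty}$. Conditionally on $\{\nu_j = n, Z_n = i\}$, the shifted sequences $\{\Pi_{n+l}^U\}_{l \geq 0}$, $\{\Pi_{n+l}^L\}_{l \geq 0}$, and $\{(U_{j+1+l}, L_{j+1+l})\}_{l \geq 0}$ are independent of $\mathcal{F}_n$ and have the same joint law as the corresponding sequences starting at index $0$; an analogous statement holds at the stopping time $\tau_{j+1}$.

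For part (i), I would decompose
\begin{equation*}
\bP(Z_{\tau_{j+1}} = k,\, \nu_j < \infty,\, Z_{\nu_j} = i) = \sum_{n=0}^{\infty} \bP(Z_{\tau_{j+1}} = k,\, \nu_j = n,\, Z_n = i),
\end{equation*}
and observe that on $\{\nu_j = n, Z_n = i\}$ the future event $\{Z_{\tau_{j+1}} = k\}$ is a measurable function of the shifted i.i.d.\ inputs, whose conditional law coincides with the unconditional law of the same event for a fresh BPRET started at $i$ with fresh supercritical environments and thresholds. Each summand therefore factors as $\bP(\nu_j = n, Z_n = i)\cdot \bP_{\delta_i^L}(Z_{\tau_1} = k)$; summing over $n$ and dividing by $\bP(\nu_j<\infty, Z_{\nu_j} = i)$ yields the first identity of (i), and intersecting first with $\{\tau_{j+1}<\infty\}$ and dividing by $\bP(\tau_{j+1}<\infty, Z_{\nu_j}=i)$ yields the second. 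Part (ii) is obtained by the same argument applied at $\tau_{j+1}$, this time using the i.i.d.\ structure of $\Pi^L$ and of $\{(U_l,L_l)\}_{l \geq 1}$.

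For part (iii), assumption \ref{H3} combined with Theorem 2.2 of \citet{Kersting-2017} guarantees that the supercritical BPRE with immigration governing the evolution after $\nu_j$ diverges to infinity a.s., hence almost surely exceeds any fixed upper threshold; combined with the independence of $U_{j+1}$ from that evolution this gives $\bP(\tau_{j+1}<\infty \mid \nu_j<\infty)=1$, as already recorded in \eqref{tau_finite}. The strong subcriticality hypothesis in \ref{H2} (or even the weaker \ref{H2prime}) gives $\bP(\nu_{j+1}<\infty \mid \tau_{j+1}<\infty)=1$ via \eqref{nu_finite}. An induction on $j$ starting from $\nu_0 = 0$ then makes all $\tau_j$ and $\nu_j$ finite a.s., so the conditioning events in (i)-(ii) become almost sure and (iii) is just the unconditional version of (i)-(ii).

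The principal obstacle will be the careful measure-theoretic bookkeeping showing that, on the event $\{\nu_j = n, Z_n = i\}$, the conditional law of the \emph{entire} future evolution (the supercritical and subcritical environments, the offspring and immigration variables indexed from $n$ onward, and the thresholds $(U_{j+1+l},L_{j+1+l})_{l\geq 0}$) coincides with the unconditional law of a fresh BPRET started at $i$. This is a routine application of the strong Markov property to each independent input stream, but must be done simultaneously for the four mutually independent families and for both stopping times $\nu_j$ and $\tau_{j+1}$.
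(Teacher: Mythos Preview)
Your proposal is correct and follows essentially the same strategy as the paper: both arguments exploit the i.i.d.\ structure of the environments and thresholds to show that the post-$\nu_j$ evolution, given $Z_{\nu_j}=i$, has the law of a fresh process started at $i$, with part (iii) following from (i)--(ii) via \eqref{tau_finite}, \eqref{nu_finite} and induction. The only organizational difference is that the paper decomposes over the relative crossing time $s=\tau_{j+1}-\nu_j$ and the threshold value $u=U_{j+1}$ (then conditions on the environments), whereas you decompose over the value $n$ of $\nu_j$ and invoke a strong-Markov-type argument on the input streams; one small caveat is that $\mathcal{F}_n$ as defined in the paper already contains \emph{all} thresholds, so your independence claim should be phrased relative to the event $\{\nu_j=n,\,Z_n=i\}$ rather than $\mathcal{F}_n$.
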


\begin{proof}[Proof of Lemma \ref{lemma:stationarity}]
We only prove (i) and (iii). Since $\bP(Z_{\tau_{j+1}} = k | Z_{\nu_{j}}=i, \nu_{j}<\infty)$ is equal to
\begin{equation*}
    \sum_{s=1}^{\infty} \sum_{u=L_{U}+1}^{\infty} \bP(Z_{\nu_{j}+s}=k, Z_{\nu_{j}+s-1}<u, \dots, Z_{\nu_{j}+1}<u | Z_{\nu_{j}}=i, \nu_{j}<\infty ),
\end{equation*}
it is enough to show that for all $s \geq 1$ and $u \geq L_{U}+1$
\begin{equation} \label{stationarity_Znutau}
\begin{aligned}
    &\bP(Z_{\nu_{j}+s}=k, Z_{\nu_{j}+s-1}<u, \dots, Z_{\nu_{j}+1}<u | Z_{\nu_{j}}=i, \nu_{j}<\infty ) \\ =&\bP_{\delta_{i}^{L}}(Z_{s}=k, Z_{s-1}<u, \dots, Z_{1}<u).
\end{aligned}
\end{equation}
To this end, we condition on $\Pi_{\nu_{j}+l}^{U}=(p_{l}^{U},q_{l}^{U})$ and $\Pi_{l}^{U}=(p_{l}^{U},q_{l}^{U})$, where $l=0,1,\dots,s-1$. Since, given $\Pi_{\nu_{j}+l}^{U}=(p_{l}^{U},q_{l}^{U})$ and $\Pi_{l}^{U}=(p_{l}^{U},q_{l}^{U})$, both the sequences $\{ \xi_{\nu_{j}+l,i}^{U} \}_{i=1}^{\infty}$, $\{ \xi_{l,i}^{U} \}_{i=1}^{\infty}$ and the random variables $I_{\nu_{j}+l}^{U}$, $I_{l}^{U}$ are i.i.d., we obtain from \eqref{process_in_supercritical_regime} that
\begin{align*}
    &\bP(Z_{\nu_{j}+s}=k, Z_{\nu_{j}+s-1}<u, \dots, Z_{\nu_{j}+1}<u | Z_{\nu_{j}}=i, \nu_{j}<\infty, \Pi_{\nu_{j}+l}^{U}=(p_{l}^{U},q_{l}^{U}), l=0,1,\dots,s-1 ) \\
    =&\bP_{\delta_{i}^{L}}(Z_{s}=k, Z_{s-1}<u, \dots, Z_{1}<u | \Pi_{l}^{U}=(p_{l}^{U},q_{l}^{U}), l=0,1,\dots,s-1).
\end{align*}
By taking expectation w.r.t.\ $\Pi^{U}=\{ \Pi_{n}^{U} \}_{n=0}^{\infty}$ and using that $\Pi_{n}^{U}$ are i.i.d., we obtain \eqref{stationarity_Znutau}. Next, we notice that
\begin{equation*}
    \bP(Z_{\tau_{j+1}} = k | \tau_{j+1}<\infty, Z_{\nu_{j}}=i) = \frac{\bP(Z_{\tau_{j+1}} = k | Z_{\nu_{j}}=i, \nu_{j}<\infty)}{\bP(\tau_{j+1}<\infty | Z_{\nu_{j}}=i, \nu_{j}<\infty)}, \text{ where}
\end{equation*}
\begin{equation*}
    \bP(\tau_{j+1}<\infty | Z_{\nu_{j}}=i, \nu_{j}<\infty) = \sum_{k=L_{U}+1}^{\infty} \bP(Z_{\tau_{j+1}}=k | Z_{\nu_{j}}=i, \nu_{j}<\infty)
\end{equation*}
is positive because $M^{U} >1$. It follows from part (i) that $\bP(Z_{\tau_{j+1}} = k | \tau_{j+1}<\infty, Z_{\nu_{j}}=i)=\bP_{\delta_{i}^{L}}(Z_{\tau_{1}} = k | \tau_{1}<\infty)$. Finally, (iii) follows from (i) and (ii) using \eqref{tau_finite} and \eqref{nu_finite}.
\end{proof}

\subsection{Finiteness of $\overline{\pi}^{U}$} \label{subsection:finiteness_of_mean_stationary_distribution}

We show that the stationary distribution $\pi^{U}$ of the Markov chain $\{ Z_{\tau_{j}} \}_{j=1}^{\infty}$ has a finite first moment $\overline{\pi}^{U}$.

\begin{Prop} \label{proposition:finite_expectation_stationary_distribution}
  Under \ref{H1}-\ref{H4}, \ref{H6} (or \ref{H7}), and \ref{H9}, $\overline{\pi}^{U} < \infty$.
\end{Prop}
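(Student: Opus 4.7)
The plan is to reduce the claim to showing $\bE_{\delta_{l}^{L}}[Z_{\tau_{1}}] < \infty$ for each $l \in S^{L}$. Indeed, by Remark \ref{remark:link_between_stationary_distributions},
\begin{equation*}
\overline{\pi}^{U} = \sum_{k \in S^{U}} k\, \pi_{k}^{U} = \sum_{l \in S^{L}} \pi_{l}^{L}\, \bE_{\delta_{l}^{L}}[Z_{\tau_{1}}],
\end{equation*}
and since $S^{L}$ is finite and $\pi^{L}$ is a probability measure, finiteness of each $\bE_{\delta_{l}^{L}}[Z_{\tau_{1}}]$ suffices.

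To control $\bE_{\delta_{l}^{L}}[Z_{\tau_{1}}]$, I would first condition on $U_{1}=u$ (so that $Z_{0}=l \leq L_{U} < u$) and exploit that $Z_{\tau_{1}-1} \leq u-1$ by the definition of $\tau_{1}$. Since $\{\tau_{1}=n\}$ is not $\mathcal{F}_{n-1}$-measurable, I would work with the larger event $\{\tau_{1} \geq n\} \in \mathcal{F}_{n-1}$. Because the thresholds are independent of the environmental sequence and the environments $\Pi_{n-1}^{U}$ together with the corresponding offspring and immigration are independent of $\mathcal{F}_{n-1}$, the tower property yields $\bE[Z_{n} \mid \mathcal{F}_{n-1}, U_{1}=u]= Z_{n-1} M^{U}+N^{U}$. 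Using $Z_{n-1} \leq u-1$ on $\{\tau_{1} \geq n\}$, this gives
\begin{equation*}
\bE_{\delta_{l}^{L}}\bigl[Z_{n}\, \mathbf{I}_{\{\tau_{1}=n\}} \,\big|\, U_{1}=u\bigr] \leq (uM^{U}+N^{U})\, \bP_{\delta_{l}^{L}}(\tau_{1} \geq n \mid U_{1}=u),
\end{equation*}
and summing over $n \geq 1$ produces $\bE_{\delta_{l}^{L}}[Z_{\tau_{1}} \mid U_{1}=u] \leq (uM^{U}+N^{U})\, \bE_{\delta_{l}^{L}}[\tau_{1} \mid U_{1}=u]$.

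Next I would control the tail of $\tau_{1}$ through the immigration: let $\sigma \coloneqq \inf\{n \geq 0 : I_{n}^{U} \geq u\}$. If $\sigma<\infty$, the recursion \eqref{process_in_supercritical_regime} forces $Z_{\sigma+1} \geq I_{\sigma}^{U} \geq u$, hence $\tau_{1} \leq \sigma+1$. Since $\{I_{n}^{U}\}_{n \geq 0}$ are i.i.d.\ functions of the i.i.d.\ environments (and independent of $U_{1}$), $\sigma+1$ is geometric with success probability $p(u) \coloneqq \bP(I_{0}^{U} \geq U_{1} \mid U_{1}=u)$, so $\bE_{\delta_{l}^{L}}[\tau_{1} \mid U_{1}=u] \leq 1/p(u)$. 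Integrating against the law of $U_{1}$ then yields
\begin{equation*}
\bE_{\delta_{l}^{L}}[Z_{\tau_{1}}] \leq M^{U}\, \bE\!\left[\frac{U_{1}}{\bP(I_{0}^{U} \geq U_{1} \mid U_{1})}\right] + N^{U}\, \bE\!\left[\frac{1}{\bP(I_{0}^{U} \geq U_{1} \mid U_{1})}\right],
\end{equation*}
both finite by \ref{H9} (for the second term, using $U_{1} \geq 1$ to dominate $1/p(U_{1}) \leq U_{1}/p(U_{1})$) together with $M^{U},N^{U}<\infty$ from \ref{H2}--\ref{H3}.

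The mildly delicate point is the bookkeeping around the non-measurability of $\{\tau_{1}=n\}$ in $\mathcal{F}_{n-1}$, which is circumvented by passing to $\{\tau_{1} \geq n\}$; the independence of the thresholds from the environmental, offspring, and immigration sequences is what lets conditioning on $U_{1}$ leave the i.i.d.\ structure of $\{I_{n}^{U}\}$ intact, which is essential for the geometric coupling above.
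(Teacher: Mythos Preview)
Your argument is correct and takes a genuinely different route from the paper. One small caveat: your opening reduction invokes Remark \ref{remark:link_between_stationary_distributions}, which expresses $\pi^U$ through $\pi^L$; the existence of $\pi^L$ needs \ref{H5}, which is not among the hypotheses of the proposition. This is easy to repair: by stationarity of $\pi^U$ and time-homogeneity one has $\pi_k^U=\sum_{l\in S^L}\bP_{\delta_l^L}(Z_{\tau_1}=k)\,\bP_{\pi^U}(Z_{\nu_1}=l)$, so $\overline{\pi}^U\le\max_{l\in S^L}\bE_{\delta_l^L}[Z_{\tau_1}]$, and your bound on the right-hand side is uniform in $l$ anyway.

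As for the comparison: the paper never estimates $\bE[\tau_1]$. It conditions on the last state before crossing, $Z_{\tau_2-1}=i<U_2$, and bounds the one-step overshoot law $\bP_{\delta_i^L}(Z_1=k\mid Z_1\ge U_1,U_1)$ directly, using $\bP_{\delta_i^L}(Z_1\ge U_1\mid U_1)\ge\bP(I_0^U\ge U_1\mid U_1)$ in the denominator and $\bE_{\delta_i^L}[Z_1]\le (U_1-1)M^U+N^U$ in the numerator; this is a pure last-step argument. Your route---factoring $\bE[Z_{\tau_1}\mid U_1=u]\le (uM^U+N^U)\,\bE[\tau_1\mid U_1=u]$ via a Wald-type inequality and then dominating $\tau_1$ by the geometric waiting time for $\{I_n^U\ge u\}$---is equally clean and yields, as a byproduct, $\sup_{l\in S^L}\bE_{\delta_l^L}[\tau_1\mid U_1]\le 1/\bP(I_0^U\ge U_1\mid U_1)$ from \ref{H9} alone; the paper only obtains $\mu^U<\infty$ later (Proposition \ref{proposition:mean_and_variance_are_finite_and_variance_is_positive}) under the extra non-extinction hypothesis \ref{H8}.
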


\begin{proof}[Proof of Proposition \ref{proposition:finite_expectation_stationary_distribution}]
Using that $\pi^{U}=\{ \pi_{k}^{U} \}_{k \in S^{U}}$ is the stationary distribution of the Markov chain $\{ Z_{\tau_{j}}\}_{j=1}^{\infty}$, we write for all $k \in S^{U}$ $\pi_{k}^{U}=\bP_{\pi^{U}}(Z_{\tau_{2}}=k) = \bE[ \bP_{\pi^{U}}(Z_{\tau_{2}}=k | U_{2})]$. Next, we notice that
\begin{equation*}
  \bP_{\pi^{U}}(Z_{\tau_{2}}=k | U_{2}) = \sum_{n=3}^{\infty} \bP_{\pi^{U}}(Z_{\tau_{2}}=k | \tau_{2}=n, U_{2}) \bP_{\pi^{U}}(\tau_{2}=n | U_{2}).
\end{equation*}
Now, using that the event $\{\tau_{2}=n\}$ is same as $\{Z_{n} \geq U_{2}\} \cap \cap_{k=\nu_{1}+1}^{n-1} \{Z_{k} < U_{2}\} \cap \{\nu_{1} \leq n-1\}$, the RHS of the above inequality is bounded above by
\begin{equation*}
\max_{i=0,1,\dots,U_{2}-1} \sum_{n=3}^{\infty} \bP(Z_{n}=k | Z_{n} \geq U_{2}, U_{2}, Z_{n-1}=i, \cap_{k=\nu_{1}+1}^{n-2} \{Z_{k}<U_{2}\}, \nu_{1} \leq n-1) \bP_{\pi^{U}}(\tau_{2}=n | U_{2}).
\end{equation*}
Since BPRE is a time-homogeneous Markov chain, it follows that
\begin{equation*}
    \bP(Z_{n}=k | Z_{n} \geq U_{2}, U_{2}, Z_{n-1}=i, \cap_{k=\nu_{1}+1}^{n-2} \{Z_{k}<U_{2}\}, \nu_{1} \leq n-1) = \bP_{\delta_{i}^{L}}(Z_{1}=k | Z_{1} \geq U_{2}, U_{2}),
\end{equation*}
where we also use the fact that the process starts in the supercritical regime. Now, using $U_{1}$ and $U_{2}$ are i.i.d., it follows that
\begin{equation*}
  \pi_{k}^{U} \leq \bE[\max_{i=0,1,\dots,U_{1}-1} \bP_{\delta_{i}^{L}}(Z_{1}=k | Z_{1} \geq U_{1}, U_{1})].
\end{equation*}
Since 
\begin{equation*}
  \bP_{\delta_{i}^{L}}(Z_{1}=k | Z_{1} \geq U_{1}, U_{1}) \leq \frac{\bP_{\delta_{i}^{L}}(Z_{1}=k)}{\bP_{\delta_{i}^{L}}(Z_{1} \geq U_{1} | U_{1})},
\end{equation*}
using Fubini-Tonelli theorem, we obtain that
\begin{equation*}
  \overline{\pi}^{U} \leq \bE \biggl[\max_{i=0,1,\dots,U_{1}-1} \frac{\sum_{k \in S^{U}} k \bP_{\delta_{i}^{L}}(Z_{1}=k)}{\bP_{\delta_{i}^{L}}(Z_{1} \geq U_{1} | U_{1})} \biggr] \leq \bE \biggl[ \max_{i=0,1,\dots,U_{1}-1} \frac{\bE_{\delta_{i}^{L}}[Z_{1}]}{\bP_{\delta_{i}^{L}}(Z_{1} \geq U_{1} | U_{1})} \biggr].
\end{equation*}
Now, for all $i=0,1,\dots,U_{1}-1$, we have that
\begin{equation*}
  \bP_{\delta_{i}^{L}}(Z_{1} \geq U_{1} | U_{1}) \geq \bP(I_{0}^{U} \geq U_{1} | U_{1}).
\end{equation*}
Finally, using the Assumptions \ref{H2}, \ref{H3}, and \ref{H9}, we conclude that
\begin{equation*}
  \overline{\pi}^{U} \leq \bE \biggl[ \frac{(U_{1}-1) M^{U} + N^{U}}{\bP(I_{0}^{U} \geq U_{1} | U_{1})} \biggr] \leq \bE \biggl[ \frac{U_{1}}{\bP(I_{0}^{U} \geq U_{1} | U_{1})} \biggr] \max(M^{U}, N^{U}) < \infty.
\end{equation*}
\end{proof}

\subsection{Proofs of Lemma \ref{lemma:regenerative_property_for_crossing_times} and Lemma \ref{lemma:variance_of_SnL_and_SnU}} \label{subsection:proofs_of_lemmas}

\begin{proof}[Proof of Lemma \ref{lemma:regenerative_property_for_crossing_times}]
  We begin by proving (i). It is sufficient to show that for $n \in \mathbb{N}_{0}$ and $k \in \mathbb{N}$
\begin{equation} \label{regenerative_property_for_crossing_times}
    \bP_{\delta_{i}^{L}}( B_{i,n+1}^{L}=(k, \bd^{L}, \bd^{L}+\bd^{U}) | B_{i,n}^{L},\dots, B_{i,1}^{L}) = \bP_{\delta_{i}^{L}}( B_{i,1}^{L}=(k, \bd^{L}, \bd^{L}+\bd^{U})),
\end{equation}
where $\bd^{L}=(d_{1}^{L},\dots,d_{k}^{L})$, $\bd^{U}=(d_{1}^{U},\dots,d_{k}^{U})$, and $d_{j}^{L},d_{j}^{U} \in \mathbb{N}$. For simplicity set $X_{j} \coloneqq Z_{\nu_{j}}$. We recall that $\overline{\Delta}_{j}^{U}=\Delta_{j}^{U}+\Delta_{j}^{L}$ and notice
\begin{align*}
    &\bP_{\delta_{i}^{L}}( B_{i,n+1}^{L}=(k, \bd^{L}, \bd^{L}+\bd^{U}) | B_{i,n}^{L},\dots, B_{i,1}^{L}) \\
    =& \bP_{\delta_{i}^{L}}( K_{i,n+1}^{L}=k, \cap_{j=1}^{k} \{\Delta_{T_{i,n}+j}^{U}=d_{j}^{U}, \Delta_{T_{i,n}+j}^{L}=d_{j}^{L}\} | X_{T_{i,n}^{L}}=i, B_{i,n}^{L},\dots, B_{i,1}^{L}) \\
    =& \bP_{\delta_{i}^{L}}( X_{T_{i,n}+k}=i, \cap_{j=1}^{k-1} \{X_{T_{i,n}+j} \neq i\}, \cap_{j=1}^{k} \{\Delta_{T_{i,n}+j}^{U}=d_{j}^{U}, \Delta_{T_{i,n}+j}^{L}=d_{j}^{L}\} | X_{T_{i,n}^{L}}=i, B_{i,n}^{L},\dots, B_{i,1}^{L}).
\end{align*}
We now compute the last term of the above equation. Specifically, by proceeding as in the proof of Lemma \ref{lemma:stationarity} (involving conditioning on the environments), we obtain that for $n, k \in \mathbb{N}$, $x_{j} \in S^{L}$, and $x_{0}=i$
\begin{align*}
    &\bP_{\delta_{i}^{L}}(\cap_{j=1}^{k} \{X_{T_{i,n}^{L}+j}=x_{j}, \Delta_{T_{i,n}^{L}+j}^{U}=d_{j}^{U}, \Delta_{T_{i,n}^{L}+j}^{L}=d_{j}^{L}\} | X_{T_{i,n}^{L}}=i, B_{i,n}^{L},\dots, B_{i,1}^{L}) \\
    =& \prod_{j=1}^{k} \bP(X_{T_{i,n}^{L}+j}=x_{j}, \Delta_{T_{i,n}^{L}+j}^{U}=d_{j}^{U}, \Delta_{T_{i,n}^{L}+j}^{L}=d_{j}^{L} | X_{T_{i,n}^{L}+j-1}=x_{j-1}) \\
    =&\prod_{j=1}^{k} \bP(X_{j}=x_{j}, \Delta_{j}^{U}=d_{j}^{U}, \Delta_{j}^{L}=d_{j}^{L} | X_{j-1}=x_{j-1}) \\
    =&\bP( \cap_{j=1}^{k} \{X_{j}=x_{j}, \Delta_{j}^{U}=d_{j}^{U}, \Delta_{j}^{L}=d_{j}^{L}\} | X_{0}=i).
\end{align*}
Now, by summing over $x_{k} \in \{ i \}$ and $x_{j} \in S^{L} \setminus \{ i \}$, we obtain that
\begin{align*}
    &\bP_{\delta_{i}^{L}}( X_{T_{i,n}+k}=i, \cap_{j=1}^{k-1} \{X_{T_{i,n}+j} \neq i\}, \cap_{j=1}^{k} \{\Delta_{T_{i,n}+j}^{U}=d_{j}^{U}, \Delta_{T_{i,n}+j}^{L}=d_{j}^{L}\} | X_{T_{i,n}^{L}}=i, B_{i,n}^{L},\dots, B_{i,1}^{L}) \\
    =&\bP_{\delta_{i}^{L}}(X_{k}=i, \cap_{j=1}^{k-1} \{X_{j} \neq i \}, \cap_{j=1}^{k} \{\Delta_{j}^{U}=d_{j}^{U}, \Delta_{j}^{L}=d_{j}^{L}\}).
\end{align*}
The last term in the above is
\begin{equation*}
    \bP_{\delta_{i}^{L}}( T_{i,1}^{L}=k, \cap_{j=1}^{k} \{ \Delta_{j}^{U}=d_{j}^{U}, \Delta_{j}^{L}=d_{j}^{L}\}) = \bP_{\delta_{i}^{L}}( B_{i,1}^{L}=(k, \bd^{L}, \bd^{L}+\bd^{U})).
\end{equation*}
We thus obtain \eqref{regenerative_property_for_crossing_times}. The proof of (ii) is similar.
\end{proof}

\begin{proof}[Proof of Lemma \ref{lemma:variance_of_SnL_and_SnU}]
The first part of (i) follows from Proposition 1.69 of \citet{Serfozo-2009} with $X_{j}=Z_{\nu_{j}}$, $\pi=\pi^{L}=\{ \pi_{i}^{L} \}_{i \in S^{L}}$, and $V_{j}=\Delta_{j+1}^{U}$. For the second part of (i) we use the above proposition with $V_{j}=\overline{\Delta}_{j+1}^{U}$ and obtain that
\begin{equation*}
    \bE_{\delta_{i}^{L}}[\overline{S}_{T_{i,1}^{L}}^{U}] = (\pi_{i}^{L})^{-1} \bE_{\pi^{L}}[\overline{\Delta}_{1}^{U}] = (\pi_{i}^{L})^{-1} (\bE_{\pi^{L}}[\Delta_{1}^{L}] + \mu^{U}).
\end{equation*}
Remark \eqref{remark:link_between_stationary_distributions} yields that
\begin{equation*}
    \bE_{\pi^{L}}[\Delta_{1}^{L}] = \sum_{k \in S^{L}} \sum_{l \in S^{U}} \bE_{\delta_{l}^{U}}[\Delta_{1}^{L}] P_{\delta_{k}^{L}}(Z_{\tau_{1}}=l] \pi_{k}^{L} = \mu^{L}.
\end{equation*}
We now prove the fist part of (ii). Since, conditionally on $Z_{\nu_{0}}=i$, $\Delta_{1}^{U}$ and $\Delta_{T_{i,1}^{L}+1}^{U}$ have the same distribution, using (i) we have that
\begin{equation*}
  \bV_{\delta_{i}^{L}}[S_{T_{i,1}^{L}}^{U}] =\bE_{\delta_{i}^{L}} \biggl[ \biggl( \sum_{j=1}^{T_{i,1}^{L}}(\Delta_{j+1}^{U}-\mu^{U} \biggr)^{2} \biggr] =\bE_{\delta_{i}^{L}} \biggl[\sum_{j=1}^{T_{i,1}^{L}}(\Delta_{j+1}^{U}-\mu^{U})^{2} \biggr] + 2 C_{i}^{U},
\end{equation*}  
\begin{equation*}
   \text{where }  C_{i}^{U} \coloneqq \bE_{\delta_{i}^{L}} \biggl[\sum_{j=1}^{T_{i,1}^{L}}(\Delta_{j+1}^{U}-\mu^{U}) \sum_{l=j+1}^{T_{i,1}^{L}}(\Delta_{l+1}^{U}-\mu^{U}) \biggr].
\end{equation*}
Next, we apply Proposition 1.69 of \citet{Serfozo-2009} with $X_{j}=Z_{\nu_{j}}$, $\pi=\pi^{L}=\{ \pi_{i}^{L} \}_{i \in S^{L}}$ and $V_{j}=(\Delta_{j+1}^{U}-\mu^{U})^{2}$ and obtain that
\begin{equation*}
  \bE_{\delta_{i}^{L}} \biggl[\sum_{j=1}^{T_{i,1}^{L}}(\Delta_{j+1}^{U}-\mu^{U})^{2} \biggr] = (\pi_{i}^{L})^{-1} \bE_{\pi^{L}}[ (\Delta_{1}^{U}-\mu^{U})^{2}].
\end{equation*}
Then, we compute
\begin{align*}
 C_{i}^{U}=& \bE_{\delta_{i}^{L}}\biggl[\sum_{j=1}^{\infty} \mathbf{I}_{\{T_{i,1}^{L} \geq j\}} \bE_{\delta_{i}^{L}} \biggl[(\Delta_{j+1}^{U}-\mu^{U}) \sum_{l=j+1}^{T_{i,1}^{L}}(\Delta_{l+1}^{U}-\mu^{U}) | T_{i,1}^{L} \geq j \biggr] \biggr] \\
  =& \bE_{\delta_{i}^{L}} \biggl[\sum_{j=1}^{\infty} \mathbf{I}_{\{T_{i,1}^{L} \geq j\}} \sum_{k \in S^{L}} \mathbf{I}_{\{Z_{\nu_{j}}=k\}} g^{L}(k) \biggr] \\
  =& \sum_{k \in S^{L}} g^{L}(k) \bE_{\delta_{i}^{L}} \biggl[\sum_{j=1}^{T_{i,1}^{L}} \mathbf{I}_{\{Z_{\nu_{j}}=k\}} \biggr],
\end{align*}
where $g^{L} : S^{L} \to \mathbb{R}$ is given by
\begin{align*}
  g^{L}(k) &= \bE_{\delta_{i}^{L}} \biggl[(\Delta_{j+1}^{U}-\mu^{U}) \sum_{l=j+1}^{T_{i,1}^{L}}(\Delta_{l+1}^{U}-\mu^{U}) | T_{i,1}^{L} \geq j, Z_{\nu_{j}}=k \biggr] \\
  &= \sum_{l=j+1}^{\infty} \bE_{\delta_{i}^{L}} \biggl[(\Delta_{j+1}^{U}-\mu^{U}) (\Delta_{l+1}^{U}-\mu^{U}) \mathbf{I}_{\{T_{i,1}^{L} \geq l\}} | T_{i,1}^{L} \geq j, Z_{\nu_{j}}=k \biggr].
\end{align*}
Using Theorem 1.54 of \citet{Serfozo-2009}, we obtain $\bE_{\delta_{i}^{L}} [\sum_{j=1}^{T_{i,1}^{L}} \mathbf{I}_{\{Z_{\nu_{j}}=k\}}] = (\pi_{i}^{L})^{-1} \pi_{k}^{L}$, which yields
\begin{equation*}
  C_{i}^{U}= (\pi_{i}^{L})^{-1} \sum_{k \in S^{L}} g^{L}(k) \pi_{k}^{L}.
\end{equation*}
Now, using Lemma \ref{lemma:regenerative_property_for_crossing_times}, we see that, conditionally on $j \leq T_{i,1}^{L} < l$, $(\Delta_{l+1}^{U}-\mu^{U})$ is independent of $(\Delta_{j+1}^{U}-\mu^{U})$. If $Z_{\nu_{j}} \sim \pi^{L}$, then using stationarity (see Remark \ref{remark:stationary_distribution})
\begin{equation*}
    \bE[(\Delta_{l+1}^{U}-\mu^{U}) | j \leq T_{i,1}^{L}<l,Z_{\nu_{j}} \sim \pi^{L}]=\bE[(\Delta_{l+1}^{U}-\mu^{U}) | Z_{\nu_{l}} \sim \pi^{L}]=0.
\end{equation*}
Therefore,
\begin{align*}
    &\sum_{l=j+1}^{\infty}\sum_{k \in S^{L}} \pi_{k}^{L} \bE_{\delta_{i}^{L}} \biggl[(\Delta_{j+1}^{U}-\mu^{U}) (\Delta_{l+1}^{U}-\mu^{U}) \mathbf{I}_{\{T_{i,1}^{L}<l\}} | T_{i,1}^{L} \geq j, Z_{\nu_{j}}=k \biggr] \\
    =&\sum_{l=j+1}^{\infty} \bE_{\delta_{i}^{L}}\biggl[(\Delta_{j+1}^{U}-\mu^{U}) \bE[(\Delta_{l+1}^{U}-\mu^{U}) | j \leq T_{i,1}^{L}<l,Z_{\nu_{j}} \sim \pi^{L}] \mathbf{I}_{\{T_{i,1}^{L}<l\}} | T_{i,1}^{L} \geq j, Z_{\nu_{j}} \sim \pi^{L} \biggr]=0.
\end{align*}
Adding the above to $\sum_{k \in S^{L}} g^{L}(k) \pi_{k}^{L}$ we conclude that
\begin{align*}
    \sum_{k \in S^{L}} g^{L}(k) \pi_{k}^{L} &= \sum_{l=j+1}^{\infty} \sum_{k \in S^{L}} \pi_{k}^{L} \bE_{\delta_{i}^{L}} \biggl[(\Delta_{j+1}^{U}-\mu^{U}) (\Delta_{l+1}^{U}-\mu^{U}) | T_{i,1}^{L} \geq j, Z_{\nu_{j}}=k\biggr] \\
    &= \sum_{l=1}^{\infty} \sum_{k \in S^{L}} \pi_{k}^{L} \bE_{\delta_{k}^{L}} \biggl[(\Delta_{1}^{U}-\mu^{U}) (\Delta_{l+1}^{U}-\mu^{U}) \biggr] \\
    &=\sum_{l=1}^{\infty} \bC_{\pi^{L}}[\Delta_{1}^{U}, \Delta_{l+1}^{U}].
\end{align*}
The second part of (ii) and (iii) are obtained similarly.
\end{proof}

\subsection{Finiteness of $\mu^{T}$, $\sigma^{2,T}$ and $\overline{\sigma}^{2,T}$} \label{subsection:finiteness_of_means_and_variances}

We establish positivity and finiteness of $\sigma^{2,T}$ and $\overline{\sigma}^{2,T}$ where $T \in \{L,U\}$. Lemma \ref{lemma:bound_on_covariance} below is used to control the covariance terms in $\sigma^{2,T}$ and $\overline{\sigma}^{2,T}$. We recall that uniform ergodicity of the Markov chains $\{ Z_{\nu_{j}} \}_{j=0}^{\infty}$ and $\{ Z_{\tau_{j}} \}_{j=1}^{\infty}$ is equivalent to the existence of constants $C_{T} \geq 0$ and $\rho_{T} \in (0,1)$ such that $\sup_{l \in S_{T}} \norm{p_{l}^{T}(j)- \pi^{T}} \leq C_{T} \rho_{T}^{j}$.
\begin{lem} \label{lemma:bound_on_covariance}
Assume \ref{H1}-\ref{H4}. The following holds: \\
  (i) If \ref{H5} holds and $w_{i} \in \mathbb{R}$,  $i \in S^{L}$, then
\begin{equation*}
  \sum_{j=1}^{\infty} \biggl\lvert \sum_{i,k \in S^{L}} w_{k} w_{i} \pi_{k}^{L} p_{ki}^{L}(j) - \biggl( \sum_{k \in S^{L}} w_{k} \pi_{k}^{L} \biggr) \biggl( \sum_{i \in S^{L}} w_{i} \pi_{i}^{L} \biggr) \biggr\rvert \leq 2 C_{L}^{1/2} \frac{\rho_{L}^{1/2}}{1-\rho_{L}^{1/2}} \biggl( \sum_{k \in S^{L}} w_{k}^{2} \pi_{k}^{L} \biggr).
\end{equation*}
  (ii) If \ref{H6} (or \ref{H7}) holds and $w_{k} \in \mathbb{R}$,  $k \in S^{U}$, then 
\begin{equation*}
  \sum_{j=1}^{\infty} \biggl\lvert \sum_{i,k \in S^{U}} w_{k} w_{i} \pi_{k}^{U} p_{ki}^{U}(j) - \biggl( \sum_{k \in S^{U}} w_{k} \pi_{k}^{U} \biggr) \biggl( \sum_{i \in S^{U}} w_{i} \pi_{i}^{U} \biggr) \biggr\rvert \leq 2 C_{U}^{1/2} \frac{\rho_{U}^{1/2}}{1-\rho_{U}^{1/2}} \biggl( \sum_{k \in S^{U}} w_{k}^{2} \pi_{k}^{U} \biggr).
\end{equation*}
\end{lem}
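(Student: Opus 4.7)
The plan is to recognize the quantity inside the absolute value as the lag-$j$ stationary covariance $\bC_{\pi^T}[w(X_0^T), w(X_j^T)]$, where $X_j^L = Z_{\nu_j}$ and $X_j^U = Z_{\tau_{j+1}}$, and then derive a geometric decay estimate from the uniform ergodicity bound $\sup_l \norm{p_l^T(j)-\pi^T}\le C_T \rho_T^j$. I write $\norm{f}_{L^2(\pi^T)}^2 \coloneqq \sum_k f_k^2 \pi_k^T$. Since the claimed bound is vacuous when $\sum_k w_k^2 \pi_k^T = \infty$, I assume $\norm{w}_{L^2(\pi^T)}<\infty$ and set $\tilde w_k \coloneqq w_k - \sum_l w_l \pi_l^T$, which has $\pi^T$-mean zero. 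Writing the covariance as the inner product $\sum_k \pi_k^T \tilde w_k (P^j \tilde w)(k)$, the task reduces to controlling $\norm{P^j \tilde w}_{L^2(\pi^T)}$ in terms of $\norm{\tilde w}_{L^2(\pi^T)}$ with a geometric rate.

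The central computation, and the main technical step, is the pointwise estimate
\begin{equation*}
|(P^j \tilde w)(k)|^2
= \Big|\sum_i \big(p_{ki}^T(j)-\pi_i^T\big)\tilde w_i\Big|^2
\le \Big(\sum_i |p_{ki}^T(j)-\pi_i^T|\Big)\Big(\sum_i |p_{ki}^T(j)-\pi_i^T|\tilde w_i^2\Big),
\end{equation*}
obtained by the Cauchy--Schwarz variant $|\sum a_i b_i|^2 \le (\sum |a_i|)(\sum |a_i| b_i^2)$ applied with $a_i = p_{ki}^T(j) - \pi_i^T$ and $b_i = \tilde w_i$. The first factor equals $2\norm{p_k^T(j)-\pi^T}$, which is bounded by $2 C_T \rho_T^j$. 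Averaging against $\pi_k^T$ and invoking the triangle inequality together with the stationarity relation $\sum_k \pi_k^T p_{ki}^T(j) = \pi_i^T$ yields $\sum_k \pi_k^T |p_{ki}^T(j)-\pi_i^T|\le 2\pi_i^T$, and consequently $\norm{P^j \tilde w}_{L^2(\pi^T)}^2 \le 4 C_T \rho_T^j \norm{\tilde w}_{L^2(\pi^T)}^2$.

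Applying Cauchy--Schwarz to the covariance inner product then gives $|\bC_{\pi^T}[w(X_0^T), w(X_j^T)]|\le 2 C_T^{1/2} \rho_T^{j/2} \norm{\tilde w}_{L^2(\pi^T)}^2$, and bounding $\norm{\tilde w}_{L^2(\pi^T)}^2 \le \sum_k w_k^2 \pi_k^T$ followed by summing the geometric series $\sum_{j\ge 1}\rho_T^{j/2} = \rho_T^{1/2}/(1-\rho_T^{1/2})$ produces the claimed inequality in both cases $T \in \{L,U\}$. The only genuine subtlety I anticipate is upgrading the $L^{\infty}$-type uniform ergodicity bound to an $L^2(\pi^T)$ contraction with the halved rate $\rho_T^{j/2}$; this is exactly what the Cauchy--Schwarz trick above accomplishes, and it is what produces the square roots appearing in the constant $C_T^{1/2}$ and the decay rate $\rho_T^{1/2}/(1-\rho_T^{1/2})$. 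The infiniteness of $S^U$ causes no difficulty, since the whole argument proceeds identically once we reduce to $w$ with finite $\pi^U$-weighted second moment.
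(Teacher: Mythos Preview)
Your proposal is correct and follows essentially the same route as the paper's proof, which it attributes to Theorem~17.2.3 of \citet{Ibragimov-1971}: two applications of Cauchy--Schwarz, the uniform-ergodicity bound $\sum_i |p_{ki}^T(j)-\pi_i^T|\le 2C_T\rho_T^j$, and the stationarity relation $\sum_k \pi_k^T p_{ki}^T(j)=\pi_i^T$, combined into the per-$j$ estimate $2C_T^{1/2}\rho_T^{j/2}\sum_k w_k^2\pi_k^T$ and then summed. The only cosmetic differences are that you center $w$ at the outset and split the inner Cauchy--Schwarz as $|a_i|\cdot|a_i|$ rather than the paper's $(p_{ki}^T(j)+\pi_i^T)\cdot|p_{ki}^T(j)-\pi_i^T|$; both lead to the same bound.
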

The proof of the above lemma can be constructed along the lines of Theorem 17.2.3 of \citet{Ibragimov-1971} with $p=q=1/2$ and it involves a repeated use of Cauchy-Schwarz inequality and stationarity in Remark \ref{remark:stationary_distribution}. 

\begin{proof}[Proof of Lemma \ref{lemma:bound_on_covariance}]
Since the proof of (ii) is similar we only prove (i). We proceed along the lines of the proof of Theorem 17.2.3 of \citet{Ibragimov-1971}. Using Cauchy-Schwarz inequality, we have that
\begin{align*}
  &\biggl\lvert \sum_{i,k \in S^{L}} w_{k} w_{i} \pi_{k}^{L} p_{ki}^{L}(j) - \biggl( \sum_{k \in S^{L}} w_{k} \pi_{k}^{L} \biggr) \biggl( \sum_{i \in S^{L}} w_{i} \pi_{i}^{L} \biggr) \biggr\rvert \\
  =&\biggl\lvert \sum_{k \in S^{L}} w_{k} (\pi_{k}^{L})^{1/2} \sum_{i \in S^{L}} w_{i} (p_{ki}^{L}(j)-\pi_{i}^{L}) (\pi_{k}^{L})^{1/2} \biggr\rvert \\
  \leq& \biggl( \sum_{k \in S^{L}} (w_{k})^{2} \pi_{k}^{L} \biggr)^{1/2} \biggl( \sum_{k \in S^{L}} \pi_{k}^{L} \biggl( \sum_{i \in S^{L}} w_{i} (p_{ki}^{L}(j)-\pi_{i}^{L}) \biggr)^{2} \biggr)^{1/2}.
\end{align*}
Using Cauchy-Schwarz inequality again, we obtain that
\begin{align*}
  \biggl( \sum_{i \in S^{L}} w_{i} (p_{ki}^{L}(j)-\pi_{i}^{L}) \biggr)^{2} &\leq \biggl( \sum_{i \in S^{L}} \abs{w_{i}} (p_{ki}^{L}(j)+\pi_{i}^{L})^{1/2} \abs{p_{ki}^{L}(j)-\pi_{i}^{L}}^{1/2} \biggr)^{2} \\
  &\leq \biggl( \sum_{i \in S^{L}} (w_{i})^{2} (p_{ki}^{L}(j)+\pi_{i}^{L}) \biggr) \biggl( \sum_{i \in S^{L}} \abs{p_{ki}^{L}(j)-\pi_{i}^{L}} \biggr).
\end{align*}
Since $\sum_{k \in S^{L}} p_{ki}^{L}(j) \pi_{k}^{L}=\pi_{i}^{L}$ by Remark \ref{remark:stationary_distribution}, we deduce that
\begin{align*}
  \biggl( \sum_{k \in S^{L}} \pi_{k}^{L} \biggl( \sum_{i \in S^{L}} w_{i} (p_{ki}^{L}(j)-\pi_{i}^{L}) \biggr)^{2} \biggr)^{1/2} &\leq \biggl( \sum_{k \in S^{L}} \pi_{k}^{L} \biggl( \sum_{i \in S^{L}} (w_{i})^{2} (p_{ki}^{L}(j)+\pi_{i}^{L}) \biggr) \biggl( \sum_{i \in S^{L}} \abs{p_{ki}^{L}(j)-\pi_{i}^{L}} \biggr) \biggr)^{1/2} \\
  &\leq \biggl( 2 \sum_{i \in S^{L}} (w_{i})^{2} \pi_{i}^{L} \biggr)^{1/2} \sup_{k \in S^{L}} \biggl( \sum_{i \in S^{L}} \abs{p_{ki}^{L}(j)-\pi_{i}^{L}} \biggr)^{1/2}.
\end{align*}
Using that $\sup_{l \in S^{L}} \norm{p_{l}^{L}(j)- \pi^{L}} \leq C_{L} \rho_{L}^{j}$, we obtain that
\begin{align*}
  \sup_{k \in S^{L}} \biggl( \sum_{i \in S^{L}} \abs{p_{ki}^{L}(j)-\pi_{i}^{L}} \biggr) &\leq \sup_{k \in S^{L}} \biggl( \sum_{i \in S^{L} : p_{ki}^{L}(j)-\pi_{i}^{L} > 0} (p_{ki}^{L}(j)-\pi_{i}^{L}) \biggr) + \sup_{k \in S^{L}} \biggl( \sum_{i \in S^{L} : p_{ki}^{L}(j)-\pi_{i}^{L} < 0} (\pi_{i}^{L}-p_{ki}^{L}(j)) \biggr) \\
  &\leq \sup_{k \in S^{L}} \biggl( \sum_{i \in S^{L} : p_{ki}^{L}(j)-\pi_{i}^{L} > 0} p_{ki}^{L}(j) - \sum_{i \in S^{L} : p_{ki}^{L}(j)-\pi_{i}^{L} > 0} \pi_{i}^{L}) \biggr) \\
  &+ \sup_{k \in S^{L}} \biggl( \sum_{i \in S^{L} : p_{ki}^{L}(j)-\pi_{i}^{L} < 0} \pi_{i}^{L}- \sum_{i \in S^{L} : p_{ki}^{L}(j)-\pi_{i}^{L} < 0} p_{ki}^{L}(j) \biggr) \\
  &\leq 2C_{L}\rho_{L}^{j}.
\end{align*}
We have thus shown that
\begin{equation*}
  \biggl\lvert \sum_{i,k \in S^{L}} w_{k} w_{i} \pi_{k}^{L} p_{ki}^{L}(j) - \biggl( \sum_{k \in S^{L}} w_{k} \pi_{k}^{L} \biggr) \biggl( \sum_{i \in S^{L}} w_{i} \pi_{i}^{L} \biggr) \biggr\rvert \leq 2 C_{L}^{1/2} \rho_{L}^{j/2} \biggl( \sum_{k \in S^{L}} w_{k}^{2} \pi_{k}^{L} \biggr),
\end{equation*}
which yields that
\begin{equation*}
  \sum_{j=1}^{\infty} \biggl\lvert \sum_{i,k \in S^{L}} w_{k} w_{i} \pi_{k}^{L} p_{ki}^{L}(j) - \biggl( \sum_{k \in S^{L}} w_{k} \pi_{k}^{L} \biggr) \biggl( \sum_{i \in S^{L}} w_{i} \pi_{i}^{L} \biggr) \biggr\rvert \leq 2 C_{L}^{1/2} \frac{\rho_{L}^{1/2}}{1-\rho_{L}^{1/2}} \biggl( \sum_{k \in S^{L}} w_{k}^{2} \pi_{k}^{L} \biggr).
\end{equation*}
\end{proof}

We are now ready to study the finiteness of means and variances $\mu^{T}$, $\sigma^{2,T}$, and $\overline{\sigma}^{2,T}$, where $T \in \{L,U\}$.
\begin{Prop} \label{proposition:mean_and_variance_are_finite_and_variance_is_positive}
Assume \ref{H1}-\ref{H4}. (i) Also, if \ref{H5} and \ref{H8} hold, then $\mu^{U}< \infty$ and $0<\sigma^{2,U}<\infty$. Next, (ii) if \ref{H6} (or \ref{H7}) and \ref{H9} hold, then $\mu^{L}<\infty$ and $0<\sigma^{2,L}<\infty$. (iii) Additionally, under the assumptions in (i) and (ii) $0<\overline{\sigma}^{2,U},\overline{\sigma}^{2,L} <\infty$.
\end{Prop}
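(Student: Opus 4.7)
My plan is to reduce every claim to the regenerative identity of Lemma \ref{lemma:variance_of_SnL_and_SnU}, namely $\sigma^{2,T} = \pi_{i}^{T} \bV_{\delta_{i}^{T}}[S_{T_{i,1}^{T}}^{T}]$ for any fixed $i \in S^{T}$ (with the analogous statement for $\overline{\sigma}^{2,T}$), and then to estimate the first and second moments of a single regenerative cycle using standard BPRE bounds. Throughout, set $f(i) \coloneqq \bE_{\delta_{i}^{L}}[\Delta_{1}^{U}]$ and $g(k) \coloneqq \bE_{\delta_{k}^{U}}[\Delta_{1}^{L}]$.

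For the means, under \ref{H8} the supercritical BPRE with immigration has no extinction and grows at exponential rate $e^{n\bE[Y_{0}^{U}]}$, so its hitting time of any deterministic level has an exponentially decaying tail; averaging over $U_{1}$ via \ref{H4} gives $f(i) < \infty$ for every $i \in S^{L}$, and finiteness of $S^{L}$ yields $\mu^{U} < \infty$. For $\mu^{L}$, the strongly subcritical assumption in \ref{H2} gives the classical logarithmic bound $g(k) \leq C(1+\log k)$; combining with $\log k \leq k$ and Proposition \ref{proposition:finite_expectation_stationary_distribution}, $\mu^{L} \leq C(1+\overline{\pi}^{U}) < \infty$.

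For finiteness of the variances, I condition on the embedded Markov chain together with the return time. By the strong Markov property, given the chain the increments $\Delta_{j}^{U}$ are independent with conditional mean $f(Z_{\nu_{j-1}})$ and conditional second moment uniformly bounded on the finite set $S^{L}$ (again using the exponential tail of supercritical crossing times under \ref{H8}). Applying $(\sum_{j=1}^{T} x_{j})^{2} \leq T \sum_{j=1}^{T} x_{j}^{2}$ inside the conditional expectation and then integrating out yields $\bE_{\delta_{i}^{L}}[(S_{T_{i,1}^{L}}^{U})^{2}] \leq \bE_{\delta_{i}^{L}}[(T_{i,1}^{L})^{2}] \max_{k \in S^{L}} f(k)^{2} + \bE_{\delta_{i}^{L}}[T_{i,1}^{L}] \max_{k \in S^{L}} \bE_{\delta_{k}^{L}}[(\Delta_{1}^{U})^{2}]$, which is finite because Lemma \ref{lemma:aperiodic_irreducible_and_positive_recurrent} gives $p_{ii}^{L} \geq \underline{p}^{L} > 0$ and hence geometric return-time tails. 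The subcritical case is structurally identical, but the infinite state space $S^{U}$ forces me to replace the pointwise maximum by an average under $\pi^{U}$; this is accomplished through Kac's identity $\bE_{\delta_{i}^{U}}[\sum_{j=1}^{T_{i,1}^{U}-1} h(Z_{\tau_{j}})] = (\pi_{i}^{U})^{-1} \bE_{\pi^{U}}[h(Z_{\tau_{1}})]$, the log-growth bounds $g(k), \bE_{\delta_{k}^{U}}[(\Delta_{1}^{L})^{2}]^{1/2} \leq C(1+\log k)$, the exponential moment bound $\bE_{\delta_{i}^{U}}[e^{\alpha T_{i,1}^{U}}] < \infty$ coming from the Doeblin condition established in the proof of Theorem \ref{theorem:uniform_ergodicity}, and the estimate $\bE_{\pi^{U}}[(\log Z_{\tau_{1}})^{p}] \leq C_{p}(1+\overline{\pi}^{U}) < \infty$ for any fixed $p \geq 1$, which follows from $(\log k)^{p} = o(k)$ together with Proposition \ref{proposition:finite_expectation_stationary_distribution}.

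For positivity, since $\sigma^{2,U} = \pi_{i}^{L} \bV_{\delta_{i}^{L}}[S_{T_{i,1}^{L}}^{U}]$ it suffices to show that the regenerative increment is not almost surely constant. On the event $\{T_{i,1}^{L} = 1\}$, which has probability $p_{ii}^{L} > 0$ by Lemma \ref{lemma:aperiodic_irreducible_and_positive_recurrent}, the increment equals $\Delta_{1}^{U}$; the non-degeneracy of the environment, immigration, and threshold sequences forces $\Delta_{1}^{U}$ to take at least two values with positive conditional probability. Part (iii) for $\overline{\sigma}^{2,T}$ follows by repeating the same arguments with $\overline{\Delta}_{j}^{U}$ or $\overline{\Delta}_{j}^{L}$ in place of $\Delta_{j}^{U}$ or $\Delta_{j}^{L}$; the only new ingredient is the bound $\bE_{\delta_{i}^{L}}[Z_{\tau_{1}}] \leq M^{U}\bE[U_{1}] + N^{U} < \infty$ from \ref{H2}-\ref{H4}, so that Jensen yields $\bE_{\delta_{i}^{L}}[g(Z_{\tau_{1}})] \leq C$ uniformly in $i \in S^{L}$, and the remaining estimates transfer verbatim. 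The principal obstacle is the subcritical finiteness argument: the state space $S^{U}$ is infinite and only first-moment integrability of $\pi^{U}$ is available, and it is the slack $(\log k)^{p} = o(k)$ together with the Doeblin-driven exponential tails of $T_{i,1}^{U}$ that let Kac's identity close the argument.
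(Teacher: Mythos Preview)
Your route differs from the paper's: rather than bounding the covariance tails in the series definitions \eqref{definition_sigma_L}--\eqref{definition_sigma_U} via the mixing-rate inequality of Lemma \ref{lemma:bound_on_covariance}, you invoke the regenerative identity of Lemma \ref{lemma:variance_of_SnL_and_SnU} and bound the second moment of a single cycle sum. Note that in the paper's logical order that lemma is established \emph{after} the present proposition, and its proof rearranges infinite series in a way that tacitly uses the finiteness you are proving; to avoid circularity you would need to re-derive the identity in a form valid in $[0,\infty]$.

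The substantive gap is in your treatment of $\sigma^{2,L}$. The inequality $(\sum_{j=1}^{K} x_j)^2 \le K \sum_{j=1}^{K} x_j^2$ with $K=T_{i,1}^{U}-1$ leaves you with $\bE_{\delta_i^U}\bigl[K \sum_{j=1}^{K} h(Z_{\tau_j})\bigr]$ for $h(k)=\bE_{\delta_k^U}[(\Delta_1^L)^2]$; Kac's identity handles $\bE[\sum h]$ but not $\bE[K\sum h]$, and neither the exponential tails of $K$ nor $\bE_{\pi^U}[(\log Z_{\tau_1})^p]<\infty$ by themselves decouple $K$ from the state-dependent summands. What your list of ingredients is missing is that $l \mapsto \bE[h(Z_{\tau_2}) \mid Z_{\tau_1}=l]$ is bounded uniformly in $l \in S^U$, because between $\tau_1$ and $\tau_2$ the process necessarily passes through $Z_{\nu_1}\in S^L$, a finite set; combined with the Doeblin bound of Lemma \ref{lemma:aperiodic_irreducible_and_positive_recurrent} this yields $\bE_{\delta_i^U}[h(Z_{\tau_j})\mathbf{I}_{\{K\ge j\}}] \le C\rho^{j}$ and closes the estimate. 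The paper avoids all of this: it uses the linear bound $\bE_{\delta_k^U}[(\Delta_1^L)^2]\le Ck$ of \eqref{expectation_nu1_square}, whence $\sum_{k} (\bE_{\delta_k^U}[\Delta_1^L]-\mu^L)^2\pi_k^U \le C\,\overline{\pi}^U<\infty$, and feeds this directly into Lemma \ref{lemma:bound_on_covariance} to bound $\sum_j \abs{\bC_{\pi^U}[\Delta_1^L,\Delta_{j+1}^L]}$.

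Your positivity argument is also incomplete for $\Delta_1^L$. The paper's contradiction exploits the explicit hypothesis $L_0 \le M^L L_U$ in \ref{H4} together with the full support of $L_1$ on $[L_0,L_U]$: if $\Delta_1^L$ were a.s.\ constant one would get $L_0 > M^L L_U$. Since $L_1$ has bounded support, a generic appeal to ``non-degeneracy of thresholds'' does not by itself force $\Delta_1^L$ to take two values.
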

It is easy to see that Proposition \ref{proposition:mean_and_variance_are_finite_and_variance_is_positive} implies that $\abs{\mathbb{C}^{U}}$ and $\abs{\mathbb{C}^{L}}$ are finite. 

\begin{proof}[Proof of Proposition \ref{proposition:mean_and_variance_are_finite_and_variance_is_positive}]
We begin by proving (i). For all $i \in S^{L}$ it holds that
\begin{equation*}
  \bE_{\delta_{i}^{L}}[\tau_{1}] = \sum_{n=1}^{\infty} \bP_{\delta_{i}^{L}}(\tau_{1} \geq n) \leq \sum_{n=0}^{\infty} \bP_{\delta_{i}^{L}}(\tilde{Z}_{n} < U_{1}),
\end{equation*}
where $\{ \tilde{Z}_{n} \}_{n=0}^{\infty}$ is a supercritical BPRE with immigration having environmental sequence $\Pi^{U} = \{ \Pi_{n}^{U} \}_{n=0}^{\infty}$ and, conditionally on $\Pi_{n}^{U}$, offspring distributions $\{ \xi_{n,i}^{U} \}_{i=0}^{\infty}$ and immigration distribution $I_{n}^{U}$. Using $\lim_{n \to \infty} \tilde{Z}_{n}=\infty$ a.s.\ and $\bE[U_{1}]< \infty$, we see that $\lim_{n \to \infty} \tilde{Z}_{n} \bP_{\delta_{i}^{L}}(\tilde{Z}_{n} < U_{1} | \tilde{Z}_{n})=0$ a.s. Since $\lim_{n \to \infty} \frac{\tilde{Z}_{n}}{(M^{L})^{n}}>0$, we obtain $\lim_{n \to \infty} (M^{L})^{n} \bP_{\delta_{i}^{L}}(\tilde{Z}_{n} < U_{1} | \tilde{Z}_{n})=0$ a.s., which yields $\lim_{n \to \infty} (M^{L})^{n} \bP_{\delta_{i}^{L}}(\tilde{Z}_{n} < U_{1})=0$. Therefore, there exists $\tilde{C}$ such that
\begin{equation} \label{upper_bound_on_1_over_Z_n}
    \bP_{\delta_{i}^{L}}(\tilde{Z}_{n} < U_{1}) \leq  \tilde{C} (M^{L})^{n}.
\end{equation}
Now, using $S^{L}$ is finite and $\bE[U_{1}]<\infty$ it follows that
\begin{equation*}
  \mu^{U} = \sum_{i \in S^{L}} \bE_{\delta_{i}^{L}}[\tau_{1}] \pi_{i}^{L} \leq (\max_{i \in S^{L}} C_{i}) \frac{\bE[U_{1}]}{1-\gamma} < \infty.
\end{equation*}
Turning to the finiteness of $\sigma^{2,U}$, replacing $n$ by $\lfloor \sqrt{n} \rfloor$ in \eqref{upper_bound_on_1_over_Z_n}, one obtains that
\begin{equation*}
  \bE_{\delta_{k}^{L}}[\tau_{1}^{2}] = \sum_{n=1}^{\infty} \bP_{\delta_{k}^{L}}(\tau_{1} \geq \sqrt{n}) \leq \sum_{n=0}^{\infty} \bP_{\delta_{k}^{L}}(\tilde{Z}_{\lfloor \sqrt{n} \rfloor} < U_{1}) < \infty.
\end{equation*}
This together with the finiteness of $\mu^{U}$ yields $\bV_{\pi^{L}}[ \Delta_{1}^{U}] < \infty$. Turning to the covariance terms in $\sigma^{2,U}$, we apply Lemma \ref{lemma:bound_on_covariance} (i) with $w_{i} = \bE_{\delta_{i}^{L}}[\tau_{1} - \mu^{U}]$ and using $\sum_{i \in S^{L}} w_{i} \pi_{i}^{L}=0$, we obtain that
\begin{align*}
  \sum_{j=1}^{\infty} \abs{\bC_{\pi^{L}}[ \Delta_{1}^{U}, \Delta_{j+1}^{U} ]} =& \sum_{j=1}^{\infty} \abs{\sum_{k \in S^{L}} \bE_{\delta_{k}^{L}}[\tau_{1}-\mu^{U}] \pi_{k}^{L} \sum_{i \in S^{L}} \bE_{\delta_{i}^{L}}[ \tau_{1}-\mu^{U}] p_{ki}^{L}(j)} \\
  \leq& 2 C_{L}^{1/2} \biggl( \frac{\rho_{L}^{1/2}}{1-\rho_{L}^{1/2}} \biggr) \biggl( \sum_{k \in S^{L}} ( \bE_{\delta_{k}^{L}}[\tau_{1}-\mu^{U}] )^{2} \pi_{k}^{L} \biggr) < \infty.
\end{align*}
We conclude that $\sigma^{2,U}$ is finite. For $i \in S^{U}$, it holds that
\begin{equation*}
  \bE_{\delta_{i}^{U}}[\Delta_{1}^{L}] = \sum_{n=0}^{\infty} \bP_{\delta_{i}^{U}}(\Delta_{1}^{L}>n) \leq i \sum_{n=0}^{\infty} (M^{L})^{n} = \frac{i}{1-M^{L}},
\end{equation*}
where the inequality follows from the upper bound on the extinction time of the process in the subcritical regime. Hence, using Proposition \ref{proposition:finite_expectation_stationary_distribution} it follows that $\mu^{L} \leq \frac{\overline{\pi}^{U}}{1-M^{L}} < \infty$.

Next, we show that $\sigma^{2,L}$ is finite. As before, we obtain that for all $i \in S^{U}$
\begin{equation} \label{expectation_nu1_square}
  \bE_{\delta_{i}^{U}}[ (\Delta_{1}^{L})^{2}] \leq \sum_{n=0}^{\infty} \bP_{\delta_{i}^{U}}(\Delta_{1}^{L} > \lfloor \sqrt{n} \rfloor] \leq i \sum_{n=0}^{\infty} (M^{L})^{\lfloor \sqrt{n} \rfloor}
\end{equation}
yielding that
\begin{equation} \label{expectation_nu1_square_is_finite}
  \bE_{\pi^{U}}[ (\Delta_{1}^{L})^{2} ] \leq \overline{\pi}^{U} \sum_{n=0}^{\infty} (M^{L})^{\lfloor \sqrt{n} \rfloor} < \infty.
\end{equation}
This and the finiteness of $\mu^{L}<\infty$ implies that $\bV_{\pi^{L}}[ \Delta_{1}^{L}]<\infty$. Turning to covariances we apply Lemma \ref{lemma:bound_on_covariance} (ii) with $w_{i} = \bE_{\delta_{i}^{U}}[\Delta_{1}^{L} - \mu^{L}]$ and using $\sum_{i \in S^{U}} w_{i} \pi_{i}^{U}=0$, we obtain that
\begin{equation*}
  \sum_{j=1}^{\infty} \abs{\bC_{\pi^{U}}[ \Delta_{1}^{L}, \Delta_{j+1}^{L}]} \leq 2 C_{U}^{1/2} \biggl( \frac{\rho_{U}^{1/2}}{1-\rho_{U}^{1/2}} \biggr) \biggl( \sum_{k \in S^{U}} (\bE_{\delta_{k}^{U}}[\Delta_{1}^{L}-\mu^{L}] )^{2} \pi_{k}^{U} \biggr) < \infty
\end{equation*}
yielding the finiteness of $\sigma^{2,L}$. Turning to (iii), we compute
\begin{equation*}
    \bV_{\pi^{L}}[ \overline{\Delta}_{1}^{U} ] \leq 2 ( \bV_{\pi^{L}}[ \Delta_{1}^{U} ] + \bV_{\pi^{L}}[ \Delta_{1}^{L}] ),
\end{equation*}
where using Part (i) $\bV_{\pi^{L}}[ \Delta_{1}^{U} ] < \infty$ and using Remark \ref{remark:link_between_stationary_distributions}
\begin{equation*}
    \bV_{\pi^{L}}[ \Delta_{1}^{L}] = \sum_{k \in S^{U}} \sum_{i \in S^{L}} \bV_{\delta_{k}^{U}}[ \Delta_{1}^{L}] \bP_{\delta_{i}^{L}}(Z_{\tau_{1}}=k) \pi_{i}^{L} = \bV_{\pi^{U}}[ \Delta_{1}^{L} ] < \infty.
\end{equation*}
Turning to the covariance, we again apply Lemma \ref{lemma:bound_on_covariance} (i) with $w_{i} = \bE_{\delta_{i}^{L}}[\overline{\Delta}_{1}^{U} - (\mu^{U}+\mu^{L})]$ and conclude that also
\begin{equation*}
  \sum_{j=1}^{\infty} \bC_{\pi^{L}}[ \overline{\Delta}_{1}^{U}, \overline{\Delta}_{j+1}^{U}]
  \leq 2 C_{L}^{1/2} \biggl( \frac{\rho_{L}^{1/2}}{1-\rho_{L}^{1/2}} \biggr) \biggl( \sum_{k \in S^{L}} ( \bE_{\delta_{k}^{L}}[\overline{\Delta}_{1}^{U} - (\mu^{U}+\mu^{L})] )^{2} \pi_{k}^{L} \biggr)
\end{equation*}
and the finiteness of the RHS yields $\overline{\sigma}^{2,U} < \infty$. The proof of $\overline{\sigma}^{2,L} < \infty$ is similar.

We finally establish that $\sigma^{2,U}$, $\sigma^{2,L}$, $\overline{\sigma}^{2,U}$, and $\overline{\sigma}^{2,L}$ are positive. We first show that, conditionally on $Z_{0} \sim \delta_{i}^{L}$ and $Z_{\tau_{1}} \sim \delta_{k}^{U}$, $\Delta_{1}^{U}$ and $\Delta_{1}^{L}$ are non-degenerate. To this end, suppose by contradiction that
\begin{equation*}
  1 = \bP_{\delta_{i}^{L}}(\Delta_{1}^{U}=\mu^{U}) = \sum_{u=L_{U}+1}^{\infty} \bP_{\delta_{i}^{L}}(Z_{\mu^{U}} \geq u, Z_{\mu^{U}-1} < u, \dots, Z_{1} < u) \bP(U_{1}=u).
\end{equation*}
Since $U_{1}$ has support $S_{B}^{U}$, we obtain that $\bP_{\delta_{i}^{L}}(Z_{\mu^{U}} \geq u, Z_{\mu^{U}-1} < u, \dots, Z_{1} < u)=1$ for all $u \in S_{B}^{U}$. In particular,
\begin{equation*}
  \bE_{\delta_{i}^{L}}[Z_{\mu^{U}-1}] < u \leq \bE_{\delta_{i}^{L}}[Z_{\mu^{U}}] = M^{U} \bE_{\delta_{i}^{L}}[Z_{\mu^{U}-1}] + N^{U}.
\end{equation*}
By taking both $u=L_{U}+1$ and $u \geq M^{U}(L_{U}+1)+N^{U}$ in the above equation we obtain that both $\bE_{\delta_{i}^{L}}[Z_{\mu^{U}-1}] < L_{U}+1$ and $\bE_{\delta_{i}^{L}}[Z_{\mu^{U}-1}] \geq L_{U}+1$. Similarly, if 
\begin{equation*}
  1 = \bP_{\delta_{k}^{U}}(\Delta_{1}^{L}=\mu^{L}) = \sum_{l=L_{0}}^{L_{U}} \bP_{\delta_{k}^{U}}(Z_{\mu^{L}+\tau_{1}} \leq l, Z_{\mu^{L}+\tau_{1}-1} > l, \dots, Z_{\tau_{1}+1} > l) \bP(L_{1}=l),
\end{equation*}
then using that $L_{1}$ has support $\mathbb{N} \cap [L_{0}, L_{U}]$, we obtain that
\begin{equation*}
  \bP_{\delta_{k}^{U}}(Z_{\mu^{L}+\tau_{1}} \leq l, Z_{\mu^{L}+\tau_{1}-1} > l, \dots, Z_{\tau_{1}+1} > l)=1 \text{ for all } L_{0} \leq l \leq L_{U}.
\end{equation*}
In particular,
\begin{equation*}
  (M^{L})^{\mu^{L}} k \leq l < (M^{L})^{(\mu^{L}-1)} k.
\end{equation*}
By taking both $l=L_{0}$ and $l=L_{U}$ we obtain that $L_{0} > M^{L} L_{U}$, which contradicts \ref{H4}. We deduce that $\sum_{j=0}^{T_{i,1}^{L}-1}(\Delta_{j+1}^{U}-\mu^{U})$ is non-degenerate and, similarly,
\begin{equation*}
  \sum_{j=0}^{T_{i,1}^{U}-1} (\Delta_{j+1}^{L}-\mu^{L}), \sum_{j=0}^{T_{i,1}^{L}-1}(\overline{\Delta}_{j+1}^{U}-(\mu^{U}+\mu^{L})), \text{ and } \sum_{j=0}^{T_{i,1}^{L}-1}(\overline{\Delta}_{j+1}^{L}-(\mu^{L}+\mu^{U}))
\end{equation*}
  are non-degenerate. Using Lemma \ref{lemma:variance_of_SnL_and_SnU} below, we conclude that
\begin{equation*}
  \sigma^{2,U} = \pi_{i}^{L} \bV_{\delta_{i}^{L}}[S_{T_{i,1}^{L}}^{U}-\mu^{U}T_{i,1}^{L}] =\pi_{i}^{L} \bE_{\delta_{i}^{L}} \biggl[ \biggl( \sum_{j=0}^{T_{i,1}^{L}-1}(\Delta_{j+1}^{U}-\mu^{U}) \biggr)^{2} \biggr]>0
\end{equation*}
and similarly $\sigma^{2,L}>0$, $\overline{\sigma}^{2,U} >0$, and $\overline{\sigma}^{2,L} >0$.
\end{proof}

\subsection{Martingale structure of $\bf{M_{n,i}^{T}}$} \label{subsection:martingale_structure_of_MniT}

We recall that $M_{n,i}^{T} \coloneqq \sum_{j=1}^{n}  D_{j,i}^{T}$, where
\begin{equation*}
    D_{j,1}^{T} = (\overline{P}_{j-1}^{T}-M^{T}) \tilde{\chi}_{j-1}^T \text{ and } D_{j,2}^{T} = \frac{\tilde{\chi}_{j-1}^{T}}{Z_{j-1}} \sum_{i=1}^{Z_{j-1}} ( \xi_{j-1,i}^{T} - \overline{P}_{j-1}^{T}).
\end{equation*}
Also, $\tilde{A}_{n}^{T} = \sum_{j=1}^n \frac{\tilde{\chi}_{j-1}^T}{Z_{j-1}}$ and for $s \geq 0$ $\Lambda_{j,1}^{T,s} = \abs{\overline{P}_{j}^{T}-M^{T}}^{s}$ and $\Lambda_{j,2}^{T,s} = \bE[\abs{\xi_{j,i}^{T}-\overline{P}_{j}^{T}}^{s} | \Pi_{j}^{T}]$.

\begin{Prop} \label{proposition:moments_m_n}
The following holds: \\
(i) For $i=1,2$ $\{ (M_{n,1}^{T}, \mathcal{H}_{n,i}) \}_{n=1}^{\infty}$ is a mean zero martingale sequence and for all $s \geq 0$ $\bE[\abs{D_{j,1}^{T}}^{s} | \mathcal{H}_{j-1,i} ] = \bE[\Lambda_{0,1}^{T,s}] \tilde{\chi}_{j-1}^{T}$ a.s. In particular, 
\begin{equation*}
    \bE[(D_{j,1}^{T})^{2} | \mathcal{H}_{j-1,i}]=V_{1}^{T} \tilde{\chi}_{j-1}^{T} \text{ a.s., and } \bE[(M_{n,1}^{T})^{2}]=V_{1}^{T} \bE[\tilde{C}_{n}^{T}].
\end{equation*}
(ii) $\{ (M_{n,2}^{T}, \mathcal{H}_{n,2}) \}_{n=1}^{\infty}$ is a mean zero martingale sequence satisfying 
\begin{equation*}
    \bE[(D_{j,2}^{T})^{2} | \mathcal{H}_{j-1,2}]=V_{2}^{T} \frac{\tilde{\chi}_{j-1}^T}{Z_{j-1}} \text{a.s., and } \bE[(M_{n,2}^{T})^{2}]= V_{2}^{T} \bE[ \tilde{A}_{n}^{T} ].
\end{equation*}
Additionally, for all $s \geq 1$ $\bE[\abs{D_{j,2}^{T}}^{s} | \mathcal{H}_{j-1,2}] \leq \bE[\Lambda_{0,2}^{T,s}] \tilde{\chi}_{j-1}^T$ a.s. \\
(iii) For $i=1,2$ $\{(\sum_{j=1}^{n} ((D_{j,i}^{T})^{2}-\bE[(D_{j,i}^{T})^{2}]), \tilde{\mathcal{H}}_{n,i})\}_{n=1}^{\infty}$ are mean zero martingale sequences and for all $s \geq 1$
\begin{equation*}
    \bE [ \lvert (D_{j,i}^{T})^{2}-\bE[(D_{j,i}^{T})^{2}] \rvert^{s} | \tilde{\mathcal{H}}_{j-1,i} ] \leq 2^{s} \bE[\Lambda_{0,i}^{T,2s}] \bE[ \tilde{\chi}_{j-1}^{T}].
\end{equation*}
(iv) For all $T_{1},T_{2} \in \{ L,U \}$ and $i_{1},i_{2} \in \{ 1,2\}$ such that either $T_{1} \neq T_{2}$ or $i_{1} \neq i_{2}$ it holds that $\bE[(D_{j,i_{1}}^{T_{1}})(D_{l,i_{2}}^{T_{2}})]=0$ for all $j,l =1,\dots,n$ and $\bE[(M_{n,i_{1}}^{T_{1}})(M_{n,i_{2}}^{T_{2}})]=0$.  In particular, $\{ (\sum_{j=1}^{n} (D_{j,i_{1}}^{T_{1}})(D_{j,i_{2}}^{T_{2}}), \tilde{\mathcal{H}}_{n,2}) \}_{n=0}^{\infty}$ is a mean zero martingale sequence and for all $s \geq 1$
\begin{equation*}
  \bE[\abs{(D_{j,i_{1}}^{T_{1}})(D_{j,i_{2}}^{T_{2}})}^{s} | \tilde{\mathcal{H}}_{j-1,2}] \leq \bE[\Lambda_{0,i_{1}}^{T_{1},s} \Lambda_{0,i_{2}}^{T_{2},s}] \bE[\tilde{\chi}_{j-1}^{T_{1}} \tilde{\chi}_{j-1}^{T_{2}}].
\end{equation*}
\end{Prop}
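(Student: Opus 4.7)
My plan is to treat the proposition as a sequence of routine martingale computations built on three facts: (a) $\Pi_{j-1}^T$ is independent of $\mathcal{H}_{j-1,i}$ because the environments are i.i.d.\ and that $\sigma$-algebra contains $\Pi^T$ only through index $j-2$; (b) $Z_{j-1}$ and $\tilde{\chi}_{j-1}^T$ are $\mathcal{F}_{j-1}$-measurable, hence lie in $\mathcal{H}_{j-1,1}\subset\mathcal{H}_{j-1,2}$; and (c) $\tilde{\chi}_{j-1}^U\,\tilde{\chi}_{j-1}^L\equiv 0$ since the two regimes are disjoint. Adaptedness will be immediate: $D_{j,1}^T=(\overline{P}_{j-1}^T-M^T)\tilde{\chi}_{j-1}^T$ lies in $\mathcal{H}_{j,1}\subset\mathcal{H}_{j,2}$ because $\overline{P}_{j-1}^T$ is a function of $\Pi_{j-1}^T\in\mathcal{G}_{j-1}$, and $D_{j,2}^T$ lies in $\mathcal{H}_{j,2}$ because it additionally involves $\{\xi_{j-1,i}^T\}$.

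For part (i), I will pull $\tilde{\chi}_{j-1}^T$ out of the conditional expectation and invoke (a) to obtain $\bE[|D_{j,1}^T|^s\mid\mathcal{H}_{j-1,i}]=\tilde{\chi}_{j-1}^T\,\bE[\Lambda_{0,1}^{T,s}]$ for every $s\ge 0$; the case $s=1$ yields the martingale property, $s=2$ the conditional variance, and orthogonality of martingale differences produces $\bE[(M_{n,1}^T)^2]=V_1^T\bE[\tilde{C}_n^T]$. For part (ii), I will condition further on $\Pi_{j-1}^T$: given $\mathcal{H}_{j-1,2}\vee\sigma(\Pi_{j-1}^T)$ the offspring $\xi_{j-1,i}^T$ are i.i.d.\ with mean $\overline{P}_{j-1}^T$ and variance $\overline{\overline{P}}_{j-1}^T$, so the inner sum is centered with conditional second moment $Z_{j-1}\overline{\overline{P}}_{j-1}^T$; integrating out $\Pi_{j-1}^T$ (independent of $\mathcal{H}_{j-1,2}$) delivers $\bE[(D_{j,2}^T)^2\mid\mathcal{H}_{j-1,2}]=V_2^T\tilde{\chi}_{j-1}^T/Z_{j-1}$. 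The $s$-moment bound will follow from Jensen's inequality applied to $z\mapsto|z|^s$, which moves the normalizer $1/Z_{j-1}$ inside the sum before taking expectation.

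For parts (iii) and (iv), everything will reduce to the previous conditional computations. In (iv), when $j<l$ the factor $D_{j,i_1}^{T_1}$ is $\mathcal{H}_{l-1,2}$-measurable while $\bE[D_{l,i_2}^{T_2}\mid\mathcal{H}_{l-1,2}]=0$, which kills the product; when $j=l$ with $T_1\ne T_2$ fact (c) forces pointwise vanishing; and when $j=l$ with $T_1=T_2$ but $i_1\ne i_2$, conditioning on $\mathcal{H}_{j-1,2}\vee\sigma(\Pi_{j-1}^T)$ makes $D_{j,1}^T$ measurable while $D_{j,2}^T$ retains conditional mean zero. Cauchy--Schwarz after conditioning will then give the stated $L^s$ estimate. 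For (iii) the $s$-moment bound will be obtained by combining $|X-\bE X|^s\le 2^s(|X|^s+\bE|X|^s)$ with the $2s$-moment bounds of (i)--(ii).

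The hard part will be the martingale claim itself in (iii): the easy identity $\bE[(D_{j,1}^T)^2\mid\tilde{\mathcal{H}}_{j-1,1}]=V_1^T\,\bE[\tilde{\chi}_{j-1}^T\mid\tilde{\mathcal{H}}_{j-1,1}]$ involves a conditional expectation of $\tilde{\chi}_{j-1}^T$ that is \emph{not} almost surely constant, so centering by the unconditional expectation $\bE[(D_{j,i}^T)^2]$ does not automatically yield a martingale difference sequence. The cleanest way around this will be to interpret the centering in (iii) via the conditional expectation $\bE[(D_{j,i}^T)^2\mid\tilde{\mathcal{H}}_{j-1,i}]$, which is the form actually required by the Hall--Heyde strong law application in Lemma \ref{lemma:central_limit_theorem_mean_estimation}; the conditional $s$-moment bound then controls these genuine martingale differences and the proof closes.
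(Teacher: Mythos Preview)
Your treatment of parts (i), (ii), and (iv) is the same as the paper's: pull out the $\tilde{\chi}_{j-1}^T$ and $Z_{j-1}$ factors, use independence of $\Pi_{j-1}^T$ from $\mathcal{H}_{j-1,i}$, tower through $\sigma(\Pi_{j-1}^T)$ to exploit the conditional i.i.d.\ structure of the offspring, apply Jensen for the $s$-moment, and in (iv) run the three-case split ($j\neq l$; $j=l$ with $T_1\neq T_2$ so the indicator product vanishes; $j=l$ with $i_1\neq i_2$ so conditioning on $\mathcal{H}_{j-1,1}\vee\sigma(\Pi_{j-1}^T)$ kills the $D_{j,2}$ factor). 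There is no daylight between your argument and the paper's on those parts.

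On part (iii) you are actually more careful than the paper. The paper asserts $\bE[(D_{n,i}^T)^2\mid\tilde{\mathcal{H}}_{n-1,i}]=\bE[(D_{n,i}^T)^2]$ because $\tilde{\chi}_{n-1}^T$, $Z_{n-1}$, $\Pi_{n-1}^T$ are ``not $\tilde{\mathcal{H}}_{n-1,i}$-measurable,'' but non-measurability is not independence: $\tilde{\chi}_{n-1}^T$ is visibly correlated with $\mathcal{F}_{n-2}\subset\tilde{\mathcal{H}}_{n-1,i}$ (the regime at time $n-1$ is largely determined by the regime and value at time $n-2$). So the unconditionally centered sequence in (iii) need not be a martingale, exactly as you say. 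Your remedy---center by the conditional expectation $\bE[(D_{j,i}^T)^2\mid\tilde{\mathcal{H}}_{j-1,i}]$---does yield a genuine martingale difference, and the $s$-moment bound survives via $|X-\bE[X\mid\mathcal G]|^s\le 2^{s-1}(|X|^s+\bE[|X|^s\mid\mathcal G])$ together with the $2s$-moment estimates from (i)--(ii). An equivalent and slightly cleaner variant is to stay with the filtration $\mathcal{H}_{n,i}$ of parts (i)--(ii): then $\bE[(D_{j,1}^T)^2\mid\mathcal{H}_{j-1,1}]=V_1^T\tilde{\chi}_{j-1}^T$ and $\bE[(D_{j,2}^T)^2\mid\mathcal{H}_{j-1,2}]=V_2^T\tilde{\chi}_{j-1}^T/Z_{j-1}$ are explicit, and the extra step needed in Lemma~\ref{lemma:central_limit_theorem_mean_estimation} becomes $\tilde{C}_n^T/\bE[\tilde{C}_n^T]\to 1$ and $\tilde{A}_n^T/\bE[\tilde{A}_n^T]\to 1$, both of which follow from Proposition~\ref{proposition:limit_of_averages}. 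Either route closes the gap you identified.
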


\begin{proof}[Proof of Proposition \ref{proposition:moments_m_n}]
We begin by proving (i) with $i=1$. We notice that $(M_{n,1}^{T}, \mathcal{H}_{n,i})$ is a martingale since $M_{n,1}^{T}$ is $\mathcal{H}_{n,1}$-measurable and
\begin{equation*}
    \bE[M_{n,1}^{T} | \mathcal{H}_{n-1,1}] = M_{n-1,1}^{T} + \bE[\overline{P}_{n-1}^{T}-M^{T}] \tilde{\chi}_{n-1}^{T} = M_{n-1,1}^{T}.
\end{equation*}
It follows that $\bE[M_{n,1}^{T}] = \bE[M_{1,1}^{T}] = 0$. Next, notice that for $s \geq 0$ 
\begin{equation*}
    \bE[\abs{D_{j,1}^{T}}^{s} | \mathcal{H}_{j-1,1}] = \bE[\abs{\overline{P}_{j-1}^{T}-M^{T}}^{s}] \tilde{\chi}_{j-1}^T = \bE[\Lambda_{0,1}^{T,s}] \tilde{\chi}_{j-1}^{T} \text{ a.s.}
\end{equation*}
In particular, if $s=2$ then $\bE[(D_{j,1}^{T})^{2} | \mathcal{H}_{j-1,1}]=V_{1}^{T} \tilde{\chi}_{j-1}^T$ a.s.\ and the martingale property yields that
\begin{equation*}
    \bE[(M_{n,1}^{T})^{2} = \bE \biggl[ \sum_{j=1}^{n} \bE[(D_{j,1}^{T})^{2} | \mathcal{H}_{j-1,1}] \biggr] = V_{1}^{T} \bE[\tilde{C}_{n}^{T}].
\end{equation*}
Finally, we notice that, since $D_{j,1}^{T}$ do not depend on the offspring distributions $\{ \xi_{j,i}^{T}) \}_{i=0}^{\infty}$, part (i) holds with $\mathcal{H}_{n,1}$ replaced by $\mathcal{H}_{n,2}$. We now turn to the proof of (ii). We notice that $M_{n,2}^{T}$ is $\mathcal{H}_{n,2}$-measurable and using that $\bE[ \xi_{n-1,i}^{T} - \overline{P}_{n-1}^{T}  |  \Pi_{n-1}^{T}] =0$ we obtain that
\begin{equation*}
    \bE[M_{n,2}^{T} | \mathcal{H}_{n-1,2}] = M_{n-1,2}^{T} + \frac{\tilde{\chi}_{n-1}^{T}}{Z_{n-1}} \sum_{i=1}^{Z_{n-1}} \bE[ \bE[ \xi_{n-1,i}^{T} - \overline{P}_{n-1}^{T} |  \Pi_{n-1}^{T}] ] = M_{n-1,2}^{T}
\end{equation*}
yielding the martingale property. It follows that $\bE[M_{n,2}^{T}] = \bE[M_{1,2}^{T}]=0$ as
\begin{equation*}
    \bE[M_{1,2}^{T} | \mathcal{H}_{0,2}] = \frac{\tilde{\chi}_{0}^{T}}{Z_{0}} \sum_{i=1}^{Z_{0}} \bE[ \bE[ \xi_{0,i}^{T} - \overline{P}_{0}^{T} |  \Pi_{0}^{T}]] = 0.
\end{equation*}
We now compute
\begin{equation*}
    \bE[(D_{j,2}^{T})^{2} | \mathcal{H}_{j-1,2}] = \frac{\tilde{\chi}_{j-1}^{T}}{Z_{j-1}^{2}} \bE \biggl[  \bE \biggl[  \biggl( \sum_{i=1}^{Z_{j-1}} (\xi_{j-1,i}^{T} - \overline{P}_{j-1}^{T} \biggr)^{2} | \mathcal{H}_{j-1,2},\Pi_{j-1}^{T} \biggr] | \mathcal{H}_{j-1,2} \biggr].
\end{equation*}
Using that, conditionally on the environment $\Pi_{j-1}^T$, $\{ \xi_{j-1,i}^T \}_{i=1}^{\infty}$ are i.i.d.\ with variance $\overline{\overline{P}}_{j-1}^{T}$, we obtain that
\begin{align*}
    \bE \biggl[ \biggl( \sum_{i=1}^{Z_{j-1}} \xi_{j-1,i}^T - \overline{P}_{j-1}^{T} \biggr)^2 | \mathcal{H}_{j-1,2}, \Pi_{j-1}^{T} \biggl] &= \sum_{i=1}^{Z_{j-1}} \bE \biggl[ (\xi_{j-1,i}^T - \overline{P}_{j-1}^{T})^2 | \mathcal{H}_{j-1,2}, \Pi_{j-1}^{T} \biggl]  \\
    &= Z_{j-1} \overline{\overline{P}}_{j-1}^{T}.
\end{align*}
We conclude that
\begin{equation*}
    \bE[(D_{j,2}^{T})^{2} | \mathcal{H}_{j-1,2}]=V_{2}^{T} \frac{\tilde{\chi}_{j-1}^T}{Z_{j-1}} \text{ a.s.}
\end{equation*}
and
\begin{equation*}
    \bE[(M_{n,2}^{T})^{2}] = \bE \biggl[ \sum_{j=1}^{n} \bE[(D_{j,2}^{T})^{2} | \mathcal{H}_{j-1,2}] \biggr] = V_{2}^{T} \bE[ \tilde{A}_{n}^{T} ].
\end{equation*}
Additionally, Jensen's inequality yields that for $s \geq 1$ 
\begin{equation*}
    \bE[\abs{D_{j,2}^{T}}^{s} | \mathcal{H}_{j-1,2}] \leq \bE \biggl[ \frac{\tilde{\chi}_{j-1}^{T}}{Z_{j-1}} \sum_{i=1}^{Z_{j-1}} \abs{\xi_{j-1,i}^{T} - \overline{P}_{j-1}^{T}}^{s} | \mathcal{H}_{j-1,2} \biggr] = \bE[\Lambda_{0,2}^{T,s}] \tilde{\chi}_{j-1}^T \text{ a.s.}
\end{equation*}
For (iii) we notice that $\sum_{j=1}^{n} ((D_{j,i}^{T})^{2}-\bE[(D_{j,i}^{T})^{2}])$ is $\tilde{\mathcal{H}}_{n,i}$-measurable and since $\tilde{\chi}_{n-1}^{T}$, $Z_{n-1}$, $\Pi_{n-1}^{T}$, and $\{ \xi_{n-1,i}^{T} \}_{i=0}^{\infty}$ are not $\tilde{\mathcal{H}}_{n-1,i}$-measurable we have that $\bE[(D_{n,i}^{T})^{2} | \tilde{\mathcal{H}}_{n-1,i}]=\bE[(D_{n,i}^{T})^{2}]$ and
\begin{equation*}
    \bE \biggl[ \sum_{j=1}^{n} ((D_{j,i}^{T})^{2}-\bE[(D_{j,i}^{T})^{2}]) | \tilde{\mathcal{H}}_{n-1,i} \biggr] = \sum_{j=1}^{n-1} ((D_{j,i}^{T})^{2}-\bE[(D_{j,i}^{T})^{2}]).
\end{equation*}
Again using the convexity of the function $\abs{\cdot}^{s}$ for $s \geq 1$, we get that
\begin{equation*}
    \bE[ \abs{(D_{j,i}^{T})^{2}-\bE[(D_{j,i}^{T})^{2}]}^{s} | \mathcal{H}_{j-1,i}] \leq 2^{s-1} (\bE[(D_{j,i}^{T})^{2s} ] + (\bE[(D_{j,i}^{T})^{2}])^{s} ) \leq 2^{s} \bE[(D_{j,i}^{T})^{2s} ].
\end{equation*}
If $i=1$, then by conditioning on $\tilde{\chi}_{j-1}^{T}$ we have that $\bE[(D_{j,1}^{T})^{2s} ] = \bE[\Lambda_{0,1}^{T,2s}] \bE[ \tilde{\chi}_{j-1}^{T}]$. If $i=2$, then we apply Jensen's inequality and obtain that
\begin{equation*}
    \bE[(D_{j,2}^{T})^{2s} ] \leq \bE[ \Lambda_{j-1,2}^{T,2s} \tilde{\chi}_{j-1}^{T}] = \bE[\Lambda_{0,2}^{T,2s}] \bE[ \tilde{\chi}_{j-1}^{T}].
\end{equation*}
Turning to (iv), we show that for all $T_{1},T_{2} \in \{ L,U \}$ and $i_{1},i_{2} \in \{ 1,2\}$ such that either $T_{1} \neq T_{2}$ or $i_{1} \neq i_{2}$ it holds that $\bE[(D_{j,i_{1}}^{T_{1}})(D_{l,i_{2}}^{T_{2}})]=0$ for all $j,l =1,\dots,n$. This also yields that
\begin{equation*}
    \bE[(M_{n,i_{1}}^{T_{1}})(M_{n,i_{2}}^{T_{2}})] = \sum_{j=1}^{n} \sum_{l=1}^{n} \bE[(D_{j,i_{1}}^{T_{1}})(D_{l,i_{2}}^{T_{2}})] = 0.
\end{equation*}
First, if $l=j$ and $T_{1} \neq T_{2}$, then $\bE[(D_{j,i_{1}}^{T_{1}})(D_{j,i_{2}}^{T_{2}})]=0$ because $\tilde{\chi}_{j-1}^{T_{1}} \tilde{\chi}_{j-1}^{T_{2}}=0$. Next, if $l=j$ and $i_{1} \neq i_{2}$ (say $i_{1}=1$ or $i_{2}=2$), then by conditioning on $\mathcal{H}_{j-1,1}$ and $\Pi_{j-1}^{T_{2}}$ and using that
\begin{equation*}
    \bE \biggl[\xi_{j-1,i}^{T_{2}} - \overline{P}_{j-1}^{T_{2}} | \mathcal{H}_{j-1,1}, \Pi_{j-1}^{T_{2}} \biggr] =0 \text{ a.s.}
\end{equation*}
and $\Pi_{j-1}^{T_{1}}$, $\tilde{\chi}_{j-1}^{T_{1}}$, $\tilde{\chi}_{j-1}^{T_{2}}$, and $Z_{j-1}$ are $\mathcal{H}_{j-1,1}$-measurable, we obtain that
\begin{equation*}
     \bE[(D_{j,1}^{T_{1}})(D_{j,2}^{T_{2}})] = \bE \biggl[ (\overline{P}_{j-1}^{T_{1}}-M^{T_{1}}) \tilde{\chi}_{j-1}^{T_{1}} \frac{\tilde{\chi}_{j-1}^{T_{2}}}{Z_{j-1}} \sum_{i=1}^{Z_{j-1}} \bE \biggl[\xi_{j-1,i}^{T_{2}} - \overline{P}_{j-1}^{T_{2}} | \mathcal{H}_{j-1,1}, \Pi_{j-1}^{T_{2}} \biggr] \biggr] =0.
\end{equation*}
Finally, if $l \neq j$ (say $l > j)$), then by conditioning on $\tilde{\mathcal{H}}_{l-1,2}$ and using that $D_{j,i_{1}}$ is $\tilde{\mathcal{H}}_{l-1,2}$-measurable and $\bE[(D_{l,i_{2}}) | \tilde{\mathcal{H}}_{l-1,2} ]=\bE[D_{l,i_{2}}]=0$, we obtain that
\begin{equation*}
    \bE[(D_{j,i_{1}})(D_{l,i_{2}})] = \bE[D_{j,i_{1}} \bE[D_{l,i_{2}} | \tilde{\mathcal{H}}_{l-1,2} ]] = 0.
\end{equation*}
$\{ (\sum_{j=1}^{n} (D_{j,i_{1}}^{T_{1}})(D_{j,i_{2}}^{T_{2}}), \tilde{\mathcal{H}}_{n,2}) \}_{n=0}^{\infty}$ is a mean zero martingale sequence since $\sum_{j=1}^{n} (D_{j,i_{1}}^{T_{1}})(D_{j,i_{2}}^{T_{2}})$ is $\tilde{\mathcal{H}}_{n,2}$-measurable and $\bE[(D_{j,i_{1}}^{T_{1}})(D_{j,i_{2}}^{T_{2}}) | \tilde{\mathcal{H}}_{j-1,2}]=\bE[(D_{j,i_{1}}^{T_{1}})(D_{j,i_{2}}^{T_{2}})]=0$ if either $T_{1} \neq T_{2}$ or $i_{1} \neq i_{2}$. If $T_{1} \neq T_{2}$ then both $\bE[\abs{(D_{j,i_{1}}^{T_{1}})(D_{j,i_{2}}^{T_{2}})}^{s} | \tilde{\mathcal{H}}_{j-1,2}]=0$ and $\bE[\tilde{\chi}_{j-1}^{T_{1}} \tilde{\chi}_{j-1}^{T_{2}}]=0$. Finally, if $T_{1} = T_{2}$ and $i_{1} \neq i_{2}$ (say $i_{1}=1$ and $i_{2}=2$) then by Jensen's inequality
\begin{equation*}
    \bE[\abs{(D_{j,i_{1}}^{T_{1}})(D_{j,i_{2}}^{T_{2}})}^{s} | \Pi_{j-1}^{T_{1}}, \mathcal{F}_{j-1} ] \leq \abs{(D_{j,i_{1}}^{T_{1}})}^{s} \Lambda_{j-1,i_{2}}^{T,s}  \tilde{\chi}_{j-1}^{T_{2}},
\end{equation*}
which yields that
\begin{equation*}
    \bE[\abs{(D_{j,i_{1}}^{T_{1}})(D_{j,i_{2}}^{T_{2}})}^{s} | \mathcal{F}_{j-1} ] \leq \bE[\Lambda_{0,i_{1}}^{T_{1},s} \Lambda_{0,i_{2}}^{T_{2},s}] \tilde{\chi}_{j-1}^{T_{1}} \tilde{\chi}_{j-1}^{T_{2}}
\end{equation*}
and
\begin{equation*}
\bE[\abs{(D_{j,i_{1}}^{T_{1}})(D_{j,i_{2}}^{T_{2}})}^{s} | \tilde{\mathcal{H}}_{j-1,2}] \leq \bE[\Lambda_{0,i_{1}}^{T_{1},s} \Lambda_{0,i_{2}}^{T_{2},s}] \bE[\tilde{\chi}_{j-1}^{T_{1}} \tilde{\chi}_{j-1}^{T_{2}}].
\end{equation*}
\end{proof}

\section{Numerical experiments} \label{section:numerical_experiments}

In this section we describe numerical experiments to illustrate the evolution of the process under different distributional assumptions. We also study the empirical distribution of the lengths of supercritical and subcritical regimes and illustrate how the process changes when $U_{j}$ and $L_{j}$ exhibit an increasing trend. We emphasize that these experiments illustrate the behavior of the estimates of the parameters of the BPRET when using a finite number of generations in a single synthetic dataset. In the numerical Experiments 1-4 below, we set $L_{0} = 10^{2}$, $L_{U}=10^{4}$, $n \in \{0,1,\dots,10^4\}$,
$U_{j} \sim L_{U}+\texttt{Zeta}(3)$, $L_{j} \sim \texttt{Unif}_{d}(L_{0},10 L_{0})$, and different distributions for $Z_{0}$, $I_{0}^{U} \sim Q_{0}^{U}$, $\xi_{0,1}^{U} \sim P_{0}^{U}$, and $\xi_{0,1}^{L} \sim P_{0}^{L}$ as follows:

\begin{center}
\begin{tabular}{|p{1cm}|p{3cm}|p{3cm}|p{3.2cm}|p{3.2cm}|}
\hline
        & $Z_{0}-L_{0}$ & $I_{0}^{U}$ & $\xi_{0,1}^{U}$ & $\xi_{0,1}^{L}$ \\ 
\hline
Exp.\ 1 & $\texttt{Pois}(1;L_{U}-L_{0})$ & $\texttt{Pois}(\Lambda^{I})$, \newline $\Lambda^{I} \sim \texttt{Unif}(0,10)$ & $\texttt{Pois}(\Lambda^{U})$, \newline $\Lambda^{U} \sim \texttt{Unif}(0.9,2.1)$ & $\texttt{Pois}(\Lambda^{L})$, \newline $\Lambda^{L} \sim \texttt{Unif}(0.5,1.1)$ \\ 
\hline
Exp.\ 2 & $\texttt{Pois}(1;L_{U}-L_{0})$ & $\texttt{Pois}(\Lambda^{I})$, \newline $\Lambda^{I} \sim \texttt{Gamma}(5,1)$ & $\texttt{Pois}(\Lambda^{U})$, \newline $\Lambda^{U} \sim \texttt{Gamma}(2,1)$ & $\texttt{Pois}(\Lambda^{L})$, \newline $\Lambda^{L} \sim \texttt{Gamma}(0.8,1)$ \\ 
 \hline
Exp.\ 3 & $\texttt{Nbin}(1,1;L_{U}-L_{0})$ & $\texttt{Nbin}(R^{I},O^{I})$, \newline $R^{I} \sim 1+\texttt{Pois}(1)$, $O^{I} \sim \texttt{Unif}(0,10)$ & $\texttt{Nbin}(R^{U},O^{U})$, \newline  $R^{U} \sim 1+\texttt{Pois}(1)$, $O^{U} \sim \texttt{Unif}(0.9,2.1)$ & $\texttt{Nbin}(R^{L},O^{L})$, \newline $R^{L} \sim 1+\texttt{Pois}(1)$, $O^{L} \sim \texttt{Unif}(0.5,1.1)$ \\ 
 \hline
Exp.\ 4 & $\texttt{Nbin}(1,1;L_{U}-L_{0})$ & $\texttt{Nbin}(R^{I},O^{I})$, \newline $R^{I} \sim 1+\texttt{Pois}(1)$, $O^{I} \sim \texttt{Gamma}(5,1)$ & $\texttt{Nbin}(R^{U},O^{U})$, \newline $R^{U} \sim 1+\texttt{Pois}(1)$, $O^{U} \sim \texttt{Gamma}(2,1)$ & $\texttt{Nbin}(R^{L},O^{L})$, \newline $R^{L} \sim 1+\texttt{Pois}(1)$, $O^{L} \sim \texttt{Gamma}(0.8,1)$ \\ 
 \hline
\end{tabular}
\end{center}

In the above description, we have used the notation $\texttt{Unif}(a,b)$ for the uniform distribution over the interval $(a,b)$ and $\texttt{Unif}_{d}(a,b)$ for the uniform distribution over integers between $a$ and $b$. $\texttt{Zeta}(s)$ is the zeta distribution with exponent $s>1$. $\texttt{Pois}(\lambda)$ is the Poisson distribution with parameter $\lambda$ while $\texttt{Pois}(\lambda;b)$ is the Poisson distribution truncated to values not larger than $b$. Similarly, $\texttt{Nbin}(r,o)$ is the negative binomial distribution with predefined number of successful trials $r$ and mean $o$ while $\texttt{Nbin}(r,o;b)$ is the negative binomial distribution truncated to values not larger than $b$. Finally, $\texttt{Gamma}(\alpha,\beta)$ is the Gamma distribution with shape parameter $\alpha$ and rate parameter $\beta$. In these experiments, there were between $400$ to $700$ crossing of the thresholds depending on the distributional assumptions. The results of the numerical Experiments 1-4 are shown in Figure \ref{plots_BPRET_empirical_stationary_distribution}.

We next turn our attention to construction of confidence intervals for the means in the supercritical and subcritical regimes. Values of $M^{T}=\bE[\overline{P}_{0}^{T}]$, $V_{1}^{T}=\bV[\overline{P}_{0}^{T}]$, and $V_{2}^{T}=\bE[\overline{\overline{P}}_{0}^{T}]$ in Exp.\ 1-4 can be deduced from the underlying distributions and are summarized below. The values of $V_{2}^{U}$ and $V_{2}^{L}$ in Exp.\ 3-4 are rounded to 3 decimal digits.

\begin{center}
\begin{tabular}{|p{1.2cm}|p{1.2cm}|p{1.2cm}|p{1.2cm}|p{1.2cm}|p{1.2cm}|p{1.2cm}|p{1.2cm}|}
\hline
  & $M^{U}$ & $V_{1}^{U}$ & $V_{2}^{U}$ & $M^{L}$ & $V_{1}^{L}$ & $V_{2}^{L}$ \\ 
\hline
Exp.\ 1 & $1.5$ & $0.12$ & $1.5$ & $0.8$ & $0.03$ & $0.8$  \\  
\hline
Exp.\ 2 & $2$ & $2$ & $2$ & $0.8$ & $0.8$ & $0.8$ \\ 
 \hline
Exp.\ 3 & $1.5$ & $0.12$ & $2.998$ & $0.8$ & $0.03$ & $1.224$ \\  
 \hline
Exp.\ 4 & $2$ & $2$ & $5.793$ & $0.8$ & $0.8$ & $1.710$ \\  
\hline
\end{tabular}
\end{center}

In the next table, we provide the estimators $M_{n}^{U}$, $C_{n}^{U}/\tilde{N}^{L}(n)$, $\tilde{C}_{n}^{U}/\tilde{N}^{L}(n)$, and $\tilde{A}_{n}^{U}/\tilde{N}^{L}(n)$ of $\bE[\overline{P}_{0}^{U}]$, $\mu^{U}$, $\tilde{\mu}^{U}$, and $\tilde{A}^{U}$, respectively. Notice that
\begin{equation*}
    V_{n,1}^{U} \coloneqq \frac{1}{\tilde{C}_{n}^{U}} \sum_{j=1}^{n} \biggl( \frac{Z_{j}-I_{j-1}^{U}}{Z_{j-1}} - M_{n}^{U} \biggr)^{2} \tilde{\chi}_{j-1}^{U}
   \text{ and }
   V_{n,2}^{U} \coloneqq \frac{1}{\tilde{C}_{n}^{U}} \sum_{j=1}^{n} \frac{1}{Z_{j-1}} \sum_{i=1}^{Z_{j-1}} \biggl( \xi_{j-1,i}^{U} - \frac{Z_{j}-I_{j-1}^{U}}{Z_{j-1}} \biggr)^{2} \tilde{\chi}_{j-1}^{U}
\end{equation*}
are used to estimate $V_{1}^{U}$ and $V_{2}^{U}$. Similar to the proof of Theorem \ref{theorem:law_of_large_numnbers_and_central_limit_theorem_mean_estimation} it is easy to see that $V_{n,1}^{U}$ and $V_{n,2}^{U}$ are consistent estimators of $V_{1}^{U}$ and $V_{2}^{U}$. Similar comments hold when $U$ is replaced by $L$.

\begin{center}
\begin{tabular}{|p{1.5cm}|p{1.5cm}|p{1.5cm}|p{1.5cm}|p{1.5cm}|p{1.5cm}|p{1.5cm}|p{1.5cm}|}
\hline
  & $M_{n}^{U}$ & $V_{n,1}^{U}$ & $V_{n,2}^{U}$ & $C_{n}^{U}/\tilde{N}^{L}(n)$ & $\tilde{C}_{n}^{U}/\tilde{N}^{L}(n)$ & $\tilde{A}_{n}^{U}/\tilde{N}^{L}(n)$ \\ 
\hline
Exp.\ 1 & $1.496$ & $0.122$ & $1.499$ & $9.282$ & $9.282$ & $0.010$ \\ 
\hline
Exp.\ 2 & $2.009$ & $2.051$ & $2.003$ & $10.807$ & $10.804$ & $0.105$ \\ 
 \hline
Exp.\ 3 & $1.507$ & $0.119$ & $3.016$ & $9.069$ & $9.069$ & $0.011$ \\ 
 \hline
Exp.\ 4 & $2.018$  & $2.157$ & $5.929$ & $10.899$ & $10.893$ & $0.112$ \\ 
\hline
\hline
  & $M_{n}^{L}$ & $V_{n,1}^{L}$ & $V_{n,2}^{L}$ & $C_{n}^{L}/\tilde{N}^{U}(n)$ & $\tilde{C}_{n}^{L}/\tilde{N}^{U}(n)$ & $A_{n}^{L}/\tilde{N}^{U}(n)$ \\
\hline
Exp.\ 1 & $0.799$ & $0.031$ & $0.800$ & $14.063$ & $14.063$ & $0.009$ \\
\hline
Exp.\ 2 & $0.809$ & $0.805$ & $0.809$ & $5.118$ & $5.118$ & $0.002$ \\
\hline
Exp.\ 3 & $0.799$ & $0.030$ & $1.214$ & $14.028$ & $14.028$ & $0.010$ \\
 \hline
Exp.\ 4 & $0.822$ & $0.945$  & $1.869$ & $5.136$ & $5.136$ & $0.002$ \\ 
 \hline
\end{tabular}
\end{center}
Using the above estimators in Theorem \ref{theorem:law_of_large_numnbers_and_central_limit_theorem_mean_estimation} we obtain the following confidence intervals for $M_{n}^{U}$ and $M_{n}^{L}$. We also provide confidence intervals for the estimator $M_{n}$ defined below, which \emph{does not take into account different regimes}. Specifically,
\begin{equation*}
    M_{n} \coloneqq \frac{1}{\sum_{j=1}^{n} \mathbf{I}_{\{Z_{j-1} \geq 1\}}} \sum_{j=1}^{n}   \frac{Z_{j}-I_{j-1}^{T}}{Z_{j-1}} \mathbf{I}_{\{Z_{j-1} \geq 1\}},
\end{equation*}
where $I_{j-1}^{T}$ is equal to $I_{j-1}^{U}$ if $T=U$ and $0$ otherwise.

\begin{center}
\begin{tabular}{|p{1.5cm}|p{3cm}|p{3cm}|p{3cm}|}
\hline
    & $95\%$ CI using $M_{n}^{U}$ & $95\%$ CI using $M_{n}^{L}$ & $95\%$ CI using $M_{n}$ \\
\hline
Exp.\ 1 & $(1.485,1.507)$ & $(0.795,0.804)$ & $(1.068,1.085)$ \\
\hline
Exp.\ 2 & $(1.975,2.043)$ & $(0.778,0.840)$ & $(1.596,1.651)$ \\
 \hline
Exp.\ 3 & $(1.496,1.518)$ & $(0.794,0.803)$ & $(1.068,1.085)$ \\
 \hline
Exp.\ 4 & $(1.982,2.053)$ & $(0.788,0.855)$ & $(1.607,1.664)$ \\
 \hline
\end{tabular}
\end{center}

\begin{figure}
\centering
\includegraphics[width=0.24\linewidth]{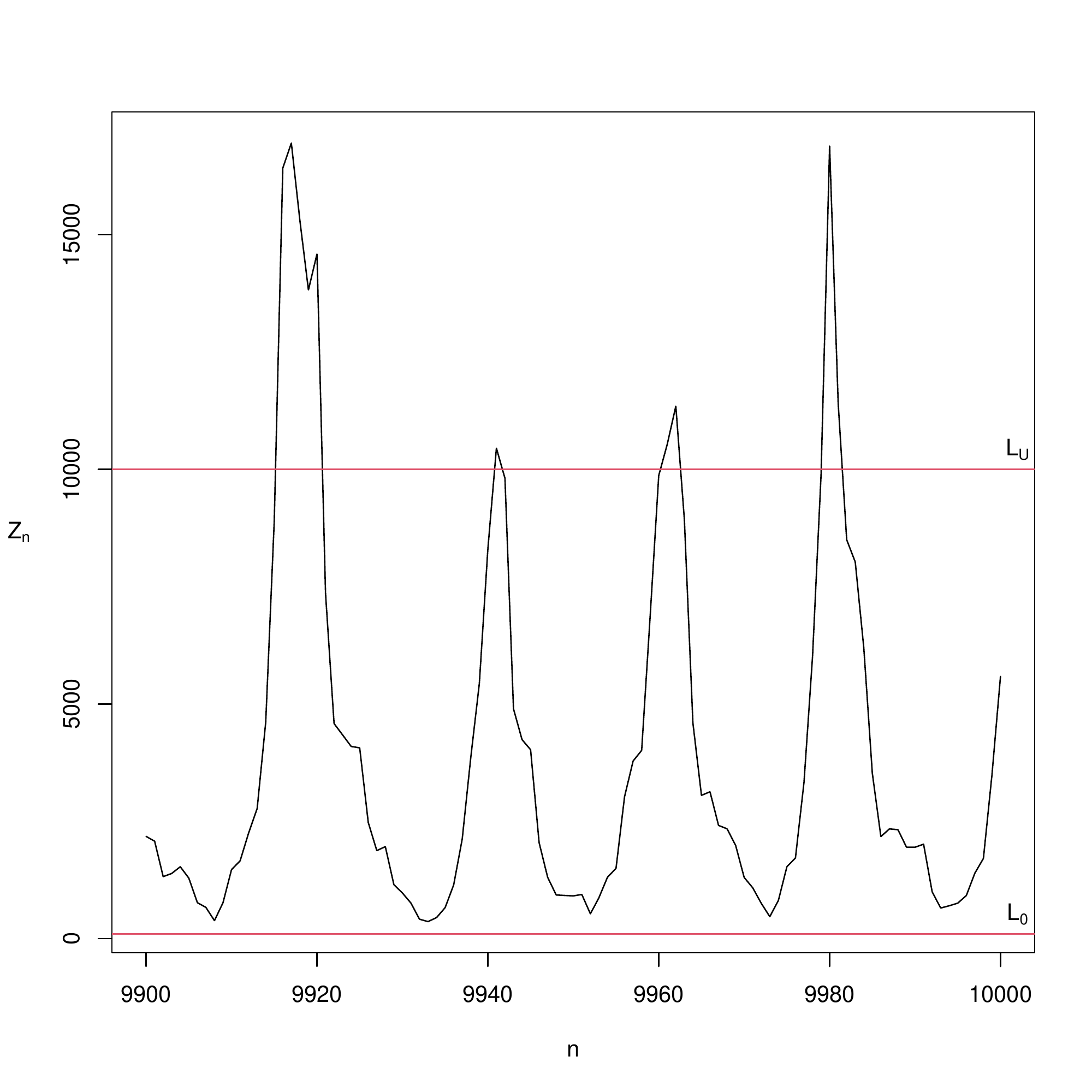}
\includegraphics[width=0.24\linewidth]{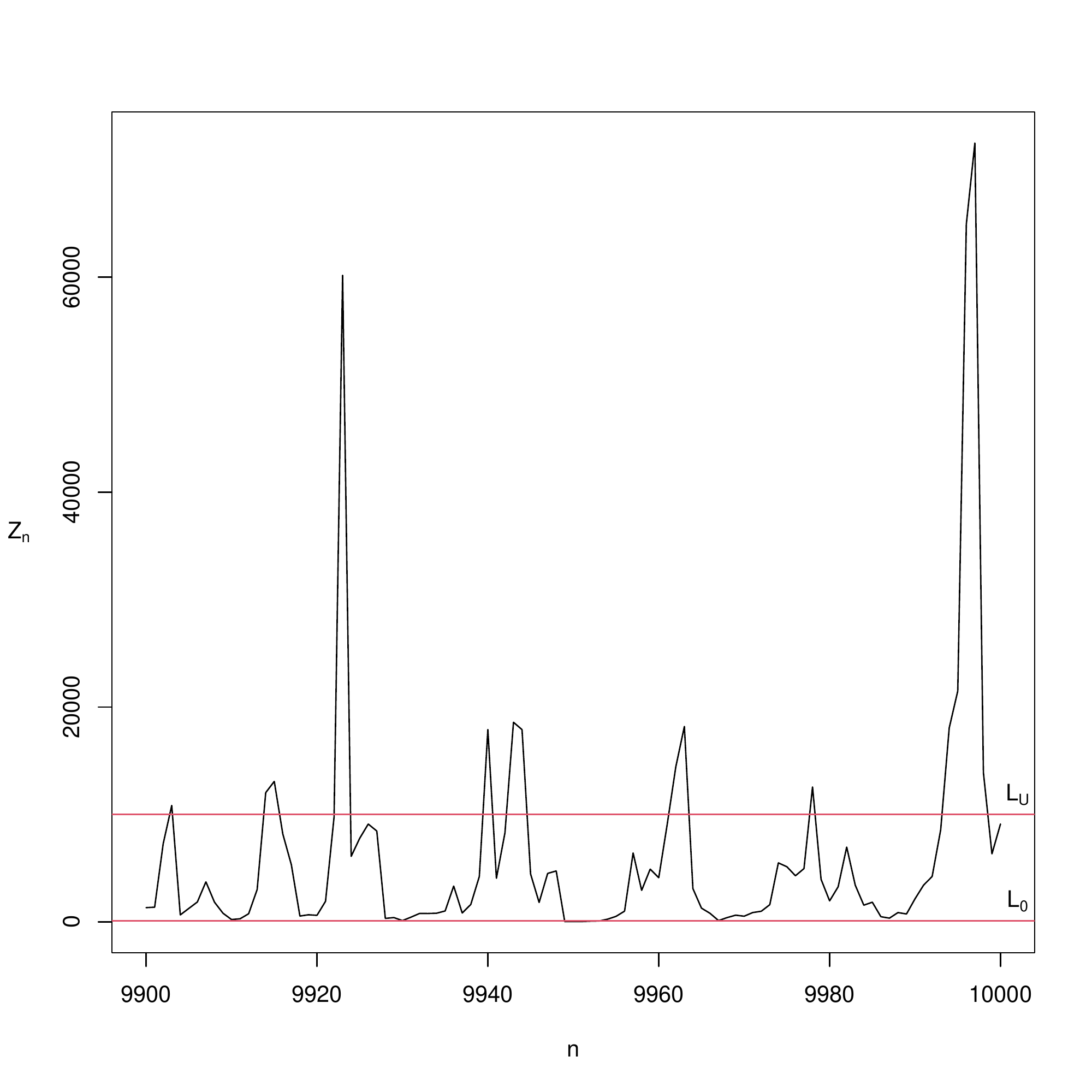}
\includegraphics[width=0.24\linewidth]{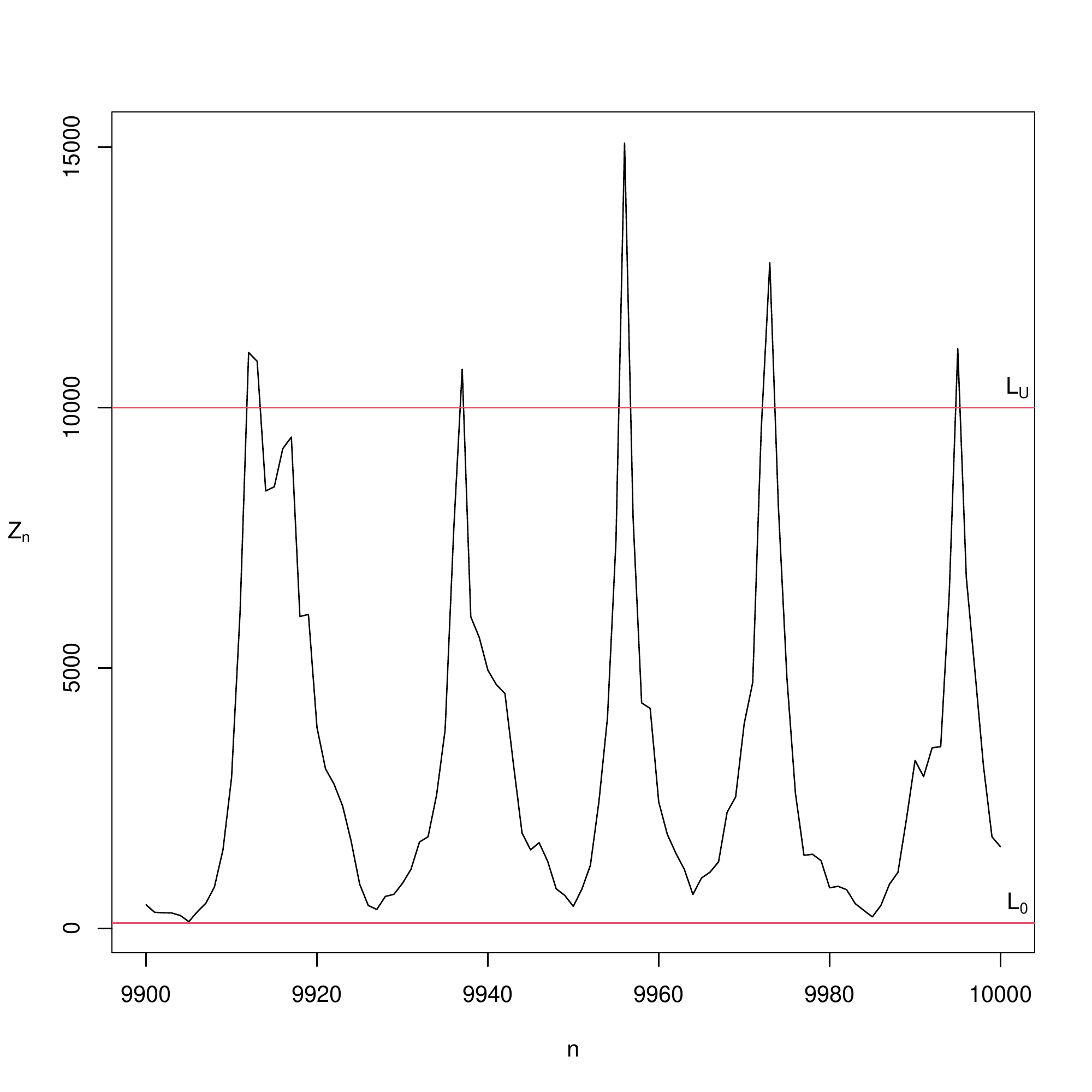}
\includegraphics[width=0.24\linewidth]{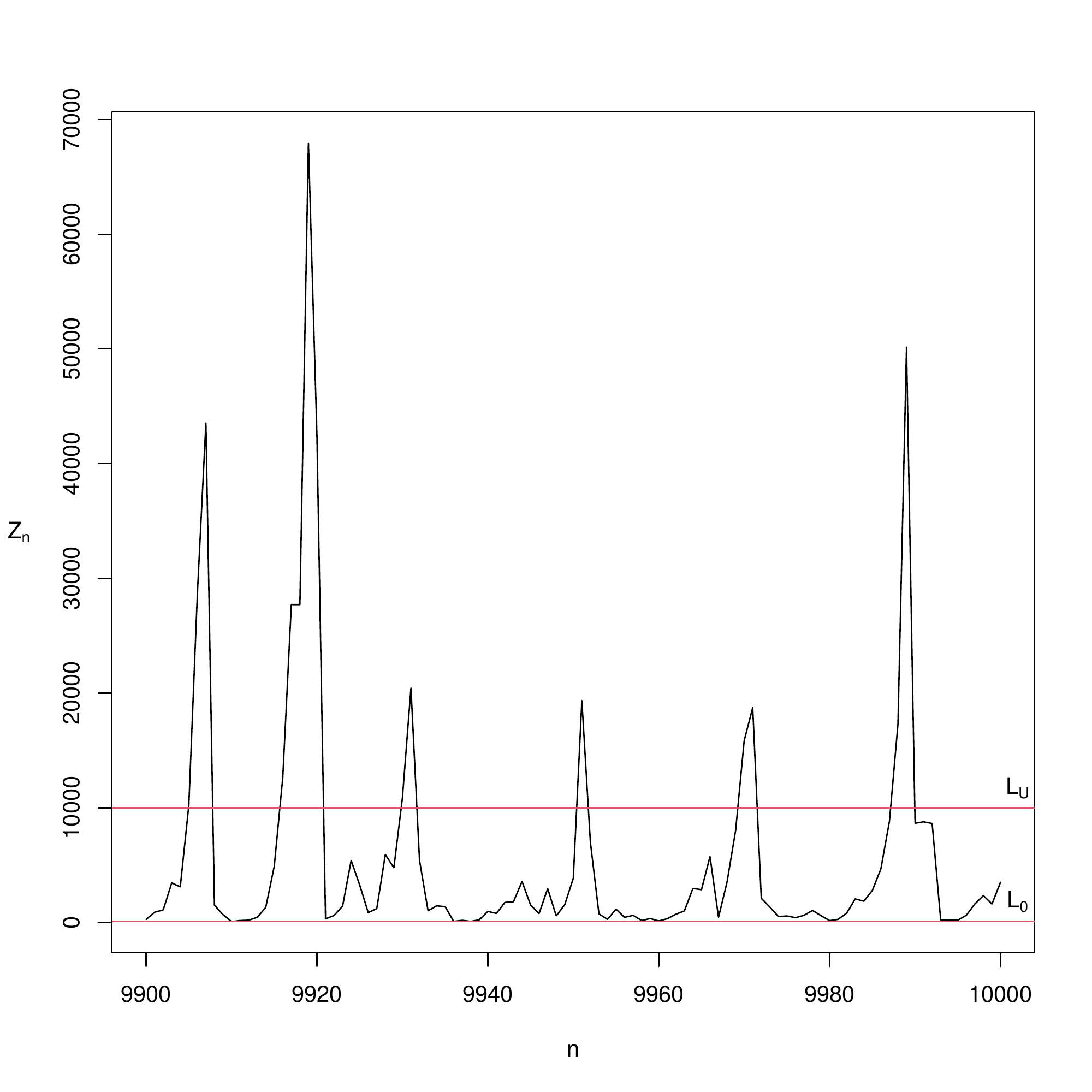}
\includegraphics[width=0.24\linewidth]{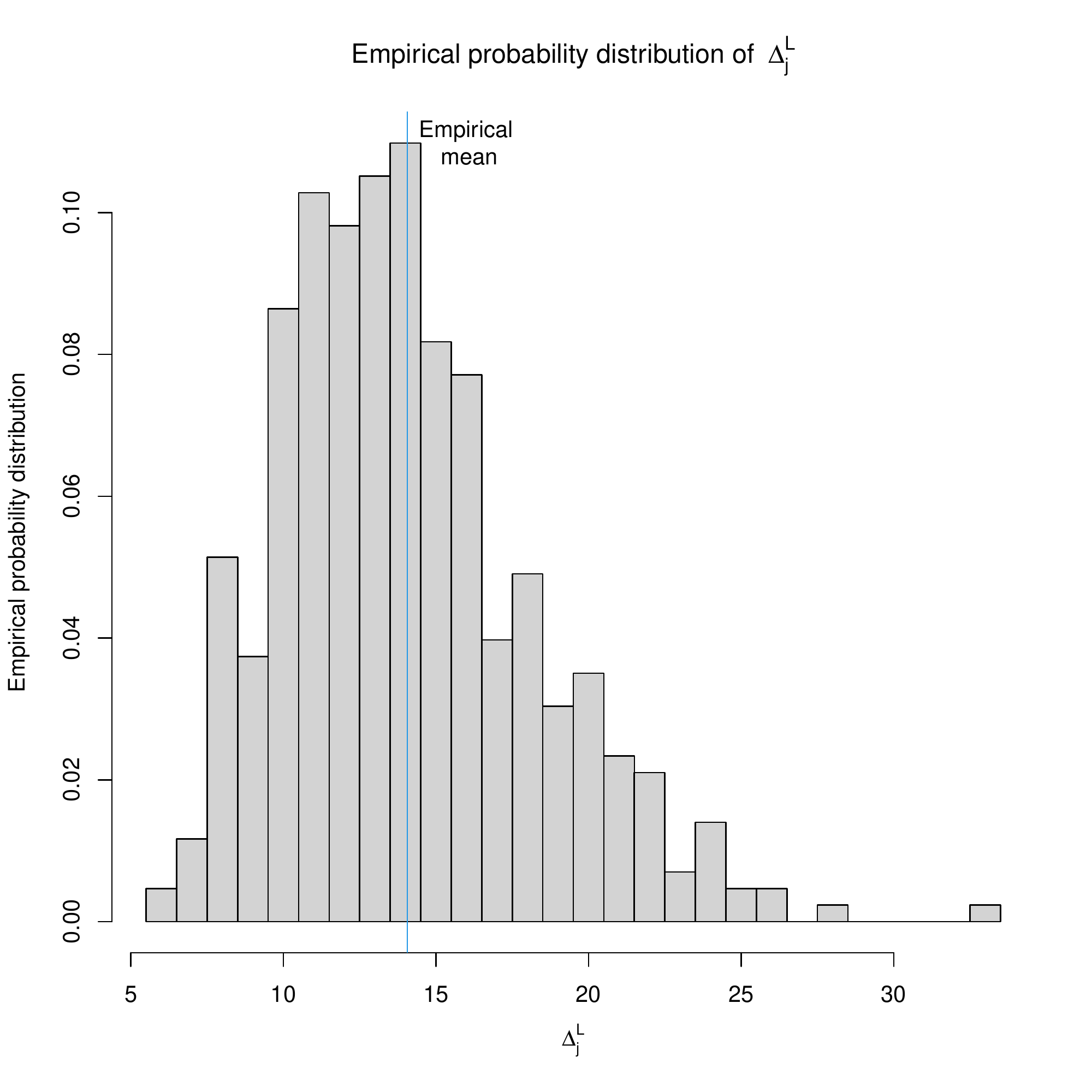}
\includegraphics[width=0.24\linewidth]{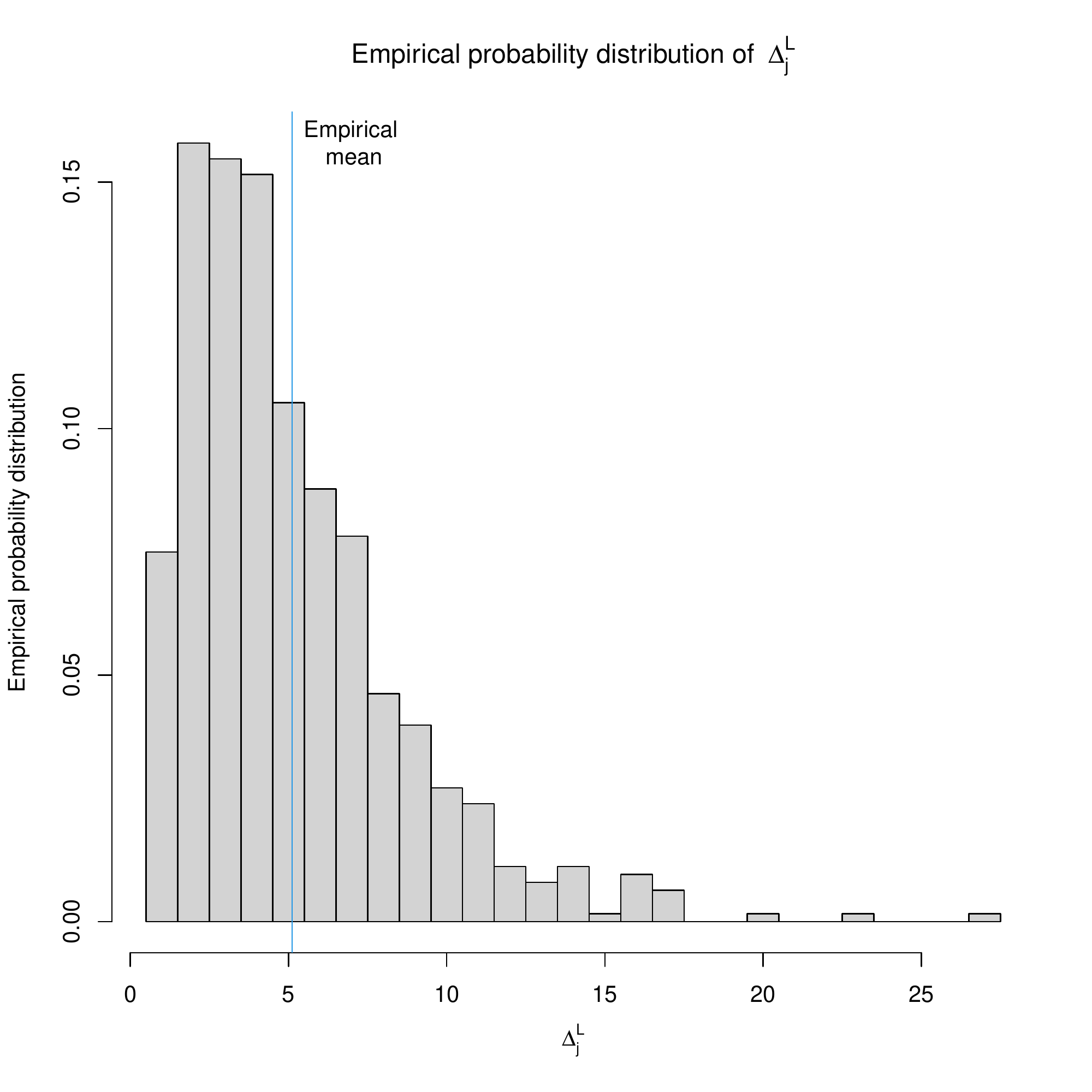}
\includegraphics[width=0.24\linewidth]{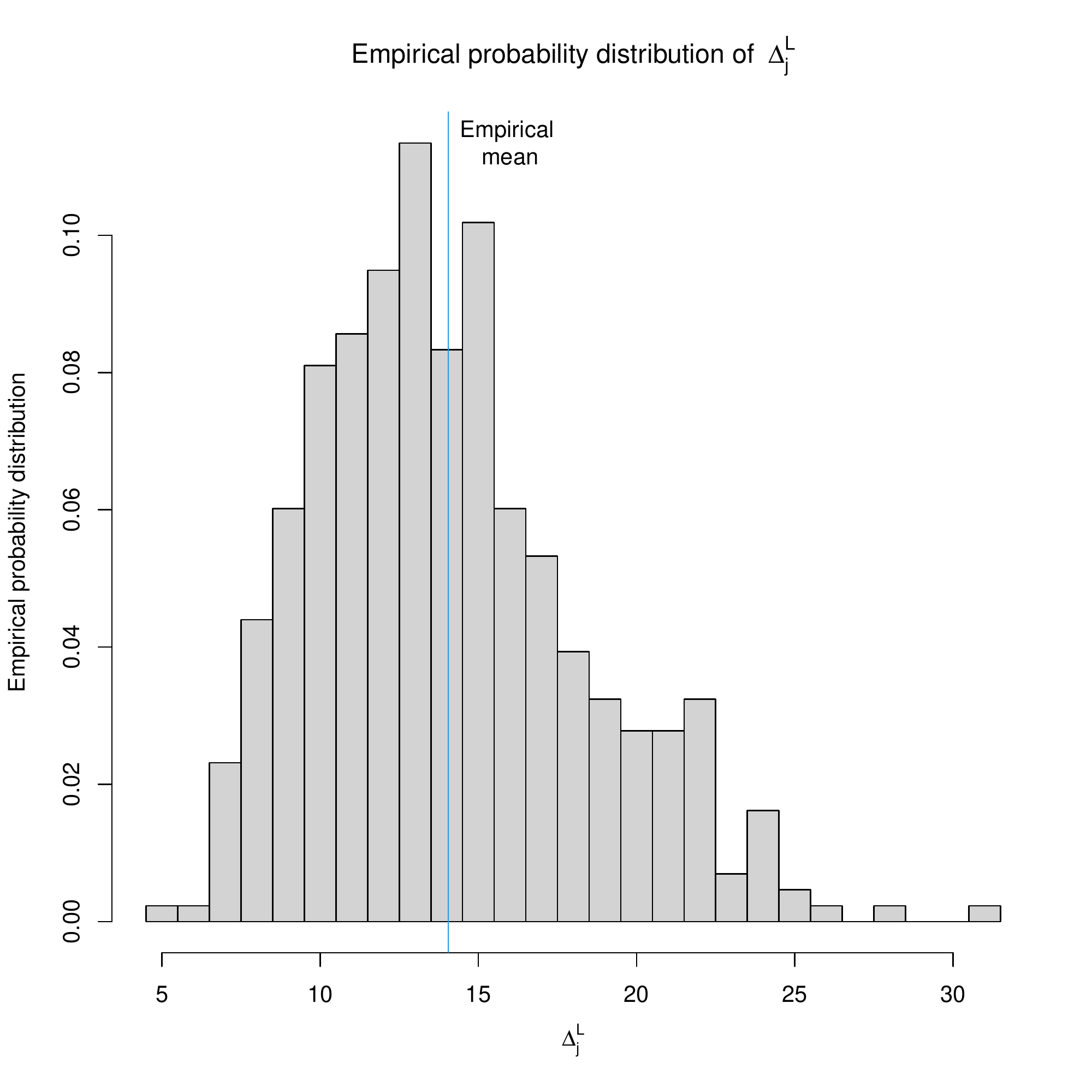}
\includegraphics[width=0.24\linewidth]{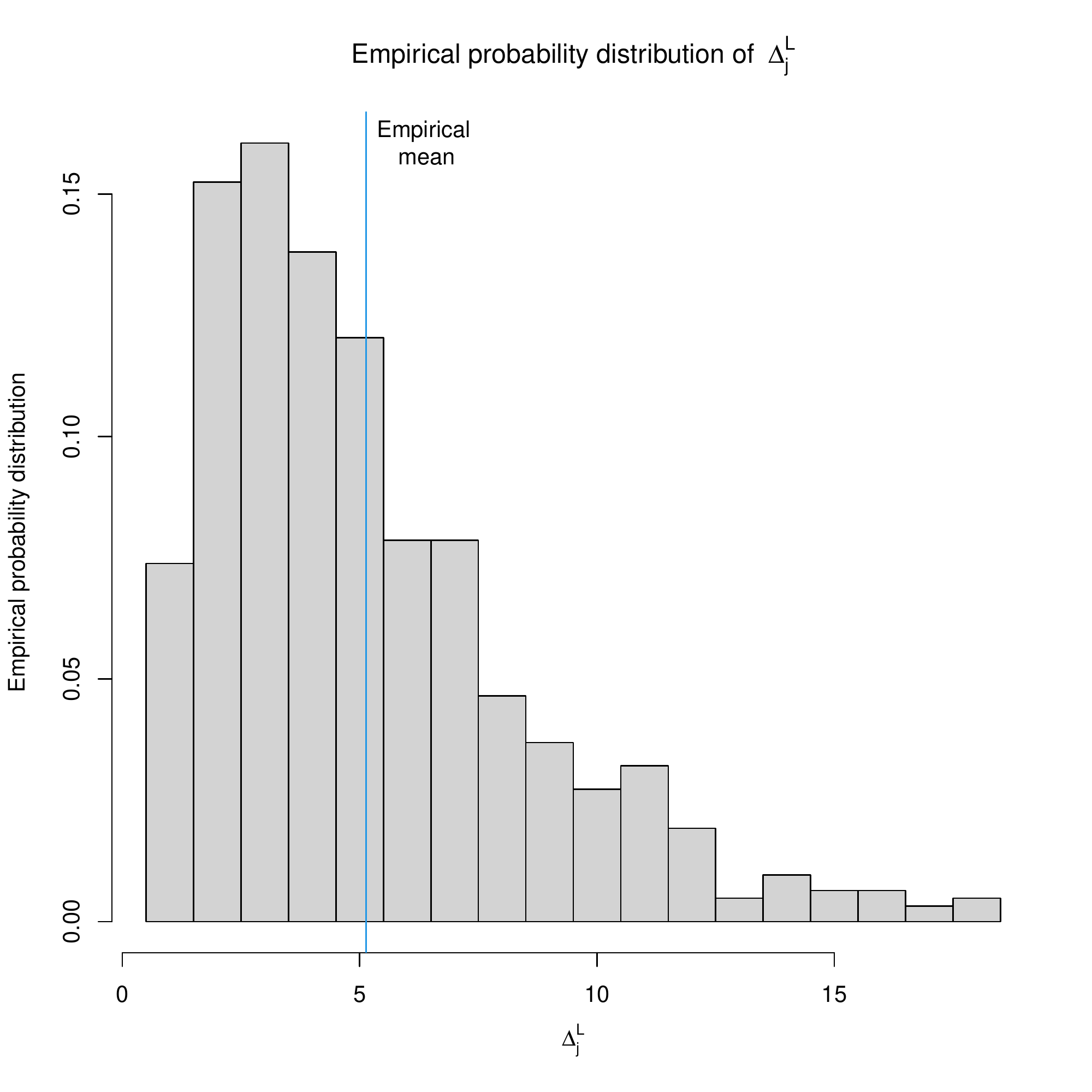}
\includegraphics[width=0.24\linewidth]{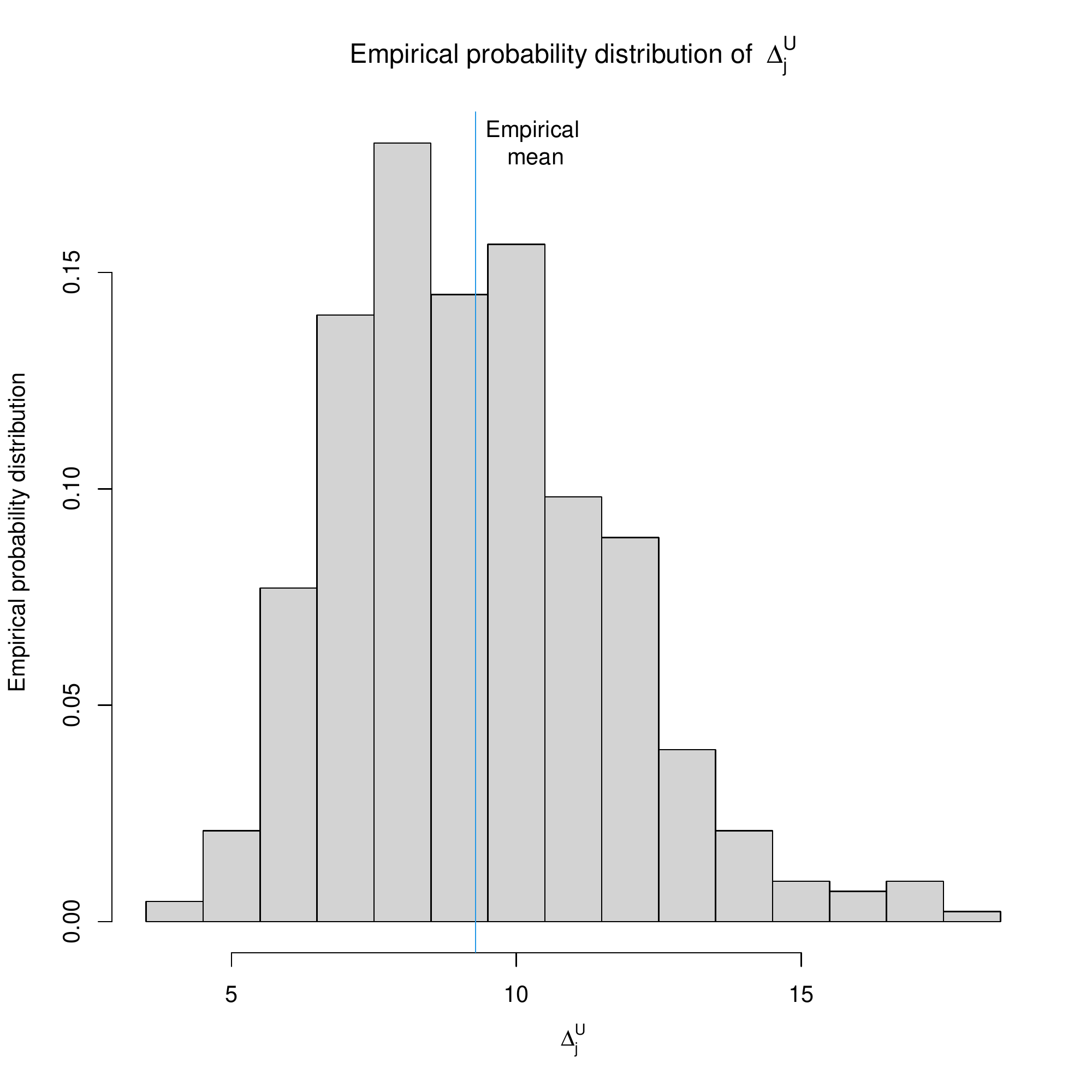}
\includegraphics[width=0.24\linewidth]{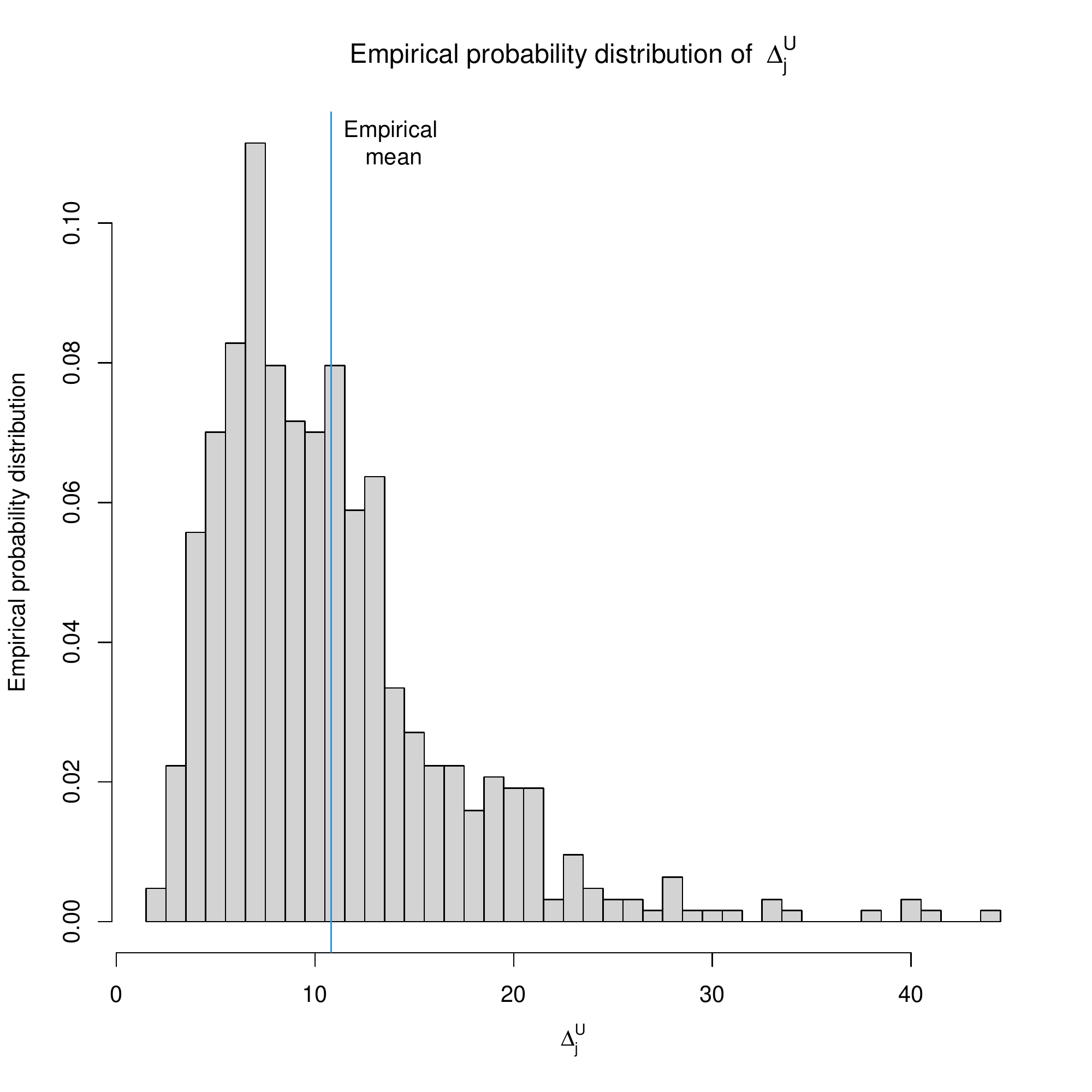}
\includegraphics[width=0.24\linewidth]{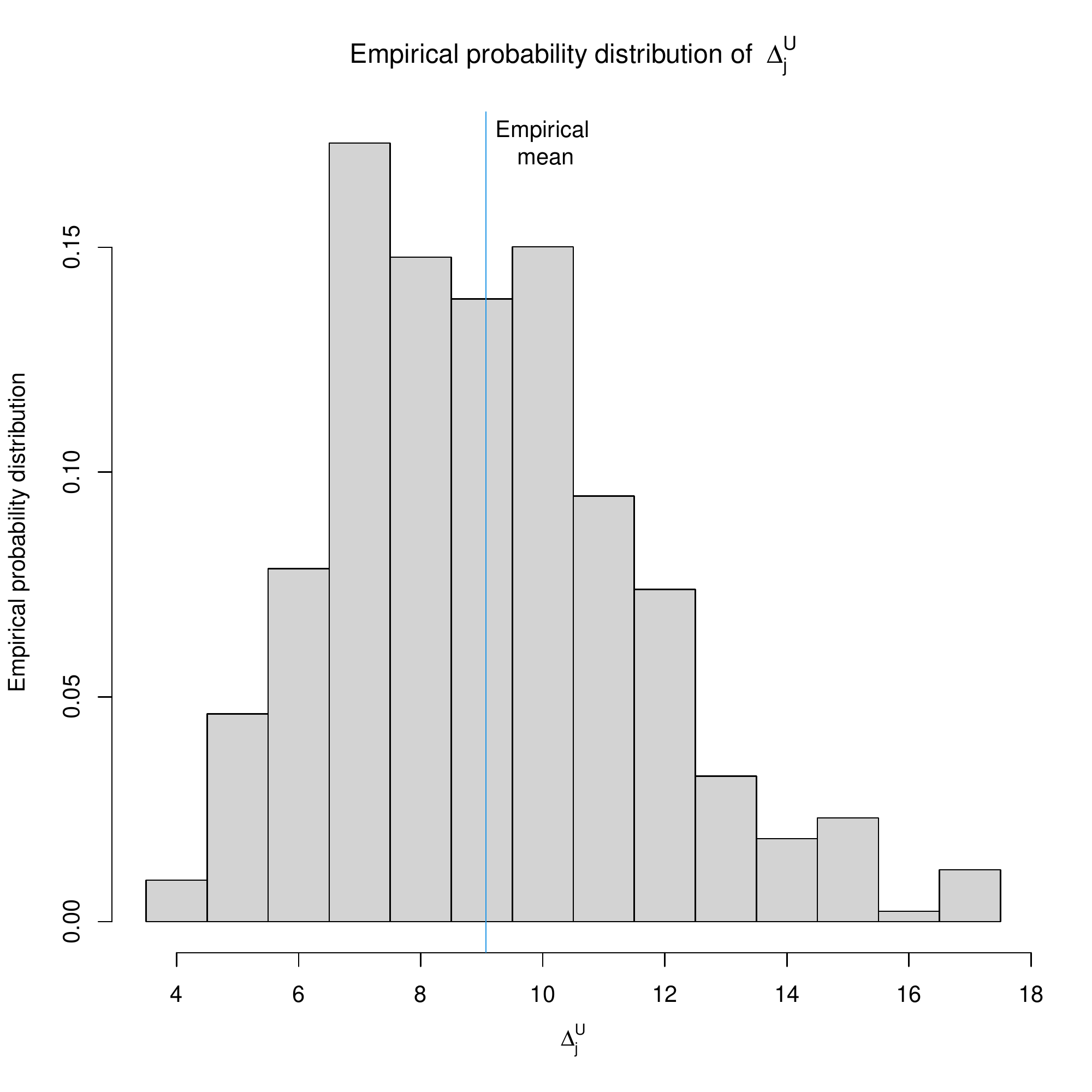}
\includegraphics[width=0.24\linewidth]{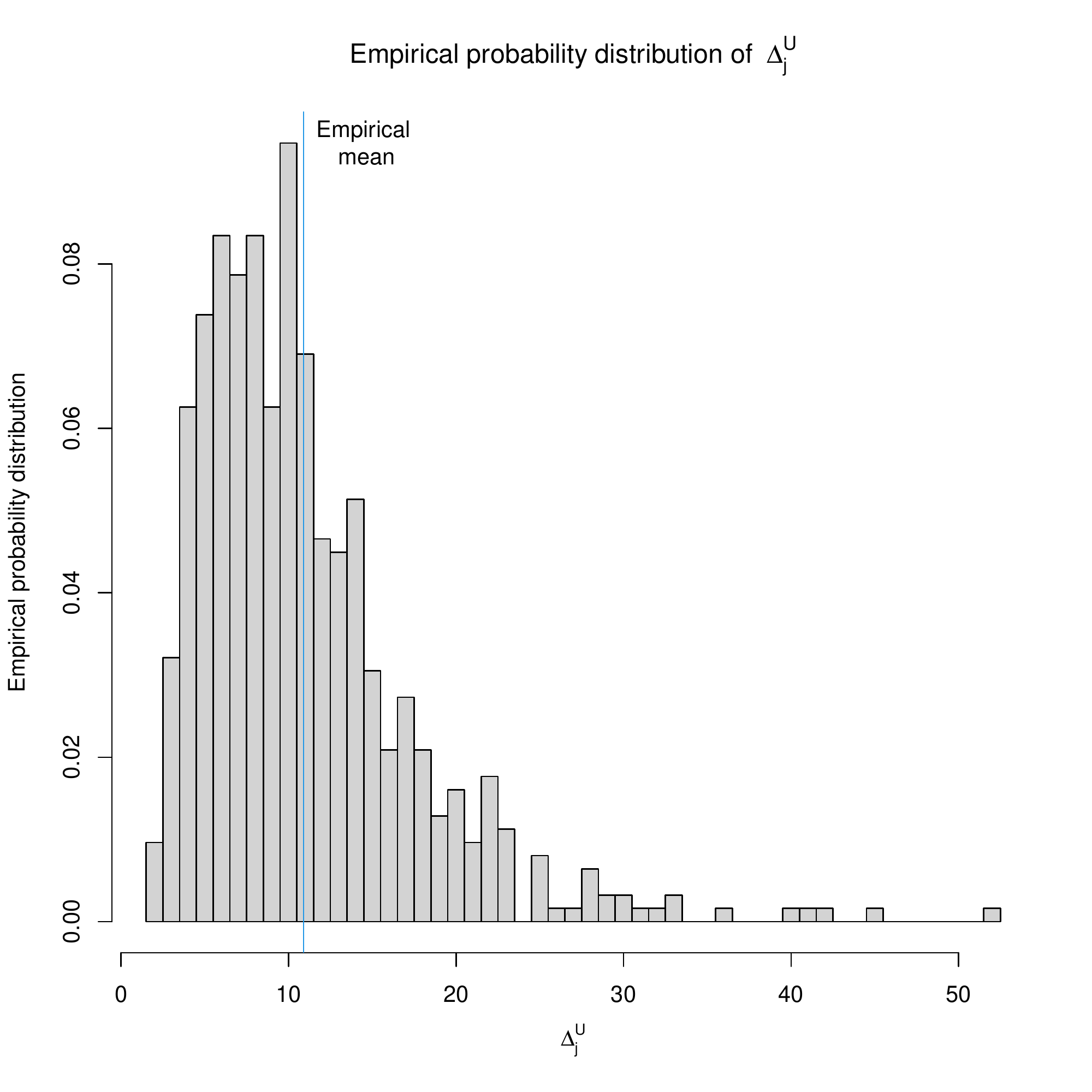}

    \caption{In each column the numerical Experiments 1, 2, 3, and 4. In the first row, the process $Z_{n}$ for $n=10^4-10^2, \dots, 10^4$. In the second and third row, the empirical probability distribution of $\{ \Delta_{j}^{L} \}$ and $\{ \Delta_{j}^{U} \}$, respectively.}
\label{plots_BPRET_empirical_stationary_distribution}
\end{figure}

Next, we investigate the behavior of the process when the thresholds $L_{j}$ and $U_{j}$ increase with $j$. To this end, we let $L_{0}=10^{2}$, $L_{U}=10^{4}$, and $n \in \{0,1, \dots, 10^3\}$ and take initial distribution $Z_{0}$, immigration distribution $I_{0}^{U}$, and offspring distributions $\xi_{0,1}^{U}$ and $\xi_{0,1}^{L}$ as in the numerical Experiment 1. We consider four different distributions for $L_{j}$ and $U_{j}$ as follows:
\begin{center}
\begin{tabular}{|p{1cm}|p{5cm}|p{5cm}|}
\hline
        & $U_{j}-L_{U}$ & $L_{j}$ \\ 
\hline
Exp.\ 5 & $\texttt{Zeta}(3)$ & $\texttt{Unif}_{d}(L_{0},10 L_{0})$ \\ 
\hline
Exp.\ 6 & $\texttt{Zeta}(3)$ & $\texttt{Unif}_{d}(L_{j,1},L_{j,2})$, where \newline $L_{j,1}=\min(L_{0}+100(j-1),L_{U})$, $L_{j,2}=\min(10 L_{0}+100(j-1),L_{U})$ \\ 
 \hline
Exp.\ 7 & $\texttt{Zeta}(3)+500(j-1)$ & $\texttt{Unif}_{d}(L_{0},10 L_{0})$ \\ 
 \hline
Exp.\ 8 & $\texttt{Zeta}(3)+500(j-1)$ & $\texttt{Unif}_{d}(L_{j,1},L_{j,2})$, where \newline $L_{j,1}=\min(L_{0}+100(j-1),L_{U})$, $L_{j,2}=\min(10 L_{0}+100(j-1),L_{U})$ \\ 
 \hline
\end{tabular}
\end{center}
The results of the numerical Experiment 5-8 are shown in Figure \ref{plots_BPRET_increasing_thresholds}. From the plots, we see that the number of cases after crossing the upper thresholds are between $10^{4}$ and $2 \cdot 10^{4}$, whereas when the thresholds increase they almost reach the $6 \cdot 10^{4}$ mark. Also, the number of regimes up to time $n=10^3$ reduces as it takes more time to reach a larger threshold. As a consequence the overall number of cases also  increases.
\begin{figure}
\centering
\includegraphics[width=0.24\linewidth]{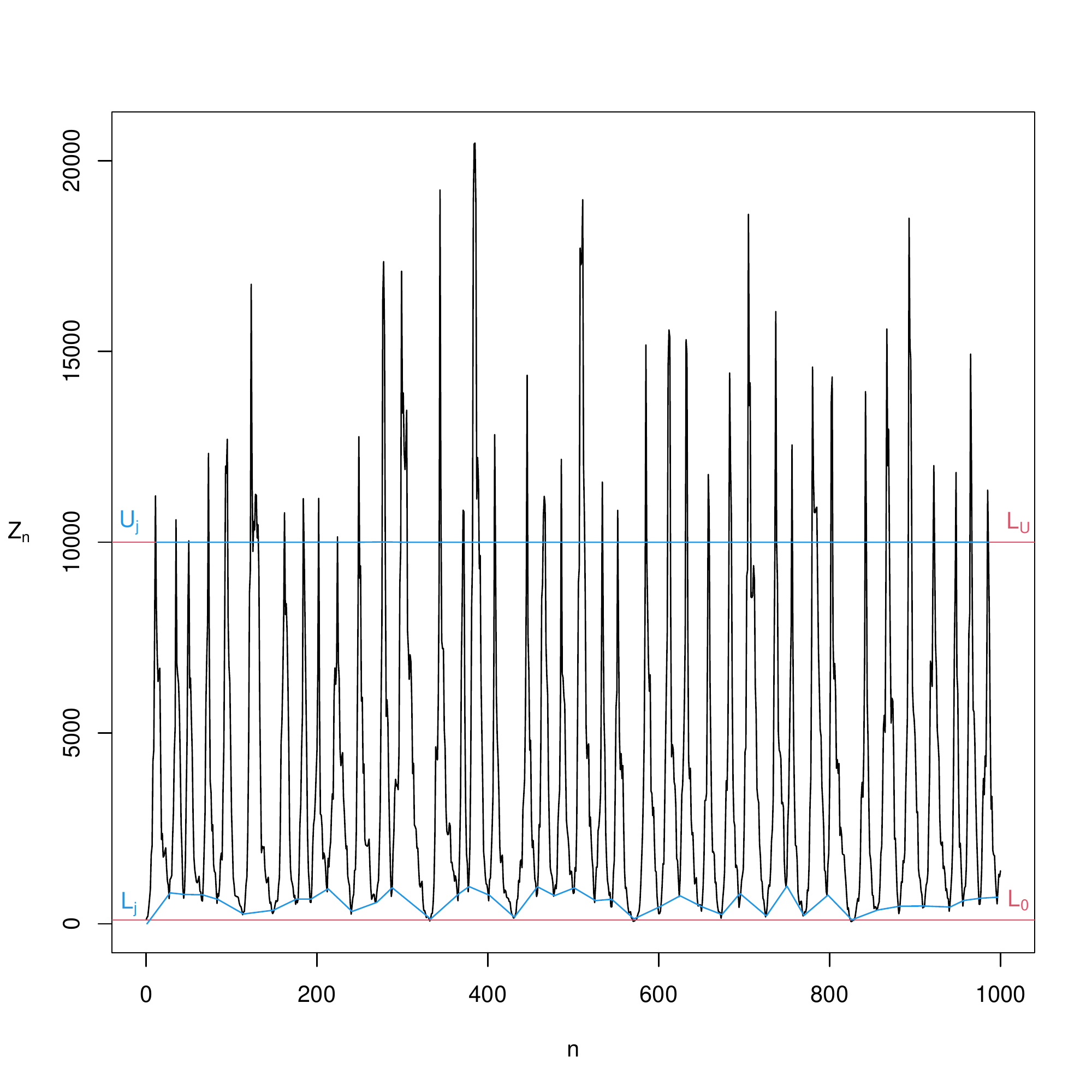}
\includegraphics[width=0.24\linewidth]{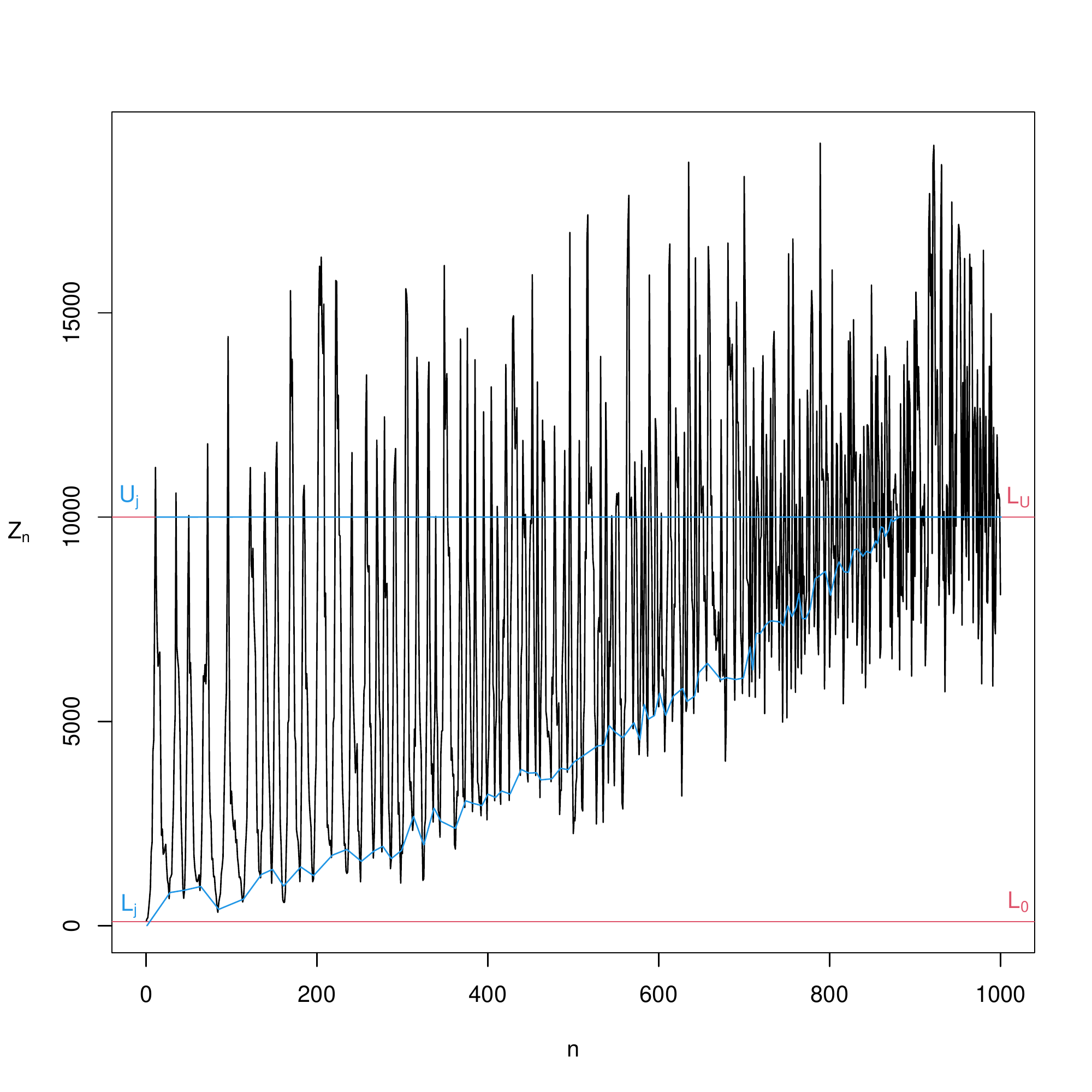}
\includegraphics[width=0.24\linewidth]{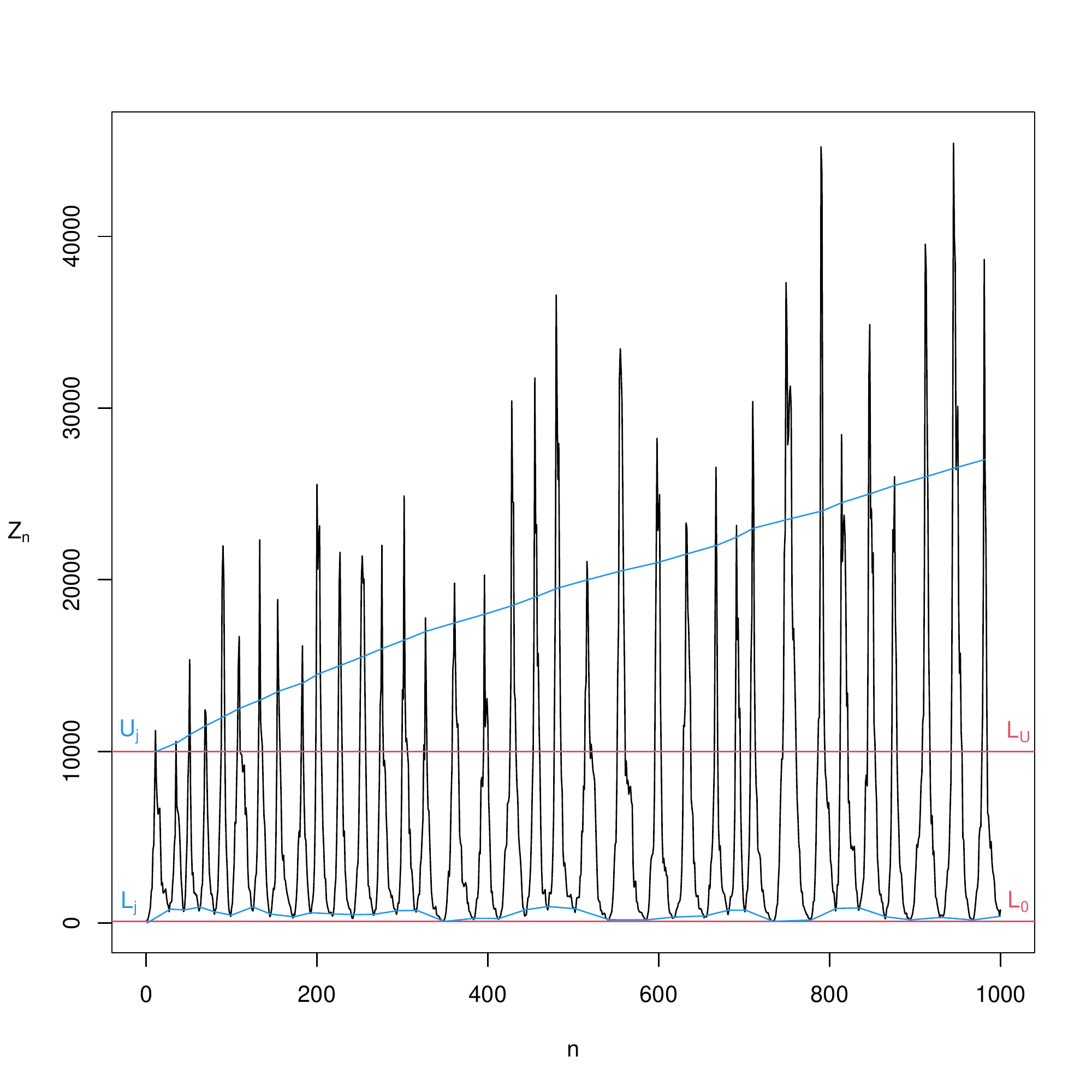}
\includegraphics[width=0.24\linewidth]{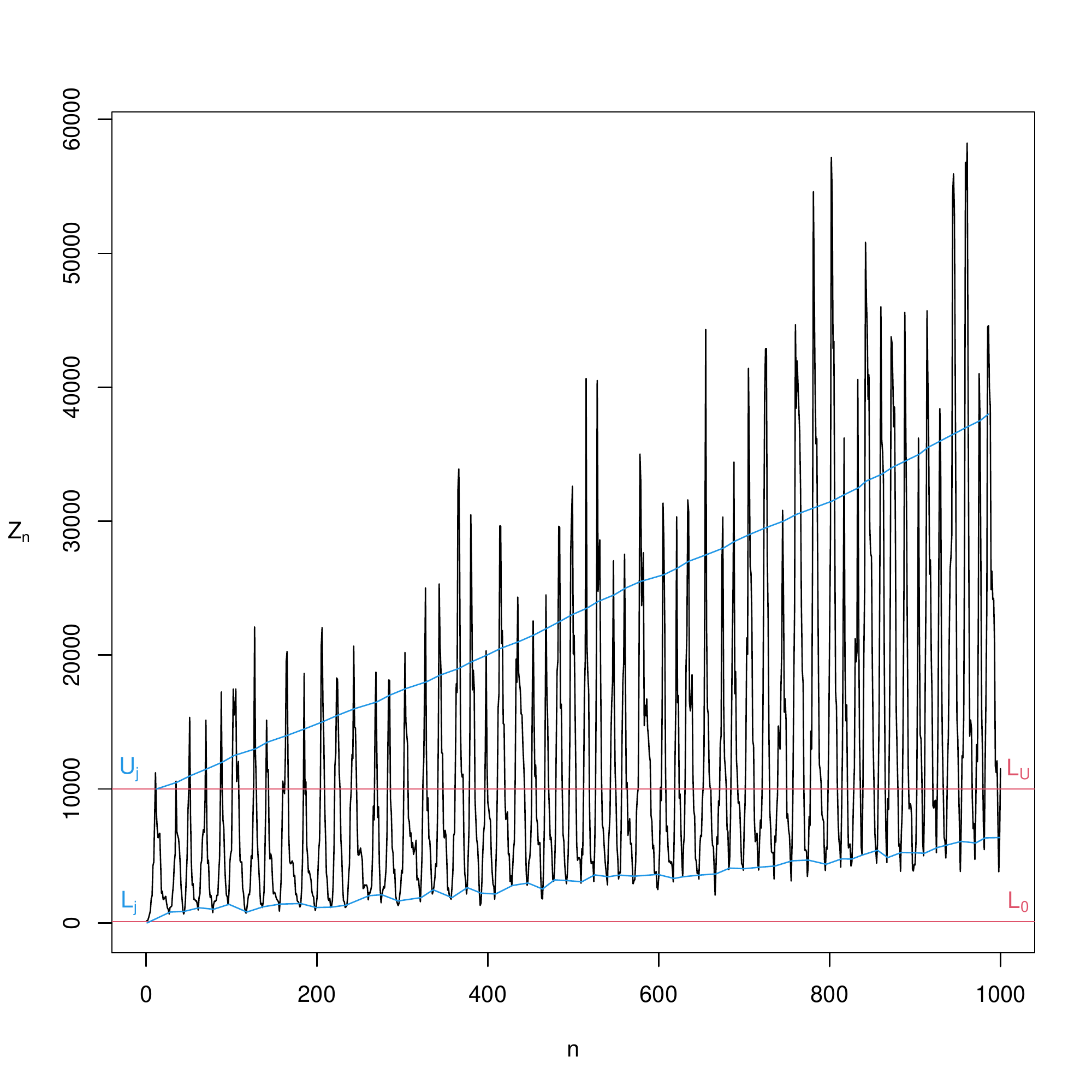}

\caption{In black the process $Z_{n}$ for $n \in \{0,1,\dots,10^{3} \}$, in red horizontal lines at $L_{0}$ and $L_{U}$, and in blue the thresholds $U_{j}$ and $L_{j}$. From left to right, the numerical Experiments 5, 6, 7, and 8.}
\label{plots_BPRET_increasing_thresholds}
\end{figure}


\end{document}